\documentclass[11pt]{amsart}

\usepackage{amsmath,amsfonts}
\usepackage{amssymb}
\usepackage{amscd}
\usepackage{amsthm}
\usepackage{yhmath}
\usepackage{subfigure}
\usepackage{graphicx}   
\usepackage{comment}
\usepackage{stackrel}

\usepackage[normalem]{ulem}
\usepackage{soul}

\usepackage{appendix}

\usepackage[all]{xy}
\usepackage{color}
\usepackage[dvipsnames]{xcolor}

\usepackage{marginnote}
\usepackage{hyperref}

\usepackage{todonotes}

\setcounter{secnumdepth}{2}
\setcounter{tocdepth}{2}

\numberwithin{equation}{section}

\setlength{\parindent}{.15in}
\setlength{\textwidth}{6in}
\setlength{\oddsidemargin}{.15in}
\setlength{\evensidemargin}{.15in}
\setlength{\parskip}{\smallskipamount}

\newtheorem{theorem}{Theorem}[section]

\newtheorem{proposition}[theorem]{Proposition}

\newtheorem{definition}[theorem]{Definition}
\newtheorem{lemma}[theorem]{Lemma}
\newtheorem{corollary}[theorem]{Corollary}

\theoremstyle{remark}
\newtheorem{remark}[theorem]{Remark}

\newcommand{\p}{\partial}
\renewcommand{\varphi}{\tau}




\def\XXint#1#2#3{{\setbox0=\hbox{$#1{#2#3}{\int}$}
	\vcenter{\hbox{$#2#3$}}\kern-.5\wd0}}

\newcommand{\free}{\mathfrak{f}_{2,3}}
\newcommand{\Ker}{\operatorname{Ker}}
\newcommand{\Lone}{\Lambda^1 \R^3}
\newcommand{\Ltwo}{\Lambda^2 \R^3}

\DeclareMathOperator{\Hor}{Hor}
\newcommand{\step}[1]{\par\medskip\noindent\it#1\rm}
\renewcommand{\O}{\mathcal{O}}

\newcommand{\R}{\mathbb R}

\newcommand{\g}{\mathfrak{g}}

\newcommand{\Int}{\operatorname{Int}}
\renewcommand{\ker}{\operatorname{Ker}}

\newcommand{\Lie}{\operatorname{Lie}}

\newcommand{\rank}{\operatorname{rank}}

\DeclareMathOperator{\Char}{Char}
\DeclareMathOperator{\Nonchar}{Nonchar}

\newcommand{\e}{\varepsilon}

\newcommand{\Anh}{\operatorname{Anh}}

\newcommand{\wh}{\widehat}

\newcommand{\Span}{\operatorname{span}}

\renewcommand{\epsilon}{\varepsilon}

\renewcommand{\tilde}{\widetilde}

\newcommand{\ol}{\overline}

\newcommand{\calO}{\mathcal{O}}
\newcommand{\calU}{\mathcal{U}}

\newcommand{\Per}{\operatorname{Per}}

\def\om{\omega}

\hyphenation{di-men-sio-nal}
\hyphenation{Lip-schitz}
\hyphenation{nei-ghbor-hood}



\newcommand{\frakf}{\mathfrak{f}}
\newcommand{\frakg}{\mathfrak{g}}
\newcommand{\frakh}{\mathfrak{h}}

\begin{document} 

\title[Precisely monotone sets in step-2 rank-3 Carnot algebras]{Precisely monotone sets in step-2 rank-3\\ Carnot algebras}

\author{Daniele Morbidelli}
\address[Morbidelli]{Dipartimento di Matematica, Alma Mater Studiorum Universit\`a di Bologna, Italy}
\email{daniele.morbidelli@unibo.it}

\author{S\'everine Rigot}
\address[Rigot]{Universit\'e C\^ote d'Azur, CNRS, LJAD, France}
\email{Severine.RIGOT@univ-cotedazur.fr}

\subjclass[2020]{20F18, 15A75, 53C17, 43A80}
%
\keywords{Carnot groups; Carnot algebras; monotone sets; subRiemannian geometry}

\begin{abstract}
 A subset of a Carnot group is said to be precisely monotone if the restriction of its characteristic function to each integral curve of every left-invariant horizontal vector field is monotone. Equivalently, a precisely monotone set is a h-convex set with h-convex complement. Such sets have been introduced and classified in the Heisenberg setting by Cheeger and Kleiner in the 2010's. In the present paper, we study precisely monotone sets in the wider setting of step-2 Carnot groups, equivalently step-2 Carnot algebras. In addition to general properties, we prove a classification in terms of sublevel sets of h-affine functions in step-2 rank-3 Carnot algebras that can be seen as a generalization of the one obtained by Cheeger and Kleiner in the Heisenberg setting. There is however a significant difference here as it is known that, unlike the Heisenberg setting, there are sublevel sets of h-affine functions on the free step-2 rank-3 Carnot algebra that are not half-spaces.
\end{abstract}

\maketitle


\section{Introduction} \label{sect:introduction}

Monotone sets have been first introduced by Cheeger and  Kleiner in \cite{CheegerKleiner10} where the proof of the non biLipschitz embeddability of the first Heisenberg group into $L^1$ is reduced to the classification of its monotone subsets, see also~\cite{CheegerKleinerNaor11}. Later on, this classification together with related notions of monotonicity/non-monotonicity appeared in a crucial way in several works related to geometric measure theory issues in the Heisenberg setting, see for instance~\cite{NaorYoung}, \cite{FasslerOrponenRigot}, \cite{naor2021foliated}, \cite{young2021areaminimizing}.

In the perspective of a further analysis along these lines of research in more general settings, we study here precisely monotone sets in more general Carnot groups, see for instance~\cite[Section~2.1]{MR3587666} and the references therein for an introduction to Carnot groups. Besides their relevance in the aforementioned questions, let us stress that monotone sets have also their own interest. They can for instance be proved to be local minimizers for the intrinsic perimeter, see \cite[Proposition~3.9]{young2021areaminimizing} and Proposition~\ref{prop:minimizers}.   Let us also mention that sets with constant horizontal normal, widely studied in connection with the theory of sets with locally finite intrinsic perimeter,  are examples of monotone sets, see the pioneering works \cite{FSSC01}, \cite{FSSC03}, and \cite{MR2496564}, \cite{MR3330910}, \cite{bellettini2019sets}.

A subset $E$ of a Carnot group is said to be precisely monotone if the restriction of its characteristic function to each integral curve of every left-invariant horizontal vector field is monotone when seen as a function from $\R$ to $\R$. In other words, the image of any such curve intersects both $E$ and its complement $E^c$ in a connected set, equivalently, both $E$ and $E^c$ are h-convex, see for instance~\cite{Rickly06},~\cite{ArenaCarusoMonti12},~\cite{CalogeroPini19},~\cite{MR4119259}, for more details about h-convex sets. Despite the simplicity of their definition, precisely monotone sets turn out to be rather difficult to describe. 

On the one hand, a classification of precisely monotone sets is known so far only in some particular settings. Namely, it has been proved in~\cite{CheegerKleiner10} that if $E$ is a non empty precisely monotone strict subset of the first Heisenberg group $\mathbb{H}$ then there is an open half-space $C$ such that $C\subset E \subset \overline{C}$. This classification has been generalized to higher dimensional Heisenberg groups in~\cite{NaorYoung} and to Carnot groups of M\'etiver's type and the direct product $\mathbb{H} \times \R$ in \cite{MR4119259}. These are, at least to our knowledge, the only cases where a classification for precisely monotone sets has been established. On the other hand, there are plenty of examples of Carnot groups, such as the free one of step-2 and rank-3, where such a classification in terms of half-spaces is known to be false, as we will explain below.

Going back to arbitrary Carnot groups, we say that a real-valued function is  horizontally monotone, h-monotone in short, if it is monotone along all integral curves of left-invariant horizontal vector fields when seen as a function from $\R$ to $\R$. It follows from the very definitions that sublevel sets of h-monotone functions are precisely monotone.  It is then natural to ask whether a classification of precisely monotone sets can be given in terms of sublevel sets of h-monotone functions. Note that the one obtained in \cite{CheegerKleiner10,NaorYoung,MR4119259} fits such a classification as an open half-space can always be written as a sublevel set of some affine function and since affine functions on step-2 Carnot groups are h-affine and hence h-monotone.

In the present paper we consider step-2 Carnot groups, identified with step-2 Carnot algebras, see Section~\ref{sect:PM-step2-Carnot-algebras} for our convention about the natural identification between step-2 Carnot groups and algebras. We first prove general properties of precisely monotone subsets of arbiratry step-2 Carnot algebras. They strongly rely on Cheeger-Kleiner's classification in the Heisenberg case together with the fact that integral curves of left-invariant horizontal vector fields in step-2 Carnot algebras are 1-dimensional affine subspaces, called horizontal lines. We next classify measurable precisely monotone subsets of step-2 rank-3 Carnot algebras in terms of sublevel sets of h-affine functions, see Theorems~\ref{thm:main} and~\ref{thm:nonfree-case}. We recall that if $\frakg$ is a step-2 Carnot algebra, a function $\phi:\frakg \rightarrow \R$ is said to be horizontally affine, h-affine in short, if its restriction to every horizontal line is affine (see \cite{LeDonneMorbidelliRigot1}). Obviously h-affine functions are h-monotone. Therefore Theorems~\ref{thm:main} and~\ref{thm:nonfree-case} give a positive answer to the question of the classification of measurable precisely monotone sets in terms of sublevel sets of h-monotone functions in the step-2 rank-3 cases that actually involves a a priori smaller class of functions. We stress that the free step-2 rank-3 case is an example of a Carnot algebra where there are h-affine functions that are not affine (see \eqref{e:phi-introduction} and \cite{LeDonneMorbidelliRigot1} for a complete description of such examples) whose sublevel sets are not half-spaces and where a classification of precisely monotone sets in terms of half-spaces can therefore not hold. This creates  in particular significant differences compared to the settings considered in \cite{CheegerKleiner10,NaorYoung,MR4119259}.

In the free step-2 rank-3 Carnot algebra $\free = \Lone \oplus \Ltwo$ equipped with the Lie bracket for which the only non trivial relations are given by $[\theta,\tau]:=\theta \wedge \tau$ for $\theta,\tau\in \Lone$ and the induced group law given by $(\theta+\omega)\cdot (\tau+\zeta):= \theta+\tau+\omega+\zeta+[\theta,\tau]$ for $\theta,\tau\in\Lone$, $\omega,\zeta\in\Ltwo$ (see Section~\ref{sect:PM-step2-Carnot-algebras}), the classification reads as follows.

\begin{theorem} \label{thm:main} Let $E\subset \free$ be precisely monotone and measurable. Then either $E=\emptyset$, $E=\free$, or there is a non constant h-affine function $\phi:\free \rightarrow \R$ such that 
\begin{equation} \label{e:main}
\Int(E) = \{x\in \free :\, \phi(x) < 0\} \quad \text{and} \quad \overline{E}= \{x\in \free :\, \phi(x) \leq 0\}~.
\end{equation}
\end{theorem}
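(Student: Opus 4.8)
The plan is to reduce the classification on $\free$ to the known Heisenberg classification by exploiting the structure of $\free$ as a bundle of Heisenberg-type slices, and then to assemble the local half-space data into a single global h-affine function. Concretely, since every horizontal line in $\free$ lives inside some copy of a lower-rank step-2 algebra, one should first use the general properties of precisely monotone sets in step-2 Carnot algebras (established earlier in the paper, relying on Cheeger--Kleiner) to show that $E$, up to modification on a null set, agrees with an open set $\Int(E)$ and a closed set $\overline{E}$ with $\overline{E}\setminus\Int(E)$ of measure zero, and that the topological boundary $\Sigma := \partial E$ is, locally, a reasonably tame hypersurface. The key structural input is that the trace of $E$ on each rank-2 sub-slice is, by Cheeger--Kleiner, either empty, everything, or a half-space bounded by a horizontal line; so $\Sigma$ meets each such slice in at most one horizontal line.

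**Building the function.** Next I would produce a candidate h-affine function $\phi$ whose zero set is $\Sigma$. The natural route is to parametrize $\Sigma$ as a graph over a suitable coordinate hyperplane and then show that the graphing function, together with the monotonicity constraints coming from all horizontal lines not contained in a single slice, forces the defining equation to be h-affine. Here one invokes the classification of h-affine functions on $\free$ from \cite{LeDonneMorbidelliRigot1}: the explicit formula \eqref{e:phi-introduction} gives the full list, and the task becomes matching the boundary of $E$ against one member of that list. The monotonicity of $\chi_E$ along horizontal lines translates into convexity/concavity-type inequalities for the graphing function in the ``bad'' directions, and combined with the slice-by-slice affineness these should pin down $\phi$ uniquely up to a positive multiple.

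**Orientation and gluing.** A genuinely delicate point is orientation consistency: each slice only tells us $E$ is \emph{one} of the two half-spaces, and one must rule out a ``flip'' as the slice varies, i.e.\ show the normal direction chosen on $\Sigma$ varies continuously and extends to a globally coherent choice of sign for $\phi$. Connectedness of the complement of the singular/degenerate locus of $\Sigma$, plus the fact that $E$ and $E^c$ are both h-convex (so $E$ cannot ``wrap around''), should do this; one argues that the set of slices on which $E$ has a given orientation is both open and closed. Finally, once $\phi$ is identified, one upgrades from the a.e.\ statement to the precise identification \eqref{e:main}: h-convexity of $E$ and of $E^c$ forces $\Int(E)=\{\phi<0\}$ and $\overline E=\{\phi\le 0\}$ with no extra boundary pieces, using that $\{\phi=0\}$ has empty interior (which follows from $\phi$ being non-constant and h-affine, since an h-affine function vanishing on an open set vanishes identically).

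**Main obstacle.** I expect the principal difficulty to be exactly the step where local half-space data is promoted to a global h-affine function: in the Heisenberg case the function is literally affine and this is automatic, but on $\free$ the presence of non-affine h-affine functions (so sublevel sets that are not half-spaces) means one must genuinely track how the ``horizontal line'' boundaries of the two-dimensional slices fit together, and show the only consistent way to glue them is along the level set of one of the functions in \eqref{e:phi-introduction}. Controlling the regularity of $\Sigma$ well enough to run this gluing — without assuming more than measurability of $E$ — is where the real work lies, and it is presumably what forces the separate treatment of the free case (Theorem~\ref{thm:main}) versus the non-free rank-3 algebras (Theorem~\ref{thm:nonfree-case}).
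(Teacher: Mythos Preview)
Your outline is broadly aligned with the paper's two-stage architecture (local description near the boundary via Cheeger--Kleiner, then a global upgrade), but several of the concrete mechanisms are either missing or off-target.

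A small correction first: on a Heisenberg subalgebra $\frakh$ (which is three-dimensional), Cheeger--Kleiner says $\partial_\frakh(E\cap\frakh)$ is a $2$-dimensional affine plane, not a horizontal line; so ``$\Sigma$ meets each such slice in at most one horizontal line'' is not what one gets from the rank-$2$ slices.

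More substantively, the local step in the paper does not run purely slice-by-slice. Its starting point is the \emph{intrinsic-graph} representation of $\partial E$ near a noncharacteristic point (imported from \cite{MR4119259}), and the Heisenberg slices are then used to constrain the graph function: one first shows it is affine in the two ``vertical'' variables $\zeta_{12},\zeta_{13}$, converts the intrinsic graph to an honest graph, and then --- by choosing a basis with all three $(e_i,0)\in\Int(E)$ and comparing the three resulting graph descriptions --- forces the exact form~\eqref{e:phi-introduction}. The role of noncharacteristic points, and of Proposition~\ref{prop:noncharacteristic-points-exist} (their density in $\partial E$, and $\Int(\partial E)=\emptyset$, which is where measurability enters), is essential and absent from your sketch.

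The local-to-global step is where your proposal diverges most. You frame it as orientation-consistency via connectedness across slices, but the real obstruction is stronger: one must show that a \emph{single} h-affine $\phi$ has zero set equal to all of $\partial E$, not merely that the local $\phi_x$'s vary continuously. The paper does this algebraically rather than by a continuity argument. It first proves $S\subset\partial E$ in two pieces: (i) propagation along Heisenberg subalgebras through the base point gives $S\cap\Sigma\subset\partial E$, where $\Sigma=\bigcup_{\xi\wedge\tau\neq 0}\Lie(\xi,\tau)$; (ii) generic points of $S\setminus\Sigma$ are reached by producing a horizontal line through them that meets $\partial E$ in two distinct points already known to lie on $\partial E$, and then invoking Lemma~\ref{linee}. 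It then proves $\partial E\subset S$ by contradiction: a second noncharacteristic point outside $S$ would yield a second level set $\widetilde S\subset\partial E$, and a transversality argument shows the union of horizontal lines hitting both $S$ and $\widetilde S$ has nonempty interior inside $\partial E$, contradicting $\Int(\partial E)=\emptyset$. Your open-and-closed argument on slice parameters does not obviously supply either inclusion, and in particular does not explain why the local functions $\phi_x$ at different noncharacteristic points are \emph{proportional} rather than merely close; that rigidity comes from Corollary~\ref{cor:level-sets} (two h-affine functions whose zero sets agree on an open set are proportional), which you do not invoke.
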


More explicitly, we prove that given a non empty measurable precisely monotone strict subset $E$ of $\free $ and given $\nu \in \Lambda^3\R^3 \setminus \{0\}$ there is $(\eta_0, \eta_1,\eta_2,\eta_3)\in \Lambda^0\R^3\times\Lambda^1\R^3\times\Lambda^2\R^3 \times \Lambda^3 \R^3$ with $(\eta_0, \eta_1,\eta_2)\not=(0,0,0)$ such that \eqref{e:main} holds true with $\phi$ given by
\begin{equation}  \label{e:phi-introduction}
\phi(\theta+\omega) \nu = \eta_3 + \eta_2 \wedge \theta + \eta_1 \wedge \omega + \eta_0 \theta \wedge \omega
\end{equation}
for $\theta\in\Lone$, $\omega\in \Ltwo$. Such a quadratic function can easily be seen to be h-affine. In addition, let us mention that we will also get from our arguments that $\Int(E) = \{x\in \free :\, \phi(x) < 0\}$ and $\Int(E^c) = \{x\in \free :\, \phi(x) > 0\}$ are the two connected components of $(\partial E)^c = \{x\in \free :\, \phi(x) \not= 0\}$.

\smallskip

Next, writing a step-2 rank-3 Carnot algebra as a quotient of $\free$ and using the fact that h-affine functions on a proper quotient of $\free$ are affine (see~\cite{LeDonneMorbidelliRigot1}) we shall deduce from Theorem~\ref{thm:main} the following classification in nonfree step-2 rank-3 Carnot algebras.

\begin{theorem} \label{thm:nonfree-case}
Let $\frakg$ be a step-2 rank-3 Carnot algebra and assume that $\frakg$ is not isomorphic to $\free$. Let $E\subset \frakg$ be precisely monotone and measurable. Then either $E=\emptyset$, $E=\frakg$, or there is an open half-space $C$ such that $C\subset E \subset \overline{C}$.
\end{theorem}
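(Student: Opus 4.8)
The plan is to reduce the statement to Theorem~\ref{thm:main} via the quotient map. Since $\frakg$ is a step-2 rank-3 Carnot algebra, its layer $V_2 = [V_1,V_1]$ is a quotient of $\Ltwo$, so there is a surjective Carnot algebra morphism $\pi : \free \to \frakg$ that is the identity on $\Lone = V_1$ and is the quotient projection $\Ltwo \to V_2$ on the second layer. Because $\frakg$ is not isomorphic to $\free$, this projection is a proper quotient, i.e. $\ker \pi \cap \Ltwo \neq \{0\}$. The map $\pi$ sends horizontal lines of $\free$ onto horizontal lines of $\frakg$ (indeed each integral curve of a left-invariant horizontal vector field $X_\theta$ in $\free$ is mapped to the integral curve of the corresponding field in $\frakg$), and it is affine and surjective on each such line. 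Hence if $E \subset \frakg$ is precisely monotone, then $\pi^{-1}(E)$ is precisely monotone in $\free$: along any horizontal line $\ell$ of $\free$, the restriction of $\carset_{\pi^{-1}(E)}$ to $\ell$ is the pullback under the affine surjection $\pi|_\ell$ of $\carset_E$ restricted to $\pi(\ell)$, and monotonicity is preserved under precomposition with an affine map of $\R$. Moreover $\pi^{-1}(E)$ is measurable since $\pi$ is a linear (hence continuous) surjection and measurability is preserved (one can work up to the null set on which a measurable precisely monotone set differs from its interior, using the general properties of such sets from Section~\ref{sect:PM-step2-Carnot-algebras}).

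First I would dispose of the trivial cases: if $E = \emptyset$ or $E = \frakg$ we are done, so assume $E$ is a non empty strict subset. Then $\widetilde E := \pi^{-1}(E)$ is a non empty strict measurable precisely monotone subset of $\free$, so Theorem~\ref{thm:main} provides a non constant h-affine $\phi : \free \to \R$ with $\Int(\widetilde E) = \{\phi < 0\}$ and $\overline{\widetilde E} = \{\phi \le 0\}$. The key point is now to transfer $\phi$ down to $\frakg$. Since $\widetilde E = \pi^{-1}(E)$ is a union of fibres of $\pi$, its interior and closure are also unions of fibres (the fibres of $\pi$ are the cosets of the connected subgroup $\ker \pi$, and $\pi$ is an open map); consequently the sublevel set $\{\phi < 0\}$ and the set $\{\phi \le 0\}$ are unions of fibres of $\pi$, and therefore so are $\{\phi = 0\} = \{\phi \le 0\} \setminus \{\phi < 0\}$ and, by the same argument applied to $-\phi$ (whose sublevel sets describe $\Int(\widetilde E^c)$, noting $\widetilde E^c = \pi^{-1}(E^c)$), the set $\{\phi > 0\}$. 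Thus $\phi$ is constant on each fibre of $\pi$, i.e. $\phi$ factors as $\phi = \bar\phi \circ \pi$ for a well-defined function $\bar\phi : \frakg \to \R$. Since $\pi$ maps horizontal lines of $\free$ surjectively onto horizontal lines of $\frakg$ and $\phi$ is h-affine, $\bar\phi$ is h-affine on $\frakg$, and it is non constant because $\phi$ is. Now invoke the cited result from~\cite{LeDonneMorbidelliRigot1}: on a proper quotient of $\free$ — which $\frakg$ is, by hypothesis — every h-affine function is affine. Hence $\bar\phi$ is a non constant affine function on $\frakg$, so $C := \{x \in \frakg : \bar\phi(x) < 0\}$ is an open half-space, and pushing the identities through $\pi$ gives $\Int(E) = \{\bar\phi < 0\} = C$ and $\overline{E} = \{\bar\phi \le 0\} = \overline{C}$; in particular $C \subset E \subset \overline{C}$, as required.

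The main obstacle I anticipate is the fibre-preservation argument, namely verifying cleanly that $\{\phi = 0\}$, $\{\phi < 0\}$ and $\{\phi > 0\}$ are saturated for $\pi$, which is what lets $\phi$ descend. The subtlety is that Theorem~\ref{thm:main} only pins down $\phi$ through $\Int(\widetilde E)$ and $\overline{\widetilde E}$, not on the (measure-zero but possibly nonempty) boundary; one must use that $\widetilde E = \pi^{-1}(E)$ forces $\Int(\widetilde E) = \pi^{-1}(\Int(E))$ and $\overline{\widetilde E} = \pi^{-1}(\overline{E})$ because $\pi$ is continuous, open and surjective with connected fibres, and then that $\{\phi \neq 0\}$ has exactly the two connected components $\{\phi<0\}$, $\{\phi>0\}$ (the last assertion made after Theorem~\ref{thm:main}), each of which is therefore either contained in or disjoint from any given fibre. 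An alternative, possibly cleaner route that avoids dwelling on the boundary is to argue directly that $\bar\phi$, defined on $\frakg$ as "the value of $\phi$ on the fibre", is well-defined by noting that $\phi$ restricted to any coset of $\ker\pi\cap\Ltwo$ is affine (it is h-affine and such a coset is a union of horizontal lines through a common point, in fact lies in a single translate of the second layer where h-affine functions are affine) and vanishes on a set with nonempty interior relative to the relevant half-space structure, forcing it to be constant there; I would first attempt the saturation argument and fall back on this variant if needed.
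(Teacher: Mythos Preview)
Your approach is essentially the paper's: pull $E$ back along a surjective Carnot morphism $\pi:\free\to\frakg$, apply Theorem~\ref{thm:main}, show the resulting h-affine $\phi$ descends to $\frakg$, then invoke Theorem~\ref{thm:h-affine-maps}. The one genuine gap is the step ``$\{\phi<0\}$, $\{\phi=0\}$, $\{\phi>0\}$ are unions of fibres, hence $\phi$ is constant on each fibre.'' Saturation of these three sets only tells you the \emph{sign} of $\phi$ is constant along each fibre, not its value; a nonconstant positive function on a line would not be ruled out by this alone.

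Your anticipated fallback is the right repair and is exactly what the paper does: fibres are cosets of $\ker\pi\subset\Ltwo$, and for any fixed $\theta\in\Lone$ the explicit form~\eqref{e:phi-introduction} shows $\omega\mapsto\phi(\theta+\omega)$ is affine on $\Ltwo$; hence $\phi$ is affine on each fibre. An affine function that is not constant on an affine subspace is surjective onto $\R$, so it would take both negative and zero values on the same fibre, contradicting the saturation you already established (the paper phrases this as $\pi(\{\phi<0\})\cap\pi(\{\phi=0\})=\emptyset$). One small correction: your parenthetical justification ``such a coset is a union of horizontal lines through a common point'' is wrong---cosets of a subspace of $\Ltwo$ contain no horizontal lines at all. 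The correct reason is simply the second-layer affinity visible in~\eqref{e:phi-introduction} (equivalently, the general fact from \cite{LeDonneMorbidelliRigot1} that h-affine functions on $\free$ are affine on translates of $\Ltwo$).
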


Note that a step-2 rank-3 Carnot algebra $\frakg$ that is not isomorphic to $\free$ is either isomorphic to $\mathbb{H} \times \R$ or to $\free / \mathfrak{i}$ where $\mathfrak{i}$ is an ideal in $\free$ generated by an element in $\Ltwo \setminus\{0\}$. If $\frakg$ is isomorphic to $\mathbb{H} \times \R$, we recover the classification proved in \cite{MR4119259}. If $\frakg$ is isomorphic to $\free / \mathfrak{i}$ with $\mathfrak{i}$ an ideal in $\free$ generated by an element in $\Ltwo \setminus\{0\}$ then we need to make use of Theorem \ref{thm:main} to get the classification given by Theorem \ref{thm:nonfree-case}, as it can indeed not be deduced from the previously known cases studied in \cite{CheegerKleiner10,NaorYoung,MR4119259}.

Before we give a sketch of the proof of Theorem~\ref{thm:main} and discuss possible generalizations to step-2 Carnot algebras of higher rank, let us say a few words about the step-3 or higher setting. It should be noticed that integral curves of left-invariant horizontal vector fields in step-3 or higher Carnot algebras are not necessarily 1-dimensional affine subspaces. Among other things,  this is expected to create significant differences compared to the step-2 setting. Examples given in \cite{bellettini2019sets}, see the discussion in \cite{antonelli2020polynomial}, suggest that there may be step-3 Carnot algebras where one cannot classify precisely monotone sets in terms of sublevel sets of h-affine functions (note however that in \cite{antonelli2020polynomial} only locally integrable h-affine functions are considered). To our knowledge, the question of a classification in terms of sublevel sets of h-monotone functions in step-3 or higher remains however open, and we shall not pursue in this direction here.
 
Going back to the free step-2 rank-3 framework, let us now explain, without entering the technical details, the main ideas behind the proof of Theorem~\ref{thm:main}. It will be articulated into two main steps. First,  we will prove that \eqref{e:main} holds true locally near noncharacterictic points of the boundary, see~\eqref{e:characteristic-points} for the definition of noncharacteristic points and Proposition~\ref{prop:local} for a precise statement. The argument is based on a local representation proved in \cite{MR4119259} of the boundary $\partial E$ of a precisely monotone set $E$ as an intrinsic graph in the sense of~\cite{MR3511465} near non characteristic points. Making use of Cheeger-Kleiner's classification in suitable Heisenberg subalgebras of $\free$, we show through a careful analysis that this local representation can be written as a level set of some h-affine function. In a second step, we use monotonicity more globally to upgrade the local representation into a global one, showing that $\partial E$ is the zero level set of some function $\phi:\free \rightarrow \R$ of the form \eqref{e:phi-introduction}. Both inclusions $ \phi^{-1}(0) \subset \partial E$ and $\p E\subset \phi^{-1}(0)$ are nontrivial and require a careful analysis. 

Concerning a possible generalization of our strategy to free step-2 Carnot algebras of higher rank, although some of our arguments extend to this more general framework, it is however not entirely clear to us whether the whole strategy does. To give an idea of some of the issues in higher rank, let us mention that in the free step-2 rank-$n$ Carnot algebra the horizontal space at some given point is a $n$-dimensional affine subspace, whereas the dimension of the whole space is $n(n+1)/2$ and hence increases quadratically with respect to $n$. As a consequence, lying on some horizontal line for a pair of points (this obviously plays a key role for our purposes) becomes a more and more rare circumstance as the rank increases. We however plan to devote future works to step-2 higher rank cases. We also refer to Remark \ref{rmk:monotone-sets} for the relationship between precisely monotone and monotone sets.

The rest of this paper is organized as follows. In Section~\ref{sect:PM-step2-Carnot-algebras} we prove several properties of precisely monotone subsets of step-2 Carnot algebras. In Sections~\ref{sect:local-statement-free-case} to~\ref{sect:(sub)levelsets-h-affine-maps} we focus on the free step-2 rank-3 case. As already explained we first prove in Section~\ref{sect:local-statement-free-case} a local description of the boundary of a precisely monotone subset of $\free$ near noncharacteristic points. In Section~\ref{sect:global-statement-free-case} we upgrade this local statement into a global one and  we  conclude the proof of Theorem~\ref{thm:main}. Properties of level and sublevel sets of h-affine functions on $\free$ that may have their own interest and play a major role in Sections~\ref{sect:local-statement-free-case} and~\ref{sect:global-statement-free-case} are proved in Section~\ref{sect:(sub)levelsets-h-affine-maps}. The final Section~\ref{sect:nonfree-case} is devoted to the proof of Theorem~\ref{thm:nonfree-case} that will be obtained as a rather easy consequence of Theorem~\ref{thm:main}.

%
\textit{Acknowledgements.} The authors are grateful to E. Le Donne for several useful discussions.
\section{Precisely monotone sets in step-2 Carnot algebras} \label{sect:PM-step2-Carnot-algebras}
    
In this section we establish several properties of precisely monotone subsets of step-2 Carnot algebras. Most of these properties will be used in the next sections to study precisely monotone subsets of the free step-2 rank-3 Carnot algebra. 

We recall that a Lie algebra $\frakg$ -- always assumed to be real and finite dimensional in this paper -- is said to be nilpotent of step 2 if the derived algebra $\frakg_2 := [\frakg,\frakg]$ is non trivial, i.e., $\frakg_2\not=\{0\}$, and central, i.e., $[\frakg,\frakg_2] = \{0\}$. Here, given $U, V \subset \frakg$, we denote by $[U,V]$ the linear subspace of $\frakg$ generated by elements of the form $[u,v]$ with $u\in U$ and $v\in V$. A step-2 Carnot algebra $\frakg$ is a Lie algebra nilpotent of step 2 that is equipped with a stratification, namely, $\frakg = \frakg_1 \oplus \frakg_2$ where $\frakg_1$ is a linear subspace of $\frakg$ that is in direct sum with $\frakg_2$. Note that $[\frakg_1,\frakg_1] = \frakg_2$. The rank of $\frakg$ is defined as $\rank \frakg:= \dim \frakg_1$.   Such a Lie algebra is naturally endowed with the group law given by $x\cdot y := x+y+[x,y]$ for $x,y \in \frakg$ that makes it a step-2 Carnot group. It is actually well known that any step-2 Carnot group can be realized in this way. We shall therefore view a step-2 Carnot algebra both as a Lie algebra and group.

We fix from now on in this section a step-2 Carnot algebra $\g = \frakg_1 \oplus \frakg_2$. Given a scalar $t\in \R$ and an element $x\in \g$, we set $x^t := tx$. We say that a set $\ell\subset \frakg$ is a horizontal line if there are $x\in \frakg$ and $y\in\frakg_1 \setminus \{0\}$ such that $\ell =\{x\cdot y^t\in \frakg:\, t\in \R\}$. Note that since $x\cdot y^t = x + t(y+[x,y])$, horizontal lines are 1-dimensional affine subspaces of $\frakg$.

\begin{definition} \label{def:PM}
A set $E\subset \frakg$ is said to be precisely monotone if every horizontal line intersects both $E$ and $E^c$ in a connected set.
\end{definition}

\begin{remark} \label{rmk:monotone-sets} Monotone sets are defined in the same way as precisely monotone sets except that the condition given in Definition~\ref{def:PM} is required to hold true only for almost every horizontal line $\ell$, and up to a null set within $\ell$. For simplicity, we restrict ourselves in the present paper to precisely monotone sets, whose study should be sufficient to give the key ideas towards a classification of monotone sets (see for instance~\cite[Sect.4-5]{CheegerKleiner10}).
\end{remark}

Note that $E$ is precisely monotone if and only if $E^c$ is precisely monotone. Note also that if $E$ is precisely monotone and $x\in \frakg$ then $x\cdot E$ is precisely monotone. We first recall some known facts.

\begin{proposition}[{\cite[Proposition~4.6]{CheegerKleiner10}}{\cite[Proposition~3.3]{MR4119259}}] \label{tretre} 
  Let $E\subset\frakg$ be precisely monotone. If $x\in\p E$ and $y\in \frakg_1 \setminus \{0\}$ are such that $x\cdot y\in\Int( E)$ then $\{x\cdot y^t \in \frakg : t>0\} \subset \Int( E)$ and  
  $\{x\cdot y^t \in \frakg : t<0\}\subset \Int( E^c)$. The same statement holds true with the role of $E$ and $E^c$ exchanged.
\end{proposition}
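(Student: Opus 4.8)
The plan is to prove Proposition~\ref{tretre} by combining the one-dimensional monotonicity along a single horizontal line with a limiting argument using nearby horizontal lines, exploiting that horizontal lines through points close to $x$ and in direction close to $y$ are 1-dimensional affine subspaces and depend continuously on their data. Since $E$ is precisely monotone if and only if $E^c$ is, and the roles of $E$ and $E^c$ in the statement are symmetric, it suffices to prove the displayed assertion for $E$.

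\textbf{Step 1: One-dimensional consequence along $\ell$.} Fix $x\in\partial E$ and $y\in\frakg_1\setminus\{0\}$ with $x\cdot y\in\Int(E)$, and let $\ell=\{x\cdot y^t:\ t\in\R\}$ be the corresponding horizontal line. Set $A:=\{t\in\R:\ x\cdot y^t\in E\}$; since $E$ is precisely monotone, both $A$ and $\R\setminus A$ are connected, i.e., $A$ is an interval (possibly degenerate or a half-line). We know $1\in A$ (indeed $x\cdot y^1=x\cdot y\in\Int(E)$) and that $0$ is a boundary point of $E$. The first thing to pin down is that $0\notin\Int(A)$ is impossible to have $0$ be interior to $A$ only if $x\in\Int(E)$, contradicting $x\in\partial E$; similarly using monotonicity of $E^c$ one sees $0$ cannot be interior to the complement interval either. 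Hence $0$ is an endpoint of the interval $A$, and since $1\in A$ we conclude $A\supset(0,1]$, and more precisely $A\supset(0,\infty)$ or $A=(0,s]$-type — but using that $E^c$ is also precisely monotone and $0\in\partial E$, one deduces $(-\infty,0)\cap A=\emptyset$. So far this only gives: $x\cdot y^t\in E$ for $t>0$ and $x\cdot y^t\in E^c$ for $t<0$, without the openness upgrade.

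\textbf{Step 2: Upgrading to interior via perturbed horizontal lines.} The heart of the argument is to show $x\cdot y^t\in\Int(E)$ for every $t>0$. Fix such a $t_0>0$ and suppose for contradiction that $x\cdot y^{t_0}\in\partial E$. Since $\partial E$ is contained in the boundary and $x\cdot y\in\Int(E)$, there is a small ball around $x\cdot y$ contained in $E$. Now consider horizontal lines $\ell'$ obtained by perturbing the basepoint: for $z$ near $x\cdot y^{t_0}$, the line through $z$ in direction $y$, namely $\{z\cdot (z^{-1}\cdot x)\cdots\}$ — more simply, since horizontal lines in direction $y$ through a given point $p$ are just $p+\R(y+[p,y])$ after translating, and the direction vector $y+[p,y]$ varies continuously with $p$ — for $p$ close to $x\cdot y^{t_0}$ the line through $p$ in the $\frakg_1$-direction $y$ still crosses the ball $B\subset E$ around $x\cdot y$ when extended in the appropriate direction. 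Applying the one-dimensional conclusion of Step~1 to each such perturbed line (legitimate because each is a genuine horizontal line and $E$ is precisely monotone), we get that on each such line the point $p$ sits in $E$ provided it is on the correct side of its boundary crossing; a Fubini-type / open-mapping argument over the $(n-1)$-parameter family of such lines then fills a full neighborhood of $x\cdot y^{t_0}$ with points of $E$, contradicting $x\cdot y^{t_0}\in\partial E$. The symmetric argument applied to $E^c$, using that the perturbed lines on the other side stay in $E^c$ near $x$ (since $x\in\partial E$, every neighborhood of $x$ meets $\Int(E^c)$, and monotonicity propagates this), yields $x\cdot y^t\in\Int(E^c)$ for $t<0$.

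\textbf{Expected main obstacle.} The delicate point is Step~2: making rigorous the passage from "on each nearby horizontal line the point lies in $E$" to "a whole open neighborhood lies in $E$". One must choose the family of perturbed lines so that (a) they remain horizontal lines in the precise sense of Definition~\ref{def:PM}, (b) they genuinely cover an open set as the parameters vary — this uses that the map $p\mapsto$(line through $p$ in direction $y$) together with the arc-length parameter is a local diffeomorphism onto a neighborhood, which in step-2 is transparent since the lines are affine with direction $y+[p,y]$ depending affinely on $p$ — and (c) each such line actually enters $\Int(E)$ on the far side, so that the one-dimensional dichotomy of Step~1 applies with the boundary crossing located strictly between $p$ and that far intersection. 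Keeping track of which side is which, and invoking $x\in\partial E$ to guarantee the complementary statement, is the part that needs care; everything else is a direct transcription of the cited Heisenberg arguments of \cite{CheegerKleiner10} and \cite{MR4119259}, which go through verbatim because the only structural fact used — that integral curves of left-invariant horizontal vector fields are $1$-dimensional affine subspaces — holds in every step-2 Carnot algebra.
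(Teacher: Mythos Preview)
The paper does not prove this proposition; it is quoted from \cite[Proposition~4.6]{CheegerKleiner10} and \cite[Proposition~3.3]{MR4119259}. So there is no in-paper argument to compare against, and your outline has to stand on its own.

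Your Step~1 contains a false inference. You claim that $0\in\Int(A)$ would force $x\in\Int(E)$, but $\Int(A)$ is the interior \emph{in $\R$} of a subset of a single line; it says nothing about an ambient neighborhood of $x$ in $\frakg$. A point $x\in\partial E$ may perfectly well have the whole horizontal line through it lying in $E$ (or in $E^c$). The symmetric claim for $A^c$ fails for the same reason, so the conclusion that ``$0$ is an endpoint of $A$'' is not established, and Step~1 as written proves nothing.

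Step~2 has the right instinct but is missing the lemma that actually does the work: \emph{$\Int(E)$ and $\Int(E^c)$ are themselves precisely monotone whenever $E$ is}. This is what converts one-dimensional monotonicity into ambient openness, and its proof is exactly the ``nearby horizontal lines cover an open set'' mechanism you gesture at: if $p=x\cdot y^a$ and $q=x\cdot y^b$ lie in $\Int(E)$ and $a<c<b$, then for every $r'$ near $x\cdot y^c$ the horizontal line through $r'$ in direction $y$ meets $\Int(E)$ near both $p$ and $q$, hence $r'\in E$ by monotonicity on that line, so $x\cdot y^c\in\Int(E)$. Once this is in hand, the set $B:=\{t:x\cdot y^t\in\Int(E)\}$ and its complement are both connected, so $B$ is a half-line; from $1\in B$ and $0\notin B$ one reads off that $B$ is unbounded to the right. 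The analogous statement for $\Int(E^c)$, together with a short additional argument (intertwined in the cited references with the companion fact recorded here as Lemma~\ref{linee}), pins down the endpoints and gives the full conclusion. Your write-up never isolates this lemma; instead you invoke an unspecified ``Fubini-type / open-mapping argument'' and assert without justification that ``every neighborhood of $x$ meets $\Int(E^c)$'', which is not automatic for an arbitrary precisely monotone set. Those are the gaps to fill.
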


\begin{lemma} [{\cite[Lemma~4.8]{CheegerKleiner10}}{\cite[Lemma~3.4]{MR4119259}}] \label{linee} 
 Let $E\subset\frakg$ be precisely monotone. If $\ell$ is a horizontal line such that $\ell \cap \partial E$ contains more than one point then $\ell \subset \partial E$.
 \end{lemma}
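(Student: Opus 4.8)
\textbf{Proof plan for Lemma~\ref{linee}.}
Write $\ell = \{x \cdot y^t : t \in \R\}$ for some $x \in \frakg$ and $y \in \frakg_1 \setminus \{0\}$, and identify $\ell$ with $\R$ via the parametrization $t \mapsto x \cdot y^t$. Under this identification, $\ell \cap E$ and $\ell \cap E^c$ are both connected subsets of $\R$ by Definition~\ref{def:PM}, so $\ell \cap \partial E = \ell \setminus (\Int(E) \cup \Int(E^c))$ is an intersection of two complementary connected sets, hence is itself an interval (possibly degenerate). The assumption is that this interval $I := \ell \cap \partial E$ contains at least two distinct points, say corresponding to parameters $t_0 < t_1$, and the goal is to show $I = \ell$, i.e.\ $I = \R$.

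The plan is to argue by contradiction: suppose $I \neq \R$, so $I$ has a finite endpoint; without loss of generality assume $I$ is bounded above and let $t_* = \sup I \in \R$ (the case of a lower endpoint is symmetric, and one of the two must occur). Then for $t$ slightly larger than $t_*$ the point $x \cdot y^t$ lies in $\Int(E)$ or in $\Int(E^c)$; say it lies in $\Int(E)$ (again the other case is symmetric). Since $t_0 < t_1 \le t_*$ and the point at parameter $t_1$ is in $\partial E$, we can apply Proposition~\ref{tretre} at the boundary point $p := x \cdot y^{t_1} \in \partial E$: there is a horizontal direction pointing along $\ell$ towards increasing $t$ (namely, rescale $y$ so that $p \cdot y' = x \cdot y^{t}$ for the relevant nearby $t > t_1$), and since a point in that direction lies in $\Int(E)$, Proposition~\ref{tretre} forces the entire forward ray from $p$ along $\ell$ to lie in $\Int(E)$ and the entire backward ray to lie in $\Int(E^c)$. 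But the backward ray from $p$ contains the point at parameter $t_0 < t_1$, which by hypothesis lies in $\partial E$, not in $\Int(E^c)$ — contradiction. Hence $I$ has no finite endpoint and $\ell = I \subset \partial E$.

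The only delicate point is the bookkeeping to present $\ell$, near the boundary point $p = x \cdot y^{t_1}$, in the exact form $\{p \cdot z^s : s \in \R\}$ with $z \in \frakg_1 \setminus \{0\}$ required by Proposition~\ref{tretre}, and to check that the orientation (sign of $s$) matches the direction of increasing $t$. This is routine: since $x \cdot y^t = x \cdot y^{t_1} \cdot y^{t - t_1} = p \cdot y^{t-t_1}$ by the group law (using that $y^t = ty$ and left translation), one simply takes $z = y$ and $s = t - t_1$. I do not expect any genuine obstacle; the lemma is an immediate consequence of Proposition~\ref{tretre} together with the observation that complementary connected subsets of a line force $\partial E \cap \ell$ to be an interval. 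One could alternatively phrase the whole argument directly: $\partial E \cap \ell$ is an interval by connectedness of $\ell \cap E$ and $\ell \cap E^c$; if it contained two points but were a proper subinterval of $\ell$, a boundary point adjacent to a point of $\Int(E)$ (or $\Int(E^c)$) on one side would, by Proposition~\ref{tretre}, have only interior points of $E$ (resp.\ $E^c$) strictly on that side, contradicting the presence of a second boundary point there.
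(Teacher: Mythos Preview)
The paper does not prove this lemma; it is quoted from \cite{CheegerKleiner10} and \cite{MR4119259}. Your argument via Proposition~\ref{tretre} is the standard one and is essentially correct.

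One point deserves care. Your opening claim that $\ell \cap \partial E$ is an interval ``because $\ell\cap E$ and $\ell\cap E^c$ are connected'' conflates the ambient boundary $\partial E\subset\frakg$ with the relative boundary of $E\cap\ell$ inside $\ell$. Connectedness of $\ell\cap E$ and $\ell\cap E^c$ tells you only that the \emph{relative} boundary in $\ell$ is at most a single point; it does not by itself control $\ell\cap\partial E$, which could a~priori be larger (points of $\ell$ can lie in $\partial E$ while belonging to the relative interior of $\ell\cap E$ or $\ell\cap E^c$). The interval structure you want is in fact true, but it already requires Proposition~\ref{tretre} to justify, so it is cleaner to bypass it entirely.

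A direct argument: suppose $x\cdot y^{t_0}, x\cdot y^{t_1}\in\partial E$ with $t_0<t_1$, and some $x\cdot y^{t}\notin\partial E$, say $x\cdot y^{t}\in\Int(E)$. Apply Proposition~\ref{tretre} at whichever of the two boundary points lies between $t$ and the other boundary point (if $t>t_1$ use $x\cdot y^{t_1}$; if $t<t_0$ use $x\cdot y^{t_0}$; if $t_0<t<t_1$ either works). The proposition then forces the remaining boundary point into $\Int(E)$ or $\Int(E^c)$, a contradiction. Your reparametrization $x\cdot y^t = (x\cdot y^{t_1})\cdot y^{t-t_1}$ is exactly what is needed, and there is no obstacle.
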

 
We say that a Lie algebra is a Heisenberg algebra if it is a step-2 rank-2 Carnot algebra. 

\begin{theorem}[{\cite[Theorem~4.3]{CheegerKleiner10}}] \label{thm:CK}
Let $\frakh$ be a Heisenberg algebra and $E\subset \frakh$ be precisely monotone. Then either $E=\emptyset$, $E=\frakh$, or there is an open half-space $C$ such that $C\subset E \subset \overline{C}$. In particular $\partial E$ is either empty or a 2-dimensional affine subspace of $\frakh$.
\end{theorem}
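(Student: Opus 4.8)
\noindent\textit{Proof plan.} The plan is to show that for a nonempty precisely monotone proper subset $E\subsetneq\frakh$ the boundary $\partial E$ is a $2$-dimensional affine subspace $P$ and that $P^+\subseteq E\subseteq \overline{P^+}$ for one of the two open half-spaces $P^\pm$ bounded by $P$; the cases $E=\emptyset,\frakh$ are immediate, and if $E$ is nonempty and proper then $\partial E\ne\emptyset$ (otherwise $E$ would be clopen, hence $\emptyset$ or $\frakh$ by connectedness of $\frakh$). So assume from now on that $E$ is nonempty and proper, with $\partial E\ne\emptyset$.

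First I would prove that $\partial E$ is a union of horizontal lines. Fix $x\in\partial E$ and set $\mathcal{H}_x:=\{x\cdot y^t:\ y\in\frakh_1,\ t\in\R\}$; since $x\cdot y^t=x+t(y+[x,y])$ and $\dim\frakh_1=2$, the set $\mathcal{H}_x$ is a $2$-dimensional affine subspace, and the horizontal lines through $x$ are exactly the affine lines through $x$ contained in $\mathcal{H}_x$. By Lemma~\ref{linee}, any such line meeting $\partial E$ at a point $\ne x$ lies entirely in $\partial E$, so $\partial E\cap\mathcal{H}_x$ is a union of lines through $x$ inside $\mathcal{H}_x$ (possibly reducing to $\{x\}$, possibly all of $\mathcal{H}_x$). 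Suppose $\partial E\cap\mathcal{H}_x=\{x\}$. Then $\mathcal{H}_x\setminus\{x\}$ is connected and disjoint from $\partial E$, hence contained in $\Int(E)\cup\Int(E^c)$ and therefore in one of them, say $\mathcal{H}_x\setminus\{x\}\subset\Int(E)$. Picking any horizontal line $\ell=\{x\cdot y^t:t\in\R\}$ through $x$, we have $x\cdot y\in\ell\setminus\{x\}\subset\Int(E)$, so Proposition~\ref{tretre} forces $\{x\cdot y^t:t<0\}\subset\Int(E^c)$, contradicting $\{x\cdot y^t:t<0\}\subset\mathcal{H}_x\setminus\{x\}\subset\Int(E)$. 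Hence $\partial E\cap\mathcal{H}_x$ strictly contains $\{x\}$ and thus contains a horizontal line through $x$; so $\partial E$ is a union of horizontal lines.

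The main step, and the one I expect to be the genuine obstacle, is the rigidity statement that all these horizontal lines lie in a single $2$-dimensional affine subspace $P$ — equivalently, that the disjoint open sets $\Int(E)$ and $\Int(E^c)$ are convex, after which a separating hyperplane yields $P$. The local picture obtained so far is not enough: near each $x\in\partial E$ one also gets from Proposition~\ref{tretre} that the connected components of $\mathcal{H}_x\setminus\partial E$ alternate between $\Int(E)$ and $\Int(E^c)$ and each subtends an angle $\le\pi$ at $x$, but ruled sets that are not planes can satisfy all of this line by line. So the argument must use the monotonicity globally rather than one horizontal line at a time, together with the invariance of the class of precisely monotone sets under the left translations $E\mapsto x\cdot E$. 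Concretely I would try to propagate the sector picture along the rulings and across the nearby horizontal planes $\mathcal{H}_y$ — using crucially that $\dim\frakh_1=2$, so that the $\mathcal{H}_y$ form a family of $2$-planes sweeping out the $3$-dimensional space $\frakh$ in a controlled way — to show that the "sign" distinguishing $\Int(E)$ from $\Int(E^c)$ is governed by a single affine functional; this low-rank rigidity is the heart of Cheeger–Kleiner's argument and is where the Heisenberg (rather than higher-rank) hypothesis is used essentially.

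Granting that $\partial E\subseteq P$ for a $2$-plane $P$, the proof concludes as follows. Then $\frakh\setminus P=P^+\sqcup P^-$ with $P^\pm$ open half-spaces, each connected and disjoint from $\partial E$, hence each contained in $\Int(E)$ or in $\Int(E^c)$. They cannot both lie in the same one: if, say, $P^+\cup P^-\subseteq\Int(E)$, then $\Int(E^c)\subseteq\frakh\setminus(P^+\cup P^-)=P$ has empty interior, so $E^c$ is a nonempty precisely monotone set contained in $P$; but through all but at most one of its points there is a horizontal line meeting $P$ in exactly that point (those $q$ with $\mathcal{H}_q\ne P$, which excludes at most one point), and such a line meets $E$ in a set of the form $\ell\setminus\{\mathrm{pt}\}$, which is disconnected — a contradiction (note $E^c$ has at least two points, since the complement of a single point is not precisely monotone). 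After possibly exchanging $E$ with $E^c$ we thus have $P^+\subseteq\Int(E)$ and $P^-\subseteq\Int(E^c)$, so with $C:=P^+$ we get $C\subseteq E\subseteq\frakh\setminus P^-=\overline{C}$. Finally $P\subseteq\overline{P^+}=\overline{C}\subseteq\overline{E}$ and $P\subseteq\overline{P^-}\subseteq\overline{E^c}$, so $P\subseteq\partial E$; combined with $\partial E\subseteq P$ this gives $\partial E=P$, a $2$-dimensional affine subspace, as claimed.
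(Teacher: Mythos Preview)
The paper does not prove Theorem~\ref{thm:CK}; it is quoted from \cite[Theorem~4.3]{CheegerKleiner10} and used as a black box throughout. So there is no ``paper's own proof'' to compare against.

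Your write-up is an honest proof plan rather than a proof: you correctly isolate the architecture (ruled structure of $\partial E$, then planarity of $\partial E$, then the half-space conclusion), and the first and third parts are carried out correctly. The argument that $\partial E\cap\mathcal H_x$ contains a horizontal line through each $x\in\partial E$ via Proposition~\ref{tretre} and Lemma~\ref{linee} is sound, and the endgame deducing $C\subset E\subset\overline C$ once $\partial E\subset P$ is clean (your observation that for $q\in P$ with $\mathcal H_q\neq P$ one can pick a horizontal line through $q$ meeting $P$ only at $q$, and that at most one $q$ has $\mathcal H_q=P$, is exactly the right mechanism).

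The genuine gap is precisely where you flag it: the rigidity step forcing $\partial E$ to lie in a single $2$-plane. Your description of what one would \emph{like} to do (propagate the sector picture along rulings, use the sweeping of $\frakh$ by the horizontal planes $\mathcal H_y$) is a reasonable heuristic, but it is not an argument. Cheeger and Kleiner's actual proof hinges on a case analysis on pairs of horizontal lines in $\partial E$ (parallel with equal or distinct projection, intersecting, skew) together with the ``filling'' Lemma~4.10 of \cite{CheegerKleiner10}, which the present paper invokes repeatedly; nothing in your sketch reproduces the content of that step. As written, the proposal is incomplete at its central point, and since the paper itself only cites the result, you would need to import the Cheeger--Kleiner argument in full to close the gap.
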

 
We define the horizontal space at a point $x\in \frakg$ as $\Hor_x:=x\cdot \frakg_1$. In other words, $\Hor_x$ is the union of all horizontal lines in $\frakg$ containing $x$. Note that $\Hor_x$ can also easily be seen to be an affine subspace of $\frakg$ of dimension  equal to $\rank \frakg$. In the next lemma we prove that the precise monotonicity of a set $E$ induces a structure of affine subspace on $\Hor_{x} \cap \partial E$ when $x\in \partial E$. In the lemma below and in  the rest of this paper, given $A\subset B \subset  \frakg$, we denote by $\Int_B (A)$ and $\partial_B A$ the relative interior and boundary in $B$ of a subset $A$ of $B$ with respect to the induced topology.

\begin{lemma}\label{apro} 
 Let $E\subset\frakg$ be precisely monotone and $x\in \partial E$. Then $\Hor_{x} \cap \partial E$ is an affine subspace of $\,\Hor_x$. 
\end{lemma}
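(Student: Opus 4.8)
The plan is to reduce the statement to the Heisenberg case (Theorem~\ref{thm:CK}) via a slicing argument inside the affine subspace $\Hor_x$, which has dimension $r:=\rank\frakg$. After left-translating by $x^{-1}$, we may assume $x=0\in\partial E$, so that $\Hor_x=\frakg_1$ and $\Hor_x\cap\partial E = \frakg_1\cap\partial E$ with $0\in\frakg_1\cap\partial E$. Set $S:=\frakg_1\cap\partial E$. Note that every horizontal line through a point of $\frakg_1$ that lies in $\frakg_1$ is of the form $\{t y : t\in\R\}$ for some $y\in\frakg_1\setminus\{0\}$ (since $0\cdot y^t = ty$), i.e.\ the ordinary lines through the origin in the vector space $\frakg_1$; more generally a line $z + \R y$ with $z,y\in\frakg_1$ is a horizontal line precisely when it passes through a point $w$ with $[w,y]\in\frakg_1$, which forces $[w,y]=0$ — so not every affine line of $\frakg_1$ is horizontal, and this is exactly where the bracket structure enters.

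First I would treat the degenerate cases: if $S=\{0\}$ we are done (a point is an affine subspace), and if $S=\frakg_1$ we are done. So assume $S$ contains a point $p\neq 0$ and $S\neq\frakg_1$. The key observation is that the line $\R p$ through $0$ and $p$ is a horizontal line meeting $\partial E$ in at least two points ($0$ and $p$), so by Lemma~\ref{linee} the whole line $\R p\subset\partial E$, hence $\R p\subset S$. Thus $S$ is a cone: it is a union of lines through the origin. The task is then to show such a cone is in fact a linear subspace, i.e.\ closed under addition. Given $p,q\in S$ linearly independent, consider the $2$-plane $V:=\Span\{p,q\}\subset\frakg_1$. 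I would like to say $V\cap\partial E$ is governed by a Heisenberg slice, but $V$ itself need not be a subalgebra. Instead, pick $z\in\frakg_1$ with $[z,\cdot]$ nondegenerate enough, or rather: embed $V$ in a suitable $3$-dimensional subspace $W\subset\frakg_1$ and consider $\frakh:=W\oplus[W,W]$. If $\dim[W,W]\geq 1$ then $\frakh$ need not be Heisenberg (rank could be $3$); the cleaner route is to choose a $2$-dimensional $W\supseteq\ $ a relevant direction such that $\frakh_W:=W\oplus[W,W]$ is a Heisenberg subalgebra (this happens iff $[W,W]\neq 0$) or is abelian (iff $[W,W]=0$).

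Concretely, here is the step I expect to carry the argument and also to be the main obstacle. Fix $p\in S\setminus\{0\}$ and any $y\in\frakg_1$; I want to show $p+\R y$, or at least the question of whether translates of lines in $S$ stay in $S$, can be analyzed. Take the plane $W=\Span\{p,y\}$ (assume $p,y$ independent). If $[p,y]=0$, then $\frakh_W := W$ is an abelian subalgebra, it is a $2$-dimensional affine — rather, $2$-dimensional linear — subspace which is a (degenerate, abelian) ``Heisenberg-like'' subgroup; restricting $E$ to $\frakh_W$ (a subgroup through $0\in\partial E$), $E\cap\frakh_W$ is precisely monotone in $\frakh_W\cong\R^2$, and precisely monotone subsets of $\R^2$ (every line is horizontal) are exactly the affine half-planes, so $\frakh_W\cap\partial E$ is a line; since it contains $0$ and $p$ it equals $\R p$. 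If $[p,y]\neq 0$, then $\frakh_W:=W\oplus\R[p,y]$ is a $3$-dimensional Heisenberg subalgebra containing $0\in\partial E$, and $E\cap\frakh_W$ is precisely monotone in $\frakh_W$; by Theorem~\ref{thm:CK}, $\partial(E\cap\frakh_W)$ is a $2$-dimensional affine subspace $\Pi$ of $\frakh_W$ passing through $0$, hence a linear subspace. The delicate point — the crux of the proof — is the compatibility claim that $\Hor_0\cap\partial E\cap\frakh_W = \frakg_1\cap\Pi$, i.e.\ that the boundary of the \emph{slice} agrees with the \emph{slice of the boundary} intersected with the horizontal space; this requires knowing that $\partial(E\cap\frakh_W)=\frakh_W\cap\partial E$, which in turn follows from $\frakh_W$ being a subgroup together with the fact (Proposition~\ref{tretre}) that points of $\Int E$ and $\Int E^c$ accumulate on either side along horizontal lines — so a point of $\partial E$ in $\frakh_W$ is also a boundary point of the slice and conversely. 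Granting this, $\frakg_1\cap\Pi$ is a linear subspace of $\frakg_1$ contained in $S$ and containing $p$; letting $p,q$ range over $S$ one shows $\Span\{p,q\}$-related segments lie in $S$, and a short argument assembling these two-dimensional pieces (using that $S$ is a cone and that through any two of its points the above dichotomy produces a linear slice inside $S$) yields that $S$ is closed under addition, hence a linear subspace of $\frakg_1$, hence an affine subspace of $\Hor_x$ after translating back. I expect the bookkeeping in passing from the pairwise two-plane statements to the global linearity of $S$ — handling the case distinction $[p,q]=0$ vs.\ $[p,q]\neq 0$ uniformly — to be the part requiring the most care.
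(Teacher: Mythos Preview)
Your overall strategy---reduce to $x=0$, use Lemma~\ref{linee} to see that $S=\frakg_1\cap\partial E$ is a cone, then analyze two-dimensional spans $W=\Span\{p,q\}$ with a case split on $[p,q]=0$ versus $[p,q]\neq 0$---matches the paper's. The gap is the compatibility claim $\partial_{\frakh_W}(E\cap\frakh_W)=\frakh_W\cap\partial E$, which you correctly flag as the crux but then misjustify. Only the inclusion $\partial_{\frakh_W}(E\cap\frakh_W)\subset\frakh_W\cap\partial E$ holds in general; the reverse fails. Concretely, in $\mathbb{H}\times\R$ with $\frakg_1=\Span\{X,Y,T\}$, $[X,Y]=Z$, $[X,T]=[Y,T]=0$, take $E=\{b<0\}$ in coordinates $aX+bY+cT+dZ$. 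Then $\partial E=\Span\{X,T,Z\}$ and $S=\Span\{X,T\}$, but with $W=\Span\{X,T\}$ one has $E\cap W=\emptyset$, so $\partial_W(E\cap W)=\emptyset$ while $W\cap\partial E=W$. Your abelian-case conclusion ``$\frakh_W\cap\partial E=\R p$'' is therefore wrong, and the Heisenberg case has the same defect: knowing $\R p,\R q\subset\partial E$ does not put them in $\partial_{\frakh_W}(E\cap\frakh_W)$. Proposition~\ref{tretre} does not rescue this, since it only fires once you already have a horizontal direction into $\Int(E)$ \emph{inside} $\frakh_W$.

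The paper circumvents the compatibility issue entirely. In the abelian case $[y_1,y_2]=0$ it argues directly: every point of $\ell_1\oplus\ell_2$ lies on a horizontal line meeting both $\ell_1\setminus\{0\}$ and $\ell_2\setminus\{0\}$, hence on a horizontal line with two points in $\partial E$, so by Lemma~\ref{linee} the point is in $\partial E$. In the Heisenberg case it uses only the valid inclusion $\partial_\frakh(E\cap\frakh)\subset\partial E$, and to get information flowing the other way it invokes Proposition~\ref{tretre} on the translated lines $\ell^b=y_2^b\cdot\ell_1$: either some $\ell^b\subset\partial E$ (then $\ell_1$ and $\ell^b$ are parallel lines in $\partial E$ and \cite[Lemma~4.10]{CheegerKleiner10} gives $\frakh\subset\partial E$), or each $\ell^b$ splits into rays lying in $\Int(E)$ and $\Int(E^c)$, which \emph{are} contained in the relative interiors $\Int_\frakh(E\cap\frakh)$, $\Int_\frakh(E^c\cap\frakh)$. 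This forces $\ell_2\subset\partial_\frakh(E\cap\frakh)$, after which Theorem~\ref{thm:CK} and a short computation with the resulting plane (plus another appeal to \cite[Lemma~4.10]{CheegerKleiner10} for skew lines) finish the job. The moral: work with the one-sided inclusion and manufacture relative-interior points via Proposition~\ref{tretre}, rather than asserting equality of boundaries.
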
 

\begin{proof}
Taking Lemma~\ref{linee} into account, we need to prove that if $\ell_1$ and $\ell_2$ are horizontal lines, $\ell_1 \not=\ell_2$, such that $\ell_1\cup\ell_2\subset\p E$ and $x\in \ell_1\cap \ell_2$ then the affine subspace generated by $\ell_1 \cup \ell_2$ is contained in $\partial E$. Using a left-translation, we can assume with no loss of generality that $x=0$, $\ell_i = \{y_i^t \in \frakg : \, t\in \R\}$ for some linearly independent $y_i\in \frakg_1 \setminus \{0\}$ for $i=1,2$, and we shall prove that
\begin{equation} \label{e:key-lemma}
\ell_1 \oplus \ell_2 \subset \partial E~.
\end{equation}

If $[y_1,y_2] = 0$ then every point in $\ell_1 \oplus \ell_2 = \ell_1 \cdot \ell_2$  lies on a horizontal line that intersects both $\ell_1 \setminus \{0\}$ and $\ell_2 \setminus \{0\}$. Namely, 
\begin{equation*}
\ell_1 \cdot \ell_2 = \bigcup_{t\not=0} (\ell_{t} \cup \tilde{\ell}_t)
\end{equation*}
where $\ell_{t}:= y_1^t \cdot \{(y_1\cdot y_2)^s \in \frakg :\ s\in \R\}$ and $\tilde{\ell}_{t}:= y_1^t \cdot \{(y_1^{-1}\cdot y_2)^s \in \frakg :\ s\in \R\}$. For every $t\not=0$ we have $y_1^t \in \ell_t \cap \tilde{\ell}_t \cap (\ell_1 \setminus \{0\})$, $y_2^{-t} \in \ell_t \cap (\ell_2 \setminus \{0\})$, and $y_2^t \in \tilde{\ell}_t \cap (\ell_2 \setminus \{0\})$.  Therefore, both $\ell_t$ and $\tilde{\ell}_t$ intersect $\p E$ in at least two points and~\eqref{e:key-lemma}  follows from Lemma~\ref{linee}.

\smallskip

If $[y_1,y_2] \not= 0$, we denote by $\frakh:=\Span\{y_1,y_2\} \oplus \Span\{[y_1,y_2]\}$ the Lie subalgebra of $\frakg$ generated by $y_1$ and $y_2$ and we consider the family of horizontal lines
\begin{equation*}
  \ell^b:= y_2^b \cdot \ell_1 \subset \frakh~.
\end{equation*}
where $b\in \R$. We distinguish two cases.

\step{Case~1.} If there is $b\neq 0$ such that $ \ell^b \subset  \p E$ then the horizontal lines $\ell^{0} = \ell_1$ and $ \ell^b$ are parallel with distinct projection in the Heisenberg algebra $\frakh$ in the sense of~\cite{CheegerKleiner10} and both contained in~$\p E$. Then Lemma~\ref{linee} together with~\cite[Lemma~4.10]{CheegerKleiner10} applied to the set $\mathcal{G}:= \frakh \cap \p E$ implies that $\frakh \subset \p E$. Then~\eqref{e:key-lemma} follows since $\ell_1 \oplus \ell_2 \subset \frakh$.

\smallskip

\step{Case~2.} If \textit{Case~1} does not hold, since $y_2^b \in \ell_2 \cap  \ell^b \subset \partial E$, we get  from Proposition~\ref{tretre} that for every $b\not=0$ either
\begin{equation} \label{e:key-lemma-case2-1}
 \ell^b_+  \subset \Int(E) \cap \frakh \subset \Int_\frakh (E\cap \frakh) \quad \text{and} \quad  \ell^b_- \subset \Int(E^c) \cap \frakh \subset \Int_\frakh (E^c\cap \frakh) 
\end{equation} 
or
\begin{equation} \label{e:key-lemma-case2-2}
 \ell^b_-  \subset \Int(E) \cap \frakh \subset \Int_\frakh (E\cap \frakh) \quad \text{and} \quad  \ell^b_+ \subset \Int(E^c) \cap \frakh \subset \Int_\frakh (E^c\cap \frakh) 
\end{equation} 
where $ \ell^b_+:=y_2^b \cdot \{y_1^t\in \frakg : \, t>0 \}$ and $ \ell^b_-:=y_2^b \cdot \{y_1^t \in \frakg : \, t<0 \}$. It follows that for every $b\not=0$
\begin{equation*}
\{y_2^b \} = \overline{ \ell^b_+} \cap \overline{ \ell^b_-} \subset \partial_\frakh (E\cap \frakh)~.
\end{equation*} 
Since $\partial_\frakh (E\cap \frakh)$ is a closed subset of $\frakh$, we get that $\ell_2 \subset \partial_\frakh (E\cap \frakh)$. Since $E\cap \frakh$ is a monotone subset of the Heisenberg algebra $\frakh$, it follows from Theorem~\ref{thm:CK} that $\partial_\frakh (E\cap \frakh)$ is a 2-dimensional linear subspace of $\frakh$ that contains $\ell_2$, i.e., there is $(p,q) \in \R^2 \setminus \{(0,0)\}$ such that
\begin{equation} \label{e:key-lemma-boundary}
 \partial_\frakh (E\cap \frakh)=\{sy_1+t y_2 + u [y_1, y_2] \in \frakh: \,s,t,u\in\R,\, ps+qu=0\}~.
\end{equation}
We now verify that $pq=0$. We argue by contradiction and assume that $p\not=0$ and $q\not=0$. Then~\eqref{e:key-lemma-boundary} implies that $\ell^{p/q}  = \{sy_1+q^{-1}p y_2 - sq^{-1}p [y_1, y_2] \in \frakh: \, s \in \R\} \subset \partial_\frakh (E\cap \frakh)$ which contradicts both~\eqref{e:key-lemma-case2-1} and~\eqref{e:key-lemma-case2-2} for $b=p/q$. Therefore $pq=0$. If $p=0$, we get from~\eqref{e:key-lemma-boundary} that $\partial_\frakh (E\cap \frakh) = \ell_1 \oplus \ell_2$ and~\eqref{e:key-lemma} follows since $\partial_\frakh (E\cap \frakh) \subset \partial E$. If $q=0$, we get from~\eqref{e:key-lemma-boundary} that $\ell_2 \oplus \R[y_1,y_2] = \partial_\frakh (E\cap \frakh) \subset \frakh \cap \partial E$. In particular the horizontal line $\ell:= [y_1,y_2] \cdot \{y_2^t \in \frakg : \, t\in \R\}$ is contained in $\frakh \cap \partial E$. It follows that $\ell$ and $\ell_1$ are skew lines in the sense of~\cite{CheegerKleiner10} that are contained in $\frakh \cap \partial E$. Then Lemma~\ref{linee} together with~\cite[Lemma~4.10]{CheegerKleiner10} applied to the set $\mathcal{G}:= \frakh \cap \p E$ implies that $\frakh \subset \p E$. Therefore~\eqref{e:key-lemma} follows since $\ell_1 \oplus \ell_2 \subset \frakh$.
\end{proof}

Given $S\subset \frakg$ we set
\begin{equation}\label{e:characteristic-points}
\Char(S):=\{ x\in S: \,\Hor_x\cap S=\Hor_x  \} \,\,\, \text{and} \,\,\, \Nonchar(S):= S \setminus \Char(S)~. 
\end{equation}
Note that if $S$ is closed then $\Char(S)$ is closed and in such a case $\Nonchar(S)$ is therefore a relatively open subset of  $S$.

In the next proposition, we upgrade Lemma~\ref{apro} proving that for $x\in \Nonchar(\p E)$ we have $\dim (\Hor_x  \cap \p E) = \dim \Hor_x  -1$.

\begin{proposition}  \label{foglietto} 
Let $E\subset\frakg$ be precisely monotone. Then 
\begin{equation*} 
\Nonchar(\p E)=\{ x\in\p E: \, \Hor_x\cap \p E  \text{ is a }  \text{codimension-1 affine subspace of } \Hor_x\}~.
\end{equation*}
\end{proposition}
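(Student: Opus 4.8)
The plan is to prove the two inclusions separately, using Lemma~\ref{apro} as the main input. By Lemma~\ref{apro}, for every $x\in\partial E$ the set $\Hor_x\cap\partial E$ is an affine subspace of $\Hor_x$; since $\Hor_x$ has dimension $\rank\frakg$, the possible dimensions of $\Hor_x\cap\partial E$ range from $0$ (a single point, if $x$ is isolated in $\Hor_x\cap\partial E$) up to $\rank\frakg$. By definition of $\Char(\partial E)$, the case $\Hor_x\cap\partial E=\Hor_x$ (dimension $\rank\frakg$) occurs precisely when $x\in\Char(\partial E)$. So the content of the proposition is to rule out, for $x\in\Nonchar(\partial E)$, every dimension strictly less than $\rank\frakg-1$, i.e.\ to show that as soon as $\Hor_x\cap\partial E$ is a proper affine subspace of $\Hor_x$ it must have codimension exactly $1$.

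First I would address the easy direction: any $x\in\partial E$ with $\Hor_x\cap\partial E$ a codimension-$1$ affine subspace of $\Hor_x$ lies in $\Nonchar(\partial E)$, since codimension-$1$ means $\Hor_x\cap\partial E\neq\Hor_x$. For the main direction, fix $x\in\Nonchar(\partial E)$; after a left translation assume $x=0$, so $\Hor_0=\frakg_1$ and $V:=\frakg_1\cap\partial E$ is a proper linear subspace of $\frakg_1$ by Lemma~\ref{apro}. Suppose for contradiction that $\dim V\le \dim\frakg_1-2$. Pick $y\in\frakg_1\setminus V$ with $0\cdot y=y\in\Int(E)$ (or $\Int(E^c)$; the two cases are symmetric since $E$ precisely monotone $\iff E^c$ precisely monotone). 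The idea is to produce, near $0$, horizontal lines through boundary points that force extra boundary points inside $\Hor_0$, contradicting $V=\frakg_1\cap\partial E$. Concretely: since $\dim V\le\dim\frakg_1-2$, inside $\frakg_1$ the complement $\frakg_1\setminus V$ is connected, so I can join $y$ to $-y$ by a path in $\frakg_1\setminus V=\frakg_1\setminus\partial E$. But $y\in\Int(E)$ and, by Proposition~\ref{tretre} applied along the horizontal line $\{y^t:t\in\R\}$ (which passes through $0\in\partial E$), $-y=y^{-1}\in\Int(E^c)$. A path from $\Int(E)$ to $\Int(E^c)$ must cross $\partial E$, yet the path avoids $\partial E$ — contradiction. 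Hence $\dim V=\dim\frakg_1-1$, i.e.\ $V$ is a codimension-$1$ affine subspace of $\Hor_0$, as desired.

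The step I expect to be the main obstacle is making the path argument fully rigorous, specifically the reduction to the ``$y\in\Int(E)$ or $y\in\Int(E^c)$'' dichotomy and the use of Proposition~\ref{tretre}. One must first check that some point of $\frakg_1\setminus V$ actually lies in $\Int(E)\cup\Int(E^c)$: since $x=0\in\Nonchar(\partial E)$ we know $\frakg_1\not\subset\partial E$, so there is $y\in\frakg_1$ with $y\notin\partial E$, i.e.\ $y\in\Int(E)$ or $y\in\Int(E^c)$; and then $\frakg_1\cap\partial E=V$ so $y\notin V$. Then Proposition~\ref{tretre}, applied with the roles of $E,E^c$ chosen so that $y\in\Int(E)$, along the horizontal line through $0$ and $y$, gives $-y\in\Int(E^c)$. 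One also needs the elementary topological fact that a proper linear subspace of codimension $\ge 2$ in $\frakg_1$ has connected complement, and that $\frakg_1$, being an affine subspace of $\frakg$, inherits the right topology so that $\Int(E)\cap\frakg_1$, $\Int(E^c)\cap\frakg_1$, $\partial E\cap\frakg_1=V$ partition $\frakg_1$ into two relatively open sets and a closed separating set; a connected set in $\frakg_1\setminus V$ meeting both open pieces is then impossible. Once these pieces are assembled, the contradiction is immediate and the proposition follows.
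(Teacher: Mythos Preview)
Your proof is correct. The overall strategy matches the paper's exactly: use Lemma~\ref{apro} to know $\Hor_x\cap\partial E$ is an affine subspace, and then argue by contradiction that codimension $\geq 2$ is impossible.

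The only difference is in how the contradiction is extracted. The paper takes a linear complement $V$ of $\Hor_0\cap\partial E$ inside $\Hor_0$ (so $\dim V\geq 2$) and invokes \cite[Lemma~3.5]{MR4119259} to conclude that $V\cap\partial E$ contains a horizontal line, contradicting $V\cap(\Hor_0\cap\partial E)=\{0\}$. You instead give a direct, self-contained connectivity argument: pick $y\in\frakg_1\setminus\partial E$, use Proposition~\ref{tretre} to get $-y$ on the opposite side, and exploit that a codimension-$\geq 2$ linear subspace does not disconnect $\frakg_1$. This is essentially the content (or proof) of the cited external lemma, so you have inlined what the paper outsources. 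Your version has the advantage of being fully self-contained within the paper's internal toolkit; the paper's version is shorter on the page but relies on the reader having access to \cite{MR4119259}.
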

 
\begin{proof}
We know from Lemma~\ref{apro} that for every $x\in \partial E$ the set $\Hor_x\cap \p E $ is an affine subspace of $\Hor_x$. To prove the proposition, we shall verify that for every $x\in \partial E$ we have $$\dim ( \Hor_x\cap \p E ) \geq \dim \Hor_x -1~.$$ We argue by contradiction and assume that there is $x\in \partial E$ such that $\dim ( \Hor_x\cap \p E )\leq  \dim \Hor_x -2$. Using a left-translation, we can assume with no loss of generality that $x=0$. Then let $V$ denote a linear subspace of $\Hor_0$ that is in direct sum with $\Hor_0\cap \p E$. We have $\dim V \geq 2$ and it follows from~\cite[Lemma~3.5]{MR4119259} that $V \cap \partial E$ contains a horizontal line which gives a contradiction and concludes the proof of the proposition.
\end{proof}

We say that a subset of $\frakg$ is measurable to mean that it is $\mu$-measurable where $\mu$ is some, equivalently any, Haar measure on $\frakg$ when seen as an outer measure. The following proposition, ensuring in particular existence of noncharacteristic points in the boundary of non empty measurable precisely monotone strict subsets, will play a key role in the next sections. It is not clear to us whether the measurability assumption can be removed from Proposition~\ref{prop:noncharacteristic-points-exist} and it is the reason that led us to include it in Theorems \ref{thm:main} and \ref{thm:nonfree-case}.

\begin{proposition} \label{prop:noncharacteristic-points-exist}
Let $E\subset \frakg$ be precisely monotone and measurable. Then $\Int(\partial E )= \emptyset$ and $\Nonchar(\p E)$ is a relatively dense subset of $\partial E$.
\end{proposition}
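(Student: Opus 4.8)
The plan is to prove the two assertions separately, with the first one being the engine for the second. For $\Int(\partial E) = \emptyset$: suppose toward a contradiction that $\partial E$ contains a nonempty open set $U$. Then $U \cap E$ and $U \cap E^c$ are both nonempty (since points of $\partial E$ are limits of points of $E$ and of $E^c$, and $U$ is open). Actually the cleaner route is to exploit precise monotonicity directly: pick $x \in \Int(\partial E)$. Since $\partial E$ is $\mu$-null would be exactly what we want, so instead argue that an open subset of $\partial E$ forces a positive-measure "wall". The key tool is Lemma~\ref{linee}: if a horizontal line meets $\partial E$ in more than one point it lies entirely in $\partial E$. So if $x \in \Int(\partial E)$, every horizontal line through $x$ that also passes through another nearby point of $\partial E$ is contained in $\partial E$; since $x$ is interior to $\partial E$, \emph{every} horizontal line through $x$ stays in $\partial E$ for a while, hence (being a line and $\partial E$ containing the segment) the whole line lies in $\partial E$ by Lemma~\ref{linee}. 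Thus $\Hor_x \subset \partial E$, i.e.\ $x \in \Char(\partial E)$, and moreover $\Hor_x \cdot \Hor_x$-type propagation via Lemma~\ref{apro} applied repeatedly shows $\partial E$ contains a translate of the whole group near $x$, contradicting that $\partial E$ has empty interior is what we want — so one must instead derive a measure-theoretic contradiction. The honest plan: from $\Hor_x \subset \partial E$ for all $x$ in an open set $U$, one gets that $\partial E \supset \bigcup_{x\in U}\Hor_x$, which is open and "spreads" by bracket-generation to an open set on which neither $E$ nor $E^c$ can have a Lebesgue density point, contradicting the Lebesgue density theorem applied to the \emph{measurable} set $E$ (every point is a density point of $E$ or of $E^c$ up to a null set). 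This is precisely where measurability enters.

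\textbf{Noncharacteristic points are relatively dense.} Suppose not: then there is $x_0 \in \partial E$ and $r>0$ such that the relative ball $B(x_0,r) \cap \partial E \subset \Char(\partial E)$. By definition of $\Char$, for every $x \in B(x_0,r)\cap\partial E$ we have $\Hor_x \subset \partial E$. Now I would run the propagation argument: starting from the horizontal spaces through points of $B(x_0,r)\cap\partial E$, use Lemma~\ref{apro} to see that for $x\in \partial E$ the set $\Hor_x \cap \partial E$ is an affine subspace of $\Hor_x$, here equal to all of $\Hor_x$; then products of such horizontal spaces (using the step-2 group law and the fact that $[\frakg_1,\frakg_1]=\frakg_2$) fill out an open subset of $\frakg$ contained in $\partial E$ near $x_0$. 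Concretely: $\Hor_{x}\cdot \Hor_{y}$ for $x,y$ ranging over a small piece of $\partial E$ covers a neighborhood because $\frakg_1$ generates $\frakg$. This forces $\Int(\partial E)\neq\emptyset$, contradicting the first part. Hence $\Nonchar(\partial E)$ is relatively dense in $\partial E$.

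\textbf{Main obstacle.} The delicate point is the first assertion, $\Int(\partial E) = \emptyset$, and specifically the step linking "a neighborhood of $x$ consists of characteristic points, so horizontal spaces through them lie in $\partial E$" to "$\partial E$ contains an honest open set of $\frakg$, contradicting measurability." One must be careful that $\Hor_x \subset \partial E$ for all $x$ in a relatively open subset of $\partial E$ — not a priori an open subset of $\frakg$ — genuinely propagates. The rigorous mechanism is: fix $x_0$ with $\Hor_{x_0}\subset\partial E$; for each $y \in \Hor_{x_0}$ close to $x_0$ we still have $y\in\partial E$, but we need $y$ to be \emph{characteristic} to iterate, which requires knowing $\Hor_y\subset\partial E$ — and that is exactly the hypothesis $B(x_0,r)\cap\partial E\subset\Char(\partial E)$ provides. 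Then $\bigcup_{y \in \Hor_{x_0}\cap B(x_0,r)} \Hor_y$ is a neighborhood of $x_0$ in $\frakg$ (two horizontal subspaces through nearby points sweep out a full neighborhood by bracket-generation in step $2$), and it lies in $\partial E$. So $\partial E$ has nonempty interior; but then neither $E$ nor $E^c$ meets that open set in full measure locally, so there is a point of the open set that is a Lebesgue density point of neither $E$ nor $E^c$ — impossible for a measurable $E$. This density-point argument, invoking the measurability of $E$, is the crux, and is exactly the reason measurability cannot be dropped here.
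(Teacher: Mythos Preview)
Your proposal has a genuine gap in the proof of $\Int(\partial E)=\emptyset$. You assert that on an open set contained in $\partial E$ ``neither $E$ nor $E^c$ can have a Lebesgue density point,'' but you never justify this, and it is not a consequence of general measure theory: a priori nothing prevents, say, $E$ from having full measure on such an open set while $E^c$ is a dense null set there. What is needed is a two--sided lower density estimate at boundary points, and this is exactly where h--convexity enters in a nontrivial way. The paper invokes \cite[Lemma~6.4]{Rickly06}: since both $E$ and $E^c$ are h--convex, there is $c>0$ with
\[
\min\{\mu(B(x,r)\cap E),\,\mu(B(x,r)\cap E^c)\}\ \ge\ c\,\mu(B(x,r))\qquad\text{for all }x\in\partial E,\ r>0.
\]
This forces $\mu(\partial E)=0$ directly (no boundary point is a density point of either set), hence $\Int(\partial E)=\emptyset$. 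Your propagation ``$\Hor_x\subset\partial E$, then spread'' adds nothing here: you already assumed an open $U\subset\partial E$, so spreading is unnecessary; the missing piece is the density bound.

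For the second assertion your propagation idea is workable but differs from the paper and contains an inaccuracy. Your claim that ``two horizontal subspaces through nearby points sweep out a full neighborhood'' is false in general step--2 algebras: in $\free$, for instance, the map $(y_1,y_2)\mapsto y_1\cdot y_2=y_1+y_2+y_1\wedge y_2$ is not open at $0$ (its differential hits only $\frakg_1$). One needs more factors, and the openness of $\Gamma:(\frakg_1)^p\to\frakg$, $\Gamma(y_1,\dots,y_p)=y_1\cdots y_p$, at $0$ is a nontrivial input (the paper cites \cite[Proposition~5.1]{MontanariMorbidelli20}). With that in hand your iteration does go through, since the partial products stay in the small ball by continuity. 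The paper, however, argues differently and more directly: once $\Int(\partial E)=\emptyset$ and $\Gamma$ is open, one finds $(y_1,\dots,y_p)$ with $\Gamma(y_1,\dots,y_p)\in\calU\setminus\partial E$; the concatenated horizontal path from $0\in\partial E$ to this point must, by Proposition~\ref{tretre}, leave $\partial E$ along some horizontal segment at a point of $\Nonchar(\partial E)$ lying in $\calU$.
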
 
 
\begin{proof}
We assume that $E\not\in\{\emptyset, \frakg\}$ since otherwise $\partial E = \emptyset$ and there is nothing to prove. We first verify that $\Int(\partial E )= \emptyset$. Recall that both $E$ and $E^c$ are h-convex. Denoting by $\mu$ a Haar mesure on $\frakg$ we get from~\cite[Lemma~6.4]{Rickly06} that there is $c>0$ such that
\begin{equation*} 
\min \left \{\mu(B(x,r) \cap E),\mu(B(x,r) \cap E^c) \right\} \geq c\,\mu(B(x,r))
\end{equation*}
for all $x\in \partial E$ and $r>0$. Here  $B(x,r)$ denotes the open ball with center $x$ and radius $r$ with respect to some given intrinsic metric (we follow here the terminology used in \cite{Rickly06} to which we refer for the definition such metrics). Since $E$ is assumed to be $\mu$-measurable, we know that $\mu$-a.e.~point in $\frakg$  has $\mu$-density 1 for either $E$ or $E^c$ and it follows that $\mu(\partial E)=0$, which implies in turn that $\Int(\partial E) = \emptyset$.

To prove that $\Nonchar(\partial E)$ is relatively dense in $\partial E$, let $\calU \subset \frakg$ be  open and such that $\calU\cap \partial E \not= \emptyset$ and let us prove that $\calU \cap \Nonchar(\partial E) \not= \emptyset$. Using a left-translation we can assume with no loss of generality that $0\in \calU \cap \partial E$. By~\cite[Proposition~5.1]{MontanariMorbidelli20} there is a positive integer $p$ such that the map $\Gamma:(\frakg_1)^p \rightarrow \frakg$ defined by
\begin{equation*}
\Gamma(y_1,\dots,y_p) := y_1 \cdots y_p
\end{equation*}
is open at $0$. Since $\Gamma$ is continuous, there is $\epsilon>0$ such that $\Omega:=\{(y_1,\dots,y_p) \in (\frakg_1)^p:\, \|y_i\| <\epsilon \text{ for } i=1,\dots,p\} \subset \Gamma^{-1}(\calU)$. Here $\|\cdot\|$ denotes some norm on $\frakg_1$. Then there is an open neighborhood $\calU'$ of $0$ in $\frakg$ such that $\calU' \subset \Gamma(\Omega) \subset \calU$. Since $\Int(\partial E) = \emptyset$, one can find $(y_1,\dots,y_p)\in\Omega$ such that $\Gamma(y_1,\dots,y_p) \not\in \partial E$. Since $0\in \partial E$ and $\Gamma(y_1,\dots,y_p) \not\in \partial E$ can be joined by a continuous curve $\gamma \subset \Gamma(\Omega)$ obtained as a concatenation of horizontal segments, we get from Proposition~\ref{tretre} that there is $x \in \gamma$, and therefore $x\in \calU$,  such that $x \in \Nonchar(\partial E)$, which concludes the proof of the proposition.
\end{proof}

For the sake of completeness, we include below a minimizing property of measurable precisely monotone sets. We refer to \cite[Section~3.5]{MR3587666} and the references therein for the notion of intrinsic perimeter that gives an analogue of the classical perimeter in Euclidean spaces. 
 
\begin{proposition}  \label{prop:minimizers} Let $E\subset \frakg$ be precisely monotone and measurable. Then $E$ has locally finite intrinsic perimeter. Furthermore $E$ is a local minimizer for the intrinsic perimeter, which means that for any open set $\Omega \subset \frakg$ such that $\Per(E,\Omega) < +\infty$  we have $\Per(E,\Omega) \leq \Per(F,\Omega)$ for any measurable set $F \subset \frakg$ such that $E \triangle F \Subset \Omega$.
\end{proposition}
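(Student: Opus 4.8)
The plan is to deduce both assertions of Proposition~\ref{prop:minimizers} from the density estimate already invoked in the proof of Proposition~\ref{prop:noncharacteristic-points-exist}, together with a standard relative isoperimetric inequality in Carnot groups. First I would record that, since $E$ is measurable and precisely monotone, both $E$ and $E^c$ are h-convex, so by~\cite[Lemma~6.4]{Rickly06} there is $c>0$ such that $\min\{\mu(B(x,r)\cap E),\mu(B(x,r)\cap E^c)\}\geq c\,\mu(B(x,r))$ for all $x\in\partial E$ and $r>0$. Combining this with the relative isoperimetric inequality on balls, namely $\min\{\mu(B(x,r)\cap E),\mu(B(x,r)\cap E^c)\}^{(Q-1)/Q}\leq C_I\,\Per(E,B(x,r))$ where $Q$ is the homogeneous dimension, yields a lower bound $\Per(E,B(x,r))\geq c'\,\mu(B(x,r))^{(Q-1)/Q}\simeq c'' r^{Q-1}$ at boundary points; more importantly, via a Vitali covering of $\partial E\cap\Omega'$ by balls $B(x_i,r_i)$ with $x_i\in\partial E$ and controlled overlap, one gets the matching \emph{upper} bound $\Per(E,\Omega')\lesssim \sum_i r_i^{Q-1}\lesssim \mu(\text{small neighborhood of }\partial E\cap\Omega')<\infty$ for $\Omega'\Subset\Omega$, using $\mu(\partial E)=0$ and outer regularity of $\mu$. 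This gives local finiteness of the intrinsic perimeter.

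For the minimality statement, fix an open set $\Omega$ with $\Per(E,\Omega)<+\infty$ and a competitor $F$ with $E\triangle F\Subset\Omega$; I want $\Per(E,\Omega)\leq\Per(F,\Omega)$. Choose an open set $\Omega'$ with $E\triangle F\Subset\Omega'\Subset\Omega$. Outside $\Omega'$ the sets $E$ and $F$ agree, so it suffices to prove $\Per(E,\Omega')\leq\Per(F,\Omega')$. The key geometric input is again the density lower bound: because every point of $\partial E\cap\Omega'$ has substantial mass of both $E$ and $E^c$ in every small ball, while $\mu(\partial E)=0$, one shows $\partial E\cap\Omega'\subset\overline{F}\cap\overline{F^c}$ up to adjusting $F$ on a null set — indeed if $x\in\partial E\cap\Omega'$ had a ball $B(x,r)\Subset\Omega'$ on which $F$ coincided (up to null sets) with, say, $E$ itself, then $x\in\partial F$ anyway, while if $F$ were (essentially) all of $B(x,r)$ or empty there, then $F\triangle E$ would have positive measure in $B(x,r)$, contradicting $B(x,r)\cap(E\triangle F)=\emptyset$ which holds once $B(x,r)\subset(\Omega')^c\cup(E\triangle F)$... the clean way is: for $x\in\partial E\cap\Omega'$ with $B(x,r)\cap(E\triangle F)=\emptyset$, $F$ has the same density estimate as $E$ at $x$, so $x\in\partial F$; and for $x\in\partial E\cap\Omega'$ with $B(x,r)\cap(E\triangle F)\neq\emptyset$ for all $r$, one has $\mu(B(x,r)\cap(E\triangle F))>0$, which again, combined with the density of $E$ and $E^c$, forces $\min\{\mu(B(x,r)\cap F),\mu(B(x,r)\cap F^c)\}\geq c'\mu(B(x,r))$ for a possibly smaller constant (for $r$ small, the $E\triangle F$ part is a small fraction or it isn't — split on this), hence $x\in\overline{F}\cap\overline{F^c}$, i.e.\ $x$ lies in the measure-theoretic boundary of $F$. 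In all cases $\partial E\cap\Omega'$ is contained in the essential boundary $\partial^*F$ of $F$, whose $(Q-1)$-dimensional measure controls $\Per(F,\cdot)$ from below, while $\Per(E,\Omega')$ is controlled from above by the $(Q-1)$-measure of $\partial E\cap\Omega'$ via the covering argument of the first paragraph. Matching these with uniform constants gives $\Per(E,\Omega')\leq C\,\Per(F,\Omega')$; to upgrade $C$ to $1$ one invokes the De Giorgi structure theorem in Carnot groups, identifying $\Per(E,\cdot)$ with $\mathcal{S}^{Q-1}\restr\partial^*E$ up to the standard density factor and using $\partial^*E\cap\Omega'\subset\partial^*F$ together with the additivity of the perimeter measure.

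The main obstacle I expect is precisely this last quantitative matching of constants: the crude Vitali-covering upper bound for $\Per(E,\cdot)$ and the isoperimetric lower bound for $\Per(F,\cdot)$ naturally come with nonmatching constants, so a soft "$\Per(E,\Omega')\leq C\,\Per(F,\Omega')$" is easy but the sharp "$C=1$" requires the finer structure theory for sets of locally finite intrinsic perimeter (rectifiability of the reduced boundary, representation of the perimeter as a spherical Hausdorff measure, and a relative isoperimetric inequality with the right geometry). An alternative, and perhaps cleaner, route avoiding fine structure theory is to observe that the statement (local finiteness plus local minimality) is invariant under the identifications used in the paper and has already been recorded in~\cite[Proposition~3.9]{young2021areaminimizing} in the Heisenberg setting; one checks that Young--Naor's argument there uses only: (i) the density bounds for h-convex sets and their complements, which hold in any step-2 Carnot group by~\cite{Rickly06}, and (ii) the relative isoperimetric inequality, which holds in every Carnot group. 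So the honest plan is to follow that argument verbatim, replacing the Heisenberg-specific inputs by their general step-2 (indeed general Carnot) counterparts cited above, and to remark that no feature of the Heisenberg group beyond these two ingredients is used; I would present the proof at that level of detail, citing~\cite[Proposition~3.9]{young2021areaminimizing} and~\cite[Lemma~6.4]{Rickly06} and the relative isoperimetric inequality, rather than reproving the structure theory.
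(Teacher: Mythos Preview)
Your approach diverges from the paper's in both parts, and each part has a genuine gap.

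For local finiteness, the density estimate and isoperimetric inequality give you a \emph{lower} bound $\Per(E,B(x,r))\gtrsim r^{Q-1}$ at boundary points, not an upper bound. The Vitali argument you sketch would need $\Per(E,B(x,r))\lesssim r^{Q-1}$, which is precisely the nontrivial content and does not follow from what you have written; that is what Rickly proves for h-convex sets (the paper simply cites~\cite[Theorem~5.6]{Ricklyphd}).

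For minimality, the inclusion $\partial^*E\cap\Omega'\subset\partial^*F$ that your argument aims at is false in general. Take $E$ a half-space bounded by a hyperplane $H$ and $F=E\cup B$ with $B$ a small ball centered on $H$; then $E\triangle F\Subset\Omega$ but points of $H$ in the interior of $B$ have $F$-density~$1$ and do not lie in $\partial^*F$. Your case split does not rescue this: at such a point every small ball meets $E\triangle F$, yet $\mu(B(x,r)\cap F^c)=0$ for small $r$.

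The paper's route is entirely different and much shorter: it invokes the \emph{kinematic formula} of Montefalcone~\cite{MR2165404}, which expresses $\Per(E,\Omega)$ as an integral over horizontal lines $\ell$ of the one-dimensional perimeter of $E\cap\ell$ in $\Omega\cap\ell$. Precise monotonicity means $E\cap\ell$ is an interval on each line, and intervals minimize one-dimensional perimeter among competitors that agree outside a compact set; integrating line by line gives $\Per(E,\Omega)\leq\Per(F,\Omega)$ with constant exactly~$1$. This is the argument in~\cite[Proposition~3.9]{young2021areaminimizing}; its key ingredient is the kinematic formula, not the density/isoperimetric package you identified.
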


\begin{proof}
The fact that $E$ has locally finite intrinsic perimeter whenever $E$ is precisely monotone and measurable follows from~\cite[Theorem~5.6]{Ricklyphd} since precisely monotone sets are h-convex. Then the fact that a measurable precisely monotone set is a local minimizer for the intrinsic perimeter follows from the kinematic formula that relates the intrinsic perimeter to perimeter on horizontal lines, see \cite{MR2165404}. We omit the proof that can be done imitating the proof of \cite[Proposition~3.9]{young2021areaminimizing} that can be verbatim extended to our more general setting, noting that the convexity assumption on $\Omega$ can easily be relaxed.
\end{proof}

We recall now the notion of horizontally affine functions that has been introduced in \cite{LeDonneMorbidelliRigot1} and to which we refer for an exhaustive study of such a class of functions.

\begin{definition} \label{def:h-affine-maps} We say that $\phi :\frakg \rightarrow \R$ is horizontally affine, h-affine in short, if for every $x\in \frakg$, $y\in \frakg_1$, the function $t\in \R \mapsto \phi(x\cdot y^t)$ is affine.
\end{definition} 

Clearly, affine functions on $\frakg$ seen as a vector space are h-affine. However h-affine functions may not be affine. Several equivalent characterizations of step-2 Carnot algebras where h-affine functions are affine can be found in~\cite{LeDonneMorbidelliRigot1}. We recall below a consequence of these characterizations that will be  the only result about h-affine functions needed in the present paper.

\begin{theorem}[{\cite[Theorems~3.2,~1.2,~1.4]{LeDonneMorbidelliRigot1}}] \label{thm:h-affine-maps}
If $\frakg$ is a Heisenberg Carnot algebra or a step-2 rank-3 Carnot algebra that is not isomorphic to the free step-2 rank-3 Carnot algebra then h-affine functions on $\frakg$ are affine.
\end{theorem}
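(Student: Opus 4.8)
The plan is to translate the statement into a question about a single bilinear form, and then to settle that question by an antisymmetrization argument exploiting the smallness of $\Lambda^3\frakg_1^*$ in low rank. Since affine functions are obviously h-affine, only the implication ``h-affine $\Rightarrow$ affine'' needs proof.

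\textbf{Step 1: reduction to a bilinear form.} Writing a point of $\frakg=\frakg_1\oplus\frakg_2$ as $x=v+z$ with $v\in\frakg_1$, $z\in\frakg_2$, one has $x\cdot y^t=(v+ty)+(z+t[v,y])$ for $y\in\frakg_1$, because $\frakg_2$ is central. The first step is to show that every h-affine $\phi$ differs from an affine function by a \emph{biaffine} form; that is, $\phi(v+z)=A(v+z)+\beta(v,z)$ with $A$ affine and $\beta:\frakg_1\times\frakg_2\to\R$ bilinear --- equivalently, no pure quadratic term in $v$, no pure quadratic term in $z$, and no higher-order term survives. Granting this, and since the affine part is automatically affine along every horizontal line, h-affinity of $\phi$ amounts to h-affinity of $\beta$, and from
\[
\beta(v+ty,\,z+t[v,y])=\beta(v,z)+t\bigl(\beta(v,[v,y])+\beta(y,z)\bigr)+t^{2}\,\beta(y,[v,y])
\]
one sees that this holds if and only if $\beta(y,[v,y])=0$ for all $v,y\in\frakg_1$ (the $t^{2}$-coefficient). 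So the theorem reduces to: \emph{if $\frakg$ is a Heisenberg algebra or a step-2 rank-3 Carnot algebra that is not free, then the only bilinear $\beta:\frakg_1\times\frakg_2\to\R$ with $\beta(y,[v,y])=0$ for all $v,y\in\frakg_1$ is $\beta=0$.} The structural reduction to biaffine forms is where one invokes the general theory of h-affine functions; in rank $\le 3$ it can also be extracted directly, by expanding the second $t$-derivative of $\phi$ along all horizontal lines and using that the bracket map $\Lambda^{2}\frakg_1\to\frakg_2$ is onto and, in dimension $\le 3$, has only decomposable $2$-vectors in its source.

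\textbf{Step 2: the $\Lambda^{3}$ argument.} Polarizing $\beta(y,[v,y])=0$ in $y$ yields $\beta(y,[v,y'])+\beta(y',[v,y])=0$ for all $v,y,y'\in\frakg_1$. Set $T(v,y,y'):=\beta(y,[v,y'])$. The polarized identity says $T$ is alternating in its last two slots, and $[v,y']=-[y',v]$ says $T$ is alternating in its first and last slots; these two transpositions generate $S_{3}$, so $T$ is totally alternating, $T\in\Lambda^{3}\frakg_1^*$. If $\rank\frakg=2$, then $\Lambda^{3}\frakg_1^*=0$, so $T=0$; since $[\frakg_1,\frakg_1]=\frakg_2$ the vectors $[v,y']$ span $\frakg_2$, whence $\beta=0$ and $\phi$ is affine. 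If $\rank\frakg=3$, then $\dim\Lambda^{3}\frakg_1^*=1$, so $T=c\det$ for some $c\in\R$. If moreover $\frakg$ is not free, the bracket map $\Lambda^{2}\frakg_1\to\frakg_2$ has nonzero kernel (its source has dimension $3$ while $\dim\frakg_2\le 2$); pick $\kappa\ne 0$ in this kernel, and, using that every $2$-vector on a $3$-dimensional space is decomposable, write $\kappa=v\wedge y'$ with $v,y'$ linearly independent. Then $[v,y']=0$, hence $0=\beta(y,[v,y'])=T(v,y,y')=c\det(v,y,y')$ for all $y$; choosing $y\notin\Span\{v,y'\}$ forces $c=0$, so $T=0$ and, as before, $\beta=0$ and $\phi$ is affine.

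The main obstacle is Step 1: establishing that a function which is merely affine along horizontal lines --- with no a priori regularity --- must be a biaffine polynomial with no pure-$v$ or pure-$z$ quadratic component. Once this is available, Step 2 is elementary, and it makes transparent where the hypothesis is used: the bound $\dim\Lambda^{3}\frakg_1^*\le 1$ for rank $\le 3$, together with decomposability of $2$-vectors in dimension $3$ to kill that last dimension in the non-free case. In the free step-2 rank-3 algebra the bracket $\Lambda^{2}\frakg_1\to\frakg_2$ is an isomorphism, so the kernel argument is unavailable and a one-parameter family of forms $\beta(y,z)=\langle y,\ast z\rangle$ survives; these are exactly the non-affine h-affine functions displayed in~\eqref{e:phi-introduction} with $\eta_0\ne 0$.
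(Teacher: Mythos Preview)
The paper does not give its own proof of this statement: Theorem~\ref{thm:h-affine-maps} is quoted from \cite{LeDonneMorbidelliRigot1} and used as a black box. So there is no in-paper argument to compare against; the question is simply whether your argument stands on its own.

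Your Step~2 is clean and correct. The polarization of $\beta(y,[v,y])=0$ together with the skew-symmetry of the bracket does force the trilinear form $T(v,y,y')=\beta(y,[v,y'])$ to be totally alternating, and the $\Lambda^3\frakg_1^*$ dimension count then finishes both the Heisenberg and the non-free rank-$3$ cases exactly as you describe. This is a nice way to package the algebra once the biaffine structure is in hand.

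The genuine gap is Step~1, and you flag it yourself. The content of the cited result is precisely that an h-affine function --- with no regularity hypothesis whatsoever --- must already be a polynomial of the biaffine shape you assume. Your suggested shortcut (``expanding the second $t$-derivative of $\phi$ along all horizontal lines'') is not available, since $\phi$ is not assumed differentiable, or even measurable; and the surjectivity/decomposability remarks about the bracket map do not by themselves produce regularity. Establishing the biaffine form is a Cauchy-type rigidity statement and is essentially the whole theorem: once you know $\phi$ is a polynomial of that shape, your Step~2 is a short coda. As written, then, your argument assumes what is to be proved in its hardest part; to make it self-contained you would need to supply an independent proof that h-affinity forces the decomposition $\phi(v+z)=A(v+z)+\beta(v,z)$, which is exactly what \cite{LeDonneMorbidelliRigot1} provides.
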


Clearly, sublevel sets of h-affine functions are precisely monotone. As explained in Section~\ref{sect:introduction}, we are interested in the present paper in classifying all precisely monotone subsets of a given step-2 Carnot algebra using sublevel sets of h-affine functions. Our main result Theorem~\ref{thm:main} concerns the case of the free step-2 rank-3 Carnot algebra $\free$ that can be realized as follows.

Given $k\in \{1,2,3\}$ we denote by $\Lambda^k \R^3$ the set of alternating $k$-multilinear forms over $\R^3$ and we set $\Lambda^0 \R^3 := \R$. The free step-2 rank-3 Carnot algebra is given by $$\free:= \Lone \oplus \Ltwo$$ equipped with the Lie bracket for which the only non trivial relations are given by
\begin{equation*} 
[\theta,\tau] := \theta \wedge \tau \quad \text{ for }\theta, \tau \in \Lone
\end{equation*} 
and with the induced group law
\begin{equation*}
(\theta+\omega) \cdot (\tau+\zeta) := \theta + \tau + \omega + \zeta + \theta \wedge \tau \quad \text{ for }\theta, \tau \in \Lone, \, \omega,\zeta \in \Ltwo~.
\end{equation*}

Given $\nu\in \Lambda^3 \R^3 \setminus \{0\}$ it can easily be verified from the very definitions that if $\phi:\free \rightarrow \R$ is given by~\eqref{e:phi-introduction} for some $(\eta_0, \eta_1,\eta_2,\eta_3)\in \Lambda^0\R^3\times\Lambda^1\R^3\times\Lambda^2\R^3 \times \Lambda^3 \R^3$ then $\phi$ is h-affine. Although we will not need the following fact in the present paper, let us mention that it has been proved in~\cite[Theorem~1.1]{LeDonneMorbidelliRigot1} that all h-affine functions on $\free$ are of this form.

\section{Local description in the free step-2 rank-3 Carnot algebra} \label{sect:local-statement-free-case} 

Our first step towards the proof of Theorem~\ref{thm:main} is the following local description near noncharacteristic points.

\begin{proposition} \label{prop:local}
Let $E\subset \free$ be precisely monotone and $x\in \Nonchar(\partial E)$. Then there is an open neighborhood $\calU_x$ of $x$ and there is a non constant h-affine function $\phi_x:\free \rightarrow \R$ such that 
\begin{equation}\label{457} 
\left\{
\begin{aligned}
\calU_x \cap \Int(E) &=\{y \in \calU_x : \phi_x(y) < 0 \} \\
\calU_x\cap\p E &= \calU_x \cap S_x \\
\calU_x\cap\Int(E^c) &=\{y\in \calU_x : \phi_x(y) > 0 \}~.
\end{aligned}
\right.
\end{equation}
where $S_x := \{y \in \free : \phi_x(y) = 0 \}$.
\end{proposition}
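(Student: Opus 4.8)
The plan is to build the local h-affine function $\phi_x$ by combining the two structural facts already available: first, that near a noncharacteristic point the boundary $\partial E$ is locally an intrinsic graph (the local representation from \cite{MR4119259} mentioned in the introduction), and second, that inside any Heisenberg subalgebra through $x$ the set $E$ is governed by Cheeger--Kleiner's Theorem~\ref{thm:CK}. After a left-translation I would assume $x=0\in\Nonchar(\partial E)$, so by Proposition~\ref{foglietto} the set $\Hor_0\cap\partial E$ is a codimension-1 affine subspace $W$ of the $3$-dimensional space $\Hor_0=\frakg_1$. The first step is to fix a complementary horizontal direction $y_0\in\frakg_1\setminus\{0\}$ with $y_0\notin W$; then by Proposition~\ref{tretre}, for each point $w\in W$ sufficiently close to $0$, the horizontal line $w\cdot y_0^{\,t}$ crosses $\partial E$ at $w$ and passes from $\Int(E^c)$ to $\Int(E)$ (or the reverse, with a consistent sign by connectedness of $\Nonchar(\partial E)$ locally). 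This gives the intrinsic-graph description of $\partial E$ near $0$ over the horizontal hyperplane $W$.

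Next I would pin down the shape of this graph. For each fixed $w\in W$ near $0$, intersecting with a suitable Heisenberg subalgebra $\frakh$ containing the line $w\cdot y_0^{\,t}$ and an adjacent boundary direction, Theorem~\ref{thm:CK} forces $\partial E\cap\frakh$ to be (locally) a $2$-dimensional affine subspace. Running this over enough $2$-planes through $0$ that sweep out $\free$, and using Lemma~\ref{apro}/Lemma~\ref{linee} to guarantee the pieces fit together affinely along horizontal lines, one sees that $\partial E$ near $0$ is cut out by a single equation $\phi_0=0$ where $\phi_0$ is affine in $\omega\in\Ltwo$ for each fixed $\theta\in\Lone$ and affine in $\theta$ along horizontal lines — i.e. exactly of the quadratic form \eqref{e:phi-introduction}. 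Concretely: write a generic point as $\theta+\omega$; the intrinsic-graph function expresses $\omega$-coordinates as functions of $\theta$; the Heisenberg slices force these to be affine; and the remaining freedom is precisely the coefficient tuple $(\eta_0,\eta_1,\eta_2,\eta_3)$, with $(\eta_0,\eta_1,\eta_2)\ne(0,0,0)$ because $0$ is noncharacteristic (if all these vanished, $\phi_0$ would be constant and $\Hor_0$ would lie in a level set, contradicting $0\in\Nonchar(\partial E)$). That $\phi_0$ so constructed is h-affine is the easy verification recorded after \eqref{e:phi-introduction} in the excerpt.

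Finally I would shrink to an open neighborhood $\calU_0$ on which the intrinsic-graph representation is valid and on which $\phi_0$ has a definite sign on each side: since $\phi_0$ is non constant and h-affine, $\{\phi_0<0\}$ and $\{\phi_0>0\}$ are open, and by the sign consistency from Proposition~\ref{tretre} along the $y_0$-lines they must coincide with $\calU_0\cap\Int(E)$ and $\calU_0\cap\Int(E^c)$ respectively (after possibly replacing $\phi_0$ by $-\phi_0$); the boundary case gives $\calU_0\cap\partial E=\calU_0\cap S_0$. Undoing the translation yields the statement. The main obstacle I anticipate is the gluing step: extracting from the local intrinsic graph plus the family of Heisenberg slices a \emph{single} globally-on-$\calU_x$ defined function of the exact algebraic form \eqref{e:phi-introduction}, rather than just a stratified collection of affine pieces — this requires carefully tracking how the coefficients determined on different $2$-planes through $x$ are forced to agree, and is where the free step-2 rank-3 structure (the identification $\free=\Lone\oplus\Ltwo$ and the bracket $[\theta,\tau]=\theta\wedge\tau$) has to be used in an essential way.
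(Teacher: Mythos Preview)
Your overall strategy matches the paper's: reduce to $x=0$, invoke the intrinsic-graph representation from \cite{MR4119259}, use Heisenberg slices via Theorem~\ref{thm:CK} to constrain the graph function, and then argue that the result is a level set of a single function of the form~\eqref{e:phi-introduction}. You have also correctly located the genuine obstacle --- the ``gluing step'' --- but your sketch does not contain the idea that actually closes it, and one remark is backwards: the intrinsic graph expresses a \emph{horizontal} coordinate as a function of the remaining horizontal and \emph{all} vertical coordinates, not $\omega$ as a function of $\theta$.

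Here is the concrete gap. After one round of Heisenberg-slice arguments (the paper's Lemma~3.3), the intrinsic-graph function $g(\theta,\omega)$ becomes affine in $(\omega_{12},\omega_{13})$, but the coefficients are still arbitrary continuous functions of $(\theta,\omega_{23})$. Running slices ``over enough $2$-planes'' does not by itself force these to be polynomial in the remaining variable $\omega_{23}$; you only ever see affinity in directions you slice along, and $\omega_{23}$ plays a distinguished role once $e_1$ is fixed. The paper resolves this in two nontrivial moves you do not mention. First, it converts the intrinsic graph into a genuine Euclidean graph $\tau_1=f(\tau_2,\tau_3,\zeta)$ (Lemmas~3.4--3.6 and~3.8), after which a further slice argument shows $f$ has the explicit shape~\eqref{e:graph-f} with six unknown continuous functions $A_i,B_i,C$ of the single variable $\zeta_{23}$. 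Second --- and this is the decisive trick --- it chooses a basis with $(e_i,0)\in\Int(E)$ for $i=1,2,3$, so that Proposition~\ref{prop:graph} applies in \emph{three} independent horizontal directions simultaneously (Proposition~\ref{prop:graphs}). Comparing the three graph equations on common slices forces the functions $A_i(\zeta_{23}),B_i(\zeta_{23}),C(\zeta_{23})$ to be rational of a very specific form, namely constants divided by $a_{23}+c\zeta_{23}$ (Lemma~\ref{lem:A^1_3-A^1_2-B^1_1} and the following lemma), and clearing that denominator produces exactly an h-affine function of the shape~\eqref{e:phi-introduction}. Without this three-direction comparison, the residual dependence on $\zeta_{23}$ remains uncontrolled and the argument does not close.
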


This section is devoted to the proof of Proposition~\ref{prop:local}. For notational convenience we will throughout this section identify $\free$ with $\Lambda^1\R^3 \times \Lambda^2\R^3$ and write elements in $\free$ as $x=(\theta,\omega) \in \Lambda^1\R^3 \times \Lambda^2\R^3$. Given a basis $(e_1,e_2,e_3)$ of $\Lone$, we set $e_{ij} := e_i\wedge e_j$ for $1\leq i <j \leq 3$ so that $(e_{12},e_{13},e_{23})$ is a basis of $\Ltwo$. We shall use coordinates in these bases, writing $\theta = \theta_1 e_1 + \theta_2 e_2 + \theta_3 e_3$ and $\omega = \omega_{12} e_{12} + \omega_{13} e_{13} + \omega_{23} e_{23}$ with $\theta_i, \omega_{ij} \in \R$. We denote by $\langle \cdot , \cdot \rangle$ the scalar product on $\Lone$ that makes $(e_1,e_2,e_3)$ an orthonormal basis and we set $e_1^\perp:=\Span\{e_2,e_3\}$.

From now on in this section, we let $E\notin\{\varnothing,\free\}$ denote a precisely monotone subset of $\free$ and, using a left-translation, we assume with no loss of generality that $x=(0,0) \in \Nonchar(\partial E)$.

\subsection{The boundary as a graph near noncharacteristic points}
Since we have $(0,0) \in \Nonchar(\partial E)$, one can find $e_1 \in \Lone \setminus \{0\}$ such that $(e_1,0) \in \Int(E)$. We show in this section, see Proposition~\ref{prop:graph}, that for any choice of such an $e_1$ and any choice of $e_2,e_3 \in \Lone$ so that $(e_1,e_2,e_3)$ is a basis of $\Lone$, one can write $\partial E$ as a graph over $e_1^\perp \times \Ltwo$ near the origin. More importantly, we also get information about the structure of the graph function, see~\eqref{e:graph-f}, that will play a key role later on.

\begin{proposition} \label{prop:graph}
Let $(e_1,e_2,e_3)$ be a basis of $\Lone$ such that $(e_1,0) \in \Int(E)$. There is $\delta >0$ and there are continous functions $A_i: (-\delta,\delta) \rightarrow \R$, $i=2,3$, $B_i : (-\delta,\delta) \rightarrow \R$, $i=1,2,3$, and $C: (-\delta,\delta) \rightarrow \R$ such that the following holds true. Set $W:=\{(\tau,\zeta) \in e_1^\perp \times \Ltwo:\,|\tau_i| < \delta, i=2,3, \, |\zeta_{ij}| < \delta, 1\leq i < j \leq 3\}$ and  $O:=\{(se_1 + \tau,\zeta) \in \free: \, s\in (-1,1),\, (\tau,\zeta) \in W\}$. Then 
\begin{equation}\label{e:local-graph-1} 
\left\{
\begin{aligned}
&O\cap \p E  = \left\{( f(\tau,\zeta) e_1 + \tau   ,\zeta ) \in O :\, (\tau ,\zeta)\in W\right\} \\
&O \cap\Int(E)= \left\{( s e_1 + \tau ,\zeta ) \in O : \, (\tau ,\zeta)\in W,\,  f(\tau ,\zeta)<s<1 \right\} \\
&O\cap\Int(E^c)= \left\{( s e_1 + \tau  ,\zeta ) \in O :\, (\tau ,\zeta)\in W,\, -1<s< f(\tau ,\zeta) \right\}
\end{aligned}
\right.
\end{equation}
where $f : W \rightarrow (-1,1)$ is given by
\begin{multline} \label{e:graph-f}
f(\tau , \zeta): =  A_3(\zeta_{23}) \tau _2 - A_2(\zeta_{23}) \tau _3  - B_1(\zeta_{23}) - B_3(\zeta_{23}) \zeta_{12} + B_2(\zeta_{23}) \zeta_{13} \\
 + C(\zeta_{23})  \tau _2 \zeta_{13} - C(\zeta_{23}) \tau _3 \zeta_{12}~. 
\end{multline}
\end{proposition}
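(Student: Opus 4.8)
The plan is to first obtain \eqref{e:local-graph-1} with \emph{some} continuous $f$, and then to upgrade its shape to \eqref{e:graph-f} by probing $\partial E$ along two-dimensional slices. For the graph representation, given $(\tau,\zeta)$ in a small neighbourhood of $0$ in $e_1^\perp\times\Ltwo$, I would look at the horizontal line $\ell_{\tau,\zeta}:=(\tau,\zeta)\cdot\{(se_1,0):s\in\R\}=\{(se_1+\tau,\,\zeta-s\tau_2e_{12}-s\tau_3e_{13}):s\in\R\}$. Since $(e_1,0)\in\Int(E)$, which is open, the point of $\ell_{\tau,\zeta}$ with $s$ near $1$ lies in $\Int(E)$ and, by Proposition~\ref{tretre}, the one with $s$ near $-1$ lies in $\Int(E^c)$, provided $(\tau,\zeta)$ is small enough. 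Precise monotonicity forces $\ell_{\tau,\zeta}\cap E$ and $\ell_{\tau,\zeta}\cap E^c$ to be two intervals partitioning $\ell_{\tau,\zeta}$; their common boundary point lies in $\partial E$ and, by Lemma~\ref{linee}, is the only point of $\ell_{\tau,\zeta}\cap\partial E$, since $\ell_{\tau,\zeta}$ meets $\Int(E)$. Its $s$-value depends continuously on $(\tau,\zeta)$ because $\partial E$ is closed, and passing from this fibre parametrisation to a Euclidean graph over $e_1^\perp\times\Ltwo$ — a routine inversion that only moves the $\zeta_{12},\zeta_{13}$ coordinates by $O(\tau)$ — produces a continuous $f$ with $f(0,0)=0$ satisfying \eqref{e:local-graph-1} for suitable $\delta$, the orientation ``$\Int(E)$ above the graph'' being dictated by $(e_1,0)\in\Int(E)$. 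This is the intrinsic-graph representation of \cite{MR4119259} near noncharacteristic points, rephrased in Euclidean form.

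To upgrade the shape of $f$, the crucial observation is that every point $p=(f(\tau,\zeta)e_1+\tau,\zeta)$ of the graph belongs to $\Nonchar(\partial E)$: the fibre through $p$ is contained in $\Hor_p$ and meets $\Int(E)$, so $\Hor_p\not\subset\partial E$. Proposition~\ref{foglietto} therefore gives that $\Hor_p\cap\partial E$ is a $2$-dimensional affine subspace of $\Hor_p$ for every such $p$. Since $\xi\mapsto p\cdot\xi$ is affine on $\frakg_1=\Lone$, the $\theta$-coordinates form an affine chart on $\Hor_p$, in which $\Hor_p\cap\partial E$ is simultaneously a $2$-plane and the portion of the graph lying above $\Hor_p$; writing this out yields functional identities forcing various one- and two-parameter restrictions of $f$ to be affine. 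I would combine this with a second rigidity input: for every $2$-dimensional subspace $V\subset\Lone$ and every coset of the Heisenberg subalgebra $V\oplus\Lambda^2 V$, the trace of $E$ is a precisely monotone subset there, so by Theorem~\ref{thm:CK} its relative boundary is a $2$-plane; the local graph near $0$ identifies this relative boundary with the graph restricted to the coset, so that, taking $V=\Span\{e_1,e_2\}$ and $V=\Span\{e_1,e_3\}$ and letting the coset vary, one learns how $f$ depends on $(\tau_2,\zeta_{12})$ and on $(\tau_3,\zeta_{13})$ along the relevant slices. By contrast the central direction $e_{23}=[e_2,e_3]$ of $e_1^\perp$ is not pinned down this way and survives as the free parameter $\zeta_{23}$ on which the coefficients may depend.

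Assembling these constraints should show that, for each fixed $\zeta_{23}$, the map $(\tau_2,\tau_3,\zeta_{12},\zeta_{13})\mapsto f$ is a polynomial of degree at most $2$, affine separately in $(\tau_2,\tau_3)$ and in $(\zeta_{12},\zeta_{13})$, whose only surviving quadratic part is a multiple of the pairing $\tau_2\zeta_{13}-\tau_3\zeta_{12}$; this is exactly \eqref{e:graph-f}, and continuity of $A_2,A_3,B_1,B_2,B_3,C$ in $\zeta_{23}$ follows by evaluating the continuous $f$ at finitely many base points and solving for the coefficients. The main obstacle is precisely this last rigidity step: eliminating the ``wrong'' quadratic monomials and, above all, showing that a single coefficient $C(\zeta_{23})$ governs both cross-terms $\tau_2\zeta_{13}$ and $-\tau_3\zeta_{12}$ — this coupling is the algebraic signature of the quadratic h-affine functions that exist on $\free$ but not on the Heisenberg or M\'etivier algebras, and it is what makes the analysis here genuinely more delicate than in \cite{CheegerKleiner10,NaorYoung,MR4119259}. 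A minor point to be careful about throughout is checking that $\partial E$ intersected with a Heisenberg coset coincides with the boundary of $E$ relative to that coset, so that Theorem~\ref{thm:CK} may be invoked; near $0$ this is guaranteed by the graph structure already obtained.
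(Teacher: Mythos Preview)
Your outline follows the paper's strategy closely: intrinsic graph via \cite{MR4119259}, conversion to a Euclidean graph, and then Heisenberg slicing together with Proposition~\ref{foglietto} to constrain the shape of $f$. Two points deserve comment.

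First, a logical ordering issue. You propose to pass from the intrinsic graph (map $g$ over the horizontal fibres $\ell_{\tau,\zeta}$) to a Euclidean graph first, calling this ``a routine inversion''. The paper does it in the opposite order: it first uses the Heisenberg slices $\Span\{e_1,\xi\}\oplus\Span\{e_1\wedge\xi\}$, for every $\xi\in e_1^\perp$, to show that the intrinsic-graph map $g(\theta,\omega)$ is \emph{affine} in $(\omega_{12},\omega_{13})$ (Lemma~\ref{lem:the-function-g}). Only then does the fixed-point equation $s=g(\tau,\zeta-s\,\tau\wedge e_1)$ become linear in $s$, with the explicit solution $f=(1-\langle q,\tau\rangle)^{-1}g$ (Lemma~\ref{lem:definition-function-f}), and one checks the resulting change of variables is a homeomorphism (Lemmas~\ref{lem:change-of-coordinates}--\ref{lem:local-euclidean-graph}). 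Without the affinity of $g$ the inversion is not routine: Euclidean $e_1$-lines are not horizontal, so precise monotonicity gives no a~priori control on the number of boundary crossings along them, and $g$ is merely continuous.

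Second, the step you rightly flag as the main obstacle --- a single $C(\zeta_{23})$ governing both $\tau_2\zeta_{13}$ and $-\tau_3\zeta_{12}$ --- is not forced by slicing only with $V=\Span\{e_1,e_2\}$ and $V=\Span\{e_1,e_3\}$; one needs $V=\Span\{e_1,\xi\}$ for \emph{all} $\xi\in e_1^\perp$. Writing $f(\tau,\zeta)=F_0(\tau,\zeta_{23})-F_3(\tau,\zeta_{23})\zeta_{12}+F_2(\tau,\zeta_{23})\zeta_{13}$ and $F:=-F_3e_2+F_2e_3$, the paper (Lemma~\ref{lem:F}) fixes $\zeta_{23}$, takes the boundary point $p=(F_0(0)\,e_1,\zeta_{23}e_{23})$, and uses that for each $\xi$ the Heisenberg slice through $p$ contributes a $2$-plane to $\partial E$ (this is~\eqref{e:larger-set-bis}). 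Matching these planes against the Euclidean graph and comparing the coefficients of the free parameter along $e_1\wedge\xi$ gives $\langle F(s\xi),\xi\rangle=\langle F(0),\xi\rangle$ for all small $s$ and all $\xi\in e_1^\perp$. With the first-order expansion $F_k(\tau)=F_k(0)+\langle D_k,\tau\rangle+\cdots$ already in hand, this forces $-\langle D_3,\xi\rangle\xi_2+\langle D_2,\xi\rangle\xi_3=0$ for every $\xi$, hence $D_2=C e_2$ and $D_3=C e_3$ for one scalar $C=C(\zeta_{23})$. That is exactly the coupling you were missing, and it also kills the remainder term, leaving the $F_k$ genuinely affine in $\tau$.
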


Our starting point to prove Proposition~\ref{prop:graph} is given by~\cite[Theorem~3.7]{MR4119259} from which we know that near the origin $\partial E$ is a so-called intrinsic graph. Namely, set $\ell_1:=\{(se_1,0)\in \free: \, s\in \R\}$ and $\ell_1^+ :=\{(se_1,0)\in \free: \, s>0\}$, $\ell_1^- :=\{(se_1,0)\in \free: \, s<0\}$. By~\cite[Theorem~3.7]{MR4119259} we know that there is $\epsilon >0$ such that, setting  $U:=\{(\theta,\omega) \in  e_1^\perp \times \Ltwo : \, |\theta_2|, |\theta_3|, |\omega_{ij}| < \epsilon, 1\leq i < j \leq 3\}$ and $ \Theta  := U \cdot \ell_1$, the following holds true. There is a continuous function $g:U\to\R$ such that $g(0,0)=0$ and
\begin{equation}\label{e:local-intrinsic-graph} 
\left\{
\begin{aligned}
&\Theta\cap \p E  = \left\{( \theta , \omega ) \cdot (g(\theta , \omega)e_1,0) : (\theta ,\omega  )\in U\right\} \\
&\Theta\cap\Int(E)= ( \Theta\cap \p E) \cdot \ell_1^+ \\
&\Theta\cap\Int(E^c)=( \Theta\cap \p E) \cdot \ell_1^-~.
\end{aligned}
\right.
\end{equation}

We first use Theorem~\ref{thm:CK} and Proposition~\ref{foglietto} to get information about the structure of the map $g$ together with a set constructed from $\Theta\cap \p E$ that is contained in $\partial E$.

\begin{lemma} \label{lem:the-function-g}
There are continuous functions $q_0, q_2,q_3: \{\theta \in e_1^\perp  : \, |\theta_2|, |\theta_3| < \epsilon\} \times (-\epsilon,\epsilon) \rightarrow \R$ such that  
\begin{equation} \label{e:g}
g(\theta,\omega) = q_0(\theta,\omega_{23}) + q_2(\theta,\omega_{23}) \omega_{12} + q_3(\theta,\omega_{23})  \omega_{13}
\end{equation}
for all $(\theta,\omega) \in U$ and there is a map $n:U \rightarrow e_1^\perp$ such that
\begin{multline} \label{e:larger-set-inside-the-boundary}
(\theta,\omega) \cdot (g(\theta,\omega)e_1,0) \cdot  \{(\langle n(\theta,\omega),\xi\rangle + \langle q(\theta,\omega_{23}),\xi'\rangle)e_1 + \xi, e_1\wedge\xi') \in\free: \\ \, \xi ,\xi' \in e_1^\perp, \xi\wedge \xi' =0\} \subset \partial E~.
\end{multline}
for all $(\theta,\omega) \in U$, where $q(\theta,\omega_{23}) := q_2(\theta,\omega_{23}) e_2 + q_3(\theta,\omega_{23}) e_3$.
\end{lemma}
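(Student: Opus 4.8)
\textbf{Proof plan for Lemma~\ref{lem:the-function-g}.}

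The plan is to exploit the intrinsic-graph representation~\eqref{e:local-intrinsic-graph} together with the affine structure of $\Hor_x \cap \partial E$ provided by Lemma~\ref{apro} and Proposition~\ref{foglietto}, and Cheeger--Kleiner's classification (Theorem~\ref{thm:CK}) applied inside suitable Heisenberg subalgebras of $\free$. First I would fix $(\theta,\omega) \in U$ and set $p := (\theta,\omega) \cdot (g(\theta,\omega)e_1,0) \in \partial E$, so that $p$ lies on the horizontal line through $(\theta,\omega)$ in the $e_1$-direction. The key geometric input is that $\Hor_p$ is a $3$-dimensional affine subspace of $\free$ and, by Proposition~\ref{foglietto}, $\Hor_p \cap \partial E$ is a codimension-$1$ (hence $2$-dimensional) affine subspace of $\Hor_p$ because $p \in \Nonchar(\partial E)$ — the latter holding since $p$ sits on the intrinsic graph and the end-curves $\ell_1^\pm$ through $p$ meet $\Int(E)$ and $\Int(E^c)$ by~\eqref{e:local-intrinsic-graph}. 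Writing $\Hor_p = p \cdot \{(\xi + te_1, 0): \xi \in e_1^\perp, t \in \R\}$ and parametrizing by $(\xi,t)$, the fact that $\Hor_p \cap \partial E$ is a $2$-plane containing $p$ forces a relation $t = \langle n(\theta,\omega), \xi\rangle$ for some linear functional $n(\theta,\omega) \in e_1^\perp$, which is exactly the $\xi' = 0$ slice of~\eqref{e:larger-set-inside-the-boundary}.

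Next I would extract the dependence on the $\omega_{12}, \omega_{13}$ coordinates. The point is that varying $\omega$ within a plane of the form $\omega + e_1 \wedge \xi'$ (with $\xi' \in e_1^\perp$) while adjusting the $e_1$-coordinate corresponds to moving along a horizontal line: from the group law, $(\theta + \xi, \omega) \cdot (se_1, 0) = (\theta + \xi + se_1, \omega + s\, e_1\wedge\xi + \ldots)$, so the $\Ltwo$-increment produced by an $e_1$-step is precisely of the form $e_1 \wedge (\text{something in } e_1^\perp)$. Thus the set of points of $\partial E$ reachable from $p$ along horizontal lines in the $e_1$-direction, after first moving in $e_1^\perp$-directions, is constrained to an affine subspace; combining this with Lemma~\ref{apro} applied at $p$ (the boundary is affine on each horizontal space) and with Theorem~\ref{thm:CK} applied to $E \cap \frakh$ for the Heisenberg subalgebra $\frakh = \Span\{e_1, \xi\} \oplus \Span\{e_1\wedge\xi\}$, I get that the graph function $g$ restricted to such slices is affine in the $(\omega_{12},\omega_{13})$-variables with coefficients depending only on $\theta$ and $\omega_{23}$ — this is~\eqref{e:g}, with $q_2, q_3$ the slopes and $q_0$ the value at $\omega_{12} = \omega_{13} = 0$. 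Continuity of $q_0, q_2, q_3$ follows from continuity of $g$ by evaluating at three affinely independent choices of $(\omega_{12},\omega_{13})$ and inverting the resulting linear system.

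Finally, to assemble~\eqref{e:larger-set-inside-the-boundary} in full (both the $\xi$ and $\xi'$ directions simultaneously), I would argue that once we know $\partial E$ near $p$ contains the $\xi'=0$ affine slice $\{(\langle n(\theta,\omega),\xi\rangle e_1 + \xi, 0)\}$ and that $g$ is affine in the $\omega_{12},\omega_{13}$-directions with slopes $q_2,q_3$, then the two families of horizontal lines sweep out the stated set: a point with parameters $(\xi,\xi')$ satisfying $\xi \wedge \xi' = 0$ (so that $\xi,\xi'$ are proportional and the relevant $2$-plane of $\Ltwo$-values is one-dimensional, keeping us inside a single Heisenberg slice) lies on a horizontal line joining two points already shown to be in $\partial E$, so Lemma~\ref{linee} puts it in $\partial E$. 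The main obstacle I anticipate is the bookkeeping in this last step: one must carefully track, via the group law, which pairs of boundary points actually lie on a common horizontal line — this is why the constraint $\xi \wedge \xi' = 0$ appears — and check that the maps $n$ and $q$ produced by the Heisenberg-slice arguments at different base points $(\theta,\omega)$ are mutually consistent so that a single function $n: U \to e_1^\perp$ works. Verifying the continuity and the compatibility of these locally-defined affine data, rather than any single computation, is where the real care is needed.
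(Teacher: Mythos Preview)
Your plan is correct and essentially the same as the paper's: define $n(\theta,\omega)$ via Proposition~\ref{foglietto}, apply Theorem~\ref{thm:CK} in each Heisenberg subalgebra $\Span\{e_1,\xi\}\oplus\Span\{e_1\wedge\xi\}$ to get a 2-plane $x\cdot\{((\alpha s+\beta t)e_1+s\xi,\,te_1\wedge\xi):s,t\in\R\}\subset\partial E$, identify $\alpha=\langle n,\xi\rangle$, and use intrinsic-graph uniqueness to deduce $g(\theta,\omega+te_1\wedge\xi)=g(\theta,\omega)+\beta t$, which gives~\eqref{e:g} and $\beta=\langle q,\xi\rangle$; the inclusion~\eqref{e:larger-set-inside-the-boundary} then follows directly from these 2-planes, since $\xi\wedge\xi'=0$ precisely means $(\xi,\xi')$ sits inside a single such Heisenberg slice, so no separate appeal to Lemma~\ref{linee} is needed. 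Your anticipated compatibility obstacle is a non-issue: $n(\theta,\omega)$ is defined pointwise by the unique 2-plane $\Hor_x\cap\partial E$ (transversal to $e_1$), and no consistency condition between different base points ever arises.
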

 
\begin{proof}
Let $(\theta,\omega) \in U$ and set $x:=( \theta , \omega \big) \cdot\big(g(\theta , \omega)e_1,0)$. First, note that we know from~\eqref{e:local-intrinsic-graph} that $x\in \Nonchar(\partial E)$ with $x\cdot (e_1,0) \not \in \partial E$. By Proposition~\ref{foglietto}, it follows that $\Hor_x \cap \partial E$ is a 2-dimensional affine subspace of $\Hor_x$ that does not contain $x\cdot (e_1,0)$, i.e., there is $n(\theta,\omega) \in e_1^\perp$ such that 
\begin{equation} \label{e:hor-boundary}
\Hor_x \cap \partial E = x \cdot \{ (\langle n(\theta,\omega), \xi \rangle e_1 + \xi, 0) \in \free:\, \xi \in e_1^\perp \}~.
\end{equation}

Next, set $F:= x^{-1} \cdot E$. Let $\xi \in e_1^\perp \setminus \{0\}$ and let $\frakh:=\Span\{e_1,\xi\} \times \Span\{e_1 \wedge \xi\}$ denote the Lie subalgebra of $\free$ generated by $e_1$ and $\xi$. Then $F \cap \frakh$ is a precisely monotone subset of the Heisenberg algebra $\frakh$. Furthermore, we know from~\eqref{e:local-intrinsic-graph} that
\begin{equation*}
\ell_1^+ \subset  \Int(F) \cap \frakh \subset \Int_\frakh (F \cap \frakh) \quad \text{and} \quad 
\ell_1^- \subset  \Int(F^c) \cap \frakh \subset \Int_\frakh (F^c \cap \frakh)~.
\end{equation*}
Therefore $F \cap \frakh \not \in \{\emptyset,\frakh\}$. By Theorem~\ref{thm:CK}, it follows that $\partial_\frakh(F\cap \frakh)$ is a 2-dimensional linear subspace of $\frakh$ that does not contain $\ell_1$ and $\Int_\frakh (F \cap \frakh)$ and $\Int_\frakh (F^c \cap \frakh)$ are the open half-spaces in $\frakh$ bounded by $\partial_\frakh(F\cap \frakh)$. Since $\Int_\frakh (F \cap \frakh) \subset F$ and  $\Int_\frakh (F^c \cap \frakh) \subset F^c$, it follows that $ \partial_\frakh(F\cap \frakh) \subset \partial F$ and therefore there are $ \alpha_{\theta,\omega,\xi}, \beta_{\theta,\omega,\xi} \in \R$ such that 
\begin{equation} \label{e:alpha-beta}
x \cdot \left\{ ((\alpha_{\theta,\omega,\xi} \,s + \beta_{\theta,\omega,\xi} \, t) e_1 + s \xi ,  t e_1\wedge \xi) \in \free:\, s,t \in \R\right\} \subset \partial E~.
\end{equation}
Then it follows from~\eqref{e:hor-boundary} that 
\begin{equation} \label{e:alpha}
\alpha_{\theta,\omega,\xi} = \langle n(\theta,\omega), \xi \rangle~.
\end{equation}

We have 
\begin{equation*}
x\cdot (\beta_{\theta,\omega,\xi} \, t e_1,t e_1\wedge \xi) = (\theta,\omega + t e_1\wedge \xi) \cdot ((g(\theta,\omega) + \beta_{\theta,\omega,\xi} \, t)e_1,0)~.
\end{equation*}
Since any point in $\free$ can be uniquely written as $x' \cdot y'$ with $x' \in e_1^\perp \times \Ltwo$ and $y'\in \ell_1$, it follows from~\eqref{e:local-intrinsic-graph} and~\eqref{e:alpha-beta} that for every $t\in\R$ small enough so that $(\theta,\omega + t e_1\wedge \xi) \in U$ we have
\begin{equation} \label{e:g-affine}
g(\theta,\omega + t e_1\wedge \xi) = g(\theta,\omega) + \beta_{\theta,\omega,\xi} \, t~.
\end{equation}
This implies that there are functions $q_0, q_2,q_3: \{\theta \in e_1^\perp  : \, |\theta_2|, |\theta_3| < \epsilon\} \times (-\epsilon,\epsilon) \rightarrow \R$ such that~\eqref{e:g} holds true. Since $g$ is continuous, we also get that the functions $q_0, q_2,q_3$ are continuous. Going back to~\eqref{e:g-affine}, we get that for all $(\theta,\omega) \in U$, all $\xi \in e_1^\perp$, and all $t\in \R$ small enough,
\begin{equation*}
\begin{split}
g(\theta,\omega) + \beta_{\theta,\omega,\xi} \, t &= q_0(\theta,\omega_{23}) + q_2(\theta,\omega_{23}) (\omega_{12}+ t \xi_2) + q_3(\theta,\omega_{23})  (\omega_{13}+t \xi_3)\\
&= g(\theta,\omega) + t \langle q(\theta,\omega_{23}) , \xi \rangle
\end{split}
\end{equation*}
where $q(\theta,\omega_{23}) := q_2(\theta,\omega_{23}) e_2 + q_3(\theta,\omega_{23}) e_3$. Therefore 
\begin{equation} \label{e:beta}
\beta_{\theta,\omega,\xi} = \langle q(\theta,\omega_{23}) , \xi \rangle
\end{equation}
and~\eqref{e:larger-set-inside-the-boundary} follows from~\eqref{e:alpha-beta} together~\eqref{e:alpha} and~\eqref{e:beta}. 
\end{proof}

\begin{lemma} \label{lem:definition-function-f}
There is an open neighborhood $V\subset U$ of the origin in $e_1^\perp \times \Ltwo$ such that for every $(\tau ,\zeta) \in V$ there is a unique $s\in (-1,1)$ such that 
\begin{equation} \label{e:equation-def-f}
s= g(\tau , \zeta - s\tau \wedge e_1)~.
\end{equation}
Namely, denoting by $f(\tau ,\zeta) \in (-1,1)$ the unique solution of~\eqref{e:equation-def-f}, we have
\begin{equation} \label{e:def-f}
f(\tau ,\zeta) = \frac{g(\tau ,\zeta)}{1-\langle q(\tau ,\zeta_{23}),\tau \rangle}~.
\end{equation}
\end{lemma}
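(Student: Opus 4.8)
The plan is to exploit the special structure of $g$ recorded in Lemma~\ref{lem:the-function-g}, which will turn out to make the defining equation~\eqref{e:equation-def-f} \emph{affine} in the unknown $s$. First I would make the bracket explicit: writing $\tau = \tau_2 e_2 + \tau_3 e_3 \in e_1^\perp$ one has $\tau \wedge e_1 = -\tau_2 e_{12} - \tau_3 e_{13}$, so the $2$-form $\zeta - s\,\tau\wedge e_1$ has coordinates $(\zeta_{12} + s\tau_2,\ \zeta_{13} + s\tau_3,\ \zeta_{23})$; in particular its $e_{23}$-coordinate is $\zeta_{23}$, independently of $s$, which is exactly what allows the argument of $q_0,q_2,q_3$ in~\eqref{e:g} to stay fixed.

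Next I would choose a small neighborhood in which everything is licit. Fix $\delta' \in (0,\epsilon/2)$ small enough that, setting $V := \{(\tau,\zeta)\in e_1^\perp\times\Ltwo : |\tau_i| < \delta',\ |\zeta_{ij}| < \delta'\}$, one has $(\tau,\ \zeta - s\,\tau\wedge e_1)\in U$ for every $(\tau,\zeta)\in V$ and every $s\in[-1,1]$ — this only needs $|\zeta_{ij} + s\tau_i| < 2\delta' < \epsilon$. Then for $(\tau,\zeta)\in V$ and $s\in(-1,1)$, equation~\eqref{e:equation-def-f} makes sense and, substituting formula~\eqref{e:g} for $g$, it becomes
\[
s = q_0(\tau,\zeta_{23}) + q_2(\tau,\zeta_{23})(\zeta_{12} + s\tau_2) + q_3(\tau,\zeta_{23})(\zeta_{13} + s\tau_3) = g(\tau,\zeta) + s\,\langle q(\tau,\zeta_{23}),\tau\rangle,
\]
that is, $s\,\bigl(1 - \langle q(\tau,\zeta_{23}),\tau\rangle\bigr) = g(\tau,\zeta)$, a genuinely linear equation in $s$ whose right-hand side is $g(\tau,\zeta)$ by~\eqref{e:g} again.

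It then remains to invert the coefficient and check admissibility. Since $q_2,q_3$ are continuous, hence bounded on a compact neighborhood of the origin, we have $\langle q(\tau,\zeta_{23}),\tau\rangle \to 0$ as $(\tau,\zeta)\to 0$, so after shrinking $\delta'$ I may assume $|\langle q(\tau,\zeta_{23}),\tau\rangle| \leq 1/2$ on $V$; then $1 - \langle q(\tau,\zeta_{23}),\tau\rangle \geq 1/2 > 0$ and the linear equation has the unique solution $f(\tau,\zeta) := g(\tau,\zeta)/\bigl(1 - \langle q(\tau,\zeta_{23}),\tau\rangle\bigr)$, which is~\eqref{e:def-f}. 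Finally, as $g$ is continuous with $g(0,0)=0$ and the denominator is bounded below by $1/2$ on $V$, one last shrinking of $\delta'$ ensures $|f(\tau,\zeta)| < 1$, so $f(\tau,\zeta)$ is indeed the unique element of $(-1,1)$ solving~\eqref{e:equation-def-f}. I do not expect a real obstacle here: the only point deserving a little care is to perform the successive shrinkings of $V$ in the correct order, so that $g$ is always evaluated inside its domain $U$ and so that the candidate solution lands in $(-1,1)$.
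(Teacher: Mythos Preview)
Your proof is correct and follows exactly the same approach as the paper's: the paper simply calls the lemma ``a straightforward consequence of~\eqref{e:g}'' and lists the three conditions the neighborhood $V$ must satisfy (that $(\tau,\zeta - s\tau\wedge e_1)\in U$, that $1-\langle q(\tau,\zeta_{23}),\tau\rangle\neq 0$, and that the resulting quotient lies in $(-1,1)$), while you spell out the affine-in-$s$ computation explicitly and perform the shrinkings one at a time.
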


\begin{proof}
The lemma is a straightforward consequence of~\eqref{e:g} letting $V$ be a small enough open neighborhhod of the origin in $e_1^\perp \times \Ltwo$ choosen in such a way that for all $(\tau ,\zeta) \in V$ we have $(\tau , \zeta - s\tau \wedge e_1) \in U$ for all $s\in (-1,1)$, $1-\langle q(\tau ,\zeta_{23}),\tau \rangle \not=0$, and $(1-\langle q(\tau ,\zeta_{23}),\tau \rangle)^{-1} g(\tau ,\zeta)\in (-1,1)$. Note that such a $V$ does exist by continuity of the functions $g$ and $q$.
\end{proof}

\begin{lemma} \label{lem:change-of-coordinates}
There are open neighborhoods $U'\subset U$ and $V'\subset V$ of the origin in $e_1^\perp \times \Ltwo$ such that the map $\Gamma: U' \rightarrow V'$ defined by $\Gamma(\theta,\omega):= (\theta,\omega+g(\theta,\omega) \theta\wedge e_1)$ is a homeomorphism from $U'$ to $V'=\Gamma(U')$. Furthermore we have $g=f\circ \Gamma$ on $U'$. 
\end{lemma}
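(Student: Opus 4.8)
The plan is to construct the homeomorphism $\Gamma$ explicitly and verify that the defining equation for $f$ turns into the graph equation for $g$ under this change of variables. First I would note that $\Gamma(\theta,\omega) = (\theta,\omega + g(\theta,\omega)\theta\wedge e_1)$ is continuous on $U$ by continuity of $g$, and that it fixes the $e_1^\perp$-component while only shifting the $\Ltwo$-component. Its candidate inverse is the map $(\tau,\zeta)\mapsto (\tau,\zeta - f(\tau,\zeta)\tau\wedge e_1)$, where $f$ is the function from Lemma~\ref{lem:definition-function-f}. The key algebraic identity to check is that if $(\theta,\omega)\in U'$ and we set $(\tau,\zeta):=\Gamma(\theta,\omega)$, so $\tau=\theta$ and $\zeta = \omega + g(\theta,\omega)\theta\wedge e_1$, then $s:=g(\theta,\omega)$ solves $s = g(\tau,\zeta - s\tau\wedge e_1)$; indeed $\zeta - s\tau\wedge e_1 = \omega + g(\theta,\omega)\theta\wedge e_1 - g(\theta,\omega)\theta\wedge e_1 = \omega$, so the equation reads $g(\theta,\omega) = g(\theta,\omega)$. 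By the uniqueness clause of Lemma~\ref{lem:definition-function-f}, this forces $f(\Gamma(\theta,\omega)) = g(\theta,\omega)$, which is precisely the asserted identity $g = f\circ\Gamma$ on $U'$, and it also shows that $(\tau,\zeta)\mapsto(\tau,\zeta - f(\tau,\zeta)\tau\wedge e_1)$ is a left inverse of $\Gamma$.

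Conversely, starting from $(\tau,\zeta)\in V$ and setting $(\theta,\omega):=(\tau,\zeta - f(\tau,\zeta)\tau\wedge e_1)$, the defining relation $f(\tau,\zeta) = g(\tau,\zeta - f(\tau,\zeta)\tau\wedge e_1) = g(\theta,\omega)$ gives $\omega + g(\theta,\omega)\theta\wedge e_1 = \zeta - f(\tau,\zeta)\tau\wedge e_1 + g(\theta,\omega)\tau\wedge e_1 = \zeta$, i.e. $\Gamma(\theta,\omega) = (\tau,\zeta)$. So $\Gamma$ is a continuous bijection between suitable neighborhoods with continuous inverse given by the explicit formula above (continuity of $f$ follows from \eqref{e:def-f} and continuity of $g$ and $q$). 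The only remaining point is domain bookkeeping: I would choose $U'\subset U$ a small enough open neighborhood of the origin on which $\Gamma$ takes values in $V$ (possible since $\Gamma(0,0)=(0,0)$ and $\Gamma$ is continuous and $V$ is open), then set $V':=\Gamma(U')$; to see $V'$ is open and $\Gamma|_{U'}$ is a homeomorphism onto it, I would further shrink so that the explicit inverse map $\Psi(\tau,\zeta):=(\tau,\zeta-f(\tau,\zeta)\tau\wedge e_1)$ sends an open neighborhood $V''$ of the origin inside $U'$, note $\Psi$ is continuous, $\Gamma\circ\Psi=\mathrm{id}$ on $V''$ and $\Psi\circ\Gamma=\mathrm{id}$ on $U'$, and replace $U'$ by $\Gamma^{-1}(V'')\cap U'$ and $V'$ by the corresponding image; then $\Gamma:U'\to V'$ is a homeomorphism with inverse $\Psi$.

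I do not expect any genuine obstacle here: the statement is essentially bookkeeping around the fixed-point equation \eqref{e:equation-def-f}, and the one-line computation $\zeta - g(\theta,\omega)\theta\wedge e_1 = \omega$ after applying $\Gamma$ is the whole content. The mildly delicate part is only the standard nuisance of shrinking neighborhoods so that all compositions are well-defined and land in the correct sets, which is handled by continuity of $f$, $g$, $q$ together with the fact that everything fixes the origin. In particular no inverse function theorem or degree argument is needed, since the inverse of $\Gamma$ is written down explicitly in terms of $f$.
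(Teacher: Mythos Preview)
Your argument is correct and the derivation of $g=f\circ\Gamma$ is identical to the paper's: both plug $s=g(\theta,\omega)$ into the defining equation~\eqref{e:equation-def-f} and invoke the uniqueness clause of Lemma~\ref{lem:definition-function-f} (note that for this you also need $g(\theta,\omega)\in(-1,1)$ on $U'$, which you should add to your shrinking conditions).

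The one genuine difference is how you conclude that $\Gamma$ is a homeomorphism onto an open set. The paper proves injectivity of $\Gamma$ directly from $g=f\circ\Gamma$ and then invokes invariance of domain to get openness of $V'=\Gamma(U')$ and continuity of the inverse in one stroke. You instead write down the inverse $\Psi(\tau,\zeta)=(\tau,\zeta-f(\tau,\zeta)\tau\wedge e_1)$ explicitly, verify $\Gamma\circ\Psi=\mathrm{id}$ and $\Psi\circ\Gamma=\mathrm{id}$, and then do a round of neighborhood shrinking to make the domains match. Your route is more elementary (no appeal to Brouwer), at the cost of slightly more bookkeeping; the paper's route is cleaner but relies on a nontrivial topological theorem. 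Either is fine here.
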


\begin{proof}
We know that the map $\Gamma$ given by $\Gamma(\theta,\omega):= (\theta,\omega+g(\theta,\omega) \theta\wedge e_1)$ is well-defined and continuous on $U$. We let $U'\subset U$ be a small enough open neighborhood of the origin in $e_1^\perp \times \Ltwo$ choosen in such a way that for all $(\theta,\omega) \in U'$ we have $g(\theta,\omega) \in (-1,1)$ and $\Gamma(\theta,\omega) \in V$ where $V$ is given by Lemma~\ref{lem:definition-function-f}. Let $(\theta,\omega) \in U'$ and set $(\tau ,\zeta) := \Gamma(\theta,\omega)$. We have $g(\theta,\omega) \in (-1,1)$ and $g(\theta,\omega) = g(\tau ,\zeta - g(\theta,\omega) \tau \wedge e_1)$. Therefore it follows from Lemma~\ref{lem:definition-function-f} that $g(\theta,\omega) = f(\tau ,\zeta)$. In other words, we have $g= f\circ \Gamma$ on $U'$. 

To conclude the proof of the lemma, let us verify that $\Gamma:U' \rightarrow e_1^\perp \times \Ltwo$ is injective. Let $(\theta,\omega), (\theta',\omega')\in U'$ be such that $\Gamma(\theta,\omega) = \Gamma(\theta',\omega')$. On the one hand, by definition of $\Gamma$, we have $\theta = \theta'$ and $\omega+g(\theta,\omega) \theta\wedge e_1 = \omega'+g(\theta',\omega')  \theta'\wedge e_1$. On the other hand, since $g=f\circ \Gamma$ on $U'$, we have $g(\theta,\omega) = g(\theta',\omega')$ and all together it follows that $(\theta,\omega)= (\theta',\omega')$. Therefore $\Gamma:U' \rightarrow e_1^\perp \times \Ltwo$ is a continuous and injective map which implies that $V':=\Gamma(U')$ is an open neighborhood of the origin in $e_1^\perp \times \Ltwo$ and $\Gamma:U' \rightarrow V'$ is a homeomorphism.
\end{proof}

We now use the change of variables provided by Lemma~\ref{lem:change-of-coordinates} to write $\partial E$ as a graph, in the usual sense, over $e_1^\perp \times \Ltwo$ in a neighborhood of the origin. We stress that in general it is not true that an intrinsic graph can be written as a standard graph, see for instance \cite[Section~4.1]{ArenaCarusoMonti12}.

\begin{lemma} \label{lem:local-euclidean-graph}
There is an open neighborhood $V''\subset V'$ of the origin in $e_1^\perp \times \Ltwo$ such that setting $\Omega:=\{(se_1 + \tau ,\zeta) \in \free: \, s\in (-1,1),\, (\tau ,\zeta) \in V''\}$, we have 
\begin{equation}\label{e:local-euclidean-graph} 
\left\{
\begin{aligned}
&\Omega\cap \p E  = \left\{( f(\tau ,\zeta) e_1 + \tau   ,\zeta ) \in \Omega :\, (\tau ,\zeta)\in V''\right\} \\
&\Omega\cap\Int(E)= \left\{( s e_1 + \tau ,\zeta ) \in \Omega : \, (\tau ,\zeta)\in V'',\,  f(\tau ,\zeta)<s<1 \right\} \\
&\Omega\cap\Int(E^c)= \left\{( s e_1 + \tau  ,\zeta ) \in \Omega :\, (\tau ,\zeta)\in V'',\, -1<s< f(\tau ,\zeta) \right\}
\end{aligned}
\right.
\end{equation}
where $f:V'' \rightarrow (-1,1)$ is given by Lemma~\ref{lem:definition-function-f}. Furthermore, setting $m:= n \circ \Gamma^{-1}:V'' \rightarrow e_1^\perp$ where $n$ is given by Lemma~\ref{lem:the-function-g}, we have
\begin{multline} \label{e:larger-set-bis}
(f(\tau ,\zeta)e_1 + \tau , \zeta) \cdot \{\left(\left(\langle m(\tau ,\zeta),\xi\rangle+ \langle q(\tau  ,\zeta_{23}), \xi'\rangle\right) e_1 + \xi, e_1 \wedge \xi'\right)\in\free: \\ \, \xi ,\xi' \in e_1^\perp, \xi\wedge \xi' =0 \} \subset \partial E
\end{multline}
for all $(\tau ,\zeta) \in V''$, where the function $q$ is given by Lemma~\ref{lem:the-function-g}.
\end{lemma}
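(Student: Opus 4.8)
The plan is to transport the intrinsic-graph description \eqref{e:local-intrinsic-graph} and the auxiliary inclusion \eqref{e:larger-set-inside-the-boundary} from Lemma~\ref{lem:the-function-g} through the change of coordinates $\Gamma$ constructed in Lemma~\ref{lem:change-of-coordinates}, using the identity $g=f\circ\Gamma$ and the bijectivity of $\Gamma:U'\to V'$. First I would fix notation: every point of $\free$ is uniquely $x'\cdot y'$ with $x'\in e_1^\perp\times\Ltwo$ and $y'\in\ell_1$, and more precisely the map $(se_1+\tau,\zeta)\mapsto (\tau,\zeta)\cdot(se_1,0)=(\tau,\zeta+s\tau\wedge e_1)\cdot\text{(adjust)}$ relating the ``Euclidean'' graph coordinates to the ``intrinsic'' ones is exactly $\Gamma$-type; I would record the elementary computation that $(\tau,\zeta)\cdot(se_1,0)=(se_1+\tau, \zeta+ s\,\tau\wedge e_1)$ and hence, setting $(\theta,\omega):=\Gamma^{-1}(\tau,\zeta)$ so that $\zeta=\omega+g(\theta,\omega)\theta\wedge e_1$ and $\tau=\theta$, the intrinsic-graph point $(\theta,\omega)\cdot(g(\theta,\omega)e_1,0)$ equals $(g(\theta,\omega)e_1+\theta,\omega+g(\theta,\omega)\theta\wedge e_1)=(f(\tau,\zeta)e_1+\tau,\zeta)$. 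This is the content that turns the first line of \eqref{e:local-intrinsic-graph} into the first line of \eqref{e:local-euclidean-graph}.

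Next I would choose $V''\subset V'$: since $\Gamma:U'\to V'$ is a homeomorphism and $U\supset U'$ is open, I can shrink to an open neighborhood $V''$ of the origin in $e_1^\perp\times\Ltwo$ such that $\Omega:=\{(se_1+\tau,\zeta):s\in(-1,1),(\tau,\zeta)\in V''\}$ is contained in the intrinsic-graph neighborhood $\Theta$ from \eqref{e:local-intrinsic-graph}, using that $\Theta=U\cdot\ell_1$ is open and contains the origin together with a neighborhood of the compact slice $\{(0,0)\cdot(se_1,0):s\in[-1,1]\}$ after a further rescaling normalization; concretely $\Omega\subset\Theta$ holds once $V''$ is small because each point $(se_1+\tau,\zeta)\in\Omega$ can be written as $(\tau,\zeta-s\tau\wedge e_1)\cdot(se_1,0)$ with the first factor in $U$ for $(\tau,\zeta)$ near $0$ and $s\in(-1,1)$. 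Having $\Omega\subset\Theta$, for $(se_1+\tau,\zeta)\in\Omega$ I write it in intrinsic coordinates as $(\theta_0,\omega_0)\cdot(te_1,0)$ with $(\theta_0,\omega_0)\in U$; then membership in $\partial E$, $\Int(E)$, $\Int(E^c)$ is governed by the comparison of $t$ with $g(\theta_0,\omega_0)$ via \eqref{e:local-intrinsic-graph}, and unwinding the coordinate change (again the computation $(\tau,\zeta)\cdot(se_1,0)$-formula, which shows $\theta_0=\tau$, $\omega_0=\zeta-s\tau\wedge e_1$, $t=s$) converts $t\gtreqless g(\theta_0,\omega_0)$ into $s\gtreqless f(\tau,\zeta)$ using Lemma~\ref{lem:definition-function-f} — this is precisely where \eqref{e:equation-def-f} and \eqref{e:def-f} are used, because $g(\tau,\zeta-s\tau\wedge e_1)$ compared to $s$ has the same sign as $f(\tau,\zeta)-s$ after dividing by the positive (on $V''$) factor $1-\langle q(\tau,\zeta_{23}),\tau\rangle$. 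That yields all three lines of \eqref{e:local-euclidean-graph}.

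Finally, for \eqref{e:larger-set-bis} I would simply specialize \eqref{e:larger-set-inside-the-boundary} at the point $(\theta,\omega)=\Gamma^{-1}(\tau,\zeta)\in U'$: the left-hand base point $(\theta,\omega)\cdot(g(\theta,\omega)e_1,0)$ equals $(f(\tau,\zeta)e_1+\tau,\zeta)$ by the computation above, the function $q$ depends on $(\theta,\omega)$ only through $(\theta,\omega_{23})=(\tau,\zeta_{23})$ since $\Gamma$ fixes the $e_1^\perp$ and $e_{23}$ components (note $\theta\wedge e_1\in\Span\{e_{12},e_{13}\}$, so $\omega_{23}=\zeta_{23}$), hence $q(\theta,\omega_{23})=q(\tau,\zeta_{23})$, and $n(\theta,\omega)=n(\Gamma^{-1}(\tau,\zeta))=m(\tau,\zeta)$ by definition of $m$. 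Substituting these identifications into \eqref{e:larger-set-inside-the-boundary} gives \eqref{e:larger-set-bis} verbatim.

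The main obstacle I anticipate is bookkeeping rather than depth: carefully justifying the nested shrinking of neighborhoods so that $\Omega\subset\Theta$ while simultaneously keeping $V''\subset V'$ (so that $f$ and the coordinate change $\Gamma^{-1}$ are both available on $V''$), and verifying the sign/positivity of $1-\langle q(\tau,\zeta_{23}),\tau\rangle$ on $V''$ so that the inequality $g(\tau,\zeta-s\tau\wedge e_1)\lessgtr s$ translates correctly into $f(\tau,\zeta)\lessgtr s$. None of this is conceptually hard given Lemmas~\ref{lem:the-function-g}--\ref{lem:change-of-coordinates}, but it must be done consistently; the key algebraic fact underpinning everything is the explicit group-law identity $(\tau,\zeta)\cdot(se_1,0)=(se_1+\tau,\,\zeta+s\,\tau\wedge e_1)$, which I would state and use repeatedly.
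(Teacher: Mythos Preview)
Your proposal is correct and follows essentially the same route as the paper: transport the intrinsic-graph description \eqref{e:local-intrinsic-graph} through the change of variables $\Gamma$, using the group-law identity $(\tau,\omega_0)\cdot(se_1,0)=(se_1+\tau,\,\omega_0+s\,\tau\wedge e_1)$ and the relation $g=f\circ\Gamma$, then read off \eqref{e:larger-set-bis} from \eqref{e:larger-set-inside-the-boundary} after noting that $\Gamma$ fixes the $e_1^\perp$- and $e_{23}$-components.

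There is one genuine, if minor, difference in how you handle $\Omega\cap\Int(E)$ and $\Omega\cap\Int(E^c)$. You argue algebraically: from \eqref{e:g} one computes
\[
g(\tau,\zeta-s\,\tau\wedge e_1)-s \;=\; \bigl(1-\langle q(\tau,\zeta_{23}),\tau\rangle\bigr)\,\bigl(f(\tau,\zeta)-s\bigr),
\]
and since $1-\langle q(\tau,\zeta_{23}),\tau\rangle$ equals $1$ at the origin and is continuous, it is positive on a sufficiently small $V''$, so the inequality $s\gtrless g(\tau,\zeta-s\,\tau\wedge e_1)$ transfers directly to $s\gtrless f(\tau,\zeta)$. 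The paper instead first establishes the boundary equality, then observes by continuity of $f$ that $\{(se_1+\tau,\zeta):s>f(\tau,\zeta)\}$ is connected and checks at the slice $\tau=0$ (where $\Gamma$ is the identity) which side lies in $\Int(E)$. Your sign argument is slightly more direct and avoids this topological step; the paper's version avoids explicitly invoking the positivity of $1-\langle q,\tau\rangle$. Both are entirely valid, and the bookkeeping obstacle you flag (simultaneously ensuring $V''\subset V'$, $(\tau,\zeta-s\,\tau\wedge e_1)\in U$ for all $s\in(-1,1)$, and positivity of $1-\langle q,\tau\rangle$) is routine by continuity at the origin.
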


\begin{proof}
Let $(\tau,\zeta)\in V'$ and set $(\theta,\omega) := \Gamma^{-1}(\tau ,\zeta) \in U'$. By Lemma~\ref{lem:change-of-coordinates} we have $(\tau,\zeta) =(\theta, \omega+g(\theta,\omega)\theta\wedge e_1)$ and $f(\tau,\zeta)=g(\theta,\omega)$. Therefore
\begin{equation} \label{e:change-coordinates-at-the-boundary}
(f(\tau ,\zeta)e_1 + \tau , \zeta) = (g(\theta,\omega)e_1+\theta, \omega+g(\theta,\omega)\theta\wedge e_1) = (\theta,\omega) \cdot (g(\theta,\omega)e_1,0)~.
\end{equation}
Then it follows from~\eqref{e:local-intrinsic-graph} that 
\begin{equation} \label{e:first-inclusion}
\left\{( f(\tau ,\zeta) e_1 + \tau   ,\zeta ) \in \free : (\tau ,\zeta)\in V'\right\} \subset \partial E~.
\end{equation}
Now let $V'' \subset V'$ be a small enough open neighborhood of the origin in $e_1^\perp \times \Ltwo$ choosen in such a way that for all $(\tau ,\zeta) \in V''$ we have $(\tau , \zeta - s\tau \wedge e_1) \in U'$ for all $s\in (-1,1)$. Set $\Omega :=\{(se_1 + \tau ,\zeta) \in \free: \, s\in (-1,1),\, (\tau ,\zeta) \in V''\}$ and let us verify that the first line in~\eqref{e:local-euclidean-graph} holds true. Taking into account~\eqref{e:first-inclusion} we only need to verify that 
\begin{equation} \label{e:second-inclusion}
\Omega \cap \partial E \subset \left\{( f(\tau ,\zeta) e_1 + \tau   ,\zeta ) \in \free : (\tau ,\zeta)\in V''\right\}~.
\end{equation}
Let $s\in (-1,1)$, $(\tau ,\zeta) \in V''$, and assume that $(se_1 + \tau ,\zeta) \in \partial E$. By choice of $V''$ we have $(se_1 + \tau ,\zeta) = (\tau , \zeta - s\tau \wedge e_1) \cdot (se_1,0) \in U' \cdot \ell_1$. Therefore it follows from~\eqref{e:local-intrinsic-graph} that there is $(\theta,\omega) \in U'$ such that $(se_1 + \tau ,\zeta) = (\theta,\omega) \cdot (g(\theta,\omega)e_1,0)$. Then it follows from~\eqref{e:change-coordinates-at-the-boundary} that $(se_1 + \tau ,\zeta) = (f(\tau ',\zeta')e_1 + \tau ', \zeta')$      where $(\tau',\zeta') := \Gamma(\theta,\omega)$. This implies in turn that $(\tau,\zeta) = (\tau',\zeta') $ and $s=f(\tau',\zeta')=f(\tau,\zeta)$ which proves~\eqref{e:second-inclusion}.
To conclude the proof of~\eqref{e:local-euclidean-graph}, note that the first line in~\eqref{e:local-euclidean-graph} together with the continuity of $f$ implies that  either $$\left\{( s e_1 + \tau,\zeta ) \in \free :\, (\tau,\zeta)\in V'',\, f(\tau,\zeta)<s<1 \right\} \subset \Int(E)$$ or $$\left\{( s e_1 + \tau,\zeta ) \in \free :\, (\tau,\zeta)\in V'',\, f(\tau,\zeta)<s<1 \right\} \subset \Int(E^c)~.$$
If $\zeta \in \Ltwo$ is such that $(0,\zeta) \in V''$ then $\Gamma^{-1}(0,\zeta) = (0,\zeta)$ and we know from~\eqref{e:local-intrinsic-graph} that for all $s>f(0,\zeta)=g(0,\zeta)$ we have $(se_1,\zeta) = (0,\zeta) \cdot(se_1,0) \in \Int(E)$. Therefore we have
$$\left\{( s e_1 + \tau,\zeta ) \in \free :\, (\tau,\zeta)\in V'',\, f(\tau,\zeta)<s<1 \right\} \subset \Int(E)~.$$
By similar arguments, we also have  $$\left\{( s e_1 + \tau,\zeta ) \in \free :\, (\tau,\zeta)\in V'',\, -1<s<f(\tau,\zeta) \right\} \subset \Int(E^c)~.$$
Recalling the first line of~\eqref{e:local-euclidean-graph}, we finally get that these inclusions are actually equalities which concludes the proof of~\eqref{e:local-euclidean-graph}.

To conclude the proof of the lemma, note that for $(\tau, \zeta) \in V''$ we have $\Gamma^{-1}( \tau,\zeta) = (\tau, \omega)$ where $\omega \in \Ltwo$ is such that $\omega_{23} = \zeta_{23}$. Therefore $q\circ \Gamma^{-1}( \tau,\zeta) = q(\tau,\zeta_{23})$ where $q$ is given by Lemma~\ref{lem:the-function-g}. Letting $m(\tau,\zeta):=n(\Gamma^{-1}(\tau,\zeta))$, we then get~\eqref{e:larger-set-bis} from~\eqref{e:larger-set-inside-the-boundary} and Lemma~\ref{lem:change-of-coordinates}.
\end{proof}

We shall now use~\eqref{e:local-euclidean-graph} and~\eqref{e:larger-set-bis} to get further information about the structure of the function $f$, see Lemma~\ref{lem:F}. We start in the next lemma with a property of the map $m$.

\begin{lemma} \label{lem:the-function-m}
There is $\delta >0$ and there are maps $\widehat{m}_0,\widehat{m}_2,\widehat{m}_3 : (-\delta,\delta) \rightarrow e_1^\perp$ such that 
\begin{equation} \label{e:m-at-0}
m(0,\zeta) = \widehat{m}_0(\zeta_{23}) + \widehat{m}_2(\zeta_{23}) \zeta_{12} + \widehat{m}_3(\zeta_{23}) \zeta_{13}
\end{equation}
for all $\zeta \in \Ltwo$ such that $|\zeta_{12}|, |\zeta_{13}|, |\zeta_{23}|<\delta$.
\end{lemma}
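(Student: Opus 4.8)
The plan is to read off the structure of $m$ on $\{0\}\times\Ltwo$ from the inclusion~\eqref{e:larger-set-bis}, combined with the graph description~\eqref{e:local-euclidean-graph}. First I would note that, by~\eqref{e:def-f}, $f(0,\zeta)=g(0,\zeta)$ whenever $(0,\zeta)\in V''$, so for such $\zeta$ the base point appearing in~\eqref{e:larger-set-bis} at $(\tau,\zeta)=(0,\zeta)$ is $(g(0,\zeta)e_1,\zeta)$. Taking $\xi'=0$ in~\eqref{e:larger-set-bis} and computing the group product, one obtains that for every $\xi\in e_1^\perp$
\begin{equation*}
Q_\xi:=\bigl((g(0,\zeta)+\langle m(0,\zeta),\xi\rangle)\,e_1+\xi,\,\zeta+g(0,\zeta)\,e_1\wedge\xi\bigr)\in\partial E .
\end{equation*}
Since $Q_0=(g(0,\zeta)e_1,\zeta)\in\partial E\cap\Omega$ for $\zeta$ small, we have $Q_\xi\in\Omega$ for $\xi$ in a neighborhood of $0$, and then the first line of~\eqref{e:local-euclidean-graph} forces the $e_1$-coordinate of $Q_\xi$ to be the value of $f$ at its remaining coordinates, i.e.
\begin{equation*}
\langle m(0,\zeta),\xi\rangle=f\bigl(\xi,\,\zeta+g(0,\zeta)\,e_1\wedge\xi\bigr)-g(0,\zeta).
\end{equation*}
As $\xi\mapsto\langle m(0,\zeta),\xi\rangle$ is linear, this identity on a neighborhood of $0$ determines $m(0,\zeta)$.

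The second step is to compute the right-hand side. By~\eqref{e:g} the $23$-entry of $\zeta+g(0,\zeta)e_1\wedge\xi$ equals $\zeta_{23}$ while its other entries are $\zeta_{12}+g(0,\zeta)\xi_2$ and $\zeta_{13}+g(0,\zeta)\xi_3$, so $g(\xi,\zeta+g(0,\zeta)e_1\wedge\xi)=g(\xi,\zeta)+g(0,\zeta)\langle q(\xi,\zeta_{23}),\xi\rangle$; hence by~\eqref{e:def-f}
\begin{equation*}
\langle m(0,\zeta),\xi\rangle=\frac{g(\xi,\zeta)-g(0,\zeta)+2\,g(0,\zeta)\langle q(\xi,\zeta_{23}),\xi\rangle}{1-\langle q(\xi,\zeta_{23}),\xi\rangle}.
\end{equation*}
For fixed $\xi$ and fixed $\zeta_{23}$, the functions $g(\xi,\zeta)$ and $g(0,\zeta)$ are affine in $(\zeta_{12},\zeta_{13})$ by~\eqref{e:g}, the factor $\langle q(\xi,\zeta_{23}),\xi\rangle$ and the denominator do not depend on $(\zeta_{12},\zeta_{13})$, and therefore $\zeta\mapsto\langle m(0,\zeta),\xi\rangle$ is affine in $(\zeta_{12},\zeta_{13})$. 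Evaluating at $\xi=se_2$ and $\xi=se_3$ for a fixed small $s>0$ and using linearity in $\xi$ shows that both coordinates of $m(0,\zeta)\in e_1^\perp$ are affine in $(\zeta_{12},\zeta_{13})$ with coefficients depending only on $\zeta_{23}$; collecting these coefficients into $\widehat m_0,\widehat m_2,\widehat m_3:(-\delta,\delta)\to e_1^\perp$ gives~\eqref{e:m-at-0} for $\delta>0$ small enough.

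The main obstacle is the bookkeeping hidden in the first step: the displayed identity for $\langle m(0,\zeta),\xi\rangle$ is a priori valid only for $\xi$ in a neighborhood of $0$ whose size might shrink with $\zeta$ (it depends on the unknown $m(0,\zeta)$ through the requirement $Q_\xi\in\Omega$), whereas the conclusion needs it for the two fixed probe vectors $\xi=se_2,se_3$ uniformly over a box $\{|\zeta_{ij}|<\delta\}$. I would resolve this by first establishing a uniform bound $|m(0,\zeta)|\le M$ for $\zeta$ near $0$: the set of $\xi$ with $Q_\xi\in\Omega$ contains a ball of radius comparable to $\min\bigl(1,1/|m(0,\zeta)|\bigr)$ (because $\{\xi:|\langle m(0,\zeta),\xi\rangle|<1/2\}$ contains the ball of radius $1/(2|m(0,\zeta)|)$ and $(\xi,\zeta+g(0,\zeta)e_1\wedge\xi)\in V''$ for $\xi$ in a fixed ball), and feeding a vector $\xi$ of that size, in the direction of $m(0,\zeta)$, into the identity while using that $f\bigl(\xi,\,\zeta+g(0,\zeta)e_1\wedge\xi\bigr)-g(0,\zeta)$ is arbitrarily small near the origin yields the bound by a short bootstrap. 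With this bound in hand the identity holds for all $\xi$ in a fixed ball and all $\zeta$ in a fixed box, and the argument above goes through with $s$ chosen below that radius.
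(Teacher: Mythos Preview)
Your proposal is correct and follows essentially the same route as the paper: apply~\eqref{e:larger-set-bis} at $(\tau,\zeta)=(0,\zeta)$ with $\xi'=0$, feed the resulting point into the graph description~\eqref{e:local-euclidean-graph} to obtain the identity $\langle m(0,\zeta),\xi\rangle=f(\xi,\zeta+f(0,\zeta)e_1\wedge\xi)-f(0,\zeta)$, probe with $\xi$ along $e_2$ and $e_3$, and read off the affine dependence on $(\zeta_{12},\zeta_{13})$ from~\eqref{e:g} and~\eqref{e:def-f}. The only organizational difference is in how the uniform bound on $m$ is obtained: the paper first bounds $n$ directly by a sequence argument (if $|n(\theta_k,\omega_k)|\to\infty$ with $(\theta_k,\omega_k)\to 0$, then points of $\partial E$ given by~\eqref{e:hor-boundary} converge to the interior point $(e_1,0)$, a contradiction), and then transfers this to $m=n\circ\Gamma^{-1}$; your bootstrap instead runs the same contradiction through the $f$-identity itself, which amounts to the same mechanism.
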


\begin{proof}
We first note that shrinking $U$ if necessary the map $n:U\rightarrow e_I^\perp$ given by Lemma~\ref{lem:the-function-g} is bounded. Indeed otherwise there is a sequence $(\theta_k,\omega_k)$  converging to $(0,0)$ in $e_1^\perp \times \Ltwo$ and such that $\langle n(\theta_k,\omega_k) , n(\theta_k,\omega_k)\rangle$ goes to infinity. Then we get from~\eqref{e:hor-boundary} that
\begin{equation*}
x_k:=(\theta_k,\omega_k) \cdot (g(\theta_k,\omega_k)e_1,0) \cdot (e_1 +\xi_k,0) \in \partial E
\end{equation*}
where $\xi_k :=\langle n(\theta_k,\omega_k) , n(\theta_k,\omega_k)\rangle^{-1} n(\theta_k,\omega_k)$. Since $x_k \rightarrow (e_1,0)$ and $\partial E$ is closed, it follows that $(e_1,0) \in \partial E$ which gives a contradiction. Therefore the map $m = n\circ \Gamma^{-1}$ is bounded as well.

Since $f$ is continuous and $m$ is bounded, one can find $\delta>0$ such that for all $(\xi,\zeta) \in e_1^\perp \times \Ltwo$ such that $|\xi_2|,|\xi_3|<\delta$ and $|\zeta_{12}|, |\zeta_{13}|, |\zeta_{23}|<\delta$, we have $(\xi, \zeta+f(0,\zeta) e_1\wedge \xi) \in V''$ and $f(0,\zeta) + \langle m(0,\zeta), \xi \rangle \in (-1,1)$. For any such $(\xi,\zeta) \in e_1^\perp \times \Ltwo$, we get from~\eqref{e:larger-set-bis} that
\begin{multline*} 
(f(0,\zeta)e_1 , \zeta) \cdot \left(\langle m(0,\zeta),\xi\rangle e_1 + \xi, 0\right)\\
 = \left( \left(f(0,\zeta) + \langle m(0,\zeta), \xi \rangle \right) e_1 + \xi , \zeta + f(0,\zeta) e_1 \wedge \xi  \right) \in  \Omega \cap \partial E
\end{multline*}
and~\eqref{e:local-euclidean-graph} implies in turn that $f(0,\zeta) + \langle m(0,\zeta), \xi \rangle = f(\xi,\zeta+f(0,\zeta) e_1\wedge \xi)$, i.e.,
\begin{equation}  \label{e:m-1}
 \langle m(0,\zeta), \xi \rangle = f(\xi,\zeta+f(0,\zeta) e_1\wedge \xi) - f(0,\zeta)~.
\end{equation}
Setting $\xi^i := 2^{-1}\delta e_i$ for $i=2,3$, we get that 
\begin{equation*}
 m(0,\zeta) = \sum_{i=2}^3 \langle m(0,\zeta), e_i \rangle e_i = 2\delta^{-1} \sum_{i=2}^3 \left( f(\xi^i,\zeta+  f(0,\zeta) e_1 \wedge \xi^i) - f(0,\zeta) \right) e_i
\end{equation*}
and then~\eqref{e:m-at-0} follows from~\eqref{e:def-f} and~\eqref{e:g}.
\end{proof}

We set 
\begin{align*}
F_0(\tau,\zeta_{23})&:= \left(1-\langle q(\tau,\zeta_{23}) , \tau \rangle \right)^{-1} q_0(\tau,\zeta_{23})\\
F_2(\tau,\zeta_{23})&:= \left(1-\langle q(\tau,\zeta_{23}) , \tau \rangle \right)^{-1} q_3(\tau,\zeta_{23})\\
F_3(\tau,\zeta_{23})&:= - \left(1-\langle q(\tau,\zeta_{23}) , \tau \rangle \right)^{-1} q_2(\tau,\zeta_{23})
\end{align*}
where the functions $q_k$ and $q$ are given by Lemma~\ref{lem:the-function-g} so that~\eqref{e:def-f} writes as
\begin{equation} \label{e:f}
f(\tau,\zeta) = F_0(\tau,\zeta_{23})  - F_3(\tau,\zeta_{23})\zeta_{12} + F_2(\tau,\zeta_{23})\zeta_{13}~.
\end{equation}

\begin{lemma} \label{lem:F} Shrinking $\delta$ if necessary, there are continuous functions $A_i: (-\delta,\delta) \rightarrow \R$, $i=2,3$, $B_i : (-\delta,\delta) \rightarrow \R$, $i=1,2,3$, and $C: (-\delta,\delta) \rightarrow \R$ such that
\begin{equation}  \label{e:F}
\left\{
\begin{aligned}
F_0(\tau,\zeta_{23}) &= A_3(\zeta_{23}) \tau_2 - A_2(\zeta_{23}) \tau_3 - B_1(\zeta_{23}) \\
F_2(\tau,\zeta_{23}) &= B_2(\zeta_{23}) + C(\zeta_{23}) \tau_2  \\
F_3(\tau,\zeta_{23}) &= B_3(\zeta_{23}) + C(\zeta_{23}) \tau_3  
\end{aligned}
\right.
\end{equation}
for all $\tau \in e_1^\perp$ such that $|\tau_2|,|\tau_3| < \delta$ and all $\zeta_{23} \in \R$ such that $|\zeta_{23}|<\delta$.
\end{lemma}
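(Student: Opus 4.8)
The plan is to extract the claimed structure of $F_0, F_2, F_3$ from three sources of information already at our disposal: the affineness of $g$ in the variables $\omega_{12}, \omega_{13}$ (Lemma~\ref{lem:the-function-g}), the inclusion~\eqref{e:larger-set-bis} of a large ruled set inside $\partial E$, and the graph property~\eqref{e:local-euclidean-graph} which says that $\partial E$ is, locally, exactly the graph of $f$. The key point is that~\eqref{e:larger-set-bis} forces many points of the form $(f(\tau,\zeta)e_1+\tau,\zeta)\cdot(\ast e_1+\xi, e_1\wedge\xi')$ (with $\xi\wedge\xi'=0$) to lie on $\partial E$, and since near the origin $\partial E$ is the graph of $f$, each such point must satisfy the graph equation. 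This yields functional identities for $f$, hence for $F_0, F_2, F_3$, that pin down their dependence on $\tau_2, \tau_3$.

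First I would make the dependence on $\tau$ explicit. Using~\eqref{e:larger-set-bis} with $\xi\in e_1^\perp$ arbitrary (small) and $\xi'=0$, the point $(f(0,\zeta)e_1,\zeta)\cdot(\langle m(0,\zeta),\xi\rangle e_1+\xi,0) = ((f(0,\zeta)+\langle m(0,\zeta),\xi\rangle)e_1+\xi,\zeta+f(0,\zeta)e_1\wedge\xi)$ lies in $\Omega\cap\partial E$, so by~\eqref{e:local-euclidean-graph} it equals $(f(\xi,\zeta+f(0,\zeta)e_1\wedge\xi)e_1+\xi,\zeta+f(0,\zeta)e_1\wedge\xi)$; this is exactly~\eqref{e:m-1}. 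Combining~\eqref{e:m-1} with the explicit formula~\eqref{e:f} for $f$ and the formula~\eqref{e:m-at-0} for $m(0,\zeta)$ from Lemma~\ref{lem:the-function-m}, and comparing coefficients of $\zeta_{12}$, $\zeta_{13}$, and the constant term in $\zeta$ on both sides (for fixed small $\xi$, and using that $\zeta_{23}$ is unchanged under $\zeta\mapsto\zeta+f(0,\zeta)e_1\wedge\xi$ since $e_1\wedge\xi\in\Span\{e_{12},e_{13}\}$), I expect to read off that $F_2(\xi,\zeta_{23})$ and $F_3(\xi,\zeta_{23})$ are affine in $(\xi_2,\xi_3)$ and that $F_0(\xi,\zeta_{23})$ is affine in $(\xi_2,\xi_3)$ as well. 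More precisely: evaluating~\eqref{e:m-1} and looking at the part depending linearly on $\zeta_{12}$ forces $\xi\mapsto F_3(\xi,\zeta_{23})$ to be affine with a linear part independent of the affine correction, and similarly for $F_2$ via the $\zeta_{13}$-coefficient; then the $\zeta$-independent part of~\eqref{e:m-1} gives the affineness of $F_0$. Writing out these affine functions and relabelling the coefficient functions of $\zeta_{23}$ as $A_2, A_3, B_1, B_2, B_3, C$ — with the crucial extra input that one single function $C(\zeta_{23})$ governs the $\tau_2$-coefficient of $F_2$ and the $\tau_3$-coefficient of $F_3$ simultaneously — gives~\eqref{e:F}. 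Continuity of $A_i, B_i, C$ follows from continuity of $f$ (hence of $F_0,F_2,F_3$) by solving for them as finite differences, e.g. $C(\zeta_{23}) = \delta^{-1}(F_2((\delta/2)e_2,\zeta_{23}) - F_2(0,\zeta_{23}))$ and so on, after shrinking $\delta$.

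The step I expect to be the main obstacle is establishing that the \emph{same} function $C(\zeta_{23})$ appears in both $F_2$ and $F_3$, i.e.\ the coupling between the $\tau_2$-dependence of $F_2$ and the $\tau_3$-dependence of $F_3$. This should come from using~\eqref{e:larger-set-bis} with $\xi'\neq 0$, which produces boundary points whose $\Ltwo$-component is shifted in the $e_{12}$ or $e_{13}$ directions by $e_1\wedge\xi'$ while simultaneously shifting the $e_1$-component by $\langle q(\tau,\zeta_{23}),\xi'\rangle$; feeding this back into the graph equation~\eqref{e:local-euclidean-graph} and matching against~\eqref{e:f} yields a compatibility relation tying together the partial derivatives $\partial F_2/\partial\tau_2$ and $\partial F_3/\partial\tau_3$ (both equal to $C$) and forcing $\partial F_2/\partial\tau_3 = \partial F_3/\partial\tau_2 = 0$ and $\partial F_0/\partial\tau_2 = A_3$, $\partial F_0/\partial\tau_3=-A_2$ with the stated signs. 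Carefully bookkeeping the group law $(\theta+\omega)\cdot(\tau+\zeta)=\theta+\tau+\omega+\zeta+\theta\wedge\tau$ in these computations, and keeping track of which terms survive when one restricts to the rank-one cone $\xi\wedge\xi'=0$, is the delicate part; the rest is coefficient comparison and a finite-difference argument for continuity.
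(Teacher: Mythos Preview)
Your plan uses the right ingredients and is close to the paper's argument, but the ordering you sketch contains a gap. You expect that comparing coefficients of $\zeta_{12}$, $\zeta_{13}$, and the constant term in \eqref{e:m-1} (together with \eqref{e:f} and \eqref{e:m-at-0}) will directly yield that $F_0, F_2, F_3$ are \emph{affine} in $\tau$. That is not what the computation gives. Writing out $f(\tau,\zeta+f(0,\zeta)e_1\wedge\tau)-f(0,\zeta)$ via \eqref{e:f} and equating it with $\langle m(0,\zeta),\tau\rangle$ produces, after solving the resulting $2\times 2$ linear system for $F_2(\tau), F_3(\tau)$ and substituting back, expressions of the form
\[
F_k(\tau,\zeta_{23}) = F_k(0,\zeta_{23}) + \langle D_k(\zeta_{23}),\tau\rangle - F_k(0,\zeta_{23})\,R(\tau,\zeta_{23}),\qquad k=0,2,3,
\]
where $R(\tau,\zeta_{23}) = \bigl(-\tau_2\langle D_3,\tau\rangle + \tau_3\langle D_2,\tau\rangle\bigr)/\bigl(1+\langle F(0,\zeta_{23}),\tau\rangle\bigr)$ is a genuine rational correction, quadratic in the numerator. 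At this stage the $F_k$ are \emph{not} yet affine in $\tau$.

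The missing step --- which in the paper both eliminates $R$ \emph{and} produces the coupling you correctly flagged as the main obstacle --- is your second ingredient: apply \eqref{e:larger-set-bis} at the base point $(0,\zeta_{23}e_{23})$ with $\xi' \parallel \xi$ (so $\xi\wedge\xi'=0$ automatically), varying independently the scalars in front of $\xi$ and of $e_1\wedge\xi$. Matching the resulting boundary points against \eqref{e:local-euclidean-graph} shows that $s\mapsto -F_3(s\xi,\zeta_{23})\xi_2 + F_2(s\xi,\zeta_{23})\xi_3$ is constant in $s$. Feeding in the rational intermediate form above forces $-\langle D_3,\xi\rangle\xi_2 + \langle D_2,\xi\rangle\xi_3 = 0$ for all $\xi\in e_1^\perp$, hence $D_2 = C e_2$ and $D_3 = C e_3$ for a single function $C(\zeta_{23})$. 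This simultaneously gives $R\equiv 0$ (so the $F_k$ become affine), the common $C$, and the vanishing of the cross-derivatives. In short: affineness and the coupling are not separate conclusions drawn from different steps; they emerge together from the $\xi'\neq 0$ computation applied \emph{after} one has the rational intermediate form. Your plan has all the pieces --- just do not expect affineness before the second step delivers it.
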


\begin{proof} Set $F(\tau,\zeta_{23}):= - F_3(\tau,\zeta_{23}) e_2 + F_2(\tau,\zeta_{23}) e_3$.
Since the function $\zeta_{23} \mapsto  F(0,\zeta_{23})$ is continuous, shrinking $\delta>0$ if necessary, we can assume with no loss of generality that  $1+\langle F(0,\zeta_{23}) , \tau \rangle \not=0$ for all $\tau \in e_1^\perp$ such that $|\tau_2|,|\tau_3| < \delta$ and all $\zeta_{23} \in \R$ such that $|\zeta_{23}|<\delta$. 

We first prove that there are maps $D_0, D_2, D_3 : (-\delta,\delta) \rightarrow e_1^\perp$ such that for $k=0,2,3$, $\tau \in e_1^\perp$ such that $|\tau_2|,|\tau_3| < \delta$ and $\zeta_{23} \in \R$ such that $|\zeta_{23}|<\delta$, 
\begin{equation} \label{e:Ak}
F_k(\tau,\zeta_{23}) = F_k(0,\zeta_{23})+ \langle D_k(\zeta_{23}) , \tau \rangle - F_k(0,\zeta_{23}) R(\tau,\zeta_{23})
\end{equation}
where
\begin{equation*} 
R(\tau,\zeta_{23}) := \frac{- \tau_2 \langle D_3(\zeta_{23}) , \tau \rangle + \tau_3 \langle D_2(\zeta_{23}) , \tau \rangle}{1+\langle F(0,\zeta_{23}) , \tau \rangle}~.
\end{equation*}
In other words we first verify that the functions $F_k$ admit a first-order Taylor expansion with respect to the variable $\tau$. To prove this claim, let $\zeta_{23} \in \R$ be fixed such that $|\zeta_{23}|<\delta$. For $k=0,2,3$, set $F_k(\tau):= F_k(\tau,\zeta_{23})$, $F(\tau):= F(\tau,\zeta_{23})$, $\widehat{F}_k : = F_k(0,\zeta_{23})$, $\widehat{F}:= F(0,\zeta_{23})$, $\widehat{m}_k:= \widehat{m}_k(\zeta_{23})$. On the one hand, we know from~\eqref{e:m-at-0} that for all $\tau \in e_1^\perp$ such that $|\tau_2|,|\tau_3| < \delta$ and $\zeta_{12}, \zeta_{13} \in \R$ such that $|\zeta_{12}|, |\zeta_{13}| < \delta$,
\begin{equation*}
\langle m(0,\zeta),\tau \rangle = \langle \widehat{m}_0,\tau \rangle + \langle \widehat{m}_2, \tau \rangle \zeta_{12} + \langle \widehat{m}_3, \tau \rangle \zeta_{13}~.
\end{equation*}
where $\zeta:= \zeta_{12} e_{12} + \zeta_{13} e_{13} + \zeta_{23} e_{23}$.
On the other hand, routine computions give 
\begin{equation*}
\begin{split}
f(\tau,\zeta+f(0,\zeta) e_1\wedge \tau) - f(0,\zeta) = & \, F_0(\tau)-\widehat{F}_0 \left(1-\langle  F(\tau),\tau \rangle \right)  \\
& +  \left( -(1-\tau_2\widehat{F}_3)F_3(\tau) - \tau_3 \widehat{F}_3 F_2(\tau) + \widehat{F}_3 \right) \zeta_{12} \\
& +  \left( - \tau_2 \widehat{F}_2 F_3(\tau)+(1+\tau_3\widehat{F}_2) F_2(\tau) - \widehat{F}_2 \right) \zeta_{13}~.
\end{split}
\end{equation*}
Then it follows from~\eqref{e:m-1} that 
\begin{equation} \label{e:A0-intermediate}
F_0(\tau)-\widehat{F}_0 \left(1-\langle  F(\tau),\tau \rangle \right) = \langle \widehat{m}_0,\tau \rangle
\end{equation}
and
\begin{equation*}
\begin{cases}
-(1-\tau_2\widehat{F}_3)F_3(\tau) - \tau_3 \widehat{F}_3 F_2(\tau) = - \widehat{F}_3 + \langle \widehat{m}_2, \tau \rangle \\
- \tau_2 \widehat{F}_2 F_3(\tau)+(1+\tau_3\widehat{F}_2) F_2(\tau) = \widehat{F}_2 + \langle \widehat{m}_3, \tau \rangle \\
\end{cases}
\end{equation*}
for all $\tau \in e_1^\perp$ such that $|\tau_2|,|\tau_3| < \delta$. We then get from the above linear system 
\begin{gather*}
 F_3(\tau) = \frac{ \widehat{F}_3 - \langle \widehat{m}_2, \tau \rangle - \tau_3 (\widehat{F}_3 \langle \widehat{m}_3, \tau \rangle + \widehat{F}_2 \langle \widehat{m}_2, \tau \rangle )}{1+\langle \widehat{F} , \tau \rangle}\\
F_2(\tau) = \frac{\widehat{F}_2 + \langle \widehat{m}_3, \tau \rangle - \tau_2 ( \widehat{F}_3 \langle \widehat{m}_3, \tau \rangle + \widehat{F}_2 \langle \widehat{m}_2, \tau \rangle )}{1+\langle \widehat{F} , \tau \rangle}
\end{gather*}
and inserting these expressions in~\eqref{e:A0-intermediate} we get
\begin{equation*}
F_0(\tau) = \langle \widehat{m}_0, \tau \rangle + \widehat{F}_0 \, \frac{1 - (\tau_2 \langle \widehat{m}_2, \tau \rangle + \tau_3 \langle \widehat{m}_3, \tau \rangle)}{1+\langle \widehat{F} , \tau \rangle}~.
\end{equation*}
Then~\eqref{e:Ak} follows setting $D_0:=\widehat{m}_0 - \widehat{F}_0 \widehat{F}$, $D_2:=\widehat{m}_3 - \widehat{F}_2 \widehat{F}$ and $D_3:=- \widehat{m}_2 - \widehat{F}_3 \widehat{F}$.

\smallskip

We next prove that there is a function $C: (-\delta, \delta) \rightarrow \R$ such that
\begin{equation} \label{e:B}
D_2(\zeta_{23}) = C(\zeta_{23}) e_2 \quad \text{and} \quad D_3(\zeta_{23}) =  C(\zeta_{23}) e_3
\end{equation}
for all $\zeta_{23} \in \R$ such that $|\zeta_{23}|<\delta$.  To prove~\eqref{e:B} let $\zeta_{23} \in \R$ be fixed such that $|\zeta_{23}|<\delta$. Using the same notational conventions as before, omitting the dependence on $\zeta_{23}$, we have $f(0,\zeta_{23} e_{23})= \widehat{F}_0$. By~\eqref{e:m-at-0} we have $m(0,\zeta_{23} e_{23}) =\widehat{m}_0$. Setting $\hat{q}:= q(0,\zeta_{23})$, we then get from~\eqref{e:larger-set-bis} that
\begin{multline*}
(\widehat{F}_0 e_1,\zeta_{23}e_{23}) \cdot \left(\left[s\langle \widehat{m}_0 , \xi \rangle + t \langle \hat{q} , \xi \rangle \right] e_1 + s\xi , t e_1 \wedge \xi\right) \\
= \left(\left[\widehat{F}_0 + s\langle \widehat{m}_0 , \xi \rangle + t \langle \hat{q} , \xi \rangle \right] e_1 + s \xi ,\, \zeta_{23} e_{23} + \left(t+ \widehat{F}_0 s\right) e_1 \wedge \xi \right) \in \partial E
\end{multline*}
for all $\xi\in e_1^\perp$ and all $s,t \in \R$. Then~\eqref{e:local-euclidean-graph} implies that for all $\xi\in e_1^\perp$ and all $s,t \in \R$ small enough 
\begin{align*}
\widehat{F}_0 + s\langle \widehat{m}_0 , \xi \rangle + t \langle \hat{q}, \xi \rangle &= f(s \xi , \zeta_{23} e_{23} + (t+ \widehat{F}_0 s) e_1 \wedge \xi ) 
\\&= 
F_0(s\xi)-F_3(s\xi)(t+\widehat F_0 s)\xi_2+ F_2(s\xi)(t+\widehat F_0 s)\xi_3
\\
&=F_0(s\xi) + \widehat{F}_0 s \langle F(s\xi), \xi \rangle  + t \langle F(s\xi), \xi \rangle~.
\end{align*}
It follows that for all $\xi\in e_1^\perp$ and all $s\in \R$ small enough, $\langle F(s\xi), \xi \rangle = \langle \hat{q}, \xi \rangle$. In other words, the function $s \mapsto\langle F(s\xi), \xi \rangle$ is constant and therefore $\langle \widehat{F}, \xi \rangle = \langle F(s\xi), \xi \rangle$ for all $s \in \R$ small enough. Since we have from~\eqref{e:Ak}
\begin{equation*}
\langle F(s\xi), \xi \rangle = \langle \widehat{F} , \xi \rangle + (- \langle D_3 , \xi \rangle \xi_2 + \langle D_2 , \xi \rangle \xi_3) s  - \langle \widehat{F} ,  \xi \rangle \frac{-\langle D_3 , \xi \rangle \xi_2 + \langle D_2 , \xi \rangle \xi_3}{1+s \langle \widehat{F} , \xi \rangle}  s^2~,
\end{equation*}
we finally get that 
\begin{equation*}
0 = \left(1 - \frac{s \langle \widehat{F} , \xi \rangle}{1+s \langle \widehat{F} , \xi \rangle} \right) (-\langle D_3 , \xi \rangle \xi_2 + \langle D_2 , \xi \rangle \xi_3)  = \frac{- \langle D_3 , \xi \rangle \xi_2 + \langle D_2 , \xi \rangle \xi_3}{1+s \langle \widehat{F} , \xi \rangle}
\end{equation*}
for all $\xi \in e_1^\perp$ which implies~\eqref{e:B}.

To conclude the proof of the lemma, note that~\eqref{e:B} implies that $R(\tau, \zeta_{23}) =0$. Then we set $A_2(\zeta_{23}) := - \langle D_0(\zeta_{23}), e_3 \rangle$, $A_3(\zeta_{23}) := \langle D_0(\zeta_{23}), e_2 \rangle$, $B_1(\zeta_{23}) := - F_0(0,\zeta_{23})$, $B_2(\zeta_{23}) := F_2(0,\zeta_{23})$, and $B_3(\zeta_{23}) := F_3(0,\zeta_{23})$ to get \eqref{e:F}. Note that the continuity of the functions $A_i$, $B_i$ and $C$ follows from the continuity of the functions $F_i$.
\end{proof}

The proof of Proposition~\ref{prop:graph} is now complete. Indeed \eqref{e:local-graph-1} follows from \eqref{e:local-euclidean-graph} and \eqref{e:graph-f} from \eqref{e:f} and \eqref{e:F}.

\subsection{The boundary as a level set of a h-affine function near noncharacteristic points}

To complete the proof of Proposition~\ref{prop:local} we choose in this section a basis $(e_1,e_2,e_3)$ of $\Lone$ such that $(e_i,0) \in \Int(E)$ for $i=1,2,3$. Such a basis does exist. Indeed, recall that $(0,0)\in \Nonchar(\partial E)$. Therefore one can find $e_1 \in \Lone \setminus\{0\}$ such that $(e_1,0) \in \Int(E)$. One can then choose $e_2,e_3$ close enough to $e_1$ in such a way that $(e_i,0) \in \Int(E)$ for $i=2,3$ and such that $e_1,e_2,e_3$ are linearly independent. We then get from Proposition~\ref{prop:graph} that near the origin $\partial E$ can be written as a graph over $e_1^\perp \times \Ltwo$ as well as a graph over $e_2^\perp \times \Ltwo$ and $e_3^\perp \times \Ltwo$ and each one of the graph functions has a structure given by~\eqref{e:graph-f} as explicitly stated in the next proposition.

\begin{proposition} \label{prop:graphs} There is $\delta >0$ and there are continuous functions $A^i_j : (-\delta,\delta) \rightarrow \R$, $i,j = 1,2,3$, $i\not=j$, $B^i_j : (-\delta,\delta) \rightarrow \R$, $i,j = 1,2,3$, $C^i: (-\delta,\delta) \rightarrow \R$, $i= 1,2,3$, such that the following holds true. Set $\calU :=\{(\tau,\zeta) \in \free : \, |\tau_i| < \delta, \, i=1,2,3, \, |\zeta_{ij}| < \delta, \, 1\leq i <j \leq 3\}$. For every $(\tau,\zeta) \in \calU$, the following four conditions are equivalent~: 
\begin{subequations}\begin{align}
& \phantom{ab} (\tau,\zeta) \in \partial E~,\label{e:in-the-boundary}\\
&\begin{split}
\tau_1 - A^1_3(\zeta_{23}) \tau_2 + A^1_2(\zeta_{23}) \tau_3  + B^1_1(\zeta_{23}) &+ B^1_3(\zeta_{23}) \zeta_{12} - B^1_2(\zeta_{23}) \zeta_{13}\\
& - C^1(\zeta_{23})  \tau_2 \zeta_{13} + C^1(\zeta_{23}) \tau_3 \zeta_{12} = 0~,\end{split}
  \label{e:graph-1}\\
&\begin{split}
 \tau_2 - A^2_1(\zeta_{13}) \tau_3 + A^2_3(\zeta_{13}) \tau_1 + B^2_2(\zeta_{13}) &+ B^2_1(\zeta_{13}) \zeta_{23} + B^2_3(\zeta_{13}) \zeta_{12} \\
 & + C^2(\zeta_{13})  \tau_3 \zeta_{12} + C^2(\zeta_{13}) \tau_1 \zeta_{23} = 0~,\end{split}
   \label{e:graph-2}\\
&\begin{split}
 \tau_3 - A^3_2(\zeta_{12}) \tau_1 + A^3_1(\zeta_{12}) \tau_2 + B^3_3(\zeta_{12}) &- B^3_2(\zeta_{12}) \zeta_{13} + B^3_1(\zeta_{12}) \zeta_{23}\\
 & + C^3(\zeta_{12})  \tau_1 \zeta_{23} - C^3(\zeta_{12}) \tau_ 2\zeta_{13} = 0~. \end{split}
  \label{e:graph-3}
\end{align}\end{subequations}
\end{proposition}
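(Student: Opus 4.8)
The plan is to apply Proposition~\ref{prop:graph} three times — once for each of the cyclically ordered bases $(e_1,e_2,e_3)$, $(e_2,e_3,e_1)$ and $(e_3,e_1,e_2)$ — and to recognize each of~\eqref{e:graph-1}--\eqref{e:graph-3} as the graph equation produced by one of these three applications. The point is that all the real work has already been done in Proposition~\ref{prop:graph}; what remains is to run it in three directions and to intersect the neighbourhoods it provides.

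First I would apply Proposition~\ref{prop:graph} with the basis $(e_1,e_2,e_3)$ itself, which is admissible because $(e_1,0)\in\Int(E)$ by the standing assumption of this section. It yields $\delta_1>0$, continuous functions $A_2,A_3,B_1,B_2,B_3,C$ on $(-\delta_1,\delta_1)$, and an open box $O^{(1)}$ around the origin on which the first line of~\eqref{e:local-graph-1} holds with the graph function $f_1$ of~\eqref{e:graph-f}. For a point $(\tau,\zeta)\in O^{(1)}$ with $\tau=\tau_1e_1+\tau_2e_2+\tau_3e_3$, that first line says exactly that $(\tau,\zeta)\in\partial E$ if and only if $\tau_1=f_1(\tau_2e_2+\tau_3e_3,\zeta)$; moving all terms to one side and setting $A^1_2:=A_2$, $A^1_3:=A_3$, $B^1_j:=B_j$, $C^1:=C$, this is precisely~\eqref{e:graph-1}. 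Next I would run the same argument with the basis $(e_2,e_3,e_1)$ (admissible since $(e_2,0)\in\Int(E)$) and with $(e_3,e_1,e_2)$ (admissible since $(e_3,0)\in\Int(E)$). Rewriting the output of Proposition~\ref{prop:graph} in the original coordinates — the subspace playing the role of ``$e_1^\perp$'' becomes $\Span\{e_3,e_1\}$, resp.\ $\Span\{e_1,e_2\}$, and the distinguished $2$-form $e_2\wedge e_3$, resp.\ $e_3\wedge e_1$, is identified with one of $\pm e_{12}$, $\pm e_{13}$, $\pm e_{23}$ according to its orientation — the resulting graph equation, after defining $A^2_j,B^2_j,C^2$, resp.\ $A^3_j,B^3_j,C^3$, from the functions furnished by Proposition~\ref{prop:graph} with the appropriate signs, takes the form~\eqref{e:graph-2}, resp.~\eqref{e:graph-3}, and is valid on an open box $O^{(2)}$, resp.\ $O^{(3)}$, around the origin.

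Finally I would fix $\delta\in(0,1)$ small enough that the cube $\calU$ lies inside $O^{(1)}\cap O^{(2)}\cap O^{(3)}$ and, for each $i$, the projection of every point of $\calU$ to the relevant ``$e_i^\perp\times\Ltwo$'' component falls inside the corresponding set $W$ of Proposition~\ref{prop:graph}. Then for any $(\tau,\zeta)\in\calU$ the first line of~\eqref{e:local-graph-1}, applied to each of the three graph representations, shows that membership of $(\tau,\zeta)$ in $\partial E$ is equivalent to~\eqref{e:graph-1}, to~\eqref{e:graph-2} and to~\eqref{e:graph-3} separately; hence the four conditions are equivalent, which is the assertion.

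The only delicate point is pure bookkeeping. One must track how $(\tau_1,\tau_2,\tau_3)$ and $(\zeta_{12},\zeta_{13},\zeta_{23})$ get permuted, and how the signs flip, when passing from one cyclic ordering of the basis to the next, so that the three instances of~\eqref{e:graph-f} match the precise sign patterns recorded in~\eqref{e:graph-1}--\eqref{e:graph-3}. Since Proposition~\ref{prop:graph} only asserts the \emph{existence} of the coefficient functions, this amounts to the routine verification that the $i$-th graph function, read in the original coordinates, is affine in $(\tau_j)_{j\neq i}$ and in $(\zeta_{jk})$ with the mixed quadratic terms grouped as the single coefficient $C^i$ in the stated way; no monotonicity input beyond Proposition~\ref{prop:graph} is required.
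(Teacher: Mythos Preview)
Your proposal is correct and is exactly the approach the paper takes: the paper states Proposition~\ref{prop:graphs} immediately after observing that, with the chosen basis satisfying $(e_i,0)\in\Int(E)$ for $i=1,2,3$, Proposition~\ref{prop:graph} applies in each of the three directions, and presents Proposition~\ref{prop:graphs} as the explicit restatement of those three graph representations without a separate proof. Your remark that the only delicate point is sign-and-permutation bookkeeping is also in line with the paper, which simply records the three resulting equations.
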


Recall that since $(0,0) \in \calU \cap \partial E$ we have
\begin{equation} \label{e:B^i_i(0)}
B^1_1(0) = B^2_2(0) = B^3_3(0) =0~.
\end{equation}

Recall also that $(0,0) \in \Nonchar(\partial E)$. Therefore we know from Proposition~\ref{foglietto} that $\Hor_{(0,0)} \cap \partial E = (\Lone \times \{0\}) \cap \partial E$ is a 2-dimensional linear subspace of $\Lone \times \{0\}$. By choice of $e_1,e_2,e_3$, we also have $(e_i,0) \not \in \partial E$ for $i=1,2,3$. Therefore there are $a_{12}, a_{13} \in \R \setminus\{0\}$ and $a_{23}<0$ such that 
\begin{equation*}
(\Lone \times \{0\}) \cap \partial E = \{(\tau,0) \in \free:\, a_{23} \tau_1 - a_{13} \tau_2 + a_{12} \tau_3 =0\}~.
\end{equation*}
We also know from Proposition~\ref{prop:graphs} and~\eqref{e:B^i_i(0)} that 
\begin{equation*}
\begin{split}
(\Lone \times \{0\}) \cap \calU \cap \partial E  &= \{(\tau,0) \in \calU:\, \tau_1 - A^1_3(0) \tau_2 + A^1_2(0) \tau_3 =0\} \\
&= \{(\tau,0) \in \calU:\, \tau_2 - A^2_1(0) \tau_3 + A^2_3(0) \tau_1 =0\}  \\
&= \{(\tau,0) \in \calU:\, \tau_3 - A^3_2(0) \tau_1 + A^3_1(0) \tau_2 =0\}~.
\end{split}
\end{equation*} 
Therefore
\begin{equation} \label{e:A^i_j(0)}
\left\{
\begin{aligned}
&A^1_3(0) = \frac{a_{13}}{a_{23}}~, \quad A^1_2(0) = \frac{a_{12}}{a_{23}}~,\\
&A^2_1(0) = \frac{a_{12}}{a_{13}}~,\quad A^2_3(0) = - \frac{a_{23}}{a_{13}}~,\\
&A^3_2(0) = -\frac{a_{23}}{a_{12}}~,\quad A^3_1(0) = - \frac{a_{13}}{a_{12}}~.
\end{aligned}
\right.
\end{equation}

\begin{lemma} \label{lem:A^1_3-A^1_2-B^1_1}
There are $c\in \R$ and $b_1\in \R$ such that for all $\zeta_{23}$ small enough
\begin{equation} \label{e:A^1_3-A^1_2-B^1_1}
\left\{
\begin{aligned}
&A^1_2(\zeta_{23}) = \frac{a_{12}}{a_{23} + c \zeta_{23}}~, \\
&A^1_3(\zeta_{23}) = \frac{a_{13}}{a_{23} + c \zeta_{23}}~, \\
&B^1_1(\zeta_{23}) = \frac{b_1\zeta_{23}}{a_{23} + c \zeta_{23}}~.
\end{aligned}
\right.
\end{equation}
\end{lemma}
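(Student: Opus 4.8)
The idea is to exploit the compatibility of the three graph representations from Proposition~\ref{prop:graphs} in a region of $\calU$ where $\tau_1$ can be expressed from~\eqref{e:graph-1} and, simultaneously, $\tau_2$ from~\eqref{e:graph-2}. Concretely, I would work on the slice where $\zeta_{12} = \zeta_{13} = 0$ and $\zeta_{23}$ is a free small parameter, and describe $(\Lone \oplus \R e_{23}) \cap \calU \cap \partial E$ in two ways. From~\eqref{e:graph-1} with $\zeta_{12}=\zeta_{13}=0$ one gets the hyperplane relation $\tau_1 = A^1_3(\zeta_{23})\tau_2 - A^1_2(\zeta_{23})\tau_3 - B^1_1(\zeta_{23})$ inside the $3$-dimensional affine space $\{\zeta_{12}=\zeta_{13}=0\}$; from~\eqref{e:graph-2} with $\zeta_{12}=0$, and keeping $\zeta_{13}$ free for a moment, one gets $\tau_2 = A^2_1(\zeta_{13})\tau_3 - A^2_3(\zeta_{13})\tau_1 - B^2_2(\zeta_{13}) - B^2_1(\zeta_{13})\zeta_{23} - C^2(\zeta_{13})\tau_1\zeta_{23}$. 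The plan is first to reduce to the plane $\{\zeta_{12}=\zeta_{13}=0\}$, on which both~\eqref{e:graph-1} and~\eqref{e:graph-2} (and~\eqref{e:graph-3}) describe the \emph{same} $2$-plane $\pi_{\zeta_{23}} := (\Lone\oplus\R e_{23})\cap\calU\cap\partial E$, a $2$-dimensional affine subspace of the $3$-space $\{\zeta_{12}=\zeta_{13}=0,\ \zeta_{23}=\mathrm{const}\}$ by Lemma~\ref{apro}/Proposition~\ref{foglietto}. Matching the two linear descriptions of $\pi_{\zeta_{23}}$ yields, after clearing denominators, algebraic identities relating $A^1_2,A^1_3,B^1_1$ at parameter $\zeta_{23}$ to $A^2_3(0), B^2_1(0)$, etc.

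Second, I would bring in a genuinely three-dimensional monotonicity constraint to pin down the \emph{affine}-in-$\zeta_{23}$ form of the denominators. The natural tool is Lemma~\ref{apro} again, applied at a point $x=(\tau,0)\in\partial E$ with $\tau$ on the line $(\Lone\times\{0\})\cap\partial E$: the set $\Hor_x\cap\partial E$ is a $2$-plane, and moving $x$ along that line while also using~\eqref{e:larger-set-bis}-type inclusions forces the coefficient functions to depend on $\zeta_{23}$ through a single common Möbius-type denominator $a_{23}+c\zeta_{23}$. More precisely, the $e_1$-graph function $f$ from Proposition~\ref{prop:graph} is, by~\eqref{e:def-f}, $f = g/(1-\langle q,\tau\rangle)$, and the $\zeta_{23}$-dependence of $q$ is what produces the denominator; comparing with the analogous $e_2$- and $e_3$-graph functions (whose ``inner'' variable is $\zeta_{13}$ resp. $\zeta_{12}$, not $\zeta_{23}$) shows that along the slice $\zeta_{12}=\zeta_{13}=0$ the functions $F^2_\bullet, F^3_\bullet$ are \emph{linear} in $\zeta_{23}$ with no denominator, while $F^1_\bullet$ carries the denominator. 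Feeding the linear expressions from the $e_2$/$e_3$ graphs into the matching identities from the first step then gives $A^1_2(\zeta_{23})(a_{23}+c\zeta_{23}) = a_{12}$, $A^1_3(\zeta_{23})(a_{23}+c\zeta_{23}) = a_{13}$, and $B^1_1(\zeta_{23})(a_{23}+c\zeta_{23}) = b_1\zeta_{23}$ for suitable constants $c := $ (the $\zeta_{23}$-linear coefficient coming from $A^2_3$ or $A^3_2$) and $b_1 := $ (the coefficient coming from $B^2_1(0)$ or $B^3_1(0)$), at least after shrinking $\delta$ so that $a_{23}+c\zeta_{23}\neq 0$; the normalization $A^1_2(0)=a_{12}/a_{23}$ etc.\ from~\eqref{e:A^i_j(0)} and $B^1_1(0)=0$ from~\eqref{e:B^i_i(0)} fix the constants and guarantee consistency at $\zeta_{23}=0$.

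The main obstacle I anticipate is \emph{bookkeeping the cross-terms correctly}: equations~\eqref{e:graph-1}--\eqref{e:graph-3} each carry a quadratic $C^i$-term, and when one substitutes one graph relation into another the $C^i$ terms interact with the $A^i_j$ and $B^i_j$ terms, so naively matching coefficients on the slice $\zeta_{12}=\zeta_{13}=0$ must be done with care to make sure the quadratic contributions genuinely vanish there (they do, since each $C$-term is multiplied by a $\zeta$-coordinate that is set to $0$) and to extract the right linear-in-$\zeta_{23}$ data from the $e_2$- and $e_3$-graphs. A secondary subtlety is ensuring that the common constant $c$ obtained from the $e_2$-graph agrees with the one from the $e_3$-graph; this agreement is forced precisely because both describe the same $2$-plane $\pi_{\zeta_{23}}$, but one should check the two resulting formulas for, say, $A^1_2(\zeta_{23})$ coincide, which is where Lemma~\ref{linee} and the $2$-plane structure of Proposition~\ref{foglietto} do the real work. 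Once these compatibility checks are in place, solving the three (affine in the unknowns) equations for $A^1_2$, $A^1_3$, $B^1_1$ is immediate and yields~\eqref{e:A^1_3-A^1_2-B^1_1}.
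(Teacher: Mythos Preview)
Your overall strategy is right and is essentially what the paper does in its second step: fix $\zeta_{23}$, restrict to the slice $\zeta_{12}=\zeta_{13}=0$, and match the description of $\partial E$ coming from~\eqref{e:graph-1} with the one coming from~\eqref{e:graph-3} (the paper uses the $e_3$-graph, multiplying~\eqref{e:graph-3} by $A^1_2(\zeta_{23})$ to normalize the $\tau_3$-coefficient). Since Proposition~\ref{prop:graphs} already asserts that \eqref{e:in-the-boundary}--\eqref{e:graph-3} are equivalent on $\calU$, you do not need to invoke Lemma~\ref{apro} or Proposition~\ref{foglietto} separately, nor the internal structure of $f$ from~\eqref{e:def-f}: the coefficient-matching is pure linear algebra once you know the two affine hyperplanes in $\Lone$ coincide.

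There is, however, a genuine error in your bookkeeping paragraph. You write that on the slice $\zeta_{12}=\zeta_{13}=0$ ``the quadratic contributions genuinely vanish there (they do, since each $C$-term is multiplied by a $\zeta$-coordinate that is set to $0$)''. This is false for~\eqref{e:graph-2} and~\eqref{e:graph-3}: the terms $C^2(\zeta_{13})\tau_1\zeta_{23}$ and $C^3(\zeta_{12})\tau_1\zeta_{23}$ survive on that slice as $C^2(0)\tau_1\zeta_{23}$ and $C^3(0)\tau_1\zeta_{23}$. These surviving $C$-terms are precisely the source of the $c\zeta_{23}$ in the denominator: on the slice, \eqref{e:graph-3} reads $(-A^3_2(0)+C^3(0)\zeta_{23})\tau_1 + A^3_1(0)\tau_2 + \tau_3 + B^3_1(0)\zeta_{23}=0$, and matching the $\tau_1$-coefficient with~\eqref{e:graph-1} gives $A^1_2(\zeta_{23})\big(-A^3_2(0)+C^3(0)\zeta_{23}\big)=1$, whence $A^1_2(\zeta_{23})=a_{12}/(a_{23}+c\zeta_{23})$ with $c:=a_{12}C^3(0)$. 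If you drop those $C$-terms you obtain the false conclusion that $A^1_2,A^1_3,B^1_1$ are constant in $\zeta_{23}$.

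One further comparison: the paper prefaces the coefficient-matching with a step you omit. It restricts to the $2$-dimensional slice $\Span\{e_1\}\times\Span\{e_{23}\}$ and observes that~\eqref{e:graph-2} and~\eqref{e:graph-3} there yield two \emph{h-affine} functions on $\free$ with the same local zero set; Corollary~\ref{cor:level-sets} then forces their coefficients to be proportional, giving the relations $-a_{13}B^2_1(0)=a_{12}B^3_1(0)=:b_1$ and $-a_{13}C^2(0)=a_{12}C^3(0)=a_{23}C^1(0)=:c$ (and analogues by permutation). For the bare statement of this lemma your one-sided choice $c=a_{12}C^3(0)$, $b_1=a_{12}B^3_1(0)$ suffices, so your worry about checking that the $e_2$- and $e_3$-derived constants agree is not needed here; but the paper establishes those compatibilities now because they are used in the next lemma to handle $B^1_2,B^1_3,C^1$.
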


\begin{proof}
We first look at $(\Span\{e_1\} \times \Span\{e_{23}\}) \cap \calU \cap \partial E$. We know from Proposition~\ref{prop:graphs} and~\eqref{e:B^i_i(0)} that $(\Span\{e_1\} \times \Span\{e_{23}\}) \cap \calU \cap \partial E \not=\emptyset$. Since $A^3_2(0)A^2_3(0)\not=0$, see~\eqref{e:A^i_j(0)}, we also know from $-A^3_2(0) \times$~\eqref{e:graph-2}, $A^2_3(0) \times$~\eqref{e:graph-3} and~\eqref{e:B^i_i(0)} that for all $(\tau_1 e_1,\zeta_{23} e_{23}) \in \calU \cap \partial E$ the following two equivalent conditions hold true~:
\begin{align*}
 - A^3_2(0)  \left[ A^2_3(0) \tau_1 + B^2_1(0) \zeta_{23} + C^2(0) \tau_1 \zeta_{23} \right] &= 0~,\\
A^2_3(0) \left[ - A^3_2(0) \tau_1 +  B^3_1(0) \zeta_{23} + C^3(0)  \tau_1 \zeta_{23} \right] &= 0~. 
 \end{align*}
Set $\phi(\tau,\zeta):=- A^3_2(0)  [ A^2_3(0) \tau_1 + B^2_1(0) \zeta_{23} + C^2(0) \tau_1 \zeta_{23} ]$ and $\psi(\tau,\zeta):= A^2_3(0) [ - A^3_2(0) \tau_1 +  B^3_1(0) \zeta_{23} + C^3(0)  \tau_1 \zeta_{23} ]$. These non constant functions $\phi,\psi: \free \rightarrow \R$ are h-affine with $\phi(0,0) = \psi(0,0)$ and we know from the previous argument that $\{(\tau,\zeta) \in \calU:\, \phi(\tau,\zeta)=0\} = \{(\tau,\zeta) \in \calU:\, \psi(\tau,\zeta)=0\}$. Then it follows from Corollary~\ref{cor:level-sets} that $-A^3_2(0) B^2_1(0) = A^2_3(0) B^3_1(0)$ and $- A^3_2(0) C^2(0) =  A^2_3(0) C^3(0)$. Taking into account \eqref{e:A^i_j(0)} we get that $- a_{13} B^2_1(0) = a_{12} B^3_1(0)$ and $- a_{13}  C^2(0) =  a_{12}C^3(0)$. Using two consecutive permutations of the coordinates, we then get the following relations~: 
\begin{align}
- a_{13} B^2_1(0) = a_{12} B^3_1(0) &=:b_1~, \label{e:b1}\\
 a_{12} B^3_2(0) = a_{23} B^1_2(0)&=:b_2~, \label{e:b2}\\
 a_{23} B^1_3(0) = - a_{13} B^2_3(0) &=:b_3~, \label{e:b3}\\
- a_{13}  C^2(0) =  a_{12} C^3(0) =  a_{23} C^1(0)  &=:c~. \label{e:c}
\end{align}

Next let $\zeta_{23} \in \R$ be fixed small enough so that $A^1_2(\zeta_{23})\not=0$ (recall that $A^1_2(0)\not=0$, see~\eqref{e:A^i_j(0)}, and $A^1_2$ is continous) and so that $(\Lone \times \{\zeta_{23} e_{23}\}) \cap \calU \cap \partial E \not=\emptyset$, see Proposition~\ref{prop:graphs}. Using~\eqref{e:graph-1}, $A^1_2(\zeta_{23})\times$~\eqref{e:graph-3} and~\eqref{e:B^i_i(0)}, we know that for all $\tau \in \Lone$ such that $|\tau_i|<\delta$ for $i=1,2,3$, we have $(\tau,\zeta_{23} e_{23}) \in \partial E$ if and only if the following two equivalent conditions hold true~:
\begin{align*}
\tau_1 - A^1_3(\zeta_{23}) \tau_2 + A^1_2(\zeta_{23}) \tau_3  + B^1_1(\zeta_{23})  &= 0~,\\
 A^1_2(\zeta_{23}) \left[ (- A^3_2(0)+ C^3(0) \zeta_{23}) \tau_1  +  A^3_1(0)\tau_2  +  \tau_3 +  B^3_1(0) \zeta_{23} \right] &= 0~. 
\end{align*}
This implies that 
\begin{align*}
 A^1_2(\zeta_{23})(- A^3_2(0)+ C^3(0) \zeta_{23}) &= 1~,\\
 A^1_2(\zeta_{23})  A^3_1(0) &= - A^1_3(\zeta_{23})~, \\
 A^1_2(\zeta_{23}) B^3_1(0) \zeta_{23} &= B^1_1(\zeta_{23})~.
\end{align*}
and ~\eqref{e:A^1_3-A^1_2-B^1_1} follows taking into account~\eqref{e:A^i_j(0)},~\eqref{e:b1} and~\eqref{e:c}.
\end{proof}

\begin{lemma} For all $\zeta_{23}$ small enough, we have
\begin{equation} \label{e:C^1-B^1_2-B^1-3}
\left\{
\begin{aligned}
&C^1(\zeta_{23}) = \frac{c}{a_{23} +c \zeta_{23}}~,\\
&B^1_2(\zeta_{23}) = \frac{b_2}{a_{23} +c \zeta_{23}}~,\\
& B^1_3(\zeta_{23}) = \frac{b_3}{a_{23} +c \zeta_{23}}~,
\end{aligned}
\right.
\end{equation}
where $c$ is given by Lemma~\ref{lem:A^1_3-A^1_2-B^1_1}, and $b_2$, $b_3$ are given by~\eqref{e:b2} and~\eqref{e:b3}.
\end{lemma}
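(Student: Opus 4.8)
The plan is to exploit, as in the proof of Lemma~\ref{lem:A^1_3-A^1_2-B^1_1}, that Proposition~\ref{prop:graphs} provides \emph{three} simultaneous descriptions \eqref{e:graph-1}, \eqref{e:graph-2}, \eqref{e:graph-3} of $\calU \cap \partial E$, and to play two of them against each other. Since condition \eqref{e:graph-1} is equivalent to $(\tau,\zeta) \in \partial E$ for $(\tau,\zeta) \in \calU$, we may solve \eqref{e:graph-1} for $\tau_1$, obtaining an explicit expression of $\tau_1$ as a function of $(\tau_2,\tau_3,\zeta_{12},\zeta_{13},\zeta_{23})$ on $\calU \cap \partial E$; this expression is continuous and vanishes at the origin, so after shrinking $\delta$ it parametrizes $\calU \cap \partial E$ as a graph over a neighborhood of $0$ in $(\tau_2,\tau_3,\zeta_{12},\zeta_{13},\zeta_{23})$-space. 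Plugging this expression for $\tau_1$ into \eqref{e:graph-3}, which also holds on $\calU \cap \partial E$, then yields an identity, polynomial of degree two in $(\tau_2,\tau_3,\zeta_{12},\zeta_{13},\zeta_{23})$, valid for all these variables small enough; the same construction with \eqref{e:graph-2} in place of \eqref{e:graph-3} yields a second such identity.

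To extract $C^1(\zeta_{23})$ and $B^1_2(\zeta_{23})$, I would specialize the first identity to $\zeta_{12} = 0$ and to a fixed, small value of $\zeta_{23}$, so that it becomes a degree-two polynomial identity in $(\tau_2,\tau_3,\zeta_{13})$ whose coefficients must all vanish. It is convenient to first regroup in \eqref{e:graph-3} the two monomials containing $\tau_1$ into a single term $-K_0 \tau_1$ with $K_0 := A^3_2(0) - C^3(0) \zeta_{23}$, and only then substitute. Reading off the coefficients of $\tau_2 \zeta_{13}$ and of $\zeta_{13}$ gives $C^3(0) = -K_0\, C^1(\zeta_{23})$ and $B^3_2(0) = -K_0\, B^1_2(\zeta_{23})$; since $K_0 = -(a_{23} + c\,\zeta_{23})/a_{12}$ by \eqref{e:A^i_j(0)} and \eqref{e:c}, combining with \eqref{e:c} and \eqref{e:b2} yields the first two formulas of \eqref{e:C^1-B^1_2-B^1-3}. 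Symmetrically, specializing the second identity to $\zeta_{13} = 0$ and regrouping the $\tau_1$-monomials of \eqref{e:graph-2} into $L_0 \tau_1$ with $L_0 := A^2_3(0) + C^2(0)\zeta_{23} = -(a_{23} + c\,\zeta_{23})/a_{13}$, the coefficient of $\zeta_{12}$ gives $B^2_3(0) = L_0\, B^1_3(\zeta_{23})$, hence, by \eqref{e:b3}, the third formula; the coefficient of $\tau_3 \zeta_{12}$ recovers the value of $C^1(\zeta_{23})$ and serves as a consistency check.

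Throughout, one uses the vanishing \eqref{e:B^i_i(0)} of $B^1_1(0), B^2_2(0), B^3_3(0)$ and the relations \eqref{e:A^i_j(0)}, \eqref{e:b1}--\eqref{e:c} already established; no geometric input beyond Proposition~\ref{prop:graphs} is required. The main obstacle is not conceptual but lies in the bookkeeping: one must track exactly which monomial in $(\tau_2,\tau_3,\zeta_{12},\zeta_{13})$ each term of \eqref{e:graph-1}, \eqref{e:graph-2}, \eqref{e:graph-3} contributes to after the substitution. In particular, the product term $C^3(\zeta_{12})\,\tau_1\,\zeta_{23}$ of \eqref{e:graph-3}, once $\tau_1$ is replaced, must be folded into the $\tau_1$-coefficient (this is why the regrouping into $K_0 \tau_1$ is performed \emph{before} substituting), and one has to check that the only coefficients carrying the unknowns $C^1(\zeta_{23})$, $B^1_2(\zeta_{23})$, $B^1_3(\zeta_{23})$ are indeed those of $\tau_2\zeta_{13}$, $\zeta_{13}$ and $\zeta_{12}$ respectively; once this is in place the computation is routine, and the continuity of $A^i_j$, $B^i_j$, $C^i$ transfers to the functions on the right-hand sides of \eqref{e:C^1-B^1_2-B^1-3} as claimed.
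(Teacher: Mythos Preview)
Your proposal is correct and follows essentially the same strategy as the paper: compare two of the graph descriptions \eqref{e:graph-1}--\eqref{e:graph-3} on suitable slices (fixing $\zeta_{23}$ and setting one of $\zeta_{12},\zeta_{13}$ to zero), then read off coefficients. The only cosmetic differences are that the paper normalizes leading coefficients (multiplying \eqref{e:graph-2} by $-A^1_3(\zeta_{23})$ and \eqref{e:graph-3} by $A^1_2(\zeta_{23})$) rather than substituting $\tau_1$ as you do, and pairs the equations slightly differently (it extracts $C^1$ and $B^1_3$ from the comparison with \eqref{e:graph-2} and $B^1_2$ from \eqref{e:graph-3}, whereas you take $C^1,B^1_2$ from \eqref{e:graph-3} and $B^1_3$ from \eqref{e:graph-2}); both routes lead to the same linear relations and the same use of \eqref{e:A^i_j(0)} and \eqref{e:b1}--\eqref{e:c}.
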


\begin{proof}
Let $\zeta_{12}, \zeta_{23} \in \R$ be fixed small enough so that $(\Span\{e_2,e_3\} \times \{\zeta_{12} e_{12} + \zeta_{23} e_{23}\}) \cap \calU \cap \partial E \not=\emptyset$, see Proposition~\ref{prop:graphs}, and so that $A^1_3(\zeta_{23}) \not=0$ (recall that $A^1_3(0) \not=0$, see~\eqref{e:A^i_j(0)}, and $A^1_3$ is continuous). Using~\eqref{e:graph-1}, $-A^1_3(\zeta_{23}) \times$~\eqref{e:graph-2} and~\eqref{e:B^i_i(0)}, we know that for all $\tau_2,\tau_3 \in (-\delta,\delta)$ we have $(\tau_2 e_2 + \tau_3 e_3 ,\zeta_{12} e_{12} +\zeta_{23} e_{23}) \in \partial E$ if and only if the following two equivalent conditions hold true~:
\begin{align*}
- A^1_3(\zeta_{23})  \tau_2 + (A^1_2(\zeta_{23}) + C^1(\zeta_{23})  \zeta_{12}) \tau_3  + B^1_1(\zeta_{23}) + B^1_3(\zeta_{23}) \zeta_{12}  &= 0~,\\
- A^1_3(\zeta_{23}) \left[ \tau_2 - (A^2_1(0) - C^2(0)\zeta_{12}) \tau_3  + B^2_1(0) \zeta_{23} + B^2_3(0) \zeta_{12} \right] &=0~.
\end{align*}
Considering the coefficients in front of $\tau_3$, this implies that  $( A^1_2(\zeta_{23}) + C^1(\zeta_{23})  \zeta_{12}) =  A^1_3(\zeta_{23})(A^2_1(0) - C^2(0)\zeta_{12})$. 
Therefore 
\begin{equation*}
 C^1(\zeta_{23})  \zeta_{12} = A^1_3(\zeta_{23})(A^2_1(0) - C^2(0)\zeta_{12}) - A^1_2(\zeta_{23})
\end{equation*}
and the  form  of $C^1(\zeta_{23})$ follows from~\eqref{e:A^i_j(0)},~\eqref{e:A^1_3-A^1_2-B^1_1} and~\eqref{e:c}. Considering the constant terms,  we get that  $B^1_1(\zeta_{23}) + B^1_3(\zeta_{23}) \zeta_{12} = - A^1_3(\zeta_{23}) (B^2_1(0) \zeta_{23} + B^2_3(0) \zeta_{12})$. Therefore
\begin{equation*}
 B^1_3(\zeta_{23}) \zeta_{12} = - A^1_3(\zeta_{23}) ( B^2_1(0) \zeta_{23} + B^2_3(0) \zeta_{12}) -  B^1_1(\zeta_{23})
\end{equation*}
and the form of $B^1_3(\zeta_{23})$ follows from~\eqref{e:A^1_3-A^1_2-B^1_1},~\eqref{e:b1} and~\eqref{e:b3}.

To get the form of $B^1_2(\zeta_{23})$ we argue in a similar way considering $(\Span\{e_3\} \times \{\zeta_{13} e_{13} + \zeta_{23} e_{23}\}) \cap \calU \cap \partial E$. Namely, let $\zeta_{13}, \zeta_{23} \in \R$ be fixed small enough so that $(\Span\{e_3\} \times \{\zeta_{13} e_{13} + \zeta_{23} e_{23}\}) \cap \calU \cap \partial E \not=\emptyset$, see Proposition~\ref{prop:graphs}, and so that $A^1_2(\zeta_{23}) \not=0$ (recall that $A^1_2(0) \not=0$, see~\eqref{e:A^i_j(0)}, and $A^1_2$ is continuous). Using~\eqref{e:graph-1}, $A^1_2(\zeta_{23}) \times$~\eqref{e:graph-3} and~\eqref{e:B^i_i(0)}, we know that for all $\tau_3 \in (-\delta,\delta)$ we have $(\tau_3 e_3,\zeta_{13} e_{13} +\zeta_{23} e_{23}) \in \partial E$ if and only if the following two equivalent conditions hold true~:
\begin{align*}
& A^1_2(\zeta_{23}) \tau_3  + B^1_1(\zeta_{23})  - B^1_2(\zeta_{23}) \zeta_{13}= 0~,\\
& A^1_2 (\zeta_{23}) \left[  \tau_3  - B^3_2(0) \zeta_{13} + B^3_1(0) \zeta_{23} \right] = 0~.
\end{align*}
This implies that $B^1_1(\zeta_{23})  - B^1_2(\zeta_{23}) \zeta_{13} = A^1_2 (\zeta_{23}) (- B^3_2(0) \zeta_{13} + B^3_1(0) \zeta_{23})$. Therefore
\begin{equation*}
B^1_2(\zeta_{23}) \zeta_{13} = B^1_1(\zeta_{23}) + A^1_2 (\zeta_{23}) (B^3_2(0) \zeta_{13} - B^3_1(0) \zeta_{23})
\end{equation*}
and the form of $B^1_2(\zeta_{23})$ follows from~\eqref{e:A^1_3-A^1_2-B^1_1},~\eqref{e:b1} and~\eqref{e:b2}.
\end{proof}

To conclude the proof of Proposition~\ref{prop:local}, we set $\eta_0:= c\in \R = \Lambda^0\R^3$, $\eta_1:=b_1 e_1 + b_2 e_2 + b_3 e_3 \in \Lone$, $\eta_2:= a_{12} e_{12} + a_{13} e_{13} +a_{23} e_{23} \in \Ltwo \setminus \{0\}$ and we let $\phi:\free \rightarrow \R$ be the non constant h-affine function given by
\begin{equation} \label{e:the-h-affine-map}
\phi (\tau, \zeta) \nu := \eta_2 \wedge \tau + \eta_1 \wedge \zeta + \eta_0\, \tau \wedge \zeta 
\end{equation}
where $\nu := e_1 \wedge e_2 \wedge e_3$. By~\eqref{e:A^1_3-A^1_2-B^1_1} and~\eqref{e:C^1-B^1_2-B^1-3} we have for $\zeta_{23}$ small enough
\begin{multline*}
\phi (\tau, \zeta)  = (a_{23} + c \zeta_{23}) (\tau_1 - A^1_3(\zeta_{23}) \tau_2 + A^1_2(\zeta_{23}) \tau_3  + B^1_1(\zeta_{23}) + B^1_3(\zeta_{23}) \zeta_{12}\\ - B^1_2(\zeta_{23}) \zeta_{13}
 - C^1(\zeta_{23})  \tau_2 \zeta_{13} + C^1(\zeta_{23}) \tau_3 \zeta_{12}).
\end{multline*}
Therefore, shrinking $\calU$ if necessary, we get from ~\eqref{e:in-the-boundary} and~\eqref{e:graph-1} that 
\begin{equation*}
\calU \cap\p E = \{(\tau,\zeta) \in \calU:\, \phi (\tau, \zeta) = 0\}~.
\end{equation*}
Since we have choosen $a_{23} < 0$, we also get from the last two lines in~\eqref{e:local-graph-1} that
\begin{align*}
\calU \cap \Int(E) &=\{(\tau,\zeta) \in \calU : \phi(\tau,\zeta) < 0 \} \\
\calU \cap\Int(E^c) &=\{(\tau,\zeta)\in \calU : \phi (\tau,\zeta) > 0 \}
\end{align*}
which concludes the proof of~\eqref{457}.

\section{Classification in the free step-2 rank-3 Carnot algebra} \label{sect:global-statement-free-case}

In this section we upgrade the local statement given by Proposition~\ref{prop:local} into a global one, that will in turn  imply Theorem~\ref{thm:main}. 

We set 
\begin{equation*} \label{e:def-Sigma}
\Sigma :=\bigcup_{\substack{\xi,\tau \in \Lone \\ \xi \wedge \tau \not=0}}\Lie(\xi,\tau)
\end{equation*}
where $\Lie(\xi,\tau) := \Span\{\xi,\tau\} \oplus \Span\{\xi \wedge \tau\}$ denotes the Lie subalgebra of $\free$ generated by $\xi$ and $\tau$. We start with the following preliminary step.

\begin{lemma} \label{lem:step0} Let $E\subset \free$ be precisely monotone and $x\in \Nonchar(\partial E)$. Let $\calU_x$ be an open neighborhood of $x$ and $\phi_x:\free \rightarrow \R$ be a non constant h-affine function for which~\eqref{457} holds true. Then $S_x \cap (x\cdot \Sigma )\subset \partial E$ where $S_x:=\{y\in \free:\, \phi_x(y)=0\}$.
\end{lemma}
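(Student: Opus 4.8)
The plan is to show that every point $y \in S_x \cap (x \cdot \Sigma)$ actually lies in $\partial E$, and the key point is to exploit precise monotonicity along horizontal lines joining $y$ to the piece of $\partial E$ already controlled by Proposition~\ref{prop:local} near $x$. After a left-translation we may assume $x = 0$, so $\phi_x$ vanishes at the origin and $S_x$ is a level set of an h-affine function through $0$; we must prove $S_x \cap \Sigma \subset \partial E$. Write $y = \xi + \tau + c\,\xi\wedge\tau$ for some $\xi,\tau \in \Lone$ with $\xi \wedge \tau \neq 0$ (i.e.\ $y \in \Lie(\xi,\tau)$), with $\phi_x(y) = 0$.

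\smallskip

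First I would restrict everything to the Heisenberg subalgebra $\frakh := \Lie(\xi,\tau)$. Both $\phi_x|_\frakh$ and the set $E \cap \frakh$ are controlled: $E \cap \frakh$ is a precisely monotone subset of $\frakh$, and $\phi_x|_\frakh$ is h-affine on a Heisenberg algebra, hence affine by Theorem~\ref{thm:h-affine-maps}. By Proposition~\ref{prop:local}, in a neighborhood $\calU_0 \cap \frakh$ of $0$ in $\frakh$ we have $\calU_0 \cap \frakh \cap \partial E = \calU_0 \cap \frakh \cap S_x$, and the two sides of $S_x \cap \frakh$ lie respectively in $\Int(E)$ and $\Int(E^c)$. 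In particular $0 \in \Nonchar_\frakh(\partial_\frakh(E\cap \frakh))$ is a boundary point, so $E \cap \frakh \notin \{\emptyset, \frakh\}$; then Theorem~\ref{thm:CK} tells us $\partial_\frakh(E \cap \frakh)$ is a $2$-dimensional affine (indeed linear, since it contains $0$) subspace of $\frakh$. Since $S_x \cap \frakh$ is a $2$-dimensional linear subspace of $\frakh$ (the level set of a nonconstant affine function through $0$) that agrees with $\partial_\frakh(E\cap \frakh)$ near $0$, the two linear subspaces coincide: $S_x \cap \frakh = \partial_\frakh(E \cap \frakh)$. Since $\partial_\frakh(E\cap\frakh) \subset \partial E$ (because one side is in $\Int(E)$ and the other in $\Int(E^c)$, both inside $E$ resp.\ $E^c$ as subsets of $\free$), we conclude $y \in S_x \cap \frakh \subset \partial E$, as desired.

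\smallskip

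The one delicate point is verifying that $\phi_x$ restricted to $\frakh$ is not constant, so that $S_x \cap \frakh$ is genuinely $2$-dimensional and Theorem~\ref{thm:CK} applies with $E\cap\frakh \notin\{\emptyset,\frakh\}$. This is where I expect the main obstacle to be: a priori $\phi_x$ could be constant on some of the subalgebras $\Lie(\xi,\tau)$. I would handle it by noting that $S_x \cap \frakh$ contains $y$ together with the origin; if $\phi_x|_\frakh$ were constant it would be $\equiv 0$, forcing $\frakh \subset S_x$, and then the local description~\eqref{457} would give $\calU_0 \cap \frakh \cap \partial E = \calU_0 \cap \frakh$, i.e.\ $\frakh$ would locally be contained in $\partial E$ near $0$ — but $(e_1,0) \in \Int(E)$ (or more precisely the noncharacteristic structure at $0$) shows $\Hor_0 \cap \partial E$ is only $2$-dimensional inside the $3$-dimensional $\Lone$, a contradiction once we pick $\xi,\tau$ appropriately, and in any case a point of $\Int(E)$ arbitrarily close to $0$ in $\frakh$ would contradict $\calU_0\cap\frakh\subset\partial E$. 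Hence $\phi_x|_\frakh \not\equiv$ const, and the argument of the previous paragraph applies. Finally, since $y \in \Sigma$ was arbitrary, $S_x \cap \Sigma \subset \partial E$, which after undoing the translation is exactly $S_x \cap (x\cdot\Sigma) \subset \partial E$.
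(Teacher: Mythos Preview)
Your overall strategy — reduce to $x=0$, restrict to $\frakh=\Lie(\xi,\tau)$, use that $\phi|_\frakh$ is affine, and compare $S_x\cap\frakh$ with $\partial_\frakh(E\cap\frakh)$ via Theorem~\ref{thm:CK} — is the same as the paper's. Your main argument for the case $\phi|_\frakh\not\equiv 0$ is essentially correct and in fact treats the paper's two subcases ($\Ker\phi|_\frakh=\Span\{\xi,\tau\}$ and $\Ker\phi|_\frakh\neq\Span\{\xi,\tau\}$) uniformly, which is fine: once you know $\phi|_\frakh$ is nonconstant, \eqref{457} gives points of $\Int(E)$ and $\Int(E^c)$ in $\calU\cap\frakh$, so $E\cap\frakh\notin\{\emptyset,\frakh\}$, Theorem~\ref{thm:CK} applies, and the local inclusion $\calU\cap\partial_\frakh(E\cap\frakh)\subset\calU\cap\frakh\cap\partial E=\calU\cap\Ker\phi|_\frakh$ forces the two $2$-planes through $0$ to coincide.

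The genuine gap is your handling of the degenerate case $\phi|_\frakh\equiv 0$. This case \emph{does} occur (e.g.\ when $\Span\{\xi,\tau\}=\Anh(\eta_2)$ and $\eta_1\wedge\eta_2=0$ in the notation of~\eqref{e:the-h-affine-map}), and it is \emph{not} a contradiction. Your suggested contradictions fail: you cannot ``pick $\xi,\tau$ appropriately'' since they are determined by the point $y\in\Sigma$; the fact that $\Hor_0\cap\partial E$ is $2$-dimensional does not conflict with $\calU\cap\frakh\subset\partial E$ since $\frakh\not\subset\Hor_0$; and there is no reason for $\Int(E)$ to meet $\frakh$ near $0$, precisely because $\phi|_\frakh\equiv 0$ means $\frakh$ lies entirely in the zero set locally. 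What is needed in this case is to \emph{prove} the conclusion $\frakh\subset\partial E$ (since here $S_x\cap\frakh=\frakh$). The paper does this by taking two horizontal lines in $\frakh$ that lie in $\calU\cap\partial E$ by \eqref{457}, extending them to full lines in $\partial E$ via Lemma~\ref{linee}, and then invoking the Cheeger--Kleiner skew-lines lemma \cite[Lemma~4.10]{CheegerKleiner10} applied to $\mathcal{G}=\frakh\cap\partial E$ to conclude $\frakh\subset\partial E$.
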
 

\begin{proof}
Using a left-translation, we can assume with no loss of generality that $x=0$. We set $\calU:= \calU_{0}$, $\phi := \phi_0$, and $S:=S_0$. Let $\xi,\tau\in \Lone$ be given such that $\xi \wedge \tau \not=0$. Set $\frakh:= \Lie(\xi,\tau)$ and let us prove that $S \cap \frakh \subset \partial E$.

The restriction $\phi_{|\frakh}:\frakh \rightarrow \R$ of the function $\phi$ to $\frakh$ is h-affine on the Heisenberg algebra $\frakh$ and such that $\phi_{|\frakh}(0)=0$. Therefore it follows from Theorem~\ref{thm:h-affine-maps} that $\phi_{|\frakh}$ is a linear form on $\frakh$. 

If $\Ker \phi_{|\frakh} = \frakh$, we get from~\eqref{457} that $\calU \cap \frakh \subset \partial E$. We fix $s\not=0$ close enough to $0$ so that $s\xi \wedge \tau \in \calU$ and we consider the horizontal lines  $\ell_1:= \{ \xi^t \in \frakh :\, t\in\R\} \subset \frakh$ and $\ell_2:= (s\xi\wedge \tau ) \cdot \{ \ \tau^t \in \frakh :\, t\in\R\} \subset \frakh$. On the one hand, we have $\calU \cap \ell_j \subset \partial E$ for $j=1,2$ and it follows from Lemma~\ref{linee} that $\ell_1 \cup \ell_2 \subset \partial E$. On the other hand $\ell_1$ and $\ell_2$ are skew lines in $\frakh$ in the sense of~\cite{CheegerKleiner10}. Since they are contained in $\frakh \cap \partial E$, Lemma~\ref{linee} together with~\cite[Lemma~4.10]{CheegerKleiner10} applied to the set $\mathcal{G}:= \frakh \cap \p E$ implies that $\frakh \subset \p E$. 

If $\Ker \phi_{|\frakh} \not= \frakh$ then $\Ker \phi_{|\frakh}$ is a 2-dimensional linear subspace of $\frakh$. If $\Ker \phi_{|\frakh} = \Span\{\xi,\tau\}$ then $\Ker \phi_{|\frakh}$ is a linear subspace of $ \Hor_0$. We also know that $\Hor_0 \cap \partial E$ is a linear subspace of $ \Hor_0$ (see Proposition~\ref{foglietto}) that contains $\calU \cap \Ker \phi_{|\frakh}$ (see~\eqref{457}). This implies that $\Ker \phi_{|\frakh} \subset \Hor_0 \cap \partial E$ and therefore $S \cap \frakh \subset \partial E$. If $\Ker \phi_{|\frakh} \not= \Span\{\xi,\tau\}$, there is $\theta \in \Span\{\xi,\tau\}$ such that $\phi(\theta) \not=0$. Then we get from~\eqref{457} that for all $s>0$ small enough either $s\theta \in \Int(E) \cap \frakh$ and $-s\theta \in \Int(E^c) \cap \frakh$, or, $-s\theta \in \Int(E) \cap \frakh $ and $s\theta \in \Int(E^c) \cap \frakh$. It follows that $E \cap \frakh$ is a precisely monotone subset of $\frakh$ that is neither $\emptyset$ nor $\frakh$. Therefore $\partial_\frakh (E\cap \frakh)$ is 2-dimensional linear subspace of $\frakh$ by Theorem~\ref{thm:CK}. We also know from~\eqref{457} that $\calU \cap \partial_\frakh (E\cap \frakh) \subset \calU \cap \partial E \cap \frakh = \calU \cap  \Ker \phi_{|\frakh}$. This implies that the 2-dimensional linear subspaces $\Ker \phi_{|\frakh}$ and $\partial_\frakh (E\cap \frakh)$ coincide and therefore $S \cap \frakh = \partial_\frakh (E\cap \frakh) \subset \partial E$.
\end{proof}

For the rest of this section, we fix a precisely monotone measurable subset $E$ of $\free$ such that $E\notin\{\emptyset,\free\}$. By Proposition~\ref{prop:noncharacteristic-points-exist} we know that $\Nonchar(\partial E) \not=\emptyset$. Using a left-translation, we can assume with no loss of generality $0 \in \Nonchar(\partial E)$. We set $\calU:= \calU_{0}$, $\phi := \phi_0$, and $S:=\{ x\in \free :\, \phi(x) =0 \}$, where $\calU_{0}$ is an open neighborhood of $0$ and $\phi_0 : \free \rightarrow \R$ is a non constant h-affine function given by Proposition~\ref{prop:local} so that $0 \in \Nonchar(S)$ and
\begin{equation}\label{e:description} 
\left\{
\begin{aligned}
\calU \cap \Int(E) &=\{y \in \calU : \phi(y) <0 \} \\
\calU\cap\p E &=\calU \cap S \\
\calU\cap\Int(E^c) &=\{y\in \calU : \phi(y) >0 \}~.
\end{aligned}
\right.
\end{equation}
Since $\Nonchar(\partial E)$ is a relatively open subset of $\partial E$, shrinking $\calU$ if necessary, we also assume with no loss of generality that
\begin{equation} \label{e:no-characteristic-points}
\calU \cap \Char(\partial E) = \emptyset~.
\end{equation}

The proof of Theorem~\ref{thm:main} will proceed in the following steps:

\smallskip 

\noindent (1) Lemma~\ref{lem:step1}: $S\cap ( x \cdot \Sigma) \subset \partial E$ for all $x\in \Nonchar(\partial E) \cap \calU$, in particular, $S\cap \Sigma \subset \partial E$. 

\noindent (2) Lemma~\ref{lem:step2}: $S\setminus \Sigma \subset \partial E$.

\noindent (3)  Lemma~\ref{lem:step3}:  $\partial E \subset S$.

\noindent (4) Lemma~\ref{lem:step4}: $\Int(E) = \{y\in \free :\, \phi(y) < 0\}$ and $\overline{E}= \{y\in \free :\, \phi(y) \leq 0\}$.

\begin{lemma} \label{lem:step1} We have $S\cap (x \cdot \Sigma)  \subset \partial E$ for all $x\in \Nonchar(\partial E) \cap \calU$. In particular
\begin{equation} \label{e:S-inter-Sigma-in-the boundary}
S\cap  \Sigma \subset \partial E~.
\end{equation}
\end{lemma}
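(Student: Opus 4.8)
The goal is to show $S \cap (x\cdot\Sigma)\subset\partial E$ for every $x\in\Nonchar(\partial E)\cap\calU$, and in particular, taking $x=0$ and using $0\in\Nonchar(\partial E)\cap\calU$, that $S\cap\Sigma\subset\partial E$. The natural strategy is to reduce everything to Lemma~\ref{lem:step0}, which already gives $S_x\cap(x\cdot\Sigma)\subset\partial E$ once we know that a \emph{local} h-affine description of type~\eqref{457} holds at $x$. The first task is therefore to produce, for each $x\in\Nonchar(\partial E)\cap\calU$, a local h-affine function $\phi_x$ satisfying~\eqref{457} and to check that near $x$ it must coincide (up to a nonzero multiplicative constant) with the globally fixed $\phi$ from~\eqref{e:description}. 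Indeed, Proposition~\ref{prop:local} gives \emph{some} non constant h-affine $\phi_x$ with~\eqref{457} on a neighborhood $\calU_x$ of $x$; shrinking so that $\calU_x\subset\calU$, both $\phi$ and $\phi_x$ vanish precisely on $\calU_x\cap\partial E$ and have the same sign pattern for $\Int(E)$, $\Int(E^c)$. Since $x\in\Nonchar(\partial E)$ (using~\eqref{e:no-characteristic-points}), the zero set $\calU_x\cap S=\calU_x\cap\partial E$ is a codimension-$1$ piece (by Proposition~\ref{foglietto}, $\Hor_x\cap\partial E$ has codimension $1$ in $\Hor_x$, so $\phi_x$ restricted to horizontal lines through $x$ is genuinely non constant). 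Two h-affine functions on $\free$ that vanish on the same nonempty relatively open codimension-$1$ subset and induce the same sublevel-set signs must be positive scalar multiples of one another: this is exactly the content of the uniqueness/rigidity results for level sets of h-affine functions collected in Section~\ref{sect:(sub)levelsets-h-affine-maps} (the same type of statement as Corollary~\ref{cor:level-sets}, used in Lemma~\ref{lem:A^1_3-A^1_2-B^1_1}). Hence $\phi_x=\lambda_x\,\phi$ for some $\lambda_x>0$, so $S_x=S$.

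With $S_x=S$ in hand, Lemma~\ref{lem:step0} applied at $x$ gives directly
$$
S\cap(x\cdot\Sigma)=S_x\cap(x\cdot\Sigma)\subset\partial E,
$$
which is the first assertion. For the second assertion, it suffices to take $x=0$: since $0\in\Nonchar(\partial E)\cap\calU$, we get $S\cap(0\cdot\Sigma)=S\cap\Sigma\subset\partial E$, which is~\eqref{e:S-inter-Sigma-in-the boundary}.

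The step I expect to be the main obstacle is the rigidity argument identifying $\phi_x$ with a multiple of $\phi$. One must be careful that the conclusion ``same zero set plus same sign pattern $\Rightarrow$ proportional with positive constant'' really does hold for h-affine functions on $\free$, where the zero sets can be genuinely quadric hypersurfaces rather than hyperplanes; this is where the structural results of Section~\ref{sect:(sub)levelsets-h-affine-maps} are essential, and one needs the noncharacteristic hypothesis to guarantee the common zero set is locally a nondegenerate codimension-$1$ set (so that it actually determines the h-affine function up to scalar, rather than, say, being contained in the singular locus). Once that rigidity is secured, the rest is a direct invocation of Lemma~\ref{lem:step0} and left-translation, with no further computation.
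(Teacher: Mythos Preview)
Your argument is correct, but it takes an unnecessary detour. The rigidity step identifying $\phi_x$ with a positive multiple of $\phi$ via Corollary~\ref{cor:level-sets} works, yet it is not needed at all. The paper's proof is a one-liner: since $x\in\calU$ and the description~\eqref{e:description} already holds on all of $\calU$ with the fixed function $\phi$, the pair $(\calU_x,\phi_x):=(\calU,\phi)$ itself satisfies~\eqref{457} at $x$, and Lemma~\ref{lem:step0} applies directly with $S_x=S$ by definition. In other words, Lemma~\ref{lem:step0} only asks for \emph{some} neighborhood and \emph{some} non constant h-affine function for which~\eqref{457} holds; there is no requirement that they arise from a fresh application of Proposition~\ref{prop:local} at $x$, so the identification problem you set up never arises.

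What your approach buys is a rehearsal of the rigidity principle for h-affine level sets, and it would be the right move if $x$ lay outside $\calU$ and one genuinely had two a~priori unrelated local descriptions to reconcile (this is essentially what happens later, implicitly, in the proof of Lemma~\ref{lem:step3}). But within $\calU$ the global $\phi$ is already available, and invoking Proposition~\ref{prop:local} again plus Corollary~\ref{cor:level-sets} only reproves what~\eqref{e:description} hands you for free.
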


\begin{proof}
If $x\in \Nonchar(\partial E) \cap \calU$ then $\calU_x:=\calU$ and $\phi_x:=\phi$ are an open neighborhood of $x$, respectively, a non constant h-affine function, for which~\eqref{457} holds true. Therefore $S\cap ( x \cdot \Sigma) \subset \partial E$ by Lemma~\ref{lem:step0}.
\end{proof}

We give in Lemma~\ref{lem:step2-1} below a condition on points in $S\setminus\Sigma$ that ensures that they belong to $\partial E$. To prove that $S\setminus\Sigma \subset \partial E$, see Lemma~\ref{lem:step2}, we shall next verify thanks to Lemma~\ref{lem:step2-2} that this condition holds true on a relatively dense subset of $S\setminus \Sigma$.

From now on in this section, we identify $\free$ with $\Lone \times \Ltwo$. We recall for further use that 
\begin{equation} \label{e:equation-Sigma}
\Sigma = \{(\theta,\omega) \in \free:\, \theta \wedge \omega =0\}~.
\end{equation} 

We also recall that given $\nu\in\Lambda^3\R^3 \setminus\{0\}$,  there are $\eta_0 \in \Lambda^0\R^3$, $\eta_1\in\Lone$, and $\eta_2 \in \Ltwo \setminus \{0\}$ such that the function $\phi$ showing up in \eqref{e:description} is given by 
\begin{equation} \label{e:phi}
\phi(\theta,\omega)\nu  = \eta_2 \wedge \theta + \eta_1 \wedge \omega + \eta_0\, \theta \wedge \omega 
\end{equation}
for all $(\theta,\omega) \in \free$, see~\eqref{e:the-h-affine-map}. For $j=1,2$ we denote by $\phi_j : \Lambda^j\R^3 \rightarrow \R$ the linear form defined as the restriction of $\phi$ to $\Lambda^j\R^3$. In other words, $\phi_1 : \Lone \rightarrow \R$ is the non constant linear form on $\Lone$ given by $\phi_1(\theta) := \phi(\theta,0)$ and $\phi_2 : \Ltwo \rightarrow \R$ is the linear form on $\Ltwo$ given by $\phi_2(\omega) := \phi(0,\omega)$.

We recall that given $\omega \in \Ltwo$ the space of exterior annihilators of $\omega$ of order 1 is defined as 
\begin{equation*} \label{e:annihilators}
\Anh (\omega) := \{ \xi\in \Lone:\, \omega \wedge \xi = 0\}~.
\end{equation*}
We also recall that if $\omega \in \Ltwo \setminus \{0\}$ then $\Anh (\omega)$ is a 2-dimensional linear subspace of $\Lone$ and for every $\xi \in \Anh (\omega)\setminus \{0\}$ there is $\tau \in \Anh (\omega)$ such that $\omega = \xi \wedge \tau$. 

\begin{lemma} \label{lem:step2-1} 
Let $(\theta, \omega)\in S\setminus\Sigma$. Assume there are $\xi, \tau \in \Anh(\omega)$ and $p,q \in\R$ such that $\omega = \xi \wedge \tau$,  
\begin{equation}\label{e:step2-1-cdt-1} 
\phi(\xi,\theta \wedge \xi) p + \phi(\tau,\theta \wedge \tau) q= - \phi_2(\omega)~,
\end{equation} 
and such that for any $\epsilon >0$ there are $r\in \R\setminus\{1\}$ and $u,v\in (-\epsilon,\epsilon)$ such that 
\begin{gather} 
 -rqu+rpv =1-r~, \label{e:step2-1-cdt-2} \\
\phi_2(\xi \wedge \theta) u+\phi_2(\tau \wedge\theta) v=0~. \label{e:step2-1-cdt-3} 
\end{gather}
Then $(\theta, \omega)\in \p E$.
\end{lemma}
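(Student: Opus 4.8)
I would produce a horizontal line $\ell$ through $(\theta,\omega)$ that lies entirely in $S$ and meets $\partial E$ in two distinct points; Lemma~\ref{linee} then forces $\ell\subset\partial E$, and in particular $(\theta,\omega)\in\partial E$. The two points of $\partial E$ on $\ell$ will both come from Lemma~\ref{lem:step1}: one is a point of $S\cap\Sigma$, the other a point of $S\cap(x\cdot\Sigma)$ for a carefully chosen $x\in\Nonchar(\partial E)\cap\calU$.

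\textbf{The line $\ell$ and the first boundary point.} Set $\zeta:=\theta-p\xi-q\tau\in\Lone$; since $\omega=\xi\wedge\tau\neq0$ and $\theta\wedge\omega\neq0$ (because $(\theta,\omega)\notin\Sigma$), the triple $(\xi,\tau,\theta)$ is a basis of $\Lone$, so $\zeta\neq0$ and $\ell:=\{(\theta,\omega)\cdot(s\zeta,0):s\in\R\}$ is a genuine horizontal line, passing through $(\theta,\omega)$ at $s=0$. Using the explicit form \eqref{e:phi} of $\phi$ together with $\xi,\tau\in\Anh(\omega)$ and $\phi(\theta,\omega)=0$, a direct computation shows that the affine function $s\mapsto\phi((\theta,\omega)\cdot(s\zeta,0))$ has slope $-p\,\phi(\xi,\theta\wedge\xi)-q\,\phi(\tau,\theta\wedge\tau)-\phi_2(\omega)$, which vanishes precisely because of hypothesis \eqref{e:step2-1-cdt-1}; hence $\phi\equiv0$ on $\ell$, i.e.\ $\ell\subset S$. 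A short wedge computation then gives $z:=(\theta,\omega)\cdot(-\zeta,0)=\bigl(p\xi+q\tau,\ \omega+p\,\theta\wedge\xi+q\,\theta\wedge\tau\bigr)$ with $(p\xi+q\tau)\wedge(\omega+p\,\theta\wedge\xi+q\,\theta\wedge\tau)=0$, so $z\in\Sigma$ by \eqref{e:equation-Sigma}; therefore $z\in S\cap\Sigma\subset\partial E$ by Lemma~\ref{lem:step1}.

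\textbf{The second boundary point.} Choose $\epsilon>0$ small enough that $(0,\ u\,\xi\wedge\theta+v\,\tau\wedge\theta)\in\calU$ whenever $|u|,|v|<\epsilon$, and apply the hypothesis with this $\epsilon$ to obtain $r\neq1$ and $u,v\in(-\epsilon,\epsilon)$ satisfying \eqref{e:step2-1-cdt-2} and \eqref{e:step2-1-cdt-3}. Put $x:=(0,\ u\,\xi\wedge\theta+v\,\tau\wedge\theta)\in\calU$. Then $\phi(x)=u\,\phi_2(\xi\wedge\theta)+v\,\phi_2(\tau\wedge\theta)=0$ by \eqref{e:step2-1-cdt-3}, so $x\in S\cap\calU$; since $\calU\cap\partial E=\calU\cap S$ and $\calU\cap\Char(\partial E)=\emptyset$ by \eqref{e:description} and \eqref{e:no-characteristic-points}, we get $x\in\Nonchar(\partial E)\cap\calU$, so Lemma~\ref{lem:step1} applies to $x$. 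Now set $z':=(\theta,\omega)\cdot(-r\zeta,0)\in\ell\subset S$. Using that the $\Ltwo$-part of $x$ is central, a direct computation of $x^{-1}\cdot z'$ shows that $x^{-1}\cdot z'\in\Sigma$ if and only if $1-r(1-qu+pv)=0$, which is exactly \eqref{e:step2-1-cdt-2}; hence $z'\in S\cap(x\cdot\Sigma)\subset\partial E$ by Lemma~\ref{lem:step1}. Finally $z'\neq z$, because $s\mapsto(\theta,\omega)\cdot(s\zeta,0)$ is injective and $-r\neq-1$.

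\textbf{Conclusion and main difficulty.} The line $\ell$ now contains the two distinct points $z,z'\in\partial E$, so $\ell\subset\partial E$ by Lemma~\ref{linee}; since $(\theta,\omega)\in\ell$ (the value $s=0$), this yields $(\theta,\omega)\in\partial E$. I expect the main obstacle to be bookkeeping in the $\Lambda^3\R^3$-valued wedge-product computations: one must get the orientations and signs exactly right so that hypothesis \eqref{e:step2-1-cdt-1} translates verbatim into ``$\ell\subset S$'' and hypothesis \eqref{e:step2-1-cdt-2} translates verbatim into ``$x^{-1}\cdot z'\in\Sigma$''. A minor but essential point is checking that the auxiliary point $x$ is noncharacteristic, where \eqref{e:no-characteristic-points} is used so that Lemma~\ref{lem:step1} may be invoked at $x$.
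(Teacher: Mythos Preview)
Your proof is correct and follows essentially the same route as the paper: the same horizontal line (your parameter $s$ corresponds to the paper's $-t$), the same two distinguished points $z=\gamma(1)$ and $z'=\gamma(r)$, and the same auxiliary point $x=(0,u\,\xi\wedge\theta+v\,\tau\wedge\theta)$. The only minor difference is that you verify $\ell\subset S$ directly by computing the slope of $s\mapsto\phi((\theta,\omega)\cdot(s\zeta,0))$ and using \eqref{e:step2-1-cdt-1}, whereas the paper first checks $\gamma(1)\in S$ by hand and then invokes Lemma~\ref{linee} applied to $S$ (which is itself the boundary of the precisely monotone sublevel set of $\phi$) to conclude $\gamma(r)\in S$; both arguments are valid and equally short.
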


The geometric idea underlying Lemma~\ref{lem:step2-1} is that \eqref{e:step2-1-cdt-1}, \eqref{e:step2-1-cdt-2} and \eqref{e:step2-1-cdt-3} ensure that there is a horizontal line containing $(\theta, \om)$ and meeting $\cup_{x\in \Nonchar(\partial E)} S \cap (x\cdot \Sigma) \subset \partial E$ in two distinct points.

 \begin{proof} Let $(\theta, \omega)\in S\setminus\Sigma$. Let $\xi, \tau \in \Anh(\omega)$ and $p,q \in\R$ be such that $\omega = \xi \wedge \tau$ and~\eqref{e:step2-1-cdt-1} holds true. Let $\epsilon>0$ be fixed small enough so that $(0,s \xi \wedge \theta + t \tau \wedge \theta) \in \calU$ for all $s,t\in (-\epsilon,\epsilon)$ and let $r\in \R\setminus\{1\}$, $u,v\in (-\epsilon,\epsilon)$ be such that~\eqref{e:step2-1-cdt-2} and~\eqref{e:step2-1-cdt-3} hold true. Since $\theta \not\in \Anh(\omega)$, we have  $p\xi+q \tau - \theta  \in \Lone \setminus \{0\}$ and we consider the horizontal line $\gamma(\R)$ where
 \begin{equation*}
 \gamma(t) := (\theta,\omega) \cdot (t(p\xi+q\tau - \theta) , 0)
 \end{equation*}
for $t\in \R$. We will verify that $\gamma(1)\in \partial E$ and $\gamma(r) \in \partial E$. This will imply by Lemma~\ref{linee} that $\gamma(\R) \subset \partial E$ and therefore $\gamma(0) = (\theta,\omega) \in \partial E$ as wanted.

We have $\gamma(1) = (p\xi+q\tau,\omega+p\theta\wedge\xi+q\theta\wedge\tau) \in \Lie(\xi+q\theta,\tau- p\theta)$ and hence $\gamma(1) \in \Sigma$. Since $\phi(\theta',\omega') = \phi_1(\theta') + \phi_2(\omega')$ for all $(\theta',\omega') \in \Sigma$, it follows from~\eqref{e:step2-1-cdt-1} that
\begin{equation*}
\begin{split}
\phi(\gamma(1)) &= \phi_1(p\xi+q\tau) + \phi_2(\omega+p\theta\wedge\xi+q\theta\wedge\tau)\\
&= p(\phi_1(\xi) + \phi_2(\theta\wedge\xi)) + q(\phi_1(\tau) + \phi_2(\theta\wedge\tau)) + \phi_2(\omega)\\
&= p \phi(\xi,\theta \wedge \xi) +  q \phi(\tau,\theta \wedge \tau) + \phi_2(\omega)=0~,
\end{split}
\end{equation*}
i.e., $\gamma(1)  \in S$. Therefore $\gamma(1) \in S\cap \Sigma$ and it follows from~\eqref{e:S-inter-Sigma-in-the boundary} that $\gamma(1) \in \partial E$.

To prove that $\gamma(r) \in \partial E$, we set $x:=(0,u \xi \wedge \theta + v \tau \wedge \theta)$ and we first verify that 
\begin{equation} \label{e:gamma(r)}
\gamma(r)  \in S \cap  (x \cdot \Sigma)~.
\end{equation} 
Since $\gamma(0) = (\theta,\omega) \in S$, $\gamma(1) \in S$, and since $S$ is the boundary of a precisely monotone subset of $\free$, we get from Lemma~\ref{linee} that $\gamma(r)  \in S$. We have $x^{-1} \cdot \gamma(r) = (\overline{\theta},\overline{\omega})$ where
\begin{equation*}
\overline{\theta} = (1-r)\theta + r(p\xi+q\tau)~, \quad \overline{\omega} = \omega + (rp+u) \theta \wedge \xi + (rq+v) \theta \wedge \tau~,
\end{equation*}
and $\overline{\theta} \wedge \overline{\omega} = (1-r -rpv  +rqu)\, \theta \wedge \omega$. Therefore it follows from~\eqref{e:step2-1-cdt-2} that $\overline{\theta} \wedge \overline{\omega}=0$, i.e., $(\overline{\theta},\overline{\omega}) \in \Sigma$ (see~\eqref{e:equation-Sigma}). Therefore $\gamma(r)  \in x \cdot \Sigma$, which concludes the proof of~\eqref{e:gamma(r)}.

We next verify that 
\begin{equation} \label{e:a-non-characteristic-point}
x \in  \Nonchar(\partial E) \cap \calU~.
\end{equation}
By~\eqref{e:step2-1-cdt-2} we have $\phi(x) = \phi(0,u \xi \wedge \theta + v \tau \wedge \theta) = u \phi_2(\xi \wedge \theta) + v \phi_2(\tau \wedge \theta)=0$, i.e., $x\in S$. Then~\eqref{e:a-non-characteristic-point} follows from  our choice of $\epsilon$ together with~\eqref{e:description} and~\eqref{e:no-characteristic-points}. Using Lemma~\ref{lem:step1}, we get that $S \cap (x \cdot \Sigma) \subset \partial E$ and hence~\eqref{e:gamma(r)} implies that $\gamma(r) \in \partial E$, which concludes the proof of the lemma.
\end{proof}

\begin{lemma} \label{lem:step2-2}
We set $F_1:=\{(\theta,\omega) \in  S \setminus \Sigma:\, \phi(\xi,\theta\wedge\xi) =0 \,\text{ for all } \xi \in \Anh (\omega)\}$. We also set $F_2=\{(\theta,\omega) \in S \setminus \Sigma:\, \phi_2(\omega) = 0\}$ if $\phi_2 \not= 0$ and $F_2 = \emptyset$ otherwise. Then $\Int_{S \setminus \Sigma} (F_1\cup F_2) = \emptyset$.
\end{lemma}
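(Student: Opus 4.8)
The plan is to argue by contradiction: suppose $\Int_{S\setminus\Sigma}(F_1\cup F_2)\neq\emptyset$, so there is a relatively open subset $\calO\subset S\setminus\Sigma$ on which both defining relations of $F_1\cup F_2$ may be analyzed. Since $S\setminus\Sigma$ is an analytic (in fact algebraic) set and $F_1$, $F_2$ are cut out by polynomial conditions in the coefficients, one first wants to promote "$F_1\cup F_2$ has nonempty interior in $S\setminus\Sigma$" to "one of $F_1$, $F_2$ has nonempty interior in $S\setminus\Sigma$", using that $S\setminus\Sigma$ is (locally) irreducible away from its singular locus, or simply that a finite union of closed sets has interior only if one of them does — here $F_1$ and $F_2$ are relatively closed in $S\setminus\Sigma$ by continuity of $\phi$ and of the map $\omega\mapsto\Anh(\omega)$ on the locus where $\omega\neq0$. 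So I would split into the two cases: either $\Int_{S\setminus\Sigma}F_1\neq\emptyset$ or (when $\phi_2\neq0$) $\Int_{S\setminus\Sigma}F_2\neq\emptyset$.

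For the case of $F_2$: if $\phi_2\neq 0$ and $F_2=\{(\theta,\omega)\in S\setminus\Sigma:\phi_2(\omega)=0\}$ had nonempty relative interior in $S\setminus\Sigma$, then on a relatively open piece of $S$ we would have both $\phi(\theta,\omega)=\phi_1(\theta)+\phi_2(\omega)+\eta_0\,\theta\wedge\omega=0$ (reading $\phi$ via~\eqref{e:phi}) and $\phi_2(\omega)=0$, hence $\phi_1(\theta)+\eta_0\,\theta\wedge\omega=0$ identically on an open subset of this two-codimensional locus. I would then compare dimensions: $S$ has dimension $5$, $\Sigma$ has dimension $5$ as well but $S\setminus\Sigma$ is $5$-dimensional and the extra equation $\phi_2(\omega)=0$ should cut it to dimension $4$, so it cannot be relatively open in $S$ unless $\phi_2$ vanishes on all of the relevant $\omega$-range, contradicting $\phi_2\neq0$ together with the fact that (from Section~\ref{sect:(sub)levelsets-h-affine-maps}, the cited properties of level sets of h-affine functions) $S$ is not contained in a hyperplane of the form $\{\phi_2(\omega)=0\}$. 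Concretely I expect one invokes the explicit parametrization of $S$ developed in Section~\ref{sect:(sub)levelsets-h-affine-maps} to see that the $\omega$-coordinates of points of $S$ sweep out an open set, so $\phi_2$ cannot vanish on a relatively open subset.

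For the case of $F_1$: if $F_1$ had nonempty relative interior, then on a relatively open $\calO\subset S\setminus\Sigma$ we would have $\phi(\xi,\theta\wedge\xi)=\phi_1(\xi)+\phi_2(\theta\wedge\xi)=0$ for all $\xi\in\Anh(\omega)$, for every $(\theta,\omega)\in\calO$. Fixing such a point and writing $\Anh(\omega)=\Span\{\xi_1,\xi_2\}$ with $\omega=\xi_1\wedge\xi_2$, this says $\phi_1+\phi_2(\theta\wedge\,\cdot\,)$ is the zero linear form on the $2$-plane $\Anh(\omega)$; equivalently the linear form $\xi\mapsto\eta_2\wedge\xi+\eta_1\wedge(\theta\wedge\xi)$ vanishes on $\Anh(\omega)$ (using~\eqref{e:phi} with the $\eta_0$-term dropping out since $\xi\wedge(\theta\wedge\xi)=0$). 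As $(\theta,\omega)$ varies over the open set $\calO$, $\omega$ — and hence $\Anh(\omega)$ — varies over an open family of $2$-planes, and $\theta$ varies independently (subject only to $\theta\notin\Anh(\omega)$ and $\phi(\theta,\omega)=0$), which forces $\eta_2\wedge\xi+\eta_1\wedge(\theta\wedge\xi)=0$ for $\xi$ ranging over an open cone of directions and $\theta$ over an open set; a short multilinear-algebra argument then gives $\eta_1=0$ and $\eta_2=0$, contradicting $\eta_2\in\Ltwo\setminus\{0\}$.

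The main obstacle I anticipate is the bookkeeping in the $F_1$ case: turning "the relation holds on a relatively open subset of $S$" into "$\eta_1,\eta_2$ must vanish" requires being careful that $\omega$ and $\theta$ can genuinely be varied independently within $S\setminus\Sigma$, which in turn relies on the explicit description of $S$ (and the open-ness of its projection to the $\omega$-factor) proved in Section~\ref{sect:(sub)levelsets-h-affine-maps}; so the real work is matching this lemma to those structural results rather than any hard new estimate. I would therefore organize the proof as: (i) reduce to $\Int_{S\setminus\Sigma}F_1\neq\emptyset$ or $\Int_{S\setminus\Sigma}F_2\neq\emptyset$; (ii) dispatch $F_2$ by the dimension/non-degeneracy argument above; (iii) dispatch $F_1$ by the multilinear-algebra argument, each time citing the parametrization of $S$ from Section~\ref{sect:(sub)levelsets-h-affine-maps} to justify the freedom of variation of $(\theta,\omega)$.
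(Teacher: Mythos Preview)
Your overall strategy is sound and does lead to a proof, but it diverges from the paper's argument in the tools it invokes. The paper's proof for both $F_1$ and $F_2$ is built on Corollary~\ref{cor:level-sets} (the level-set rigidity of h-affine functions): for $F_2$ one observes that an open piece of $S$ contained in $\{\psi_2=0\}$ forces $\phi$ and $\psi_2$ to be proportional, which is impossible since $\eta_2\neq 0$; for $F_1$ the paper splits off the degenerate case $\phi=\psi_1$ (where $F_1=\emptyset$ because the constraint forces $(\theta,\omega)\in\Sigma$) and in the remaining case uses Corollary~\ref{cor:level-sets} to pick $(\hat\theta,\hat\omega)\in S\cap\calO$ with $\phi_1(\hat\theta)\neq 0$, then explicitly perturbs $\hat\omega$ along a direction in $\Span\{\hat\theta\wedge\hat\xi,\hat\theta\wedge\hat\tau\}\cap\Ker L_2$ to exhibit a nearby point of $S\setminus\Sigma$ outside $F_1$. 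Your route instead relies on showing that the projections $S\setminus\Sigma\to\Lambda^2\R^3$ and $S\setminus\Sigma\to\Lambda^1\R^3$ are submersions on a dense set, so that $\omega$ and $\theta$ genuinely sweep open sets, and then a multilinear identity forces $\eta_1=\eta_2=0$. This works, but you should be aware that the ``short multilinear-algebra argument'' hides exactly the degenerate case the paper isolates as Case~1: when $\eta_0=\eta_1=0$ the $F_1$ condition becomes $\omega\parallel\eta_2$, and one must check separately that this, combined with $\theta\in\Anh(\eta_2)$, already forces $\theta\wedge\omega=0$, so $F_1=\emptyset$. Your submersion argument also needs you to step around the isolated bad fibre $\theta=-\eta_0^{-1}\eta_1$ (when $\eta_0\neq 0$). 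None of this is fatal, but it is precisely the bookkeeping you flagged as ``the main obstacle'', and the paper's use of Corollary~\ref{cor:level-sets} plus an explicit perturbation sidesteps it more cleanly.
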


\begin{proof}
We will prove that $\Int_{S \setminus \Sigma} (F_1) =  \Int_{S \setminus \Sigma} (F_2)  =\emptyset$. Since $F_2$ is a relatively closed subset of $S\setminus \Sigma$, this will imply $\Int_{S \setminus \Sigma} (F_1\cup F_2) = \emptyset$, as wanted. For $j=1,2$ we denote by $\psi_j : \free \rightarrow \R$ the h-affine functions given by
$\psi_1(\theta,\omega) := \phi (\theta,0) =\phi_1(\theta)$ and  $\psi_2(\theta,\omega) := \phi (0,\omega) = \phi_2(\omega)$. Note for further use that $\psi_2 \not= \phi$.

\smallskip

To prove that $\Int_{S \setminus \Sigma} (F_1) = \emptyset$, we distinguish two cases.

\noindent \step{Case~1.} If $\phi=\psi_1$ then $F_1 \subset (\Ker \phi_1 \times (\Ker\phi_1 \wedge \Ker\phi_1) ) \setminus \Sigma$. Since $\phi_1\not=0$, we know that $\Ker\phi_1$ is a 2-dimensional linear subspace of $\Lone$. Therefore there are $\xi,\tau \in \Lone$ such that $\xi \wedge \tau \not=0$ and  $\Ker \phi_1 \times (\Ker\phi_1 \wedge \Ker\phi_1)= \Lie (\xi,\tau)\subset\Sigma$.  This implies that $F_1 =\emptyset$ and therefore $\Int_{S \setminus \Sigma} (F_1) = \emptyset$.

\noindent \step{Case~2.} If $\phi\not= \psi_1$, we consider $(\overline{\theta},\overline{\omega}) \in F_1$, an open neighborhood $\calO$ of $(\overline{\theta},\overline{\omega})$, and we shall verify that $\calO \cap S\setminus (\Sigma \cup F_1) \not=\emptyset$. Since $\Sigma$ is closed, we can assume with no loss of generality that $\calO \cap \Sigma =\emptyset$. Then we claim that there is $(\hat{\theta},\hat{\omega}) \in S \cap \calO$ such that $\psi_1(\hat{\theta},\hat{\omega}) \not=0$. Indeed otherwise $\{(\theta,\omega) \in \calO:\, \phi(\theta,\omega) = 0\} \subset \{(\theta,\omega) \in \calO:\, \psi_1(\theta,\omega) = 0\}$ and Corollary~\ref{cor:level-sets} implies $\phi= \psi_1$, which gives a contradiction. If $(\hat{\theta},\hat{\omega}) \not\in F_1$, we are done. If  $(\hat{\theta},\hat{\omega}) \in F_1$, let us consider the linear form $L_2: \omega \in \Ltwo \mapsto \phi(\hat{\theta},\hat{\omega} + \omega)$. Let $\hat{\xi}, \hat{\tau} \in \Anh (\hat{\omega})$ be such that $\hat{\omega} = \hat{\xi} \wedge \hat{\tau}$. Since $\hat{\theta} \not \in \Anh (\hat{\omega})$, we have $\dim (\Span\{\hat{\theta} \wedge \hat{\xi},\hat{\theta} \wedge \hat{\tau}\})=2$ and therefore $\Ker L_2 \cap \Span\{\hat{\theta} \wedge \hat{\xi},\hat{\theta} \wedge \hat{\tau}\} \not=\{0\}$. In other words there is $(u,v) \in \R^2 \setminus \{(0,0)\}$ such that $L_2(u \hat{\theta} \wedge \hat{\xi} + v \hat{\theta} \wedge \hat{\tau})=0$. For all $s\in \R$ we have $$\hat{\omega} + s(u \hat{\theta} \wedge \hat{\xi} + v \hat{\theta} \wedge \hat{\tau}) = (\hat{\xi} + sv\hat{\theta}) \wedge (\hat{\tau} - su \hat{\theta})$$ and therefore $\phi(\hat{\theta},(\hat{\xi} + su\hat{\theta}) \wedge (\hat{\tau} - sv \hat{\theta})) = sL_2(u \hat{\theta} \wedge \hat{\xi} + v \hat{\theta} \wedge \hat{\tau})=0$, i.e., $$(\hat{\theta},(\hat{\xi} + su\hat{\theta})\wedge (\hat{\tau} - sv \hat{\theta})) \in S~.$$
Let us now consider the linear form $L_1:\theta \in \Lone \mapsto \phi(\theta,\hat{\theta} \wedge \theta)$. By definition of $F_1$ we have $L_1(\xi) = 0$ for all $\xi \in \Anh (\hat{\omega})$. Since $L_1(\hat{\theta}) = \psi_1(\hat{\theta},\hat{\omega}) \not=0$, it follows that $L_1(\xi + s \hat{\theta}) \not=0$  for all $\xi \in \Anh (\hat{\omega})$ and all $s\in \R\setminus \{0\}$. Since $(u,v)\not=(0,0)$, we get that for all $s\in \R\setminus \{0\}$ either $L_1 (\hat{\xi} + su\hat{\theta}) \not=0$ or $L_1 (\hat{\tau} - sv \hat{\theta})\not=0$, i.e.,
$$(\hat{\theta},(\hat{\xi} + su\hat{\theta})\wedge (\hat{\tau} - sv \hat{\theta})) \not\in F_1~.$$
Then choosing $s\not=0$ small enough, we get that $(\hat{\theta},(\hat{\xi} + su\hat{\theta})\wedge (\hat{\tau} - sv \hat{\theta})) \in \calO \cap S \setminus F_1$, which proves that $\Int_{S \setminus \Sigma} (F_1) = \emptyset$, as wanted.

\smallskip

To prove that $\Int_{S \setminus \Sigma} (F_2) = \emptyset$ we only need to consider the case where $\phi_2\not=0$, otherwise the claim is obvious. If $\phi_2\not=0$,   we consider $(\overline{\theta},\overline{\omega}) \in F_2$ and an open neighborhood $\calO$ of $(\overline{\theta},\overline{\omega})$. Since $\Sigma$ is closed, we can assume with no loss of generality that $\calO \cap \Sigma =\emptyset$. Then we claim that there is $(\hat{\theta},\hat{\omega}) \in S \cap \calO$ such that $\phi_2(\hat{\omega}) = \psi_2(\hat{\theta},\hat{\omega}) \not=0$. Indeed otherwise $\{(\theta,\omega) \in \calO:\, \phi(\theta,\omega) = 0\} \subset \{(\theta,\omega) \in \calO:\, \psi_2(\theta,\omega) = 0\}$ and Corollary~\ref{cor:level-sets} implies $\phi= \psi_2$, which gives a contradiction. This shows that $\Int_{S \setminus \Sigma} (F_2) = \emptyset$ and concludes the proof of the lemma.
\end{proof}

\begin{lemma} \label{lem:step2}
We have $S\setminus \Sigma \subset \partial E$.
\end{lemma}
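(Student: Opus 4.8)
The plan is to deduce this inclusion from the three preceding lemmas together with a density/closedness argument. First I would note that $\partial E$ is closed, so $\partial E\cap(S\setminus\Sigma)$ is relatively closed in $S\setminus\Sigma$; since a relatively closed and relatively dense subset of a space must be the whole space, it suffices to exhibit a relatively dense subset of $S\setminus\Sigma$ that is contained in $\partial E$. Lemma~\ref{lem:step2-2} already tells us that $(S\setminus\Sigma)\setminus(F_1\cup F_2)$ is relatively dense in $S\setminus\Sigma$, so the task reduces to proving $(S\setminus\Sigma)\setminus(F_1\cup F_2)\subset\partial E$, and for this I would check that every point of this set satisfies the hypotheses of Lemma~\ref{lem:step2-1}.

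So fix $(\theta,\omega)\in(S\setminus\Sigma)\setminus(F_1\cup F_2)$; then, by~\eqref{e:equation-Sigma}, $\omega\neq 0$, $\theta\notin\Anh(\omega)$, and $\Anh(\omega)$ is $2$-dimensional. The key simplification is to choose $\xi,\tau$ so that~\eqref{e:step2-1-cdt-1} becomes trivial. Since $(\theta,\omega)\notin F_1$, the linear functional $\Lambda(\xi):=\phi(\xi,\theta\wedge\xi)$ on $\Anh(\omega)$ is nonzero, so I can pick $\xi\in\Anh(\omega)$ with $a:=\Lambda(\xi)\neq 0$, then $\tau_0\in\Anh(\omega)$ with $\xi\wedge\tau_0=\omega$, and set $\tau:=\tau_0-(\Lambda(\tau_0)/a)\,\xi$; linearity of $\Lambda$ gives $\Lambda(\tau)=0$ while $\xi\wedge\tau=\omega$ persists. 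Putting $p:=-\phi_2(\omega)/a$, condition~\eqref{e:step2-1-cdt-1} then holds for every $q\in\R$, since its left-hand side equals $ap+0=-\phi_2(\omega)$. This leaves $q$ entirely free, to be used to defeat the remaining constraint.

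It then remains to choose $q$ and, for each $\epsilon>0$, small $u,v$ and $r\neq 1$ satisfying~\eqref{e:step2-1-cdt-2} and~\eqref{e:step2-1-cdt-3}. Writing $\alpha:=\phi_2(\xi\wedge\theta)$ and $\beta:=\phi_2(\tau\wedge\theta)$, I would observe that any $(u,v)\neq(0,0)$ with $\alpha u+\beta v=0$ and $pv-qu\neq 0$, rescaled into $(-\epsilon,\epsilon)^2$ and completed by $r:=(1-qu+pv)^{-1}$, provides the required data: a one-line computation gives~\eqref{e:step2-1-cdt-2}, the condition $pv-qu\neq 0$ gives $r\neq 1$, and $r$ is well defined for small $u,v$. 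Thus it suffices to choose $q$ so that the linear subspace $\{(u,v):\alpha u+\beta v=0\}$ is not contained in $\{(u,v):pv-qu=0\}$. This reduces to a short case check: if $\phi_2\neq 0$, then $(\theta,\omega)\notin F_2$ forces $\phi_2(\omega)\neq 0$, hence $p\neq 0$, and the set of "bad" $q$ is empty when $(\alpha,\beta)=(0,0)$ and at most a singleton otherwise, so a good $q$ exists; if $\phi_2=0$, then $\alpha=\beta=0$ and $p=0$, and $q=1$ works. With $\xi,\tau,p,q$ chosen this way, Lemma~\ref{lem:step2-1} yields $(\theta,\omega)\in\partial E$, which completes the proof. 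I expect the only genuinely delicate point to be the bookkeeping in this final case analysis; everything else is formal.
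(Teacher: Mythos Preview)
Your proof is correct and follows essentially the same approach as the paper: reduce to $S\setminus(\Sigma\cup F_1\cup F_2)$ via Lemma~\ref{lem:step2-2} and closedness of $\partial E$, then verify the hypotheses of Lemma~\ref{lem:step2-1} for each such point. The only difference is a cosmetic one: the paper picks any $\tau$ with $\xi\wedge\tau=\omega$ and works with the $1$-dimensional affine line $V=\{(p,q):\eqref{e:step2-1-cdt-1}\}$, splitting into the cases $(\alpha,\beta)=(0,0)$ and $(\alpha,\beta)\neq(0,0)$, whereas you normalize $\tau$ so that $\phi(\tau,\theta\wedge\tau)=0$ (which is legitimate since $\xi\mapsto\phi(\xi,\theta\wedge\xi)$ is linear on $\Anh(\omega)$, as $\eta_3=0$ here and the quadratic term vanishes), thereby fixing $p$ and leaving $q$ free; your case split $\phi_2\neq 0$ versus $\phi_2=0$ then amounts to the same dichotomy.
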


\begin{proof}
By Lemma~\ref{lem:step2-2} and since $\partial E$ is closed, we only need to prove that $S\setminus (\Sigma\cup F_1 \cup F_2) \subset \partial E$. We thus consider $(\theta,\omega) \in S\setminus (\Sigma\cup F_1 \cup F_2)$ and we claim that Lemma~\ref{lem:step2-1} can be applied to $(\theta,\omega)$, and therefore $(\theta,\omega)\in \partial E$, as wanted. To prove this claim, we let $\xi\in\Anh(\omega)$ be such that $\phi(\xi,\theta\wedge\xi)\not=0$ and $\tau \in \Anh(\omega)$ be such that $\omega = \xi \wedge \tau$. Since $\phi(\xi,\theta\wedge\xi)\not=0$, the set $V:=\{(p,q) \in \R^2: \eqref{e:step2-1-cdt-1} \text{ holds true}\}$ is a 1-dimensional affine subspace of $\R^2$. We then distinguish two cases. 

\smallskip

\noindent \step{Case~1.} If $\phi_2(\xi\wedge\theta) = \phi_2(\tau\wedge\theta) =0$, we let $(p,q) \in V\setminus\{(0,0)\}$. Then, given $\epsilon>0$, one can choose $r\in\R\setminus\{1\}$ close enough to 1 so that there are $u,v \in (-\epsilon,\epsilon)$ such that~\eqref{e:step2-1-cdt-2} holds true. Since $\phi_2(\xi\wedge\theta) = \phi_2(\tau\wedge\theta) =0$ ~\eqref{e:step2-1-cdt-3} holds true trivially and this concludes the proof in this first case.

\smallskip 

\noindent \step{Case~2.} If $(\phi_2(\xi\wedge\theta),\phi_2(\tau\wedge\theta)) \not=(0,0)$ then $\phi_2\not=0$. Since $(\theta,\omega) \not\in F_2$, we have $\phi_2(\omega)\not=0$ and therefore the 1-dimensional affine subspace $V$ of $\R^2$ does not contain the origin. On the other side, the 1-dimensional affine subspace $W:=\{(p,q) \in \R^2:\, \phi_2(\xi\wedge\theta) p + \phi_2(\tau\wedge\theta) q =0 \}$ of $\R^2$ contains the origin. Therefore $V\setminus W \not=\emptyset$ and we let $(p,q) \in V\setminus W$. We set $\delta:=\phi_2(\xi\wedge\theta) p + \phi_2(\tau\wedge\theta) q$. Then, given $r\in \R \setminus \{0,1\}$, there is a unique solution $(u,v) \in \R^2$ to~\eqref{e:step2-1-cdt-2} and~\eqref{e:step2-1-cdt-3} given by 
\begin{equation*}
u= - (1-r)(\delta r)^{-1}\phi_2(\tau\wedge\theta) \quad \text{and} \quad v=(1-r)(\delta r)^{-1}\phi_2(\xi\wedge\theta)~.
\end{equation*}
It follows that given $\epsilon>0$, one can choose $r\in\R\setminus\{1\}$ close enough to 1 so that the solution $(u,v) \in \R^2$ to~\eqref{e:step2-1-cdt-2} and~\eqref{e:step2-1-cdt-3} belongs to $(-\epsilon,\epsilon)^2$, which concludes the proof of the lemma.
\end{proof}

Putting together~\eqref{e:S-inter-Sigma-in-the boundary} and Lemma~\ref{lem:step2}  we get $S \subset\p E$.  Using left-translations, we also get   the following corollary. 

\begin{corollary} \label{cor:step2} Let $x\in \Nonchar(\partial E)$. Then there is a non constant h-affine function $\phi_x:\free \rightarrow \R$ such that  $x\in \Nonchar(S_x)$ and $S_x \subset \partial E$ where $S_x:=\{y\in\free:\, \phi_x(y) =0\}$. 
\end{corollary}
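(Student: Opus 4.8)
The plan is to deduce the corollary from the inclusion $S \subset \partial E$ established just above for the distinguished base point $0$, by a routine left-translation argument. The facts I would rely on are: precise monotonicity and measurability of a subset of $\free$ are preserved under left-translations (the former was already noted in Section~\ref{sect:PM-step2-Carnot-algebras}, the latter because Haar measure is translation invariant); for every $z\in\free$ the left-translation $L_z:y\mapsto z\cdot y$ is a homeomorphism of $\free$ with $L_z(\Hor_y)=\Hor_{z\cdot y}$, since $\Hor_y=y\cdot\frakg_1$; and consequently $L_z$ maps characteristic points of a closed set to characteristic points of its translate and topological boundaries to topological boundaries, so that $\Nonchar(z\cdot T)=z\cdot\Nonchar(T)$ for any closed $T$ and $\partial(z\cdot F)=z\cdot\partial F$ for any $F\subset\free$.

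First I would fix $x\in\Nonchar(\partial E)$ and set $E':=x^{-1}\cdot E$. Then $E'$ is again precisely monotone and measurable, and $0=x^{-1}\cdot x\in x^{-1}\cdot\Nonchar(\partial E)=\Nonchar(\partial E')$. Running for $E'$ the construction carried out in this section for the base point $0$, that is, Proposition~\ref{prop:local} followed by~\eqref{e:S-inter-Sigma-in-the boundary} and Lemma~\ref{lem:step2}, yields a non constant h-affine function $\phi':\free\to\R$ such that, writing $S':=\{y\in\free:\, \phi'(y)=0\}$, one has $0\in\Nonchar(S')$ and $S'\subset\partial E'$.

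Next I would set $\phi_x:=\phi'\circ L_{x^{-1}}$, that is, $\phi_x(y):=\phi'(x^{-1}\cdot y)$. This $\phi_x$ is non constant, and it is h-affine: for $y\in\free$ and $w\in\frakg_1$ the curve $t\mapsto x^{-1}\cdot y\cdot w^t$ is a horizontal line, hence $t\mapsto\phi_x(y\cdot w^t)=\phi'(x^{-1}\cdot y\cdot w^t)$ is affine by h-affinity of $\phi'$. Moreover $S_x:=\{y\in\free:\, \phi_x(y)=0\}=x\cdot S'$, so that $S_x=x\cdot S'\subset x\cdot\partial E'=\partial(x\cdot E')=\partial E$ and $x=x\cdot 0\in x\cdot\Nonchar(S')=\Nonchar(x\cdot S')=\Nonchar(S_x)$, which is exactly the assertion.

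Since all the ingredients are already in place, I do not anticipate any genuine obstacle; the only point requiring a (routine) verification is that the composition of an h-affine function with a left-translation is again h-affine, which is immediate from the fact that left-translations permute horizontal lines.
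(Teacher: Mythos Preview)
Your argument is correct and is exactly the approach the paper indicates: it simply writes ``Using left-translations, we also get the following corollary'' after establishing $S\subset\partial E$, and you have spelled out that translation argument in detail.
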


\begin{lemma} \label{lem:step3} We have $\partial E \subset S$.
\end{lemma}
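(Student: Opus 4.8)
The plan is to argue by contradiction: suppose there is $x\in \partial E\setminus S$, so $\phi(x)\neq 0$; we will produce a point of $\partial E$ lying in $\Int(E)$ or $\Int(E^c)$, which is absurd. The main tool is Corollary~\ref{cor:step2} applied at noncharacteristic boundary points: it gives, for each $y\in\Nonchar(\partial E)$, a non constant h-affine $\phi_y$ with $y\in\Nonchar(S_y)$ and $S_y\subset\partial E$; combined with the already-established inclusion $S\subset\partial E$ this pins down a lot of structure. First I would use Proposition~\ref{prop:noncharacteristic-points-exist}: since $\partial E$ has empty interior and $\Nonchar(\partial E)$ is relatively dense in $\partial E$, one may assume $x\in\Nonchar(\partial E)$ (approximate a bad point by noncharacteristic ones and use that $S$ is closed and $\phi$ continuous, so the set $\{\phi\neq 0\}$ is open). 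Then Corollary~\ref{cor:step2} gives $\phi_x$ with $S_x\subset\partial E$ and $x\in\Nonchar(S_x)$, while $0\in\Nonchar(\partial E)$ gives $\phi=\phi_0$ with $S=S_0\subset\partial E$.

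The key step is to show $S_x = S$, i.e.\ $\phi_x$ and $\phi$ are proportional. The idea: both $S$ and $S_x$ are contained in $\partial E$, and near any common noncharacteristic point $z\in S\cap S_x\cap\Nonchar(\partial E)$ the local description~\eqref{457}/\eqref{e:description} forces the zero set of $\partial E$ to be locally a single h-affine hypersurface; since two h-affine functions vanishing on the same open piece of a hypersurface through a noncharacteristic point must be proportional (this is exactly the content one extracts from Corollary~\ref{cor:level-sets}, already invoked repeatedly above, e.g.\ in Lemma~\ref{lem:step2-2}), we would get $\phi_x=\lambda\phi$ locally hence globally, so $S_x=S$. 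To make this rigorous I would first check $S\cap S_x\neq\emptyset$ and in fact meets $\Nonchar(\partial E)$: if $S$ and $S_x$ were disjoint closed subsets of $\partial E$, then, using that $\partial E$ near $0$ equals $\calU\cap S$, the set $S_x$ would miss a neighborhood of $0$ in $\partial E$; but $S_x\subset\partial E$ and $x\in\Nonchar(S_x)$ means $S_x$ is a full codimension-$1$ h-affine hypersurface inside $\partial E$, and one shows by a connectedness/propagation argument along the sets $S_y$ that all these hypersurfaces coincide — concretely, the function $y\mapsto[\phi_y]$ into $\mathbb{P}$ is locally constant on $\Nonchar(\partial E)$ by the proportionality argument above, and $\Nonchar(\partial E)$ is connected enough (or one covers $\partial E$ by the overlapping pieces $\calU_y\cap\partial E = \calU_y\cap S_y$) to conclude it is globally constant, hence $S_x=S$.

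Once $S_x=S$ is established, we reach the contradiction quickly: $x\in S_x=S$ contradicts $x\notin S$. The main obstacle I anticipate is the propagation/connectedness argument: one must be careful that $\Nonchar(\partial E)$, while relatively dense and relatively open in $\partial E$, need not be connected a priori, so the cleanest route is probably to show directly that for \emph{every} $y\in\Nonchar(\partial E)$ one has $S_y = S$, by covering a chain of horizontal segments (as in the proof of Proposition~\ref{prop:noncharacteristic-points-exist}, using that $\Gamma$ is open and that $\partial E$ has empty interior so such chains between two noncharacteristic points exist within any prescribed region) and propagating the proportionality of the h-affine functions step by step across the overlapping neighborhoods $\calU_y\cap\partial E=\calU_y\cap S_y$. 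Alternatively, and perhaps more simply, one observes that $S\subset\partial E$ together with $0\in\Nonchar(S)$ and the local description~\eqref{e:description} imply that $S$ itself, being closed and containing a relatively dense set of points where $\partial E$ is locally cut out by $\phi$, must equal $\partial E$ on a relatively open dense subset; then $S$ closed and $\partial E$ closed with $S\subset\partial E$ and $\partial E\subset\overline{\Nonchar(\partial E)}\subset\overline{S}=S$ gives $\partial E\subset S$, i.e.\ \eqref{lem:step3} directly — the point being that every noncharacteristic boundary point lies in $S$ because near it $\partial E$ is the zero set of $\phi_x$ which must coincide with $\phi$, and then density closes the argument.
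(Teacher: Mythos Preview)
Your strategy is to show that for every $y\in\Nonchar(\partial E)$ one has $S_y=S$, by propagating the proportionality of the local h-affine functions $\phi_y$ across overlapping neighborhoods. The local step is sound: if $z\in\Nonchar(\partial E)$ and $\calU_z\cap S\neq\emptyset$, then from $S\subset\partial E$ and $\calU_z\cap\partial E=\calU_z\cap S_z$ one gets $\calU_z\cap S\subset\calU_z\cap S_z$, and Corollary~\ref{cor:level-sets} (applied at any point of $\calU_z\cap S$) forces $\phi_z$ proportional to $\phi$, hence $S_z=S$.

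The gap is the global step. To conclude $S_y=S$ for \emph{every} $y\in\Nonchar(\partial E)$ you need either that $\Nonchar(\partial E)$ is connected, or a chain of overlapping neighborhoods $\calU_{y_i}$ linking $0$ to $y$. Neither is available: $\Nonchar(\partial E)$ is only known to be relatively open and dense in $\partial E$, the neighborhoods $\calU_y$ from Proposition~\ref{prop:local} have no uniform size, and a point $y\in\Nonchar(\partial E)\setminus S$ may well satisfy $\calU_y\cap S=\emptyset$. Your closing sentence (``every noncharacteristic boundary point lies in $S$ because near it $\partial E$ is the zero set of $\phi_x$ which must coincide with $\phi$'') is precisely the assertion to be proved and presupposes the propagation. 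Attempting instead to propagate along $S$ runs into the same obstacle from the other side: a point of $\Nonchar(S)$ need not lie in $\Nonchar(\partial E)$, since if $\partial E\supsetneq S$ nearby it could be characteristic for $\partial E$.

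The paper avoids connectedness altogether. Assuming $x\in\Nonchar(\partial E)\setminus S$ exists, it takes the second hypersurface $\widetilde S:=S_x\subset\partial E$ from Corollary~\ref{cor:step2} and, after adjusting $x$ and passing to a nearby $\widetilde x\in\Nonchar(\widetilde S)\setminus S$ (this uses explicit algebra with the coefficients $\eta_j$ of $\phi$), produces a horizontal line meeting $\Nonchar(\widetilde S)$ and $\Nonchar(S)$ transversally at two \emph{distinct} points; transversality comes from the smooth $5$-manifold structure of Lemma~\ref{lem:level-sets-as-submanifolds}. Varying the basepoint over a small patch of $\widetilde S$ yields a family of such lines whose union $T$ has nonempty interior. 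Each line meets $\partial E$ in at least two points (one on $\widetilde S$, one on $S$), hence lies in $\partial E$ by Lemma~\ref{linee}; thus $\Int(\partial E)\supset\Int(T)\neq\emptyset$, contradicting Proposition~\ref{prop:noncharacteristic-points-exist}. This trades your delicate global propagation for a local transversality computation and a single appeal to $\Int(\partial E)=\emptyset$.
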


\begin{proof}
We know from~\eqref{e:S-inter-Sigma-in-the boundary} and Lemma~\ref{lem:step2} that $S\subset \partial E$. We also know from Proposition~\ref{prop:noncharacteristic-points-exist} that $\Nonchar(\partial E)$ is a relatively dense subset of $\partial E$. Since $S$ is closed, it is therefore sufficient to prove that $\Nonchar(\partial E) \subset S$. We argue by contradiction and assume that $\Nonchar(\partial E) \setminus S \not=\emptyset$. We fix $\nu \in \Lambda^3\R^3 \setminus \{0\}$ and we let $\eta_0 \in \Lambda^0\R^3$, $\eta_1\in\Lone$, and $\eta_2 \in \Ltwo \setminus \{0\}$ be such that $\phi$ is given by~\eqref{e:phi}. 

\smallskip

We first claim that one can find $x=(\theta,\om) \in \Nonchar(\partial E) \setminus S$ in such a way that $\eta_1+\eta_0\theta \not=0$ whenever $\eta_0\not=0$. Indeed, if $x' \in  \Nonchar(\partial E) \setminus S$, we know from Proposition~\ref{foglietto} that $\Hor_{x'} \cap \partial E$ is a codimension-1 affine subspace of $\Hor_{x'}$. We also know that $\Nonchar(\partial E) \setminus S$ is an open subset of $\partial E$. Therefore, if $\eta_0\not=0$, one can find $x=(\theta,\om)$ close enough to $x'$ such that $x \in \Nonchar(\partial E) \setminus S$ and $\eta_1+\eta_0\theta \not=0$, as wanted. We then let $\widetilde{\phi}:\free \rightarrow\R$ be given by Corollary~\ref{cor:step2} applied to $x$ so that $\widetilde{\phi}$ is a non constant h-affine function, $\widetilde{S}:=\{y\in\free:\, \widetilde{\phi}(y) =0\} \subset \partial E$, and $x \in \Nonchar(\widetilde{S})\setminus S$.

\smallskip

We next claim that one can find $ \widetilde{x} \in \Nonchar(\widetilde{S}) \setminus S$ in such a way that $\Hor_{\widetilde{x}} \cap S \not= \emptyset$. Indeed, since $\widetilde{S}$ is the boundary of a precisely monotone subset of $\free$, we know that there is a 2-dimensional linear subspace $V$ of $\Hor_0$ such that $x \cdot V \subset \widetilde{S}$. We then let $\Omega$ denote an open neighborhood of the origin in $V$ such that $x \cdot \Omega \subset \Nonchar(\widetilde{S}) \setminus S$. Given  $\xi, \tau \in \Lone$, we have $(\theta,\om)\cdot (\xi,0)\cdot(\tau,0)\in S$ if and only if 
\begin{equation*}
\begin{aligned}
 \phi\left((\theta,\om)\cdot (\xi,0)\cdot(\tau,0)\right) \nu = \big\{ & 
 \eta_2\wedge\theta+\eta_1\wedge\om+\eta_0\theta\wedge\om 
 + [\eta_2+\eta_1\wedge\theta+\eta_0\om]\wedge\xi
 \big\}
 \\&
    +\big\{
    \eta_2+\eta_1\wedge\theta+\eta_0\om +[\eta_1+\eta_0\theta]\wedge\xi 
    \big\}_{(*)}    \wedge\tau=0
\end{aligned}
\end{equation*} 
To prove the claim, we shall now verify that one can find $\xi\in \Omega$  such that $\{\cdots\}_{(*)} \not=0$ in $\Lambda^2\R^3$. Indeed, if $\eta_0=0$ then $\{\cdots\}_{(*)}=\eta_2+\eta_1\wedge\theta+\eta_1\wedge\xi$. Since $\eta_2\neq 0$ and $\dim V = 2$, given any $\eta_1\in\Lone$, one can find $\xi\in \Omega$  such that $\{\cdots\}_{(*)} \neq 0$. If instead  $\eta_0\neq 0 $, by choice of $x=(\theta,\omega)$, we have $\eta_1+\eta_0\theta\neq 0$. Therefore, using once again the fact that $\dim V = 2$, one can also find in such a case $\xi\in \Omega$ such that $\{\cdots\}_{(*)} \not=0$. Next, for such a choice of $\xi$, the function $\tau \in \Lone \mapsto \phi\left((\theta,\om)\cdot (\xi,0)\cdot(\tau,0)\right)$ is surjective. Therefore one can find $\tau \in \Lone$ such that $\phi\left((\theta,\om)\cdot (\xi,0)\cdot(\tau,0)\right)=0$. Then, setting $\widetilde{x}:=(\theta,\omega) \cdot (\xi,0)$, we have $\widetilde{x}\in  \Nonchar(\widetilde{S}) \setminus S$ and $\widetilde{x} \cdot (\tau,0) \in \Hor_{\widetilde{x}} \cap S$ which concludes the proof of the claim.

\smallskip

We now claim that $\Hor_{\widetilde{x}} \cap \Nonchar(S) \setminus \widetilde{S} \not= \emptyset$. First, note that $\Hor_{\widetilde{x}} \cap \Char(S) = \emptyset$. Indeed, since $y\in \Hor_{\widetilde{x}}$ if and only if $\widetilde{x} \in \Hor_{y}$, if there is $y\in \Hor_{\widetilde{x}} \cap\Char(S)$ then $\widetilde{x} \in \Hor_{y} \subset S$, which gives a contradiction. Next, assume there is $\tau\in\Lone$ such that $\widetilde{x}\cdot(\tau,0) \in \Nonchar(S) \cap \widetilde{S}$. Since $\widetilde{x} \in \Hor_{\widetilde{x}\cdot(\tau,0)} \setminus S$, the horizontal line $\ell_\tau :=\{\widetilde{x} \cdot (t\tau,0):\, t\in \R\}$ intersects the smooth 5-dimensional submanifold $\Nonchar(S)$ transversally at $\widetilde{x}\cdot(\tau,0)$, see Lemma~\ref{lem:level-sets-as-submanifolds}. It follows that for all $\tau'\in \Lone$ close enough to $\tau$, the horizontal line $\ell_{\tau'}:=\{\widetilde{x} \cdot (t\tau',0):\, t\in \R\}$ intersects $\Nonchar(S)$ transversally at some point close to $\widetilde{x}\cdot(\tau,0)$ and hence $\ell_{\tau'} \cap \Nonchar(S) \not= \emptyset$. Since $\widetilde{x} \in \Nonchar(\widetilde{S})$, one can moreover choose such a $\tau'$ so that $\ell_{\tau'} \cap \widetilde{S} = \{\widetilde{x}\}$. Since $\widetilde{x} \not\in S$, for such a choice of $\tau'$, we then have $\ell_{\tau'} \cap \Nonchar(S) \cap \widetilde{S} =\emptyset$. All together it follows that for such a choice of $\tau'$, we have $\emptyset \not= \ell_{\tau'} \cap \Nonchar(S) \setminus \widetilde{S} \subset \Hor_{\widetilde{x}}$ which concludes the proof of the claim.

\smallskip

We thus have proved that there are $\widetilde{x} \in \Nonchar(\widetilde{S}) \setminus S$ and $\tau \in \Lone$ such that $\widetilde{x}\cdot (\tau,0) \in \Nonchar(S) \setminus \widetilde{S}$. Then it follows from Lemma~\ref{lem:level-sets-as-submanifolds} that the horizontal line  $\ell_\tau :=\{\widetilde{x} \cdot (t\tau,0):\, t\in \R\}$ intersects transversally the smooth 5-dimensional submanifolds $\Nonchar(\widetilde{S})$ and $\Nonchar(S)$ at respectively $\widetilde{x}$ and $\widetilde{x}\cdot (\tau,0)$. Therefore there is an open neighborhood $\calO_{\widetilde{x}}$ of $\widetilde{x}$ such that for every $y\in \widetilde{S}_{\widetilde{x}}:= \calO_{\widetilde{x}} \cap \Nonchar(\widetilde{S})$ the following hold true. First, the horizontal line $\ell^y:=\{y\cdot (t\tau,0):\, t\in \R\}$ intersects $\Nonchar(\widetilde{S})$ transversally at $y$ and hence the set $T:=\{y\cdot (t\tau,0):\, y \in \widetilde{S}_{\widetilde{x}},\, t\in \R\}$ has non empty interior. Second, the horizontal line $\ell^y$  intersects $\Nonchar(S)$ transversally at some point close to $\widetilde{x}\cdot (\tau,0)$. Since $\widetilde{S} \cup S \subset \partial E$, it then follows from Lemma~\ref{linee} that $T\subset \partial E$. This implies in turn that $\Int(\partial E) \not= \emptyset$ which contradicts Proposition~\ref{prop:noncharacteristic-points-exist} and concludes the proof of the lemma.
\end{proof}

\begin{lemma} \label{lem:step4} We have $\Int(E) = \{y\in \free :\, \phi(y) < 0\}$ and $\overline{E}= \{y\in \free :\, \phi(y) \leq 0\}$.
\end{lemma}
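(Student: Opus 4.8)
The plan is to combine the identification $\partial E = S$ — already established in this section by putting together \eqref{e:S-inter-Sigma-in-the boundary}, Lemma~\ref{lem:step2} and Lemma~\ref{lem:step3} — with the local description \eqref{e:description} and with the knowledge of the topology of the sublevel and superlevel sets of $\phi$. Since $\partial E = S = \{x\in\free:\phi(x)=0\}$, the open set $\free\setminus S$ decomposes in two ways as a disjoint union of two open sets: $\free\setminus S = \Int(E)\sqcup\Int(E^c)$ on the one hand, and $\free\setminus S = \{\phi<0\}\sqcup\{\phi>0\}$ on the other. I would then argue that each of $\{\phi<0\}$ and $\{\phi>0\}$, being connected, must be entirely contained in one of $\Int(E)$, $\Int(E^c)$; the local description \eqref{e:description} identifies which one, and a counting/disjointness argument upgrades the resulting inclusions to equalities.

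In more detail, the key input I would invoke is that, for a non-constant h-affine function $\phi$ on $\free$, the sets $\{\phi<0\}$ and $\{\phi>0\}$ are non-empty and connected (equivalently, $\{x:\phi(x)\neq 0\}$ has exactly two connected components); this is part of the analysis of (sub)level sets of h-affine functions on $\free$ carried out in Section~\ref{sect:(sub)levelsets-h-affine-maps}. Non-emptiness is easy: $\phi$ is non-constant and affine with non-zero slope along a suitable horizontal line, hence surjective there. Granting connectedness, $\{\phi<0\}$ is contained in exactly one of the two pieces of the separation $\Int(E)\sqcup\Int(E^c)$ of $\free\setminus S$; and by \eqref{e:description} the intersection $\calU\cap\{\phi<0\}=\calU\cap\Int(E)$ is non-empty — it contains $\calU\cap\Hor_0\cap\{\phi_1<0\}$, where $\phi_1=\phi|_{\Hor_0}$ is the non-zero linear form $\theta\mapsto\phi(\theta,0)$ (non-zero because $\eta_2\neq 0$ in \eqref{e:phi}), and $0$ lies in the closure of $\{\phi_1<0\}$. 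Hence $\{\phi<0\}\subseteq\Int(E)$, and symmetrically $\{\phi>0\}\subseteq\Int(E^c)$. Since $\{\phi<0\}\cup\{\phi>0\}=\free\setminus S=\Int(E)\cup\Int(E^c)$ with both unions disjoint, these inclusions are forced to be equalities, so $\Int(E)=\{\phi<0\}$ and $\Int(E^c)=\{\phi>0\}$. Finally, using the general identity $\overline E=\Int(E)\cup\partial E$ together with $\partial E = S$, one gets $\overline E=\{\phi<0\}\cup\{\phi=0\}=\{x\in\free:\phi(x)\leq 0\}$.

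The main obstacle is the connectedness statement for $\{\phi<0\}$ and $\{\phi>0\}$ used in the second paragraph: when $\eta_0=0$ the function $\phi$ is affine and these are open half-spaces, hence trivially connected, but when $\eta_0\neq 0$ the function $\phi$ is genuinely quadratic and its level sets need not be half-spaces, so connectedness of the two sides relies on the structural description of level sets of h-affine functions on the free step-2 rank-3 algebra developed in Section~\ref{sect:(sub)levelsets-h-affine-maps}. All the remaining steps are elementary point-set topology.
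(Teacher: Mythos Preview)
Your proposal is correct and follows essentially the same route as the paper: both arguments use $\partial E = S$ (from \eqref{e:S-inter-Sigma-in-the boundary}, Lemma~\ref{lem:step2} and Lemma~\ref{lem:step3}), invoke the connectedness of $\{\phi<0\}$ and $\{\phi>0\}$ established in Proposition~\ref{prop:connectedness-h-affine-maps}, use \eqref{e:description} to see that $\Int(E)$ meets $\{\phi<0\}$ and $\Int(E^c)$ meets $\{\phi>0\}$, and then conclude by elementary point-set topology. The paper's proof is simply terser on the last steps, which you spell out more fully.
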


\begin{proof}
We know from~\eqref{e:S-inter-Sigma-in-the boundary}, Lemma~\ref{lem:step2} and Lemma~\ref{lem:step3} that $S=\p E$.  By~\eqref{e:description} we have $\Int(E) \cap C^- \not=\emptyset$ and $\Int(E^c) \cap C^+ \not=\emptyset$ where $C^- := \{y\in \free :\, \phi(y) < 0\} $ and $ C^+ := \{y\in \free :\, \phi(y) > 0\}~$. We also know from Proposition~\ref{prop:connectedness-h-affine-maps} that $C^-$ and $C^+$ are the connected components of $S^c=(\partial E)^c = \Int(E) \cup \Int(E^c)$. Then the conclusion follows from elementary topological considerations.
\end{proof}


\section{Sublevel and level sets of h-affine functions on \texorpdfstring{$\free$}{free}} \label{sect:(sub)levelsets-h-affine-maps}

We prove in this section properties of sublevel and level sets of h-affine functions on $\free$ that have been used in the Sections~\ref{sect:local-statement-free-case} and~\ref{sect:global-statement-free-case} and may have their own interest. Throughout this section we identify $\free$ with $\Lone \times \Ltwo$, and $\Lambda^3 \R^3$ with $\R$ via $s\in \R \mapsto s\nu \in \Lambda^3\R^3$ where $\nu \in \Lambda^3\R^3 \setminus \{0\}$ is fixed. With these identifications, we recall that we are interested in functions $\phi:\free \rightarrow \R$ such that there is $(\eta_0, \eta_1,\eta_2,\eta_3)\in \Lambda^0\R^3\times\Lambda^1\R^3\times\Lambda^2\R^3 \times \Lambda^3 \R^3$ such that
\begin{equation} \label{e:phi-for-sect5}
\phi(\theta,\omega)  = \eta_3 +  \eta_2 \wedge \theta + \eta_1 \wedge \omega + \eta_0 \, \theta \wedge \omega
\end{equation}
for all $(\theta,\omega) \in \free$. As already mentioned in Section~\ref{sect:PM-step2-Carnot-algebras}, such functions can easily be seen to be h-affine. Let us recall for the sake of completeness that it has been proved in~\cite[Theorem~1.1]{LeDonneMorbidelliRigot1} that all h-affine functions on $\free$ are of this form.  We will not need this nontrivial result here, except for the use of the terminology "h-affine function" that will denote a function $\phi:\free \rightarrow \R$ of the form~\eqref{e:phi-for-sect5} throughout this section.

\begin{proposition} \label{prop:connectedness-h-affine-maps}
Let $\phi:\free \rightarrow \R$ be a non constant h-affine function. Then $\phi$ is surjective and for every $c\in \R$ the sets $\{x\in \free :\, \phi(x) < c\}$ and $\{x\in \free :\, \phi(x) > c\}$ are the connected components of $\{x\in \free :\, \phi(x) \not= c\}$. 
\end{proposition}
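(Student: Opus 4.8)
The plan is to first establish surjectivity, then connectedness, by reducing to a one-variable analysis along a well-chosen horizontal line. Write $\phi(\theta,\omega) = \eta_3 + \eta_2\wedge\theta + \eta_1\wedge\omega + \eta_0\,\theta\wedge\omega$ with $(\eta_0,\eta_1,\eta_2)\neq(0,0,0)$ since $\phi$ is non constant (a pure constant $\eta_3$ would make $\phi$ constant). For surjectivity, I would exhibit a horizontal line on which $\phi$ is non constant affine, hence surjective onto $\R$. If $\eta_2\neq 0$, pick $\theta$ with $\eta_2\wedge\theta\neq 0$ and use $t\mapsto\phi(t\theta,0)$. If $\eta_2=0$ but $\eta_1\neq 0$, pick $\omega$ with $\eta_1\wedge\omega\neq 0$; since $\omega = \xi\wedge\tau$ for suitable $\xi,\tau$, the horizontal line $t\mapsto (\xi,0)\cdot(t\tau,0) = (\xi + t\tau, t\,\xi\wedge\tau)$ has $\phi$ equal to $t\mapsto \eta_3 + \eta_1\wedge(t\,\xi\wedge\tau) + \eta_0(\xi+t\tau)\wedge(t\,\xi\wedge\tau)$, and the $\eta_0$ term vanishes because $(\xi+t\tau)\wedge\xi\wedge\tau = 0$; thus it is affine with nonzero slope $\eta_1\wedge\omega$. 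If $\eta_1=\eta_2=0$ but $\eta_0\neq 0$, pick $\xi,\tau$ linearly independent and use $t\mapsto\phi(\xi + t\tau, s\,\xi\wedge\tau)$ for a fixed $s\neq 0$: this equals $\eta_3 + \eta_0 s\,(\xi+t\tau)\wedge(\xi\wedge\tau)$, which again has the wrong wedge structure and is constant, so instead one should use a line like $t\mapsto(t\xi,\omega)$ with $\omega$ chosen so $\xi\notin\Anh(\omega)$, giving slope $\eta_0\,\xi\wedge\omega\neq 0$. In every case $\phi$ restricted to some horizontal line is a non-constant affine map $\R\to\R$, hence onto, so $\phi$ is surjective.

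For the connectedness statement, fix $c\in\R$; replacing $\phi$ by $\phi-c$ we may assume $c=0$. The sets $C^- := \{\phi<0\}$ and $C^+ := \{\phi>0\}$ are open (as $\phi$ is continuous — it is polynomial), disjoint, nonempty (by surjectivity), and their union is $\{\phi\neq 0\}$. So it suffices to show each of $C^-$ and $C^+$ is connected; I would actually show each is \emph{path-connected}. The key geometric input is that any two points $x,y\in\free$ can be joined by a chain of horizontal segments lying entirely in $\free$ (this is standard controllability / the Chow condition, and is implicitly used elsewhere in the paper, e.g.\ via \cite[Proposition~5.1]{MontanariMorbidelli20} in the proof of Proposition~\ref{prop:noncharacteristic-points-exist}). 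Along each horizontal segment $t\mapsto z\cdot(tv,0)$ the function $\phi$ is affine in $t$; hence the restriction of $\phi$ to the image of such a segment is monotone, so if both endpoints of a segment lie in $C^-$ then the whole segment lies in $C^-$ (an affine function negative at both endpoints of an interval is negative throughout), and similarly for $C^+$.

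The remaining — and main — difficulty is to upgrade "connected by a horizontal chain in $\free$" to "connected by a horizontal chain staying inside $C^-$". Given $x_0,x_1\in C^-$, I would argue as follows. Pick a point $p$ deep in $C^-$ in the sense that $\phi(p)$ is very negative; concretely, using surjectivity and the affine-along-lines property, $C^-$ contains horizontal rays on which $\phi\to-\infty$. The strategy is to first move $x_0$ to $p$ and $x_1$ to $p$ within $C^-$. To move $x_0$ toward $p$: join $x_0$ to $p$ by \emph{some} horizontal chain $\gamma$ in $\free$; $\phi\circ\gamma$ is piecewise affine but may exceed $0$. To fix this, perturb: at each vertex of the chain where $\phi$ risks becoming nonnegative, insert a detour first along a horizontal direction $v$ with $\eta$-pairing making $\phi$ decrease (such $v$ exists at every point precisely because $\phi$ is non constant, so its "horizontal differential" is nowhere zero — this is where one uses that sublevel sets of h-affine functions have no interior-type obstruction, cf.\ the transversality discussion around Lemma~\ref{lem:level-sets-as-submanifolds}), pushing $\phi$ down far enough, then proceed. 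Since along any horizontal segment $\phi$ is affine, a segment that starts and ends with $\phi<0$ stays in $C^-$; so by driving $\phi$ sufficiently negative before each risky segment and controlling the (bounded, affine) variation along it, one keeps the whole chain in $C^-$. The cleanest packaging: show $C^-$ is star-shaped-like with respect to $\phi$, namely for any $x\in C^-$ there is a horizontal chain from $x$ into $\{\phi<\phi(x)\}\cap C^-$ reaching arbitrarily negative values of $\phi$, and any two such "deep" points can be directly joined since on the relevant segments $\phi$ stays below $0$ by an elementary affine estimate. I expect the bookkeeping of these detours — ensuring the finitely many segments all keep $\phi<0$ simultaneously — to be the technical heart of the argument, though no single step is deep. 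The same argument applies verbatim to $C^+$ (replace $\phi$ by $-\phi$), completing the proof.
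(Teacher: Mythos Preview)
Your surjectivity argument is fine. The connectedness argument, however, has a concrete error and is otherwise incomplete in a way that the paper's actual proof avoids entirely.

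The error: you assert that the horizontal differential of $\phi$ is nowhere zero. This is false when $\eta_0\neq 0$. A direct computation gives
\[
\phi\bigl((\theta,\omega)\cdot(v,0)\bigr)=\phi(\theta,\omega)+(\eta_2+\eta_1\wedge\theta+\eta_0\,\omega)\wedge v,
\]
so the horizontal differential vanishes along the 3-dimensional set $\{(\theta,\omega):\eta_2+\eta_1\wedge\theta+\eta_0\,\omega=0\}$, which in general meets both $C^-$ and $C^+$. At such points there is no horizontal direction along which $\phi$ strictly decreases, so your detour mechanism cannot be invoked there; and your sketch gives no argument that such points can be avoided. Beyond this, the ``bookkeeping'' you allude to --- making finitely many horizontal segments stay below level $0$ simultaneously --- is the entire content of the connectedness claim and is not carried out.

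The paper's proof takes a completely different and much shorter route: it does \emph{not} use horizontal paths at all. If $\eta_0=0$ the function is affine on the vector space $\free$ and the claim is trivial. If $\eta_0\neq 0$, the affine change of variables $(\theta,\omega)\mapsto(\eta_0\theta+\eta_1,\;\omega+\eta_0^{-1}\eta_2)$ reduces to $\phi(\theta,\omega)=\theta\wedge\omega$, a nondegenerate quadratic form of signature $(3,3)$ on $\R^6$. One then shows that $\{(u,v)\in\R^3\times\R^3:\|v\|^2-\|u\|^2<c\}$ is path-connected by an explicit three-segment Euclidean path through points of the form $(u,0)$. This sidesteps all the difficulties you encountered, at the cost of abandoning the group structure in favor of the underlying vector-space structure. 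Your instinct to use h-affinity along horizontal lines is natural given the paper's theme, but here the quadratic-form viewpoint is simply the right tool.
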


\begin{proof}
Let $(\eta_0, \eta_1,\eta_2,\eta_3)\in \Lambda^0\R^3\times\Lambda^1\R^3\times\Lambda^2\R^3 \times \Lambda^3 \R^3$ with $(\eta_0,\eta_1,\eta_2) \not= (0,0,0)$ be such that $\phi$ is given by~\eqref{e:phi-for-sect5}. If $\eta_0 = 0$ then $\phi$ is a non constant affine function on $\free$  seen as a vector space and the statement is obvious. We thus assume that $\eta_0 \not=0$ in the rest of this proof. For $(\theta,\omega) \in \free$, we have
$$(\eta_0 \, \theta + \eta_1 ) \wedge (\omega + \eta_0^{-1} \, \eta_2) =  \eta_2 \wedge \theta + \eta_1 \wedge \omega  + \eta_0\, \theta \wedge \omega + \eta_0^{-1} \, \eta_1 \wedge \eta_2~.$$
Since the map $(\theta,\omega) \in \free \mapsto (\eta_0 \, \theta + \eta_1, \omega + \eta_0^{-1} \, \eta_2) \in \free$ is a homeomorphism, we thus only need to consider the case where $\phi(\theta,\omega) = \theta \wedge \omega$. In such a case $\phi$ is a quadratic form with signature $(0,3,3)$ on $\free$ seen as a vector space and hence is in particular surjective. Let $c\in \R$ be given. Since $\phi$ is surjective, the sets $\{x\in \free :\, \phi(x) < c\}$ and $\{x\in \free :\, \phi(x) > c\}$ are non empty. Let us verify that $\{(\theta,\omega) \in \free:\, \phi(x) < c\}$  is arcwise connected, the proof for the set $\{x\in \free :\, \phi(x) > c\}$ being similar. Since $\phi$ is a quadratic form with signature $(0,3,3)$, this is equivalent to proving that $F:=\{(u, v)\in\R^3\times\R^3 :\, \|v\|^2 - \|u\|^2<c\}$ is arcwise connected where $\|\cdot\|$ denotes a Euclidean norm on $\R^3$. It can easily be seen that any two points $(\ol u,\ol v)$, $(\wh u,\wh v)\in F$ can be connected by a concatenation of three continuous paths contained in $F$, namely, the segment from $(\ol u, \ol v)$ to $(\ol u, 0)$, any continuous path connecting $(\ol u, 0)$ and $(\wh u, 0)$ inside the arcwise connected subset $\{(u, 0)\in\R^3\times\R^3 :\, - \|u\|^2<c\}$ of $F$, and the segment from $(\wh u, 0)$ to $(\wh u, \wh v)$. To conclude the proof of the proposition, note that it follows from the surjectivity and continuity of $\phi$ that the set $\{x\in \free :\, \phi(x) \not= c\}$ is not connected. Therefore the sets $\{x\in \free :\, \phi(x) < c\}$ and $\{x\in \free :\, \phi(x) > c\}$ are the two connected components of $\{x\in \free :\, \phi(x) \not= c\}$, as claimed.
\end{proof}

\begin{lemma} \label{lem:connected-sublevelsets-local} Let $\phi:\free \rightarrow \R$ be a non constant h-affine function, $x\in \free$, and $\calO$ be an open neighborhhood of $x$. Then there is an open neighborhood $\calO' \subset \calO$ of $x$ such that the non empty sets $\{y\in \calO' :\, \phi(y) < \phi(x)\}$ and $\{y\in \calO' :\, \phi(y) > \phi(x)\}$ are connected.
\end{lemma}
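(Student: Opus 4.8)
The plan is to reduce the statement, by local homeomorphisms, to a few model situations, treating separately the generic case where $\phi$ is a submersion at $x$ and the single exceptional case where $x$ is the (unique) critical point of $\phi$. Write $\phi(\theta,\omega)=\eta_3+\eta_2\wedge\theta+\eta_1\wedge\omega+\eta_0\,\theta\wedge\omega$ with $(\eta_0,\eta_1,\eta_2)\neq(0,0,0)$. First I would handle the case $d\phi_x\neq0$, which in particular covers every non-constant affine $\phi$ (i.e.\ $\eta_0=0$), since then the linear form $(\theta,\omega)\mapsto\eta_2\wedge\theta+\eta_1\wedge\omega$ is already nonzero. As $\phi$ is a polynomial, hence smooth, the submersion form of the implicit function theorem provides a chart $\chi$ on a neighborhood $W\subseteq\calO$ of $x$, with $\chi(x)=0$, in which $\phi=\phi(x)+\chi_1$, where $\chi_1$ is the first coordinate of $\chi$; choosing $\calO':=\chi^{-1}(B)$ for a small ball $B$ about $0$ inside $\chi(W)$, the set $\{y\in\calO':\phi(y)<\phi(x)\}$ is the homeomorphic image of the half-ball $\{z\in B:z_1<0\}$, which is non-empty and convex, hence connected; the set $\{y\in\calO':\phi(y)>\phi(x)\}$ is treated the same way.

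It remains to treat $d\phi_x=0$. Differentiating, $d\phi_{(\theta,\omega)}(\dot\theta,\dot\omega)=(\eta_2+\eta_0\omega)\wedge\dot\theta+(\eta_1+\eta_0\theta)\wedge\dot\omega$, and since the wedge pairings $\Lambda^1\R^3\times\Lambda^2\R^3\to\Lambda^3\R^3$ and $\Lambda^2\R^3\times\Lambda^1\R^3\to\Lambda^3\R^3$ are nondegenerate, $d\phi_x=0$ forces $\eta_0\neq0$ (otherwise $\eta_1=\eta_2=0$) and $x=(-\eta_0^{-1}\eta_1,-\eta_0^{-1}\eta_2)$. Then, exactly as in the proof of Proposition~\ref{prop:connectedness-h-affine-maps}, the affine homeomorphism $\Psi(\theta,\omega):=(\eta_0\theta+\eta_1,\ \omega+\eta_0^{-1}\eta_2)$ satisfies $\phi=\psi\circ\Psi+c_0$ with $\psi(\theta',\omega'):=\theta'\wedge\omega'$ and $c_0\in\R$ a constant, and here $\Psi(x)=0$ and $\phi(x)=c_0$. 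Since $\psi$ is a nondegenerate quadratic form of signature $(3,3)$ on $\free\cong\R^6$, composing with a further linear homeomorphism reduces the claim to showing that, for small $\rho>0$, the sets $G^{\pm}:=\{(u,v)\in\R^3\times\R^3:\ \|u\|^2+\|v\|^2<\rho^2,\ \pm(\|v\|^2-\|u\|^2)>0\}$ are non-empty and connected. Non-emptiness is immediate. For $G^-$ note that $\|v\|<\|u\|$ forces $u\neq0$; any $(u,v)\in G^-$ is joined to $(u,0)$ inside $G^-$ by $s\in[0,1]\mapsto(u,(1-s)v)$; and $\{(u,0):0<\|u\|<\rho\}\subseteq G^-$ is connected, being homeomorphic to a punctured ball in $\R^3$ (connected as $3\ge2$). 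Hence $G^-$ is connected, and $G^+$ is handled symmetrically by interchanging the roles of $u$ and $v$.

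The only delicate point is the exceptional case $d\phi_x=0$: there the level set $\{\phi=\phi(x)\}$ is a quadric cone singular at $x$, so the submersion normal form is unavailable and one must check directly that each of the two local components of its complement stays connected near $x$ — which is precisely where the connectedness of a punctured ball in $\R^3$ enters. The remaining ingredients — the local normal form, the identity $\phi=\psi\circ\Psi+c_0$, and the signature computation for $\theta'\wedge\omega'$ — are routine.
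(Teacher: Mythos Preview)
Your proof is correct and follows essentially the same strategy as the paper: separate the submersion case (handled via the local normal form, which the paper phrases as the level set being a smooth 5-dimensional submanifold) from the unique critical point, where both you and the paper reduce via the affine change of variables to the quadratic form $\theta'\wedge\omega'$ at the origin and check connectedness by retracting to a punctured ball in $\R^3$. The only cosmetic differences are that the paper performs the reduction to the model $\theta\wedge\omega$ first and then splits cases, and that it uses a product of balls $B(0,\varepsilon)\times B(0,\varepsilon)$ rather than your round ball in $\R^6$.
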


\begin{proof}
Arguing as in the proof of Proposition~\ref{prop:connectedness-h-affine-maps} we only need to consider the case where $\phi(\theta,\omega) = \theta \wedge \omega$. If $\phi(x) \not=0$, the claim follows from the fact that the set $\{(\theta,\omega)\in \free :\, \theta \wedge \omega = \phi(x)\}$ is a smooth 5-dimensional submanifold of $\free$. Similarly, the set $\{(\theta,\omega)\in \free :\, \theta \wedge \omega = 0\} \setminus \{(0,0)\}$ is a smooth 5-dimensional submanifold of $\free$ and we thus only need to consider the case where $x=0$. Arguing as in the proof of Proposition~\ref{prop:connectedness-h-affine-maps}, we are lead to show that the set  $\{(u,v)\in B(0,\e)\times B(0,\e): \|v\|<\|u\|\}$ is arcwise connected for all $\e>0$ where $B(0,\e)$ denotes a Euclidean open ball in $\R^3$ centered at the origin with radius $\e$. This can be done in the same way than in the proof of Proposition~\ref{prop:connectedness-h-affine-maps} taking care that the intermediate path from $(\ol u, 0)$ to $(\wh u,0)$ remains contained in $(B(0,\e) \setminus\{0\}) \times \{0\}$.
\end{proof}

\begin{proposition} \label{prop:comparison-of-sublevelsets-local}
Let $\phi,\psi:\free \rightarrow \R$ be h-affine functions with $\psi$ non constant. Assume that there is $x \in \free$ such that $\phi(x) = \psi(x)$ and there is an open neighborhood $\calO$ of $x$ such that 
\begin{equation*} 
\{y \in \calO:\, \phi(y) \leq \phi(x)\} \subset \{ y \in\calO:\, \psi(y)  \leq  \psi(x)\}~.
\end{equation*}
Then there is $\lambda > 0$ such that $\psi = \lambda \phi$.
\end{proposition}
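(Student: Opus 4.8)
The plan is to reduce to the normal form $\psi(\theta,\omega)=\theta\wedge\omega$ (identifying $\Lambda^3\R^3$ with $\R$) by the same affine change of variables used in the proof of Proposition~\ref{prop:connectedness-h-affine-maps}: writing $\psi$ via coefficients $(\zeta_0,\zeta_1,\zeta_2,\zeta_3)$ with $(\zeta_0,\zeta_1,\zeta_2)\neq(0,0,0)$, if $\zeta_0\neq 0$ the homeomorphism $(\theta,\omega)\mapsto(\zeta_0\theta+\zeta_1,\omega+\zeta_0^{-1}\zeta_2)$ conjugates $\psi$ to a multiple of $(\theta,\omega)\mapsto\theta\wedge\omega$ plus a constant, and an affine map of this type sends h-affine functions to h-affine functions; if $\zeta_0=0$ then $\psi$ is affine and an analogous (simpler) linear change brings it to a coordinate function. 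In the affine case the statement is a classical fact about affine functions on a vector space, so I would dispatch that case first and then assume $\psi(\theta,\omega)=\theta\wedge\omega$ and, after a translation, $x=0$, $\phi(0)=\psi(0)=0$. The inclusion hypothesis becomes: on some neighborhood $\calO$ of $0$, $\{\phi\le 0\}\subset\{\psi\le 0\}$, equivalently $\{\psi>0\}\subset\{\phi>0\}$ near $0$; by symmetry (applying the same reasoning to the two open "sectors'') one also gets $\{\psi<0\}\subset\{\phi<0\}$ near $0$, hence $\{\psi=0\}\supset\{\phi=0\}$ near $0$ as well once we know $\phi$ is non constant.

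The core step is to show that the level set inclusion forces $\phi$ to be a scalar multiple of $\psi$. First I would argue $\phi$ is non constant: if $\phi\equiv 0$ then the hypothesis reads $\calO\subset\{\psi\le 0\}$, impossible since $\{\psi>0\}$ meets every neighborhood of $0$ (as $\psi=\theta\wedge\omega$ is a nondegenerate quadratic form vanishing at $0$, every neighborhood of $0$ contains points where it is positive). Now both $\{y\in\calO:\phi(y)<0\}$ and $\{y\in\calO:\psi(y)<0\}$ are nonempty open sets with, by Lemma~\ref{lem:connected-sublevelsets-local} applied to $\psi$ (and, after shrinking, to $\phi$), connected local sublevel and superlevel sets; moreover $\{\phi<0\}\cap\calO\subset\{\psi<0\}\cap\calO$ and similarly for the positive sides. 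The key observation is that a non constant h-affine function on $\free$ has the property that its zero set is the topological boundary of each of its two (connected) strict level-sign components — this is exactly the content of Proposition~\ref{prop:connectedness-h-affine-maps} globally and of Lemma~\ref{lem:connected-sublevelsets-local} locally. From $\{\phi<0\}\subset\{\psi<0\}$ and $\{\phi>0\}\subset\{\psi>0\}$ near $0$ one deduces $\{\psi=0\}\cap\calO'\subset\{\phi=0\}\cap\calO'$ on a possibly smaller neighborhood $\calO'$: indeed a point $y$ with $\psi(y)=0$ is, by Lemma~\ref{lem:connected-sublevelsets-local}, a limit of points in $\{\psi<0\}$ and of points in $\{\psi>0\}$, hence — were $\phi(y)\neq 0$, say $\phi(y)<0$ — a whole neighborhood of $y$ would lie in $\{\phi<0\}\subset\{\psi<0\}$, contradicting that $y$ is approached by $\{\psi>0\}$. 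So the zero sets agree locally. Then I would invoke Corollary~\ref{cor:level-sets} (the local agreement of zero sets of h-affine functions passing through a common value forces proportionality) to conclude $\phi=\lambda\psi$ for some $\lambda\in\R$, and finally the sign conditions $\{\phi<0\}\subset\{\psi<0\}$ near $0$ force $\lambda>0$ (if $\lambda<0$ the inclusion would be reversed, if $\lambda=0$ then $\phi\equiv 0$, excluded above). Undoing the normalization gives $\psi=\mu\phi$ with $\mu=1/\lambda>0$, which is the claim up to relabeling.

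The main obstacle I expect is the passage from the one-sided inclusion $\{\phi\le\phi(x)\}\subset\{\psi\le\psi(x)\}$ of (closed) sublevel sets to the two-sided matching of zero sets: one has to be careful that the inclusion of \emph{sublevel} sets really does yield inclusion of \emph{strict} sublevel sets on a slightly smaller neighborhood, and then that it yields the reverse inclusion of strict superlevel sets — this uses that $\psi(x)$ is not a local extremum of $\psi$, i.e.\ that $\psi$ takes both signs near $x$ relative to $\psi(x)$, which is guaranteed precisely because $\psi$ is non constant h-affine (via Lemma~\ref{lem:connected-sublevelsets-local}). Once the zero sets are shown to coincide near $x$, the rest is an application of Corollary~\ref{cor:level-sets} and a sign check. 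A secondary technical point is checking that the normalizing affine change of variables indeed preserves the class of h-affine functions and the hypotheses; this is routine and can be stated briefly by referring to the computation already carried out in the proof of Proposition~\ref{prop:connectedness-h-affine-maps}.
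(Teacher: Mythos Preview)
Your proposal has a fatal circularity: you invoke Corollary~\ref{cor:level-sets} to conclude proportionality from the local coincidence of zero sets, but in the paper Corollary~\ref{cor:level-sets} is \emph{deduced from} Proposition~\ref{prop:comparison-of-sublevelsets-local} (its proof ends with ``one can apply Proposition~\ref{prop:comparison-of-sublevelsets-local} to get the required conclusion''). So the Proposition must be proved independently, by a direct comparison of the coefficients $(\eta_0,\eta_1,\eta_2)$ and $(\alpha_0,\alpha_1,\alpha_2)$; this is exactly what the paper does via the chain of FACTs~1--5 and the auxiliary Lemma~\ref{lem:Sigma^+--meets-hyperplane-local}, and there is no shortcut through the level-set corollary.

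There are two further gaps even before the circular step. First, your ``by symmetry'' claim that $\{\psi<0\}\subset\{\phi<0\}$ near $0$ does not follow from the one-sided hypothesis: taking complements in $\{\phi\le 0\}\subset\{\psi\le 0\}$ only gives $\{\psi>0\}\subset\{\phi>0\}$, and there is no symmetry to invoke. Consequently your argument for $\{\psi=0\}\subset\{\phi=0\}$ breaks down when $\phi(y)>0$ at a point $y$ with $\psi(y)=0$: nearby points with $\psi<0$ and $\phi>0$ violate neither inclusion, so no contradiction arises. Second, your treatment of the case $\zeta_0=0$ (``the affine case is a classical fact'') tacitly assumes $\phi$ is affine as well, but this is precisely the nontrivial content of FACT~3 in the paper ($\alpha_0=0\Rightarrow\eta_0=0$), which requires Lemma~\ref{lem:Sigma^+--meets-hyperplane-local} and is not a classical linear-algebra statement.
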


\begin{proof}
Considering the h-affine functions $y \mapsto \phi(x\cdot y) - \phi(x)$ and $y \mapsto \psi(x \cdot y) - \psi(x)$, we can assume with no loss of generality that $x=0$ and $\phi(0)=\psi(0)=0$.  We set
$$E_\phi:=\{y\in\free:\, \phi(y) \leq 0\} \quad \text{and} \quad E_\psi:=\{y\in\free:\, \psi(y) \leq 0\}~.$$
By assumption there is an open neighborhood $\calO$ of $0$ such that 
\begin{equation}  \label{e:hyp-local-inclusion}
E_\phi \cap \calO \subset E_\psi \cap \calO~.
\end{equation}

We first verify that $\phi\not= 0$. To prove this claim, we argue by contradiction and assume that $\phi= 0$. Then $E_\phi \cap \calO = \calO$ and~\eqref{e:hyp-local-inclusion} implies that $\psi(y) \leq 0$ for all $y \in \calO$. Since $\psi \not=0$, there is $(\alpha_0, \alpha_1, \alpha_2) \in (\Lambda^0 \R^3 \times \Lone \times \Ltwo) \setminus \{(0,0,0)\}$ such that $$\psi(\theta,\omega) = \alpha_2\wedge\theta + \alpha_1 \wedge \omega + \alpha_0\, \theta \wedge \omega.$$  The fact that $\psi \leq 0$ in $\calO \cap (\Lone \times \{0\})$ implies that $\alpha_2 = 0$. Similarly, the fact that $\psi \leq 0$ in $\calO \cap (\{0\}\times \Ltwo)$ implies that $\alpha_1 = 0$. Finally it can easily be seen that if $ \alpha_0\, \theta \wedge \omega \leq 0$ for all $(\theta,\omega) \in \calO$ then $\alpha_0 =0$. Therefore $(\alpha_0, \alpha_1, \alpha_2) = (0,0,0)$ which gives a contradiction. 

\smallskip

Since $\phi\not= 0$, there is $(\eta_0, \eta_1, \eta_2)\in ( \Lambda^0 \R^3 \times \Lone \times \Ltwo ) \setminus \{(0,0,0)\}$ such that 
$$\phi(\theta,\omega) = \eta_2\wedge\theta + \eta_1 \wedge \omega + \eta_0\, \theta \wedge \omega.$$ 
Before we prove the proposition, we begin with some preliminary facts.
\begin{equation} \tag{FACT~1}\label{e:eta12-not-0-if-alpha12-not-0}
(\alpha_1 , \alpha_2)  \not= (0,0) \Rightarrow (\eta_1,\eta_2) \not= (0,0)~.
\end{equation}
By contradiction, assume that $(\alpha_1 , \alpha_2) \not= (0,0)$ and $(\eta_1,\eta_2) = (0,0)$. Then~\eqref{e:hyp-local-inclusion} reads as 
\begin{equation*}
\begin{split}
E_\phi\cap\calO &=\{(\theta,\om)\in\calO:\,  \eta_0\, \theta \wedge \omega \leq 0\}\\
& \subset \{(\theta,\om)\in\calO:\, \alpha_2\wedge\theta + \alpha_1 \wedge \omega + \alpha_0\, \theta \wedge \omega \leq 0\}~.
\end{split}
\end{equation*}
In particular there is a neighborhood  $U$ of the origin in $\Lone$ so that $( \theta, \theta\wedge\xi)\in E_\phi\cap\calO$ for all $\theta,\xi\in U$.  Therefore the previous inclusion implies that  $\alpha_2\wedge\theta + \alpha_1\wedge \theta\wedge\xi\leq 0 $ for all $\theta,\xi\in U$. Taking $\xi=0$, it follows that $\alpha_2\wedge\theta \leq 0$ for all $\theta\in U$ which implies  that $\alpha_2=0$. Then we get that  $\alpha_1\wedge \theta\wedge\xi\leq 0 $ for all $\theta,\xi\in U$ which implies in turn that $\alpha_1=0$. Therefore $(\alpha_1 , \alpha_2) = (0,0)$ which gives a contradiction and concludes the proof of~\ref{e:eta12-not-0-if-alpha12-not-0}.

\begin{equation*} \tag{FACT~2}\label{e:eta12-and-alpha12-colinear}
(\alpha_1 , \alpha_2)  \not= (0,0) \text{ and }  (\eta_1,\eta_2) \not= (0,0) \Rightarrow \text{there is } \lambda>0 \text{ such that } (\alpha_1 , \alpha_2) = \lambda (\eta_1,\eta_2)~.
\end{equation*}
Let $(\theta,\omega) \in \free$ be such that $\eta_2\wedge\theta + \eta_1 \wedge \omega <0$. For $t>0$ small enough, $(t\theta,t\omega) \in \calO$ and $\phi(t\theta, t\omega)    = t(\eta_2\wedge\theta + \eta_1 \wedge \omega) + t^2\, \eta_0 \,\theta \wedge \omega \leq 0$,~i.e., $(t\theta,t\omega) \in E_\phi \cap \calO$. Then~\eqref{e:hyp-local-inclusion} implies that $(t\theta,t\omega) \in E_\psi$,~i.e., $t(\alpha_2\wedge\theta + \alpha_1 \wedge \omega) + t^2\, \alpha_0\, \theta \wedge \omega \leq 0$ 
for all $t>0$ small enough. This implies in turn that $\alpha_2\wedge\theta + \alpha_1 \wedge \omega \leq 0$. Therefore 
$$\{(\theta,\om)\in\free:\, \eta_2\wedge\theta + \eta_1 \wedge \omega < 0\} \subset \{(\theta,\om)\in\free:\, \alpha_2\wedge\theta + \alpha_1 \wedge \omega  \leq 0\}~.$$
Since $(\eta_1,\eta_2) \not= (0,0)$ and $(\alpha_1,\alpha_2)\not=(0,0)$,  this is an inclusion between half-spaces in $\free$ seen as a vector space with the origin as a common point in their boundary. It implies in turn that there is $\lambda>0$ such that $(\alpha_1 , \alpha_2) = \lambda (\eta_1,\eta_2)$ and concludes the proof of~\ref{e:eta12-and-alpha12-colinear}.

\begin{equation*} \tag{FACT~3} \label{e:when-alpha0-is-0}
 \alpha_0=0 \Rightarrow \eta_0 = 0~.
\end{equation*} 
By contradiction, assume that $\alpha_0=0$, and hence $(\alpha_1,\alpha_2) \not= (0,0)$, and $\eta_0 \not= 0$. Since $(\alpha_1,\alpha_2) \not= (0,0)$, we know from~\ref{e:eta12-not-0-if-alpha12-not-0} and~\ref{e:eta12-and-alpha12-colinear} that there is $\lambda>0$ such that $(\alpha_1,\alpha_2) = \lambda (\eta_1,\eta_2)$. Then~\eqref{e:hyp-local-inclusion} reads as
\begin{equation*}
 \{(\theta,\omega)\in\O: \eta_2\wedge\theta+\eta_1\wedge\omega+\eta_0\theta\wedge\omega\leq 0\}
\subset  \{(\theta,\omega)\in\O: \eta_2\wedge\theta+\eta_1\wedge\omega \leq 0\}.
\end{equation*}
By~Lemma~\ref{lem:Sigma^+--meets-hyperplane-local}, to be proved below, there is $(\theta,\omega)\in\calO$ such that   
$\eta_2\wedge\theta+\eta_1\wedge\omega+\eta_0\theta\wedge\omega=0$ and $\eta_0\theta\wedge\omega<0$. Thus $\eta_2\wedge\theta+\eta_1\wedge\omega>0$ which contradicts the inclusion above and concludes the proof of~\ref{e:when-alpha0-is-0}.

\begin{equation*} \tag{FACT~4}\label{e:when-eta0-is-0}
(\alpha_1,\alpha_2) \not= (0,0) \text{ and } \eta_0=0 \Rightarrow \alpha_0 = 0~.
\end{equation*} 
By contradiction, assume that $(\alpha_1,\alpha_2) \not= (0,0)$, $\eta_0=0$, and $\alpha_0\neq 0$. On the one hand, using~\ref{e:eta12-not-0-if-alpha12-not-0} and~\ref{e:eta12-and-alpha12-colinear}, \eqref{e:hyp-local-inclusion} reads as
\begin{equation*}
 \{(\theta,\omega)\in\calO: \alpha_2 \wedge\theta+\alpha_1\wedge\omega \leq 0\}
 \subset \{(\theta,\omega)\in\calO: \alpha_2 \wedge\theta+\alpha_1\wedge\omega+\alpha_0\theta\wedge\omega\leq 0\}.
\end{equation*}
On the other hand, by Lemma~\ref{lem:Sigma^+--meets-hyperplane-local}, there is $(\theta,\omega)\in\calO$ such that $\alpha_2\wedge\theta+\alpha_1\wedge\omega=0$ and $\alpha_0\theta\wedge\omega>0$ which contradicts the inclusion above and concludes the proof of~\ref{e:when-eta0-is-0}.

\begin{equation*} \tag{FACT~5}\label{e:alpha12-are-0,0-implies-eta12-is-0}
(\alpha_1,\alpha_2) = (0,0) \Rightarrow (\eta_1,\eta_2) = (0,0)~.
\end{equation*}
By contradiction, assume that $(\alpha_1,\alpha_2) = (0,0)$ and $(\eta_1,\eta_2)\not = (0,0)$. Then~\eqref{e:hyp-local-inclusion} reads as
\begin{equation*}
 \{(\theta,\om)\in\calO: \eta_2\wedge\theta+\eta_1\wedge
 \omega+\eta_0\theta\wedge\omega\leq 0\}\subset
  \{(\theta,\om)\in\calO:  \alpha_0\theta\wedge\omega\leq 0\},
\end{equation*}
where $\alpha_0\neq 0$. By Lemma~\ref{lem:Sigma^+--meets-hyperplane-local}, there is $(\theta,\om)\in\calO $ such that $\eta_2\wedge\theta+\eta_1\wedge \omega+\eta_0\theta\wedge\omega=0$ and $\alpha_0\theta\wedge\omega>0$ which contradicts the inclusion above and concludes the proof of~\ref{e:alpha12-are-0,0-implies-eta12-is-0}.

\medskip
We now turn to the proof of Proposition~\ref{prop:comparison-of-sublevelsets-local}. We divide it into three cases.

\noindent \step{Case~1.} We first consider the case where $(\alpha_1,\alpha_2)\not=(0,0)$ and $\alpha_0=0$. Then we know from~\ref{e:eta12-not-0-if-alpha12-not-0} and~\ref{e:eta12-and-alpha12-colinear} that there is $\lambda>0$ such that $(\alpha_1,\alpha_2) = \lambda (\eta_1,\eta_2)$. We also know from~\ref{e:when-alpha0-is-0} that $\eta_0 =0$. Therefore $(\alpha_0,\alpha_1,\alpha_2) =  \lambda (\eta_0,\eta_1,\eta_2)$ as wanted.

\noindent \step{Case~2.} We next consider the case where $(\alpha_1,\alpha_2)\not=(0,0)$ and $\alpha_0\not=0$. Then we once again know from~\ref{e:eta12-not-0-if-alpha12-not-0} and~\ref{e:eta12-and-alpha12-colinear} that there is $\lambda>0$ such that $(\alpha_1,\alpha_2) = \lambda (\eta_1,\eta_2)$. We also know from~\ref{e:when-eta0-is-0} that $\eta_0\not=0$. Therefore there is $\mu\not=0$ such that $\alpha_0 = \mu \eta_0$. Setting $s:=\mu/\lambda$, the assumption~\eqref{e:hyp-local-inclusion} reads as
\begin{multline} \label{e:hyp-case2}
\{(\theta,\om)\in\calO:\, \eta_2\wedge\theta + \eta_1 \wedge \omega  + \eta_0 \, \theta \wedge \omega \leq 0\} \\ \subset \{(\theta,\om)\in \calO:\, \eta_2\wedge\theta + \eta_1 \wedge \omega + s \eta_0 \, \theta \wedge \omega \leq 0\}
\end{multline}
with $(\eta_1,\eta_2)\not=0$, $s\eta_0\not=0$, and, to conclude the proof of Proposition~\ref{prop:comparison-of-sublevelsets-local} in the present case, we shall verify that $s=1$. By Lemma~\ref{lem:Sigma^+--meets-hyperplane-local}, there is $(\theta,\omega)\in\calO$ such that $\eta_2\wedge\theta+\eta_1\wedge\omega+\eta_0\theta\wedge\omega=0$ and $ \theta\wedge\omega>0$. Then~\eqref{e:hyp-case2} implies that $(s-1)\eta_0 \leq0$. Similarly, once again by Lemma~\ref{lem:Sigma^+--meets-hyperplane-local}, there is $(\theta',\omega')$ such that $\eta_2\wedge\theta'+\eta_1\wedge \omega'+\eta_0\theta'\wedge\omega'=0$ and  $\theta'\wedge\omega'<0$, and~\eqref{e:hyp-case2} implies now the opposite inequality $(s-1)\eta_0\geq 0$. Therefore $s=1$ as wanted.

\noindent \step{Case~3.} We finally consider the case where $(\alpha_1,\alpha_2) = (0,0)$. Then we know from~\ref{e:alpha12-are-0,0-implies-eta12-is-0} that $(\eta_1,\eta_2) = (0,0)$ and~\eqref{e:hyp-local-inclusion} reads as
\begin{equation} \label{e:hyp-case3}
\{(\theta,\om)\in\calO:\, \eta_0 \, \theta \wedge \omega \leq 0\} \subset \{(\theta,\om)\in\calO:\, \alpha_0 \, \theta \wedge \omega \leq 0 \} 
\end{equation}
with $\alpha_0 \not=0$ and $\eta_0\not=0$. Considering $(\theta,\omega)\in\calO$ such that $\eta_0\theta\wedge\omega<0$ we get that $\eta_0\alpha_0 >0$, i.e., there is $\lambda>0$ such that $\alpha_0 =\lambda \eta_0$. Therefore $(\alpha_0,\alpha_1,\alpha_2)  = \lambda (\eta_0,\eta_1,\eta_2)$ and this concludes the proof of the proposition.
\end{proof}

\begin{lemma} \label{lem:Sigma^+--meets-hyperplane-local}
Set $\Sigma^+:= \{(\theta,\om)\in\free:\, \theta \wedge \omega > 0\}$ and $\Sigma^-:= \{(\theta,\om)\in\free:\, \theta \wedge \omega < 0\}$. Let $\calO \subset \free$ be an open neighborhood of the origin. Let $(\eta_0,\eta_1,\eta_2) \in \Lambda^0\R^3 \times \Lone \times \Ltwo $ with $(\eta_1,\eta_2) \not= (0,0)$. Then
\begin{align} 
\label{e:useful-1} & \Sigma^+\cap\{ (\theta,\om)\in\calO:\eta_2\wedge \theta + \eta_1 \wedge \omega+\eta_0
\theta\wedge\omega=0  \}\neq \varnothing~,
\\ &
\Sigma^-\cap \{ (\theta,\om)\in\calO:\eta_2\wedge \theta + \eta_1 \wedge \omega+\eta_0\theta\wedge\omega=0  \}
\neq \varnothing~.
\label{e:useful-2}
\end{align}
\end{lemma}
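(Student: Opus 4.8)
The statement asserts that the zero level set of the h-affine function $\phi(\theta,\om)=\eta_2\wedge\theta+\eta_1\wedge\om+\eta_0\,\theta\wedge\om$ meets both open ``cones'' $\Sigma^\pm$ inside any neighborhood $\calO$ of the origin, under the sole assumption $(\eta_1,\eta_2)\neq(0,0)$. Since $\phi(0,0)=0$, the origin lies in the level set, so the point is really a local-spreading statement: near $0$ the level set cannot avoid $\Sigma^+$ nor $\Sigma^-$. The plan is to exhibit, explicitly, a short curve $t\mapsto(\theta(t),\om(t))$ through the origin lying on $\{\phi=0\}$ along which the quantity $\theta\wedge\om$ (valued in $\R$ after identifying $\Lambda^3\R^3$ with $\R$) takes both signs for small $t\neq 0$.

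\textbf{Main case: $\eta_2\neq 0$.} Here I would use the lower-order term $\eta_2\wedge\theta$ to control $\phi$. Pick $\theta_0\in\Lone$ with $\eta_2\wedge\theta_0\neq 0$ (possible since $\eta_2\neq 0$), and pick $\om_0\in\Ltwo$ with $\theta_0\wedge\om_0\neq 0$ (possible since $\theta_0\neq 0$). Consider the two-parameter family $(\theta,\om)=(s\theta_0,t\om_0)$ for small $s,t$. Then $\phi(s\theta_0,t\om_0)=s\,\eta_2\wedge\theta_0+t\,\eta_1\wedge\om_0+st\,\eta_0\,\theta_0\wedge\om_0$, which for each fixed small $t$ is affine in $s$ with nonzero leading coefficient $\eta_2\wedge\theta_0+t\,\eta_0\,\theta_0\wedge\om_0$ (nonzero for $t$ small); hence it has a unique root $s=s(t)$, and $s(t)\to 0$ as $t\to 0$, so the point stays in $\calO$. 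Along this branch $\theta\wedge\om=s(t)\,t\,\theta_0\wedge\om_0$, and since $s(t)=-\dfrac{t\,\eta_1\wedge\om_0}{\eta_2\wedge\theta_0+t\,\eta_0\,\theta_0\wedge\om_0}+O(t)\cdot\text{(bounded)}$—more simply, $s(t)/t\to -\dfrac{\eta_1\wedge\om_0}{\eta_2\wedge\theta_0}$ if $\eta_1\wedge\om_0\neq 0$, giving $\theta\wedge\om$ a definite sign as $t\to 0^\pm$, or $\theta\wedge\om=s(t)t\,\theta_0\wedge\om_0$ with $s(t)t$ changing sign if $\eta_1\wedge\om_0=0$ forces $s(t)\equiv 0$—I would instead avoid the degeneracy by additionally varying $\om_0$. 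Concretely: choose $\om_0$ first so that $\eta_1\wedge\om_0$ has either sign is not needed; the clean route is to take the curve $t\mapsto(s(t)\theta_0,\,t\,\om_0)$ for $t>0$ small and separately $t<0$ small—then $s(t)t\,\theta_0\wedge\om_0$ has opposite signs on the two sides provided $s(t)/t$ has a nonzero limit, and if $\eta_1\wedge\om_0=0$ one replaces $\om_0$ by $\om_0+\om_1$ with $\eta_1\wedge\om_1\neq 0$ (possible unless $\eta_1=0$, in which case handle directly: then $\phi(s\theta_0,t\om_0)=s\,\eta_2\wedge\theta_0+st\,\eta_0\,\theta_0\wedge\om_0=s(\eta_2\wedge\theta_0+t\,\eta_0\theta_0\wedge\om_0)$ vanishes exactly for $s=0$, which is no good, so instead take $\theta=s\theta_0$, $\om=\om_0'+t\om_1$ with $\theta_0\wedge\om_1\neq 0$ and solve the affine-in-$s$ equation). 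I would organize this as: first reduce to $\eta_1=0$ or $\eta_1\neq 0$, then in each subcase write down the explicit affine-in-one-variable equation whose root gives a branch of the level set on which $\theta\wedge\om$ is a nonzero multiple of a parameter that ranges over a punctured interval, hence attains both signs.

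\textbf{Case $\eta_2=0$, $\eta_1\neq 0$.} By the substitution trick already used in the proof of Proposition~\ref{prop:connectedness-h-affine-maps} (and in the FACTs), one would like to symmetrize: when $\eta_0\neq 0$ the map $(\theta,\om)\mapsto(\eta_0\theta+\eta_1,\om+\eta_0^{-1}\eta_2)$ turns $\phi$ into $\theta\wedge\om$ up to an additive constant, but here that constant $\eta_0^{-1}\eta_1\wedge\eta_2$ and the shift of the origin complicate the ``neighborhood of $0$'' bookkeeping, so instead I would argue directly by a curve as above with the roles of $\theta$ and $\om$ interchanged: pick $\om_0$ with $\eta_1\wedge\om_0\neq 0$, pick $\theta_0$ with $\theta_0\wedge\om_0\neq 0$, set $(\theta,\om)=(t\theta_0,\,r(t)\om_0)$, solve $t\,\eta_1\wedge(\text{scaled})$... actually the equation $\phi(t\theta_0,r\om_0)=r\,\eta_1\wedge\om_0+tr\,\eta_0\theta_0\wedge\om_0$ is affine (indeed linear) in $r$ with leading coefficient $\eta_1\wedge\om_0+t\,\eta_0\theta_0\wedge\om_0\neq 0$ for small $t$, hence forces $r=0$—again degenerate, so one perturbs $\theta_0\rightsquigarrow\theta_0+t'\theta_1$ or better includes a $\theta_3$-direction making $\eta_1\wedge(\cdot)$ nonzero after the wedge. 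The robust fix, which I would ultimately adopt uniformly, is: since $(\eta_1,\eta_2)\neq 0$, the linear part $\ell(\theta,\om):=\eta_2\wedge\theta+\eta_1\wedge\om$ of $\phi$ is a nonzero linear functional on the $6$-dimensional space $\free$; its kernel $H$ is a $5$-dimensional hyperplane through the origin, and $\phi=\ell+q$ where $q(\theta,\om)=\eta_0\theta\wedge\om$ is quadratic. On $H$ the restriction of $q$ is a quadratic form in $5$ variables; I claim it is indefinite on $H$ (not semidefinite), because $q$ itself has signature $(3,3)$ on $\free$ and a $5$-dimensional subspace of a $6$-dimensional space cannot be contained in the ``non-negative'' or ``non-positive'' part of an indefinite form of that signature—any codimension-$1$ subspace still contains vectors of both signs when $\eta_0\neq 0$; when $\eta_0=0$ the statement is trivial since then $\{\phi=0\}=H$ and one just needs $H\cap\Sigma^+\neq\emptyset\neq H\cap\Sigma^-$, which again follows from $q=\theta\wedge\om$ being indefinite with $\dim H=5>3$. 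Then near $0$, $\{\phi=0\}$ is a smooth hypersurface tangent to $H$ (by the implicit function theorem, $\ell\neq 0$), and picking $v_\pm\in H$ with $q(v_\pm)\gtrless 0$ and following the branch of $\{\phi=0\}$ in the direction $v_\pm$ (a curve $t\mapsto tv_\pm+O(t^2)$), one computes $(\theta\wedge\om)=t^2 q(v_\pm)/\eta_0+O(t^3)$ when $\eta_0\neq0$, which has the sign of $q(v_\pm)/\eta_0$; choosing the sign of $v_\pm$ appropriately (or swapping $v_+\leftrightarrow v_-$) hits both $\Sigma^+$ and $\Sigma^-$, inside $\calO$ for $t$ small. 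The degenerate subcase $\eta_0=0$ is immediate as noted.

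\textbf{Expected main obstacle.} The delicate point is the repeated degeneracy where a naively chosen one-parameter family forces the solved variable to be identically $0$, yielding a curve on which $\theta\wedge\om\equiv 0$ — useless for reaching $\Sigma^\pm$. The clean way around it is the linear-algebra argument of the last paragraph: work with the hyperplane $H=\ker\ell$ and the fact that an indefinite quadratic form of signature $(3,3)$ stays indefinite on every $5$-dimensional subspace, then transfer this to the level set $\{\phi=0\}$ near $0$ via the implicit function theorem and a second-order Taylor expansion of $\theta\wedge\om$ along a branch tangent to a chosen sign-definite direction in $H$. I would write up \eqref{e:useful-1} and \eqref{e:useful-2} together this way, treating $\eta_0=0$ as a trivial special case and, for $\eta_0\neq0$, recording the expansion $\phi(tv+t^2 w(t))=0$ with $v\in H$, $\ell(w(0))=-\,\eta_0\,(\text{the }\Lambda^3\text{-component of }v\wedge v\text{-type term})$ — i.e. choosing $w(0)$ to cancel the $t^2$ term of $\phi$, which is possible since $\ell\neq 0$ — and then noting $(\theta\wedge\om)(tv+t^2w(t))=t^2\,(v\text{-quadratic part})+O(t^3)$ has the sign of $q(v)/\eta_0$, completing the proof.
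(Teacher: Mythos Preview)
Your final ``robust fix'' is correct and is the part worth keeping; the long exploratory first half, with its repeated degeneracies and ad hoc perturbations, should be cut entirely, since you yourself abandon it. The clean argument you land on is: let $\ell(\theta,\om)=\eta_2\wedge\theta+\eta_1\wedge\om$, set $H=\ker\ell$ (a $5$-dimensional hyperplane through $0$, since $(\eta_1,\eta_2)\neq(0,0)$), observe that the quadratic form $(\theta,\om)\mapsto\theta\wedge\om$ has signature $(3,3)$ and hence is indefinite on any $5$-dimensional subspace (because maximal positive and negative subspaces have dimension $3$, so $H$ meets both nontrivially). This already settles the case $\eta_0=0$ after scaling into $\calO$. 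For $\eta_0\neq0$ you note that $d\phi|_0=\ell\neq0$, so by the implicit function theorem $\{\phi=0\}$ is a smooth hypersurface through $0$ tangent to $H$; for any $v=(\theta_0,\om_0)\in H$ with $\theta_0\wedge\om_0\neq0$ there is a curve $\gamma(t)=tv+O(t^2)$ in $\{\phi=0\}$, along which $\theta\wedge\om=t^2\,\theta_0\wedge\om_0+O(t^3)$ has the sign of $\theta_0\wedge\om_0$ for small $t\neq0$, and $\gamma(t)\in\calO$ for $t$ small. Choosing $v$ in $H$ with $\theta_0\wedge\om_0$ of each sign yields \eqref{e:useful-1} and \eqref{e:useful-2}.

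The paper's proof rests on the same linear-algebra core (indefiniteness of $\theta\wedge\om$ on $H$) but handles $\eta_0\neq0$ more elementarily: starting from a point $(\bar\theta,\bar\om)\in H\cap\calO$ with $\bar\theta\wedge\bar\om>0$, it perturbs slightly to $(\hat\theta,\hat\om)\in\calO$ so that $\phi(\hat\theta,\hat\om)$ and $\phi(t\hat\theta,t\hat\om)$ for small $t>0$ have opposite signs (one checks this by writing $\phi(t\hat\theta,t\hat\om)=t\,\ell(\hat\theta,\hat\om)+t^2\eta_0\,\hat\theta\wedge\hat\om$ and arranging the linear term to be small with the right sign), then applies the intermediate value theorem on the segment to find a root inside $\Sigma^+\cap\calO$. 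So where you invoke the implicit function theorem and a second-order Taylor expansion on the level set, the paper uses only a one-variable IVT along a ray; your route is a bit more conceptual, theirs a bit more hands-on, but both are short once the signature observation on $H$ is in place.
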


\begin{proof}
We first prove the lemma when $\eta_0=0$. Set $V:=\{(\theta,\omega)\in\free: \eta_2\wedge\theta+\eta_1\wedge\omega=0\}$. Then $V$ is a 5-dimensional linear subspace of $\free$. The quadratic form $(\theta,\om)\mapsto \theta\wedge\om $ on $\free$ seen as a vector space has signature $(0,3,3)$. Therefore there are 3-dimensional linear subspaces $W^+$ and $W^-$ of $\free$ such that $\theta \wedge \omega >0$ for all $(\theta,\omega) \in W^+$ and $\theta \wedge \omega < 0$ for all $(\theta,\omega) \in W^-$. Since $\dim V =5$, we have $W^+ \cap V \not=\emptyset$ and $W^- \cap V \not=\emptyset$ which proves~\eqref{e:useful-1} and~\eqref{e:useful-2} when $\eta_0 = 0$.

Assume that $\eta_0 \not=0$ and let us prove~\eqref{e:useful-1}, the proof of~\eqref{e:useful-2} being similar. Set $\phi(\theta,\omega):=\eta_2\wedge \theta+\eta_1\wedge\omega+\eta_0\theta\wedge\omega$. By the previous argument, there is $(\ol\theta,\ol\om)\in \calO$ such that $\eta_2\wedge\ol\theta+\eta_1\wedge\ol\omega=0$ and $ \ol\theta\wedge\ol\omega>0$. Assume that $\eta_0>0$. Then choose $(\wh\theta,\wh\omega)\in\calO$ close enough to $(\ol\theta,\ol\om)$ such that $-\eta_0\wh\theta\wedge\wh\omega <\eta_2\wedge\wh\theta+\eta_1\wedge \wh\omega<0$. On the one hand, we have $\phi(\wh\theta,\wh\omega)>0$. On the other hand, for $t>0$ small enough, we have $\phi(t\wh\theta, t\wh\omega)=t(\eta_2\wedge\wh\theta+\eta_1\wedge\wh\omega)+ t^2\eta_0 \wh\theta\wedge\wh\omega <0$. Therefore there is $\hat t\in(0,1)$ such that $\phi(\hat t\hat\theta, \hat t\hat\omega)=0$. Since $(\hat t\hat\theta, \hat t\hat\omega) \in \Sigma^+$ this concludes the proof of~\eqref{e:useful-1} when $\eta_0 >0$. If $\eta_0<0$, we choose $(\theta^*, \omega^*)\in\calO $ close enough to $(\ol\theta,\ol\om)$ such that $0<\eta_2\wedge\theta^*+\eta_1\wedge\omega^*<-\eta_0\theta^*\wedge\omega^*$ and we argue in a similar way.
\end{proof}

Combining Proposition~\ref{prop:comparison-of-sublevelsets-local} with topological arguments, we get the following corollary where sublevel sets are replaced by level sets.

\begin{corollary} \label{cor:level-sets}
Let $\phi,\psi:\free \rightarrow \R$ be h-affine functions with $\psi$ non constant. Assume that there is $x \in \free$ such that $\phi(x) = \psi(x)$ and there is an open neighborhood $\calO$ of $x$ such that 
\begin{equation} \label{e:hyp-levelsets}
\{y \in \calO:\, \phi(y) = \phi(x)\} \subset \{ y \in\calO:\, \psi(y)  = \psi(x)\}~.
\end{equation}
Then there is $\lambda \in \R \setminus \{0\}$ such that $\psi = \lambda \phi$.
\end{corollary}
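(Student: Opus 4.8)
The plan is to reduce Corollary~\ref{cor:level-sets} to Proposition~\ref{prop:comparison-of-sublevelsets-local} by upgrading the inclusion of level sets \eqref{e:hyp-levelsets} to an inclusion of sublevel sets, possibly after replacing $\psi$ by $-\psi$. As in the proof of Proposition~\ref{prop:comparison-of-sublevelsets-local}, first translate: replacing $\phi$ and $\psi$ by $y \mapsto \phi(x\cdot y)-\phi(x)$ and $y\mapsto \psi(x\cdot y)-\psi(x)$, we may assume $x=0$ and $\phi(0)=\psi(0)=0$, so \eqref{e:hyp-levelsets} becomes $\{y\in\calO:\phi(y)=0\}\subset\{y\in\calO:\psi(y)=0\}$ and we must show $\psi=\lambda\phi$ for some $\lambda\in\R\setminus\{0\}$.

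First I would dispose of the case $\phi\equiv 0$: then $\calO\subset\{\psi=0\}$, and since $\psi$ is continuous and $\calO$ is open this forces $\psi\equiv 0$ on $\free$ (an h-affine function vanishing on an open set is zero — e.g.\ because its level set at $0$ would then have nonempty interior, contradicting Proposition~\ref{prop:connectedness-h-affine-maps} which says the level set is a proper closed set whose complement is the disjoint union of two nonempty open sets, hence has empty interior), contradicting that $\psi$ is non constant. So $\phi$ is non constant. Now shrink $\calO$ using Lemma~\ref{lem:connected-sublevelsets-local} to an open neighborhood $\calO'\subset\calO$ of $0$ on which both $\{y\in\calO':\phi(y)<0\}$ and $\{y\in\calO':\phi(y)>0\}$ are connected (and nonempty). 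The set $\calO'\setminus\{\phi=0\}$ is the disjoint union of these two open connected sets; on $\calO'\setminus\{\phi=0\}$ we have $\psi\neq 0$ by \eqref{e:hyp-levelsets}, so $\psi$ has constant sign on each of the two connected pieces. If $\psi$ had the same sign, say $\psi\geq 0$, on both pieces, then $\psi\geq 0$ on all of $\calO'$ (by continuity, since $\psi=0$ on $\{\phi=0\}\cap\calO'$); but then $\{y\in\calO':\psi(y)\le 0\}=\{\phi=0\}\cap\calO'$ has empty interior while $\{y\in\calO':\psi(y)<0\}=\emptyset$, and one checks this is impossible for a non constant h-affine $\psi$ — more directly, it contradicts the conclusion of Lemma~\ref{lem:connected-sublevelsets-local} applied to $\psi$, which guarantees $\{\psi<0\}$ is nonempty in every neighborhood of $0$. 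Hence $\psi$ has opposite signs on the two pieces; after possibly replacing $\psi$ by $-\psi$ (which does not affect the conclusion, it only changes the sign of $\lambda$), we may assume $\psi<0$ on $\{y\in\calO':\phi(y)<0\}$ and $\psi>0$ on $\{y\in\calO':\phi(y)>0\}$.

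With this sign normalization, \eqref{e:hyp-levelsets} upgrades to an inclusion of sublevel sets on $\calO'$: if $\phi(y)\le 0$ with $y\in\calO'$, then either $\phi(y)<0$, whence $\psi(y)<0$, or $\phi(y)=0$, whence $\psi(y)=0$; in both cases $\psi(y)\le 0$. Thus $\{y\in\calO':\phi(y)\le\phi(0)\}\subset\{y\in\calO':\psi(y)\le\psi(0)\}$, and Proposition~\ref{prop:comparison-of-sublevelsets-local} yields $\lambda>0$ with $\psi=\lambda\phi$; undoing the possible sign change on $\psi$, we get $\psi=\lambda\phi$ for some $\lambda\in\R\setminus\{0\}$ as claimed.

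I expect the only subtle point to be the sign-separation argument, i.e.\ ruling out that $\psi$ is (weakly) one-signed near $0$; this is where the precise content of Lemma~\ref{lem:connected-sublevelsets-local} — that \emph{both} strict sublevel sets of a non constant h-affine function are nonempty and connected in arbitrarily small neighborhoods — is essential, and it is what forces the two connected components of $\calO'\setminus\{\phi=0\}$ to be mapped by $\psi$ into the two different open half-lines $(-\infty,0)$ and $(0,\infty)$. Everything else is a routine reduction to Proposition~\ref{prop:comparison-of-sublevelsets-local}.
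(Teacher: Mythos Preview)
Your overall strategy is right, but the sign-separation step uses the hypothesis in the wrong direction. After normalizing to $x=0$, $\phi(0)=\psi(0)=0$, the assumption \eqref{e:hyp-levelsets} reads $\{\phi=0\}\cap\calO\subset\{\psi=0\}\cap\calO$, i.e.\ $\phi(y)=0\Rightarrow\psi(y)=0$. You then write ``on $\calO'\setminus\{\phi=0\}$ we have $\psi\neq 0$ by \eqref{e:hyp-levelsets}'', which would require the \emph{reverse} implication $\psi(y)=0\Rightarrow\phi(y)=0$. Nothing prevents $\psi$ from vanishing (and hence changing sign) inside a connected component of $\{\phi\neq 0\}\cap\calO'$, so the argument that $\psi$ has constant sign on each such component breaks down.

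The paper avoids this by swapping the roles: it uses Lemma~\ref{lem:connected-sublevelsets-local} to make $\{\psi<0\}$ connected (not $\{\phi<0\}$), and then observes, via the correct contrapositive $\psi(y)\neq 0\Rightarrow\phi(y)\neq 0$, that $\{\psi<0\}\cap\calO'$ is contained in $\{\phi\neq 0\}$ and hence splits as $(\{\psi<0\}\cap\{\phi<0\})\sqcup(\{\psi<0\}\cap\{\phi>0\})$. Connectedness then forces $\{\psi<0\}\subset\{\phi<0\}$ or $\{\psi<0\}\subset\{\phi>0\}$; together with $\{\psi=0\}\subset\overline{\{\psi<0\}}$ this yields $\{\psi\le 0\}\subset\{\phi\le 0\}$ or $\{\psi\le 0\}\subset\{-\phi\le 0\}$ (this is packaged as Lemma~\ref{lem:sublevel-sets-topological-lemma}), and then Proposition~\ref{prop:comparison-of-sublevelsets-local} is applied with the roles of $\phi$ and $\psi$ interchanged (which is why one must first check, as you did, that $\phi$ is non constant). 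Your argument becomes correct if you make exactly this swap: shrink $\calO'$ using Lemma~\ref{lem:connected-sublevelsets-local} for $\psi$, conclude that $\phi$ has constant sign on each of $\{\psi<0\}$ and $\{\psi>0\}$, rule out the same-sign case via nonemptiness of $\{\phi<0\}$ near $0$, and then apply Proposition~\ref{prop:comparison-of-sublevelsets-local}.
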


\begin{proof}
Arguing as in the beginning of the proof of Proposition~\ref{prop:comparison-of-sublevelsets-local}, we can assume 
with no loss of generality that $x=0$ and $\phi(x) = \psi(x) = 0$. Since $\psi:\free \rightarrow \R$ is a non constant h-affine function, shrinking $\calO$ if necessary, one can assume that $\{ y \in\calO:\, \psi(y)  < 0\}$ is connected, see Lemma~\ref{lem:connected-sublevelsets-local}. One can also easily verify that $ \{ y \in\calO:\, \psi(y)  = 0\} \subset \overline{\{ y \in\calO:\, \psi(y)  < 0\}}$. Then it follows from Lemma~\ref{lem:sublevel-sets-topological-lemma}, to be proved below, that either $\{y \in \calO:\, \psi(y) \leq 0\} \subset \{ y \in\calO:\, \phi(y)  \leq 0\}$ or $\{y \in \calO:\, \psi(y) \leq 0\} \subset \{ y \in\calO:\, \phi(y)  \geq 0\}$. Arguing as in the beginning of the proof of Proposition~\ref{prop:comparison-of-sublevelsets-local}, one can also verify that the fact that $\psi$ is non constant together with~\eqref{e:hyp-levelsets} implies that $\phi$ is non constant as well. Then, changing $\phi$ into $-\phi$ if necessary, one can apply Proposition~\ref{prop:comparison-of-sublevelsets-local} to get the required conclusion. 
\end{proof}

Given a space $X$, a subset $\calO$ of $X$, and $\phi:X \rightarrow \R$, we set
\begin{align*}
\calO_\phi^0 & := \{y\in \calO:\, \phi(y)= 0\}~,
\\ 
\calO_\phi^- := \{y\in \calO:\, \phi(y)< 0 \} & \quad \text{and} \quad \calO_\phi^+ := \{y\in \calO:\, \phi(y)>0\}~.
\end{align*}

\begin{lemma} \label{lem:sublevel-sets-topological-lemma} Let $X$ be a topological space, $\calO\subset X$ be open, and $\phi, \psi:X \rightarrow \R$. Assume that $\phi$ is continuous. Assume also that  $\calO_\phi^0 \subset \calO_\psi^0\subset \overline{\calO_\psi^-}$ and $\calO_\psi^-$ is connected. Then either $\calO_\psi^- \cup \calO_\psi^0 \subset \calO_\phi^- \cup \calO_\phi^0$ or $\calO_\psi^- \cup \calO_\psi^0 \subset \calO_\phi^+ \cup \calO_\phi^0$.
\end{lemma}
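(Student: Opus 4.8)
The statement is a soft separation lemma: since $\calO^0_\phi\subset\calO^0_\psi$, the (relatively) open set $\calO^-_\psi$ is disjoint from $\calO^0_\phi$, hence it is contained in $\calO^-_\phi\cup\calO^+_\phi$, which is the complement of the zero set of the continuous function $\phi$. The plan is to exploit the connectedness of $\calO^-_\psi$ together with the continuity of $\phi$ to conclude that $\calO^-_\psi$ lies entirely in one of the two "sides" $\calO^-_\phi$ or $\calO^+_\phi$, and then to pass to the closure-type inclusion $\calO^0_\psi\subset\overline{\calO^-_\psi}$ to absorb the zero set of $\psi$.

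First I would observe that $\calO^-_\psi\cap\calO^0_\phi=\emptyset$: indeed any $y\in\calO^-_\psi$ satisfies $\psi(y)<0$, so $y\notin\calO^0_\psi$, and since $\calO^0_\phi\subset\calO^0_\psi$ by hypothesis, $y\notin\calO^0_\phi$. Hence $\calO^-_\psi\subset\calO^-_\phi\cup\calO^+_\phi$. Now $\calO^-_\phi=\phi^{-1}((-\infty,0))\cap\calO$ and $\calO^+_\phi=\phi^{-1}((0,+\infty))\cap\calO$ are both open in $X$ (using continuity of $\phi$ and openness of $\calO$), and they are disjoint. Therefore they induce a partition of the connected set $\calO^-_\psi$ into two relatively open pieces; connectedness forces one of them to be empty. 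Thus either $\calO^-_\psi\subset\calO^-_\phi$ or $\calO^-_\psi\subset\calO^+_\phi$.

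Next I would upgrade this to include the zero level set $\calO^0_\psi$. Suppose we are in the first alternative, $\calO^-_\psi\subset\calO^-_\phi$. Take $y\in\calO^0_\psi$; by hypothesis $\calO^0_\psi\subset\overline{\calO^-_\psi}$, so $y$ is a limit of points $y_n\in\calO^-_\psi\subset\calO^-_\phi$, i.e.\ $\phi(y_n)<0$. By continuity of $\phi$, $\phi(y)=\lim\phi(y_n)\le 0$, so $y\in\calO^-_\phi\cup\calO^0_\phi$. This gives $\calO^0_\psi\subset\calO^-_\phi\cup\calO^0_\phi$, and combined with $\calO^-_\psi\subset\calO^-_\phi$ we obtain $\calO^-_\psi\cup\calO^0_\psi\subset\calO^-_\phi\cup\calO^0_\phi$. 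The symmetric argument in the second alternative (limits of points with $\phi<0$ replaced... more precisely $\phi(y_n)>0$, giving $\phi(y)\ge 0$) yields $\calO^-_\psi\cup\calO^0_\psi\subset\calO^+_\phi\cup\calO^0_\phi$. This exhausts the two cases and proves the lemma.

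\textbf{Main obstacle.} There is essentially no obstacle: the argument is purely point-set-topological and uses only continuity of $\phi$, the openness of $\calO$, and connectedness of $\calO^-_\psi$. The one subtlety worth stating carefully is the disjointness step $\calO^-_\psi\cap\calO^0_\phi=\emptyset$, which is exactly where the hypothesis $\calO^0_\phi\subset\calO^0_\psi$ (implicit in the chain $\calO^0_\phi\subset\calO^0_\psi\subset\overline{\calO^-_\psi}$) is used — without it the partition argument would not apply. No assumption on $\psi$ beyond what is stated (in particular no continuity of $\psi$) is needed.
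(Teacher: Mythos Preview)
Your proof is correct and follows essentially the same route as the paper's: disjointness $\calO_\psi^-\cap\calO_\phi^0=\emptyset$, then connectedness of $\calO_\psi^-$ inside the open disjoint union $\calO_\phi^-\cup\calO_\phi^+$, then passage to the closure to absorb $\calO_\psi^0$. One small technical slip: in a general topological space a point of $\overline{\calO_\psi^-}$ need not be a sequential limit of points of $\calO_\psi^-$, so the line ``$y$ is a limit of points $y_n$'' is unjustified; instead argue directly that $\calO_\psi^0\subset\overline{\calO_\psi^-}\cap\calO\subset\overline{\calO_\phi^-}\cap\calO\subset\calO_\phi^-\cup\calO_\phi^0$ by continuity of $\phi$, which is exactly how the paper proceeds.
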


\begin{proof}
Since $\calO_\psi^- \cap \calO_\phi^0 = \emptyset$, we have $\calO_\psi^- = (\calO_\psi^- \cap \calO_\phi^-) \sqcup (\calO_\psi^- \cap \calO_\phi^+)$. Since $\calO_\psi^-$ is connected and $\phi$ is continuous, it follows that either  
$$\calO_\psi^- = \calO_\psi^- \cap \calO_\phi^- \subset \calO_\phi^- \quad \text{or} \quad \calO_\psi^- = \calO_\psi^- \cap \calO_\phi^+ \subset \calO_\phi^+~.$$ 
Since $\calO_\psi^0\subset \overline{\calO_\psi^-} \cap \calO$, we also get that either 
$$\calO_\psi^0  \subset \overline{\calO_\phi^-} \cap \calO \subset \calO_\phi^- \cup \calO_\phi^0 \quad \text{or} \quad \calO_\psi^0 \subset  \overline{\calO_\phi^+} \cap \calO \subset \calO_\phi^+ \cup \calO_\phi^0$$
which concludes the proof of the lemma.
\end{proof}

We conclude this section with the proof of rather easy properties of the set of noncharacterictic points of level sets of h-affine functions that has been used in the proof of Lemma~\ref{lem:step3}.

\begin{lemma} \label{lem:level-sets-as-submanifolds}
Let $\phi:\free \rightarrow \R$ be a non constant h-affine function, $c\in\R$, and set $S:=\{x\in \free:\, \phi(x) = c\}$. Then $\Nonchar(S)$ is a smooth 5-dimensional submanifold of $\free$ and for all $x\in\Nonchar(S)$ we have $T_x S \cap \Hor_x = S \cap \Hor_x$ where $T_x S$ denotes the tangent space to $S$ at $x$ seen as an affine subspace of $\free$ through $x$.
\end{lemma}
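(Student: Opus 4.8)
The statement has two parts: (i) $\Nonchar(S)$ is a smooth $5$-dimensional submanifold of $\free$, and (ii) for $x \in \Nonchar(S)$ the tangent space $T_x S$ and the horizontal space $\Hor_x$ intersect in exactly $S \cap \Hor_x$. My plan is to reduce to the normal form $\phi(\theta,\omega) = \theta \wedge \omega$ (after an affine change of variables and a translation) exactly as in the proofs of Proposition~\ref{prop:connectedness-h-affine-maps} and Lemma~\ref{lem:connected-sublevelsets-local}: writing $\phi$ via $(\eta_0,\eta_1,\eta_2,\eta_3)$ as in~\eqref{e:phi-for-sect5}, if $\eta_0 = 0$ then $\phi$ is affine and $S$ is an affine hyperplane with $\Char(S) = \emptyset$, $\Nonchar(S) = S$, and the claim is immediate; if $\eta_0 \neq 0$ one uses the homeomorphism $(\theta,\omega) \mapsto (\eta_0\theta + \eta_1, \omega + \eta_0^{-1}\eta_2)$ to reduce to $\phi(\theta,\omega) = \theta\wedge\omega$, noting that this map is affine and preserves horizontal lines up to affine reparametrization, hence carries horizontal spaces to horizontal spaces and $\Nonchar$ to $\Nonchar$.

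Working with $\phi(\theta,\omega) = \theta\wedge\omega$ (identified with a scalar via $\nu$), I would first identify $\Char(S)$ explicitly. A point $x \in S$ lies in $\Char(S)$ iff $\Hor_x \subset S$, i.e. iff the restriction of $\phi$ to the $3$-dimensional affine subspace $\Hor_x$ is constant equal to $c$; since $\phi$ is h-affine its restriction to each horizontal line through $x$ is affine, so $\Hor_x \subset S$ iff the gradient of $\phi$ along every horizontal direction vanishes at $x$. A direct computation of the differential of $(\theta,\omega)\mapsto\theta\wedge\omega$ along horizontal directions $(\xi, \theta\wedge\xi)$ shows this happens precisely on the locus where $\theta$ generates a suitable kernel; in any case one checks that $\Char(S)$ is a closed set and that on $\Nonchar(S) = S \setminus \Char(S)$ the full differential $d\phi_x$ restricted to $\Hor_x$ is nonzero. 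When $c \neq 0$ the manifold point is cleaner: $\{\theta\wedge\omega = c\}$ is a regular level set (the differential of the quadratic form never vanishes off the cone), so $S$ is a global smooth $5$-manifold and $\Char(S) = \emptyset$; when $c = 0$ one removes the single bad point $(0,0)$ (as in Lemma~\ref{lem:connected-sublevelsets-local}, $\{\theta\wedge\omega=0\}\setminus\{(0,0)\}$ is a smooth $5$-manifold) and checks $\Char(S) = \{(0,0)\}$ in this normal form. Either way $\Nonchar(S)$ is open in $S$ and is a regular level set of a submersion, hence a smooth $5$-dimensional submanifold, and $T_x S = \ker d\phi_x$.

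For part (ii), fix $x \in \Nonchar(S)$. Since $\phi$ is h-affine, for any $y \in \frakg_1$ the function $t \mapsto \phi(x\cdot y^t)$ is affine, say $\phi(x \cdot y^t) = c + t\,\ell_x(y)$ for a linear functional $\ell_x$ on $\frakg_1$; moreover $t \mapsto x\cdot y^t$ is an affine parametrization of the horizontal line through $x$ in direction $y + [x,y]$, so the horizontal vector at $x$ tangent to this line is $y + [x,y]$ and $d\phi_x(y+[x,y]) = \ell_x(y)$. Hence $S \cap \Hor_x = \{x \cdot y : \ell_x(y) = 0\}$ and, on the other hand, $T_x S \cap \Hor_x = \{x + (y+[x,y]) : d\phi_x(y+[x,y]) = 0\} = \{x\cdot y : \ell_x(y) = 0\}$ — here I use that $\Hor_x = x\cdot\frakg_1$ is the affine subspace through $x$ with direction space $\{y + [x,y] : y \in \frakg_1\}$, which I would record as a short preliminary lemma. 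These two sets coincide, which is exactly $T_x S \cap \Hor_x = S \cap \Hor_x$. (The point $x \in \Nonchar(S)$ is used to ensure $\ell_x \not\equiv 0$, so both sides are genuine $2$-dimensional affine subspaces rather than all of $\Hor_x$, but the set equality holds regardless.)

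The main obstacle is the bookkeeping around $\Char(S)$: making sure the reduction to normal form genuinely transports $\Char$, $\Nonchar$, $\Hor$, and $S$ correctly, and pinning down $\Char(S)$ in the case $c=0$ (which is the only case where $\Char(S)$ is nonempty in the free rank-$3$ setting, being a single point in the normal coordinates but an affine subspace of $\Hor$-type issues one should double-check). Everything else is a routine computation with the quadratic form $\theta\wedge\omega$ of signature $(0,3,3)$ together with the elementary fact that horizontal lines in a step-$2$ Carnot algebra are affine and $\phi$ is affine along them.
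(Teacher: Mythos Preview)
Your argument for part~(ii) is essentially the paper's: write $\phi(x\cdot y^t)=c+t\,\ell_x(y)$, note that the tangent vector to the horizontal line at $t=0$ is $y+[x,y]$, and conclude $d\phi_x(y+[x,y])=\ell_x(y)$, whence $S\cap\Hor_x=\{x\cdot y:\ell_x(y)=0\}=T_xS\cap\Hor_x$. That is fine.

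The gap is in your reduction for part~(i). The affine map $\Psi(\theta,\omega)=(\eta_0\theta+\eta_1,\,\omega+\eta_0^{-1}\eta_2)$ is \emph{not} a group automorphism of $\free$ and does \emph{not} carry horizontal lines to horizontal lines: the image of $t\mapsto(\theta+t\tau,\,\omega+t\,\theta\wedge\tau)$ has $\Ltwo$-component moving in direction $\theta\wedge\tau$, whereas a horizontal line through $\Psi(\theta,\omega)$ would require direction $(\eta_0\theta+\eta_1)\wedge\sigma$ for some $\sigma$. Consequently $\Psi$ does not send $\Hor_x$ to $\Hor_{\Psi(x)}$, nor $\Char(S)$ to $\Char$ of the reduced level set, and you cannot transport the $\Nonchar$ analysis through $\Psi$. (In the paper this map is used only for topological connectedness statements, where the horizontal structure is irrelevant.) Relatedly, your side claims that $\Char(S)=\emptyset$ when $\eta_0=0$, and that $\Char(\{\theta\wedge\omega=0\})=\{(0,0)\}$, are both false: in the latter normal form $\Char(S)=\Lone\times\{0\}$ is $3$-dimensional.

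The fix is already hidden in your part~(ii). Since $x\in\Nonchar(S)$ means $\ell_x\not\equiv 0$, and $\ell_x(y)=d\phi_x(y+[x,y])$, you get $d\phi_x\neq 0$ at every noncharacteristic point directly in the original coordinates; hence $\Nonchar(S)$ lies in the regular level set of $\phi$ and is a smooth $5$-manifold. This is exactly how the paper argues: it computes $d\phi$ explicitly (no reduction), observes the unique critical point when $\eta_0\neq 0$ is $(-\eta_0^{-1}\eta_1,-\eta_0^{-1}\eta_2)$, and checks this point is characteristic by the same horizontal-derivative computation.
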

 
\begin{proof}
Assume with no loss of generality that $c=0$. Let $(\eta_0, \eta_1,\eta_2,\eta_3)\in \Lambda^0\R^3\times\Lambda^1\R^3\times\Lambda^2\R^3 \times \Lambda^3 \R^3$ with $(\eta_0,\eta_1,\eta_2) \not= (0,0,0)$ be such that $\phi$ is given by~\eqref{e:phi-for-sect5}. The function $\phi$ is smooth and for all $(\theta,\omega) \in \free$ we have
\begin{equation} \label{e:dphi}
d_{(\theta,\omega)}\phi (\theta',\omega') = (\eta_2 + \eta_0 \omega) \wedge \theta'+ (\eta_1 + \eta_0 \theta) \wedge \omega'~.
\end{equation}
If $\eta_0 =0$, we have $(\eta_1,\eta_2) \not= (0,0)$, therefore $d_{(\theta,\omega)}\phi \not=0$ for all $(\theta,\omega) \in \free$. It follows that $S$ is a smooth 5-dimensional submanifold of $\free$ (it is actually a codimension-1 affine subspace of $\free$) and so is the relatively open subset $\Nonchar(S)$ of $S$. If $\eta_0\not=0$ then $d_{(\theta,\omega)}\phi \not=0$ for all $(\theta,\omega) \in  \free \setminus \{(- \eta_0^{-1} \eta_1,- \eta_0^{-1} \eta_2)\}$. Therefore $S\setminus \{(- \eta_0^{-1} \eta_1,- \eta_0^{-1} \eta_2)\}$ is a smooth 5-dimensional submanifold of $\free$. Note incidentally that $(- \eta_0^{-1} \eta_1,- \eta_0^{-1} \eta_2) \in S$ if and only if $\eta_3 = \eta_0^{-1} \eta_1\wedge \eta_2$. Let us now verify the inclusion $\Nonchar(S)  \subset S\setminus \{(- \eta_0^{-1} \eta_1,- \eta_0^{-1} \eta_2)\}$. Let $(\theta,\omega) \in \Nonchar(S)$. Then there is $\tau \in \Lone$ such that $\phi((\theta,\omega)\cdot (\tau,0)) = (\eta_2  +\eta_1 \wedge \theta  + \eta_0 \omega) \wedge \tau \not= 0$. Therefore $\eta_2  +\eta_1 \wedge \theta  + \eta_0 \omega \not=0$ which implies that $(\theta,\omega) \not= (- \eta_0^{-1} \eta_1,- \eta_0^{-1} \eta_2)$, as wanted. It follows that $\Nonchar(S)$ is a relatively open subset of $S\setminus \{(- \eta_0^{-1} \eta_1,- \eta_0^{-1} \eta_2)\}$ and hence is a smooth 5-dimensional submanifold of $\free$. To conclude the proof of the lemma, let $x= (\theta,\omega) \in \Nonchar(S)$. It follows from~\eqref{e:dphi} that for $\tau \in \Lone$, one has $\phi(x \cdot (\tau,0)) = \phi(x +(\tau,\theta \wedge \tau)) = d_{x}\phi(\tau,\theta\wedge\tau)$. Therefore $x\cdot (\tau,0) \in S$ if and only if $x\cdot (\tau,0) = x + (\tau,\theta\wedge \tau) \in T_x S$, i.e., $T_x S \cap \Hor_x = S \cap \Hor_x$. 
\end{proof}

\section{Classification in nonfree step-2 rank-3 Carnot algebras} \label{sect:nonfree-case}

This section is devoted to the proof of Theorem~\ref{thm:nonfree-case}. We recall that a Carnot morphism $\pi :\frakf \rightarrow \frakg$ between step-2 Carnot algebras $\frakf =\frakf_1 \oplus \frakf_2$ and $\frakg = \frakg_1 \oplus \frakg_2$ is a homomorphism of graded Lie algebras, which means that $\pi$ is a linear map such that $\pi([x,y]) = [\pi(x),\pi(y)]$ for all $x,y\in \frakf$ and $\pi(\frakf_i) \subset \frakg_i$ for $i=1,2$. Note that a Carnot morphism is both a homomorphism of graded Lie algebras and a group homomorphism. It can easily be seen that the preimage of a precisely monotone set under a Carnot morphism is precisely monotone. We give in the next lemma the rather elementary proof of this property, for the reader's convenience.

\begin{lemma} \label{lem:PM-vs-quotient}
Let $\frakf$ and $\frakg$ be step-2 Carnot algebras and $\pi :\frakf \rightarrow \frakg$ be a Carnot morphism. Let $E\subset \frakg$ be precisely monotone. Then $\pi^{-1}(E) \subset \frakf$ is precisely monotone.
\end{lemma}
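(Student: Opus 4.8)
The plan is to argue directly from the definitions. Let $E \subset \frakg$ be precisely monotone and let $\ell \subset \frakf$ be an arbitrary horizontal line; I must show that $\ell \cap \pi^{-1}(E)$ and $\ell \cap \pi^{-1}(E^c)$ are connected subsets of $\ell$. Since $\ell$ is a horizontal line in $\frakf$, we may write $\ell = \{x \cdot y^t : t \in \R\}$ for some $x \in \frakf$ and $y \in \frakf_1 \setminus \{0\}$. The key computation is to understand the image $\pi(\ell)$. Because $\pi$ is a group homomorphism, $\pi(x \cdot y^t) = \pi(x) \cdot \pi(y^t) = \pi(x) \cdot \pi(y)^t$, using that $\pi$ is linear so $\pi(ty) = t\pi(y)$ and hence $\pi(y^t) = \pi(y)^t$ in the notation $z^t := tz$ fixed in Section~\ref{sect:PM-step2-Carnot-algebras}. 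Moreover $\pi(y) \in \frakg_1$ since $\pi$ respects the stratification.

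The slightly delicate point — and the one I expect to be the only real obstacle — is that $\pi(y)$ could be $0$, in which case $\pi(y)^t = 0$ for all $t$ and $\pi(\ell) = \{\pi(x)\}$ is a single point, not a horizontal line, so Definition~\ref{def:PM} does not apply directly. I would handle this by a case split. If $\pi(y) \neq 0$, then $t \mapsto \pi(x) \cdot \pi(y)^t$ is an (affine, injective) parametrization of the horizontal line $\ell' := \{\pi(x) \cdot \pi(y)^t : t \in \R\}$ in $\frakg$, and $t \mapsto x \cdot y^t$ is an injective parametrization of $\ell$; since $\pi$ intertwines these parametrizations, $\ell \cap \pi^{-1}(E)$ corresponds under $t \leftrightarrow t$ exactly to $\ell' \cap E$, which is connected by precise monotonicity of $E$. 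Hence $\ell \cap \pi^{-1}(E)$ is connected, and likewise $\ell \cap \pi^{-1}(E^c) = \ell \cap \pi^{-1}(E)^c$ corresponds to $\ell' \cap E^c$, also connected. If instead $\pi(y) = 0$, then $\pi$ is constant on $\ell$ with value $\pi(x)$, so either $\pi(x) \in E$, in which case $\ell \subset \pi^{-1}(E)$ and $\ell \cap \pi^{-1}(E^c) = \emptyset$, or $\pi(x) \notin E$, in which case $\ell \subset \pi^{-1}(E^c)$ and $\ell \cap \pi^{-1}(E) = \emptyset$; in both sub-cases the two intersections are connected (one is all of $\ell$, the other is empty, and $\ell$ itself is connected being a line).

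In either case both $\ell \cap \pi^{-1}(E)$ and $\ell \cap \pi^{-1}(E^c)$ are connected. Since $\ell$ was an arbitrary horizontal line, $\pi^{-1}(E)$ is precisely monotone by Definition~\ref{def:PM}. I would present this as a short proof: fix $\ell$, compute $\pi(x \cdot y^t) = \pi(x) \cdot \pi(y)^t$ using that $\pi$ is a linear group homomorphism preserving the grading, then dispatch the two cases $\pi(y) \neq 0$ and $\pi(y) = 0$ as above. No deeper input is needed — in particular the step-2 structure is used only through the fact (recorded in Section~\ref{sect:PM-step2-Carnot-algebras}) that horizontal lines have the form $\{x \cdot y^t\}$ with $y \in \frakf_1 \setminus \{0\}$, and that $\pi(\frakf_1) \subset \frakg_1$.
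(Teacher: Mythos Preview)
Your proof is correct and follows essentially the same approach as the paper's own argument: both split into the cases where $\pi(\ell)$ is a single point versus a horizontal line in $\frakg$, and handle each case in the obvious way. Your version is just slightly more explicit about the parametrization $t \mapsto x\cdot y^t$ and the identity $\pi(x\cdot y^t)=\pi(x)\cdot\pi(y)^t$, while the paper phrases the non-degenerate case as ``$\pi$ restricted to $\ell$ is a homeomorphism onto $\pi(\ell)$''; the content is the same.
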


\begin{proof}
Let $E\subset \frakg$ be precisely monotone and $\ell \subset \frakf$ be a horizontal line. Then $\pi(\ell)$ is either a singleton or a horizontal line in $\frakg$. If $\pi(\ell)$ is a singleton then either $\ell \cap \pi^{-1}(E)=\emptyset$ or $\ell \cap \pi^{-1}(E)=\ell$ and in both cases $\ell$ intersects both $\pi^{-1}(E)$ and its complement in a connected set. If $\pi(\ell)$ is a horizontal line in $\frakg$ then the restriction of $\pi$ to $\ell$ is a homeomorphism from $\ell$ to $\pi(\ell)$. Since $\pi(\ell)$ intersects both $E$ and $E^c$ in a connected set, it follows that $\ell$ intersects both $\pi^{-1}(E)$ and its complement in a connected set, which concludes the proof of the lemma. 
\end{proof}

To prove Theorem~\ref{thm:nonfree-case} let $\frakg$ be a step-2 rank-3 Carnot algebra that is not isomorphic to $\free$ and let $E\subset \frakg$ be precisely monotone and measurable with $E\not\in \{\emptyset,\frakg\}$. Let $\pi : \free \rightarrow \frakg$ be a surjective Carnot morphism. Recall that the universal property of the free step-2 rank-3 Carnot algebra ensures the existence of such a surjective Carnot morphism, see for instance~\cite[p.45]{MR1218884}, and $\ker \pi$ is a non trivial linear subspace of  $\Ltwo$ since $\frakg$ is not isomorphic to $\free$. By Lemma~\ref{lem:PM-vs-quotient} and since $\pi$ is continuous, $\pi^{-1}(E)$ is a precisely monotone and measurable subset of $\free$ with $\pi^{-1}(E) \not\in \{\emptyset,\free\}$. By Theorem~\ref{thm:main}, there is a non constant h-affine function $\phi:\free \rightarrow \R$ such that $\Int(\pi^{-1}(E)) = \{x\in \free :\, \phi(x) < 0\}$, $\overline{\pi^{-1}(E)} = \{x\in \free :\, \phi(x) \leq 0\}$, and $\partial \pi^{-1}(E) = \{x\in \free :\, \phi(x) = 0\}$. Since $\pi$ is linear and surjective, $\pi$ is  continuous and open. Therefore $\Int(\pi^{-1}(E)) = \pi^{-1} (\Int E)$, $\overline{\pi^{-1}(E)} = \pi^{-1}(\overline{E})$, $\partial \pi^{-1}(E) = \pi^{-1}(\partial E)$, and it follows that 
\begin{equation} \label{e:non-free-case}
\left\{
\begin{aligned}
\Int (E) &= \pi (\{x\in \free :\, \phi(x) < 0\}) \\
\overline{E} &= \pi (\{x\in \free :\, \phi(x) \leq 0\}) \\
\partial E &= \pi (\{x\in \free :\, \phi(x) = 0\}).
\end{aligned}
\right.
\end{equation}

In particular, we have $\pi (\{x\in \free :\, \phi(x) < 0\}) \cap \pi (\{x\in \free :\, \phi(x) = 0\}) = \emptyset$. We shall now verify that this implies that $\phi$ factors through $\free / \ker \pi$, i.e., $\phi(\theta+ \omega + \zeta) = \phi(\theta+\omega)$ for all $\theta\in \Lone$, $\omega \in \Ltwo$, $\zeta \in \ker \pi$. Indeed otherwise there are $\theta\in \Lone$, $\omega \in \Ltwo$, $\zeta \in \ker \pi \subset \Ltwo$ such that $\phi(\theta+ \omega + \zeta) \not= \phi(\theta+\omega)$. Then it follows from~\eqref{e:phi-introduction} that the function $t\in \R \mapsto \phi(\theta, \omega + t\zeta)$ is a degree-1 polynomial and therefore is surjective. In particular, one can find $s,t \in \R$ such that $\phi(\theta+ \omega + s\zeta)<0$ and $\phi(\theta+ \omega + t\zeta)=0$, which implies that $\pi(\theta+\omega) \in \pi (\{x\in \free :\, \phi(x) < 0\}) \cap \pi (\{x\in \free :\, \phi(x) = 0\})$ and gives a contradiction.

Since $\phi$ factors through $\free / \ker \pi$, there is $\psi:\frakg \rightarrow \R$ such that $\phi = \psi \circ \pi$ and it follows from~\eqref{e:non-free-case} that 
\begin{equation*} 
\Int (E) = \{x\in \frakg :\, \psi(x) < 0\} \subset E \subset \overline{E} = \{x\in \frakg :\, \psi(x) \leq 0\}.
\end{equation*}
Furthermore, since $\phi = \psi \circ \pi$ and $\phi$ is h-affine, then $\psi:\frakg \rightarrow \R$ is h-affine (see \cite[Lemma~2.3]{LeDonneMorbidelliRigot1}). By Theorem~\ref{thm:h-affine-maps} we get that $\psi$ is affine, and $\psi$ is non constant since $\phi$ is, which concludes the proof of Theorem~\ref{thm:nonfree-case}.

\bibliography{biblio}
\bibliographystyle{amsplain}

\end{document}